\numberwithin{equation}{section}
\numberwithin{figure}{section}
\newtheorem{question}{Question}[section]
\newtheorem{prop}[question]{Proposition}
\newtheorem{thm}[question]{Theorem}
\newtheorem{cor}[question]{Corollary}
\newtheorem{lemma}[question]{Lemma}
\theoremstyle{definition}
\newtheorem{definition}[question]{Definition}
\newtheorem{Ex}[question]{Example}
\newtheorem{remark}[question]{Remark}
\def\epsilon{\varepsilon}
\title{Uniform Length Estimates for Trajectories on Flat Cone Surfaces}
\author{Kai Fu}
\address[Kai Fu]{Max Planck Institute for Mathematics in the Sciences, Leipzig, Germany}
\email{kai.fu@mis.mpg.de}
\date{\today}
\begin{document}

\begin{abstract}
This paper studies length estimates for trajectories on flat cone surfaces in terms of their self-intersection numbers. For an area-one flat cone surface, we obtain a lower bound for the length of a trajectory, with constants depending only on the flat metric. Our main focus is the case of convex flat cone spheres. We show that these constants can be chosen uniformly for such spheres with a positive curvature gap and a fixed number of singularities. Explicit values for these constants are also provided. Combined with a previously established upper bound, this yields uniform two-sided estimates for trajectory lengths on such flat cone spheres. As an application, we obtain uniform bounds for counting functions of trajectories on convex flat cone spheres and on convex polygonal billiards.
\end{abstract}

\maketitle

\setcounter{tocdepth}{1}
\tableofcontents

\section{Introduction}\label{bigsec:intro}

\subsection{Trajectories on flat cone surfaces}
Let $P$ be a polygon in the plane. A \emph{billiard path} in $P$ is a finite polygonal path $\{s_1, \ldots, s_m\} \subset P$, composed of line segments, such that each vertex $s_i \cap s_{i+1}$ lies in the interior of some edge of $P$, and the angles that $s_i$ and $s_{i+1}$ make with this edge are complementary.

Let $\{s_1, \ldots, s_m\}$ be a billiard path in $P$. We say that the path is \emph{periodic} if the polygonal path $\{s_1, \ldots, s_m, s_{m+1}\}$, with $s_{m+1} = s_1$, is also a billiard path. We say that the path is a \emph{generalized diagonal} if $s_1$ and $s_m$ both have one endpoint at a vertex of $P$ (possibly the same).

A significant amount of literature focuses on the study of billiard paths. One classical question is the existence of a periodic path in a given polygonal billiard. It has been proved in~\cite{RSch06, RSch08, tokarsky2018point} that triangles with angles of at most $112.3^\circ$ all have a periodic billiard path. However, the existence of a periodic path remains open for general polygons.

Another classical question relates to the growth rate of the number of generalized diagonals of bounded length. We denote by $N^{diag}(P, R)$ the number of generalized diagonals of length at most $R$ in the polygon $P$. Katok proved in~\cite{Kat} that the growth of $N^{diag}(P, R)$ with $R$ is subexponential. Scheglov provided an explicit subexponential upper bound for almost every triangle in~\cite{Sch}. To date, explicit subexponential upper bounds for $N^{diag}(P, R)$ are not fully resolved for general polygonal billiards.
\newline

The study of billiard paths in a polygon can be reformulated as the study of trajectories on a flat cone sphere. Specifically, to a polygon $P$ in the plane, we associate a flat cone sphere obtained by gluing two copies of $P$ along their boundaries (this construction is detailed in~\cite{FoxKer}, with a more thorough description provided in Section~\ref{sec:appli}). A flat cone sphere is a sphere endowed with a flat metric and finitely many conical points. Thurston studied flat cone spheres in~\cite{thu} and a formal definition of flat cone spheres is provided in Section~\ref{sec:basicdef}.

This construction is particularly useful for rational polygons, which are polygons where all angles are rational multiples of $\pi$. In this case, the flat cone sphere has a branched cover that is a translation surface. Although we do not use translation surfaces in this paper, we refer readers to~\cite{Zor} for their definition. There are many questions about trajectories on flat cone surfaces for which answers are known in the case of translation surfaces. For instance, Masur proved in~\cite{Mas86} that every translation surface contains a periodic geodesic, and showed in~\cite{Ma1, Ma2} that the number of periodic geodesics of bounded length has quadratic upper and lower bounds as the length bound increases. It is worth noting that there is an $SL(2, \mathbb{R})$-action on moduli spaces of translation surfaces, and all the results mentioned above rely on this action. However, such an action does not extend to flat cone spheres. For more results on translation surfaces, readers can consult~\cite{AthreyaMasur2024, Filip2024, McMullen2023, MasurTabachnikov2002, Wright2015, Zor}.
\newline

This paper investigates the behavior of trajectories on flat cone spheres and general flat cone surfaces. We approach the study of these trajectories by comparing their lengths with their self-intersection numbers. 
In fact, recent studies have addressed such a comparison for the lengths of closed geodesics in hyperbolic surfaces, such as those in~\cite{Ara} and~\cite{BaPV}. However, no such estimates are known for flat cone surfaces. This estimate establishes groundwork for future work~\cite{Fu25}. See Remark~\ref{rmk:futurework} for further details.\newline

The definition of flat cone surfaces is provided in Section~\ref{sec:basicdef}. A \emph{trajectory} on a flat cone surface $X$ is defined as a geodesic of finite length that does not pass through any singularities except possibly at its endpoints. We denote the length of a trajectory $\gamma$ by $|\gamma|$. The definition of self-intersection numbers follows from~\cite{Ara}.

\begin{definition}\label{def:intersectionnumber}
    Let $\gamma$ be a trajectory on a flat cone surface $X$. For a point $p$ on $\gamma$, we say that two tangent vectors to $\gamma$ at $p$ form a \emph{transverse pair} if they span the tangent space at $p$. The \emph{self-intersection number} of $\gamma$ is defined as the sum of transverse pairs over all points in the interior of $\gamma$, and we denote it by $\iota(\gamma, \gamma)$.
\end{definition}

We recall the notion of curvature gap introduced in the joint work with Tahar~\cite{FT}. Let $X$ be a flat cone surface with genus zero and $n$ conical points. Let $x_1, \ldots, x_n$ be the $n$ conical points. We define the \emph{curvature} of $x_i$ as
\[k_i := \frac{2\pi - \theta_i}{2\pi}\] where $\theta_i$ is the cone angle at $x_i$. We denote the tuple of curvatures by $(k_1, \ldots, k_n)$ and refer to it as $\underline{k}$. The \emph{curvature gap} of $\underline{k}$, denoted by $\delta(\underline{k})$, is defined by
\begin{equation}\label{equ:gap}
\delta(\underline{k}) = \inf\limits_{I \subset \{1, \ldots, n\}} \left|1 - \sum\limits_{\substack{i \in I}} k_i\right|.
\end{equation}

    If a flat cone sphere $X$ has positive curvature gap, then there is no \emph{simple} closed geodesic on $X$, where ``simple'' means that the self-intersection number of the geodesic is zero. Indeed, such a geodesic divides $X$ into two connected components, and by the Gauss--Bonnet formula, the sum of the curvatures in each component is equal to one. Hence the curvature gap is zero. 
    
    Note that the case of positive curvature gap does not include examples such as the cube, where an embedded cylinder exists. Indeed, one should think of flat cone spheres with positive curvature gap as \emph{generic}. More precisely, in the space of curvature vectors
\[
\{\underline{k} = (k_1,\ldots,k_n)\in\mathbb{R}^n \mid k_1+\cdots+k_n = 2\},
\]
the condition of having zero curvature gap is defined by finitely many linear equalities
\[
1 - \sum_{i\in I} k_i = 0, \qquad I\subseteq\{1,\ldots,n\},
\]
and thus corresponds to a subset of measure zero.

The following theorem from~\cite{FT} provides an upper bound for the lengths of trajectories in terms of their self-intersection numbers:

\begin{thm}[{\cite[Theorem~1.4, Theorem~1.6]{FT}}]
\label{thm:finite}
    Let $X$ be a flat cone sphere with unit area and positive curvature gap. Let $n$ be the number of conical singularities and $\delta$ be the curvature gap. Then the number of trajectories with at most $s$ self-intersections is at most
    $$
    (3n-6)2^{20n\delta^{-1}((n-1)\sqrt{s} + 1)}.
    $$
    Moreover, for any trajectory $\gamma$ in $X$, we have that
    \begin{equation}\label{equ:upperlengthformula}
        |\gamma| \le a_1\sqrt{\iota(\gamma,\gamma)} + a_2,
    \end{equation}
    where $a_1$ and $a_2$ are positive constants depending only on $n$ and $\delta$.
\end{thm}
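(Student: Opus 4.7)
The plan is to establish the length bound $|\gamma|\le a_1\sqrt{\iota(\gamma,\gamma)}+a_2$ by an area-packing argument on $X$, and then to deduce the counting estimate from it by a symbolic encoding. Heuristically, if a trajectory of length $L$ can be thickened to an immersed strip of half-width $w$, the area constraint imposes $L\lesssim 1/w+Cw\,\iota(\gamma,\gamma)$, and optimizing $w$ against $\iota(\gamma,\gamma)$ produces the square-root bound.

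Concretely, for $w>0$ small I would consider the perpendicular flow
$$F_w:[0,|\gamma|]\times(-w,w)\to X,\qquad F_w(t,s)=\exp_{\gamma(t)}(s\,n(t)),$$
where $n(t)$ is a unit normal along $\gamma$. Provided $w$ is less than the perpendicular injectivity radius of $\gamma$, $F_w$ is a local isometry, so the area formula gives $\int_X \#F_w^{-1}(x)\,dx=2w|\gamma|$; since $\mathrm{Area}(X)=1$, the total overlap area is at least $2w|\gamma|-1$. I would then bound this overlap above by $Cw^2\,\iota(\gamma,\gamma)$: near each self-intersection the two passages of $\gamma$ are transverse, and their strips of width $w$ meet in a parallelogram of area $\lesssim w^2$. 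Rearranging yields $|\gamma|\le C'\sqrt{\iota(\gamma,\gamma)}$ when $w\sim 1/\sqrt{\iota(\gamma,\gamma)}$ is admissible, and a constant bound in the low self-intersection regime; combining the two cases gives the stated form with constants depending on the largest admissible width $w_{\max}$.

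The hard part is uniformity of $w_{\max}$. On a single surface, $w_{\max}$ is controlled by the systole, which is positive but varies with $X$. To extract constants depending only on $n$ and $\delta$, one needs a uniform lower bound on the systole across all area-one flat cone spheres in the stratum with $n$ singularities and curvature gap at least $\delta$: a compactness-type statement, since the only possible degenerations in the stratum correspond to subsets $I\subset\{1,\ldots,n\}$ for which $\sum_{i\in I}k_i\to 1$, which a positive curvature gap excludes. A secondary subtlety is that tube overlap arises not only from transverse self-intersections but also from near-parallel stretches of $\gamma$; this is handled either by using an angle-dependent thickening (restricting the overlap contribution to passages meeting at sufficiently large angle) or by lifting $\gamma$ to the unit tangent bundle, where it is embedded and the tube is genuinely injective. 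Once the length bound is established, the counting estimate follows by fixing a geodesic triangulation of $X$ with singular vertices, encoding each trajectory by the sequence of edges it crosses, and counting symbolic words of length $O(\sqrt{s})$ over an alphabet of size $O(n)$, with careful book-keeping of constants -- in particular the factor $\delta^{-1}$ coming from the uniform systole bound -- yielding the quoted exponential form.
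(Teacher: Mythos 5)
The area-packing tube argument is a natural idea, but as written the proposal rests on a claim that is false, and the paper you are reading demonstrates the failure explicitly.

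You assert that positive curvature gap forces a uniform lower bound on the systole across area-one flat cone spheres with $n$ singularities and curvature gap $\ge\delta$, reasoning that ``the only possible degenerations in the stratum correspond to subsets $I$ for which $\sum_{i\in I}k_i\to 1$.'' That is not the only degeneration. Singularities whose curvatures sum to \emph{less} than $1$ can collide, shrinking the relative systole to $0$ while the curvature vector --- hence $n$ and $\delta(\underline{k})$ --- stays fixed. This is precisely the content of Lemma~\ref{lem:thurstoncompletion}: the metric completion $\overline{\mathbb{P}\Omega}(\underline{k})$ is compact when $\delta>0$, but it is a completion, with boundary strata indexed by admissible partitions (those $p$ with $\sum_{k\in p}k<1$), and approaching those strata drives $\mathrm{relsys}(X)\to 0$. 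Proposition~\ref{prop:estimatedegenerate} and the whole thick/thin machinery of Sections~\ref{bigsec:hulls}--\ref{bigsec:ivslen} exist exactly because no such uniform systole bound holds. So your $w_{\max}$ is not uniformly controlled by $(n,\delta)$, and the constants you extract would depend on the individual surface, which is weaker than the statement to be proved.

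There are two further gaps even for a single fixed $X$. First, the normal flow $F_w$ is only a local isometry where $\gamma$ stays at distance $>w$ from all singularities; a trajectory is merely required to avoid singularities in its interior, not to keep a definite distance from them, so $F_w$ may simply fail to be defined for any positive $w$. Second, the overlap estimate by transverse intersections alone is insufficient: flat metrics (unlike hyperbolic ones) admit long, genuinely parallel families of geodesic segments, so two sheets of the tube can overlap in area $\sim w\cdot(\text{length})$ rather than $\sim w^2$, and neither ``restrict to large crossing angles'' nor ``pass to the unit tangent bundle'' closes this gap without substantial additional work --- in the unit tangent bundle the tube is three-dimensional and the two-dimensional area formula no longer gives the bound $2w|\gamma|$. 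You would need a quantitative way to convert long near-parallel returns into self-intersections (or into a Gauss--Bonnet contradiction via the bounded curvature in the enclosed region), which is, in effect, the kind of combinatorial/geometric bookkeeping that the cited proof in~\cite{FT} and the lower-bound argument in Section~\ref{bigsec:individual} of this paper carry out via triangulations and corner-switches rather than tube packing.
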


Formulas for the constants $a_1$ and $a_2$ are explicitly given in~\cite{FT} as follows:
\begin{equation}\label{equ:upperexplicivalues}
a_1(n,\delta) = \frac{20n(n-1)}{\sqrt{\pi}}\left( \frac{2}{\delta} + \frac{1}{\sqrt{2}\delta^{3/2}} \right)
 \mbox{ and } a_2(n,\delta) = \frac{40n}{\delta\sqrt{\pi}} + \frac{20n}{\delta^{3/2}\sqrt{2\pi}}.    
\end{equation}

In this paper, we focus on obtaining lower bounds for the lengths of trajectories. Specifically, we investigate how these lower bounds relate to the self-intersection numbers, complementing the upper bound results established in~\eqref{equ:upperlengthformula}.

We compare our bounds for flat surfaces with the bounds in ~\cite{Ara} for hyperbolic surfaces in Remark~\ref{rmk:comparehyper1} and Remark~\ref{rmk:comparehyper2}. These comparisons reveal the relationships and differences between the bounds in flat and hyperbolic settings.

To present the first main result, we introduce some necessary definitions.
A \emph{saddle connection} on a flat cone surface is a geodesic that starts and ends at conical singularities without passing through any singularities in the interior. A \emph{regular closed geodesic} on a flat cone surface is a geodesic that returns to its starting point with the same tangent direction. The following result, which applies to flat cone surfaces of any genus, provides bounds on the lengths of these trajectories in terms of their self-intersection numbers.

\begin{restatable}{thm}{mainindividual}\label{thm:mainindividual}
Let $X$ be a unit area flat cone surface of genus $g$. For any trajectory $\gamma$ in $X$, there exist positive constants $b_1$ and $b_2$, depending only on the metric of $X$, such that
\begin{equation}\label{equ:compareindividual}
       b_1\sqrt{\iota(\gamma,\gamma)} - b_2 \le |\gamma|.
\end{equation}
Moreover, for any saddle connection or regular closed geodesic $\gamma$ in $X$, we have that
\begin{equation}\label{equ:compareindividual2}
       b_1\sqrt{\iota(\gamma,\gamma)} \le |\gamma|,
\end{equation}
where $b_1$ is the same constant as in the inequality~\eqref{equ:compareindividual}.
\end{restatable}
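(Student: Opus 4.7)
The plan is to establish the quadratic upper bound $\iota(\gamma,\gamma) \leq \tfrac{K_0}{2}(|\gamma|/\ell_0 + 1)^2$ for suitable constants $\ell_0, K_0 > 0$ depending only on the flat metric of $X$, which rearranges to the desired lower bound on $|\gamma|$. The strategy is to decompose $\gamma$ into short sub-arcs whose self-intersections are easy to control and whose pairwise intersections are uniformly bounded.

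I would first introduce a length scale $\ell_0 = \ell_0(X) > 0$ designed so that the following two properties hold:
(A) every geodesic arc on $X$ of length at most $\ell_0$ is simple (embedded);
(B) any two geodesic arcs on $X$ of length at most $\ell_0$ meet transversely in at most $K_0 = K_0(X)$ points.
Property (A) follows from the positivity of the geodesic loop systole: a self-intersection of a geodesic arc $\sigma$ produces a geodesic loop of length at most $|\sigma|$, and by Gauss--Bonnet the topological disk bounded by such a loop must enclose a cone point of positive curvature, forcing a positive lower bound on its length. Take $\ell_0$ strictly smaller than this bound.

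Property (B) is the main technical step. I would fix a triangulation of $X$ by embedded Euclidean triangles with vertices at the cone singularities (whose combinatorics depend only on $X$) and show that two short geodesic arcs cross each triangle in straight segments meeting at most once, with any extra crossings coming only from arcs winding around a shared cone point and bounded in terms of the maximum cone angle. Granting (A) and (B), I partition $\gamma$ into $m \leq |\gamma|/\ell_0 + 1$ consecutive sub-arcs of length at most $\ell_0$; by (A) each is simple, so every self-intersection of $\gamma$ arises from a distinct pair of sub-arcs, and
$$
\iota(\gamma,\gamma) \;\leq\; K_0 \binom{m}{2} \;\leq\; \frac{K_0}{2}\left(\frac{|\gamma|}{\ell_0}+1\right)^2,
$$
yielding $|\gamma| \geq b_1\sqrt{\iota(\gamma,\gamma)} - b_2$ with $b_1 = \ell_0\sqrt{2/K_0}$ and $b_2 = \ell_0$.

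For the stronger bound on saddle connections and regular closed geodesics, the decomposition can be chosen without a fractional sub-arc: a closed geodesic admits a cyclic partition into $m$ equal pieces of length $|\gamma|/m \leq \ell_0$, and a saddle connection can be partitioned starting at its singular endpoints. Both types of trajectory have length at least $\ell_0$ (closed geodesics by the loop systole, saddle connections by the minimum saddle connection length), so $m \leq 2|\gamma|/\ell_0$, the additive term is absorbed, and the cleaner inequality $|\gamma| \geq b_1\sqrt{\iota(\gamma,\gamma)}$ follows with the same $b_1$.

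The hard part will be property (B). A priori, two short geodesic arcs could cross many times as they develop around cone points of large angle, and turning the local Euclidean fact "two lines meet at most once" into a global uniform estimate on $X$ requires a careful analysis of the developing map near each singularity together with the chosen Euclidean triangulation. The size of $K_0$ (and hence of $b_1, b_2$) will therefore be controlled by the cone angles of $X$, which is consistent with the uniformity statements later in the paper expressing these constants in terms of the number of singularities and the curvature gap.
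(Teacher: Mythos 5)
Your plan takes a genuinely different route from the paper, but property (A) is false as stated and this creates a real gap. The claim that a geodesic loop must have length bounded below does not hold: Gauss--Bonnet does force the disk bounded by a geodesic loop to contain total curvature $>1/2$, but near a singularity of cone angle $\alpha<\pi$ (curvature $>1/2$) there are geodesic loops of length $2r\sin(\alpha/2)$ based at distance $r$ from the apex, tending to zero as $r\to 0$. The paper's Example~\ref{ex:b2notzero} exhibits exactly such a family of non-simple trajectories of arbitrarily small length and intersection number $1$. So no $\ell_0>0$ makes (A) hold, and the step $\iota(\gamma,\gamma)\leq K_0\binom{m}{2}$ is unjustified because it ignores self-intersections occurring within a single sub-arc.

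This is fixable: replace (A) with the correct statement that each geodesic arc of length $\leq\ell_0$ has at most $L_0=L_0(X)$ self-intersections (controlled by $\min_i(1-k_i)$, which bounds how many times a short arc can wrap the sharpest cone), giving $\iota(\gamma,\gamma)\leq L_0 m + K_0\binom{m}{2}$, which is still quadratic in $m$. But bounding $L_0$ and $K_0$ requires the same developing-map analysis near singularities that you flag as the hard part of (B), so the repair is not free. Two smaller slips: it is \emph{small} cone angles (curvature near $1$), not large ones, that force wrapping and many crossings; and your saddle-connection/closed-geodesic constant comes out as $\ell_0/\sqrt{2K_0}$, half the $b_1$ from the general case, so one should take the smaller of the two to satisfy ``the same $b_1$.''

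For comparison, the paper sidesteps (A) and (B) entirely: it cuts $\gamma$ by a geometric (Delaunay) triangulation into threads each lying in a single Euclidean triangle, so two threads in the same face cross at most once and $m\geq\sqrt{2\iota(\gamma,\gamma)}$ is immediate (Lemma~\ref{lem:combinvsintersect}); the work is instead in showing that among $m_0$ consecutive threads there is always a corner-switch, so their total length is at least the width of the triangulation (Lemma~\ref{lem:consecutivethick}, Lemma~\ref{lem:lenvscombinlen}). Both proofs relate $\iota$ quadratically to a combinatorial count $m$ and then relate $m$ to $|\gamma|$; the paper makes the intersection count trivial at the cost of the corner-switch argument, whereas you make $m\leq|\gamma|/\ell_0+1$ trivial at the cost of the much harder crossing estimates (A) and (B). The triangulation route also leads directly to the explicit constants the paper needs later for its uniformity results.
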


In~\eqref{eq:finalb1b2}, we construct explicit values for $b_1$ and $b_2$ based on certain geometric quantities of $X$.

\begin{remark}
    Note that the inequality~\eqref{equ:compareindividual} provides a bound for any trajectory $\gamma$, including a constant term $b_2$ in the lower bound, whereas the inequality~\eqref{equ:compareindividual2} applies specifically to saddle connections and regular closed geodesics without the constant term. It is important to mention that $b_2$ cannot be zero for all flat cone surfaces. A counterexample illustrating this is provided in Example~\ref{ex:b2notzero}.
\end{remark}

\begin{remark}\label{rmk:comparehyper1}
    In the case of hyperbolic surfaces, Theorem~1.1 of Basmajian's work~\cite{Ara} shows that for any compact hyperbolic surface $S$, there exists a constant $b_S$ depending on the hyperbolic metric of $S$ such that for any closed geodesic $\gamma$,
    \begin{equation}\label{equ:hyper1}
        b_S\sqrt{\iota(\gamma,\gamma)} \leq L(\gamma),
    \end{equation}
    where $L(\gamma)$ is the hyperbolic length of $\gamma$, and $b_S$ converges to zero as the systole of $S$ (the shortest length of a closed geodesic) tends to zero.
    \par
    Our result~\eqref{equ:compareindividual2} can thus be seen as an analogue of~\eqref{equ:hyper1} in the setting of flat surfaces. Furthermore, we show in Proposition~\ref{prop:estimatedegenerate} that the coefficient $b_1$ in~\eqref{equ:compareindividual2} also tends to zero as the systole of the flat cone surface goes to zero.
\end{remark}

We restrict our attention to the case of genus zero, that is, flat cone spheres. We define a flat cone sphere to be \textit{convex} if all curvatures are positive. In the setting of convex flat cone spheres, we obtain the second main result, where the lower bounds for the lengths of trajectories become uniform under the conditions of the theorem. Furthermore, we provide explicit computations of the constants involved in the bounds.

\begin{restatable}{thm}{firstmainfav}\label{thm:mainfavorite}
    Let $\delta$ be a positive real number. Let $X$ be a unit area convex flat cone sphere with $n$ singularities, and assume that the curvature gap is bounded below by $\delta$. For any trajectory $\gamma$ in $X$, there exist positive constants $c_1$ and $c_2$, depending only on $n$ and $\delta$, such that
    \begin{equation}\label{equ:compare}
           c_1\sqrt{\iota(\gamma,\gamma)} - c_2 \le |\gamma|.
    \end{equation}
    Moreover, for any regular closed geodesic $\gamma$ in $X$, we have that
    \begin{equation}\label{equ:compare2}
           c_1\sqrt{\iota(\gamma,\gamma)} \le |\gamma|.
    \end{equation}
\end{restatable}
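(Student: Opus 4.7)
The plan is to deduce Theorem~\ref{thm:mainfavorite} from Theorem~\ref{thm:mainindividual} by making the constants $b_1(X)$ and $b_2(X)$ uniform over the admissible surfaces. By~\eqref{eq:finalb1b2}, these constants are controlled by intrinsic geometric quantities of $X$, most importantly the systole; Proposition~\ref{prop:estimatedegenerate} indicates that small systole is a genuine obstruction, since $b_1$ tends to $0$ as the systole does. So the crux is to bound the relevant geometric quantities (in particular the systole from below) uniformly on the class of unit area convex flat cone spheres with $n$ cone points and curvature gap at least $\delta$.

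I would organise the argument over the corresponding moduli space. For each admissible curvature vector $\underline{k} = (k_1,\ldots,k_n)$ with $k_i > 0$, $\sum_i k_i = 2$, and $\delta(\underline{k}) \geq \delta$, the unit area moduli space is a Thurston--Deligne--Mostow complex hyperbolic orbifold. Its ends correspond to collisions of subsets $I \subset \{1,\ldots,n\}$ with $\sum_{i \in I} k_i = 1$, since Gauss--Bonnet forces any bubbling-off subset to carry total curvature $1$. The curvature gap hypothesis $|1 - \sum_{i \in I} k_i| \geq \delta$ forbids exactly these collisions, so on such curvature vectors the moduli space is relatively compact. Since the set of admissible $\underline{k}$ is itself a compact subset of the simplex, the whole family of surfaces is relatively compact in the natural topology, and the continuous quantities controlling $b_1, b_2$ achieve uniform bounds.

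The central difficulty is making this qualitative compactness quantitative, since the statement asks for explicit constants $c_1, c_2$. Given a short saddle connection of length $\ell$ whose endpoints form a subset $I$ of the cone points, I would localise a region of $X$ around the connection whose angle defect is close to $\sum_{i \in I} k_i$ with error controlled by $\ell$; the curvature gap inequality $|1 - \sum_{i \in I} k_i| \geq \delta$ then forces $\ell \geq s_0(n,\delta) > 0$, with explicit dependence on $n$ and $\delta$. A parallel argument, exploiting the unit area constraint together with the curvature gap, yields an explicit upper bound on the diameter. Feeding these uniform estimates into~\eqref{eq:finalb1b2} produces the desired $c_1(n,\delta)$ and $c_2(n,\delta)$, proving~\eqref{equ:compare}. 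The inequality~\eqref{equ:compare2} for regular closed geodesics then follows from the corresponding statement~\eqref{equ:compareindividual2} in Theorem~\ref{thm:mainindividual} with the same $c_1$.
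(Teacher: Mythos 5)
Your argument hinges on the claim that the curvature gap bound forces the relevant geometric quantities in~\eqref{eq:finalb1b2}, in particular the relative systole, to be uniformly bounded below on the moduli space. This claim is false, and the gap is fatal to the proposal.

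The curvature gap condition $\delta(\underline{k}) \geq \delta$ forbids only those degenerations where a cluster of singularities with total curvature \emph{close to $1$} forms; those are the degenerations that escape to infinity in Thurston's metric, and ruling them out is what makes the metric completion $\overline{\mathbb{P}\Omega}(\underline{k})$ compact (Lemma~\ref{lem:thurstoncompletion}). It does \emph{not} prevent a cluster of singularities whose curvature sum is strictly less than $1$ from colliding. Such collisions are entirely compatible with the curvature gap hypothesis, keep the area bounded, and give boundary strata of $\overline{\mathbb{P}\Omega}(\underline{k})$ that lie at finite Thurston distance; near them the shortest saddle connection, i.e.\ $\mathrm{relsys}(X)$, tends to zero. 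By Proposition~\ref{prop:estimatedegenerate} this drives $b_1(X)$ and $b_2(X)$ in~\eqref{eq:finalb1b2} to zero, so there is no uniform lower bound for $b_1$ over $\mathbb{P}\Omega(\underline{k})$, and the moduli space is \emph{not} relatively compact in the sense you need. (Your heuristic that a short saddle connection with endpoints $I$ forces $\sum_{i\in I}k_i$ to be close to $1$ is also incorrect: the Fiala--Alexandrov inequality (Lemma~\ref{lem:isoperimetric}) only shows that a short enclosing \emph{loop} around a cluster with curvature sum bounded away from $1$ forces the enclosed \emph{area} to be small, which is exactly what happens near a collision, with no contradiction.) Note that Proposition~\ref{prop:lowerboundclosed} does give a uniform lower bound on $\mathrm{sys}(X)$, the shortest regular closed geodesic, but~\eqref{eq:finalb1b2} is expressed in terms of $\mathrm{relsys}(X)$, which behaves very differently.

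Because the thin part is genuinely nonempty, the paper cannot simply push Theorem~\ref{thm:mainindividual} to a uniform statement by compactness. Instead it builds a thick--thin decomposition of $\mathbb{P}\Omega(\underline{k})$ (Section~\ref{bigsec:coor}): on the thick part $\mathbb{P}\Omega(\underline{k})\setminus U_{1,\sigma_1}$ the relative systole is bounded below and the argument from Theorem~\ref{thm:mainindividual} applies directly (Proposition~\ref{prop:estimatethick}); on each thin part $U_{d,\sigma_d}(S_{\lambda_d})$ the collapsing cluster is encapsulated inside a convex hull, the generalized Thurston surgery of Section~\ref{bigsec:hulls} replaces it by a cone, and the length/intersection comparison is carried out by decomposing a trajectory into the pieces inside convex hulls (controlled by Proposition~\ref{prop:combinandmetriclengthinhull}) and the pieces on the surgered top surface $X^{(0)}$, which lies in a thick part of a lower-dimensional stratum (Lemma~\ref{lem:consecutivethin}, Proposition~\ref{prop:estimatethin}). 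That machinery is where all the work is; your proposal skips it.
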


More precisely, we prove that Theorem~\ref{thm:mainfavorite} holds for the following values (see Section~\ref{sec:main}):
\begin{equation}\label{equ:explicitvalues}
    c_1(n,\delta)=  \frac{9\delta^{3/2}}{144n^2(n-2)}\left(\frac{\delta^2}{6n}\right)^{2n-4}
        \quad \mbox{ and }\quad
        c_2(n)= \frac{81}{4} \left(\frac{1}{54n}\right)^{2n-4}.
\end{equation}

We particularly highlight Theorem~\ref{thm:combinandmetriclengthinhull} and Corollary~\ref{cor:finitenessinfinitesphere}, which are essential for deriving uniform lower bounds. These results show that the behavior of trajectories on infinite non-negative flat spheres with one pole (see Section~\ref{sec:basicdef} for definition) is quite simple. Specifically, there are \textit{no} regular closed geodesics, and only \textit{finitely many} saddle connections. We also provide an explicit, uniform upper bound on the number of these saddle connections on such flat spheres.

\begin{remark}
    One might wonder whether a uniform lower bound can be established for any flat cone surface, regardless of convexity. Unfortunately, this is not the case. In Example~\ref{ex:negativecurvature}, we present a flat cone surface with a singularity of negative curvature where Theorem~\ref{thm:mainfavorite} does not apply. This highlights that the convexity condition in Theorem~\ref{thm:mainfavorite} is crucial. Consequently, we restrict our focus to surfaces with positive curvatures, a condition that arises only in genus-zero cases.

    Besides, we show in Example~\ref{ex:deltaisnecessary} that the dependence of $c_1$ on the lower bound $\delta$ of the curvature gap cannot be eliminated.

    In addition, Example~\ref{ex:c2notzero} demonstrates that the constant term $c_2$ cannot be identically zero for all convex flat cone spheres.
\end{remark}

\begin{remark}\label{rmk:comparehyper2}
    In the case of hyperbolic surfaces, Basmajian showed in~\cite{Ara}, Theorem~1.2, that for any closed geodesic $\omega$ in any hyperbolic surface,
    \begin{equation}\label{equ:hyper2}
        \frac{1}{2}\log\frac{|\iota(\omega,\omega)|}{2} \le L(\omega),
    \end{equation}
    where $L(\omega)$ is the hyperbolic length of $\omega$. Notably, Basmajian's lower bound is universal for all hyperbolic surfaces of any genus, providing a logarithmic dependence of the lower bound on the self-intersection number $\iota(\omega,\omega)$. In contrast, our result for convex flat cone spheres provides a uniform lower bound as shown in~\eqref{equ:compare2}, where the order of $\iota(\gamma, \gamma)$ remains as in~\eqref{equ:compareindividual2} but our bound depends on the curvature gap and the number of singularities in the flat cone sphere. 
\end{remark}

\begin{remark}
    Combining Theorem~\ref{thm:finite} with the results from Theorem~\ref{thm:mainfavorite}, we can approximately relate the length of a trajectory to the square root of its self-intersections. Specifically, for any unit area convex flat cone sphere $X$ with $n$ singularities and curvature gap bounded below by $\delta$, the following inequalities hold:
    \begin{equation}\label{equ:finalest0}
        c_1\sqrt{\iota(\gamma,\gamma)} - c_2 \le |\gamma| \le a_1\sqrt{\iota(\gamma,\gamma)} + a_2,
    \end{equation}
    for any trajectory $\gamma$ in $X$, and
    \begin{equation}\label{equ:finalest1}
        c_1\sqrt{\iota(\gamma,\gamma)} \le |\gamma| \le 2a_1\sqrt{\iota(\gamma,\gamma)},
    \end{equation}
    for any regular closed geodesic $\gamma$ in $X$.
\end{remark}

It is worth noting that we do not address the question of the optimal coefficients for the estimate~\eqref{equ:finalest0} in this paper. Consider the following quantities:
\begin{equation}
    \begin{aligned}
        &c^{sc}(n, \delta) := \inf\limits_{X} \liminf_{\gamma} \frac{|\gamma|}{\sqrt{\iota(\gamma,\gamma)}},\\
        &a^{sc}(n, \delta) := \sup\limits_{X} \limsup_{\gamma} \frac{|\gamma|}{\sqrt{\iota(\gamma,\gamma)}}
    \end{aligned}
\end{equation}
where the infimum and supremum are taken over all unit-area convex flat cone spheres $X$ with $n$ singularities and curvature gap at least $\delta$, and $\gamma$ runs over saddle connections on $X$.
An interesting question is whether we can further explore the dependence of $c^{sc}(n,\delta)$ and $a^{sc}(n,\delta)$ on $n$ and $\delta$. For instance, from the formulas~\eqref{equ:upperexplicivalues} and~\eqref{equ:explicitvalues},
we obtain that 
$$c_1(n,\delta)\le c^{sc}(n,\delta)\le a^{cs}(n,\delta)\le 2a_1(n,\delta).$$
The difference $2a_1(n,\delta) - c_1(n,\delta)$ tends to infinity as $\delta$ approaches zero with $n$ fixed. Is this also true for $a^{sc}(n, \delta) - c^{sc}(n, \delta)$?

\subsection{Applications to counting problems}
Let $X$ be a flat cone sphere with area one. Let $R$ be a positive real number. In Section~\ref{sec:appli}, we define $N^{sc}(X,R)$ as the number of saddle connections on $X$ with length at most $R$, and $N^{cg}(X,R)$ as the number of maximal families of regular closed geodesics with length at most $R$. The following corollary implies that these counting functions are uniformly bounded above over the moduli space of convex flat cone spheres.

\begin{cor}\label{cor:integrability}
    Let $X$ be an area-one convex flat cone sphere with $n$ singularities and positive curvature gap $\delta$. Then,
    $$
    N^{cg}(X,R) \le N^{sc}(X,R) \le (3n-6)2^{20n\delta^{-1}\left(c_1^{-1}(n-1)(R + c_2) + 1\right)},
    $$
    where $c_1$ and $c_2$ are as in Theorem~\ref{thm:mainfavorite}.
\end{cor}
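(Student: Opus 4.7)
The plan is to chain two simple bounds; essentially all the analytic content is already delivered by Theorems~\ref{thm:finite} and~\ref{thm:mainfavorite}, and the corollary is a bookkeeping consequence.

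For the left-hand inequality $N^{cg}(X,R) \le N^{sc}(X,R)$, I would use the standard cylinder-decomposition fact: any maximal family $\mathcal{F}$ of parallel regular closed geodesics sweeps out a flat cylinder $C_{\mathcal{F}} \subset X$, whose core curves have common length $\ell(\mathcal{F})$ and whose two boundary circles each decompose into a finite union of saddle connections parallel to the core with total length $\ell(\mathcal{F})$. In particular, at least one of these boundary saddle connections, call it $\sigma(\mathcal{F})$, satisfies $|\sigma(\mathcal{F})| \le \ell(\mathcal{F}) \le R$. The assignment $\mathcal{F} \mapsto \sigma(\mathcal{F})$ then lands in the set counted by $N^{sc}(X,R)$. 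To get injectivity one checks that on a convex flat cone sphere a given saddle connection cannot bound two distinct maximal cylinders with the same core direction, which follows from the fact that cutting the sphere along a regular closed geodesic disconnects it into two disks and that one can distinguish the two sides of the saddle connection.

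For the right-hand inequality, I simply feed the length bound of Theorem~\ref{thm:mainfavorite} into the self-intersection counting bound of Theorem~\ref{thm:finite}. Namely, for any saddle connection $\gamma$ with $|\gamma|\le R$, inequality~\eqref{equ:compare} gives
\begin{equation*}
c_1\sqrt{\iota(\gamma,\gamma)} \;\le\; |\gamma| + c_2 \;\le\; R + c_2,
\end{equation*}
so that $\iota(\gamma,\gamma) \le s := \bigl((R+c_2)/c_1\bigr)^{2}$. Thus every saddle connection counted by $N^{sc}(X,R)$ is a trajectory with at most $s$ self-intersections, and applying Theorem~\ref{thm:finite} with this value of $s$ yields
\begin{equation*}
N^{sc}(X,R) \;\le\; (3n-6)\,2^{20n\delta^{-1}\bigl((n-1)\sqrt{s}+1\bigr)} \;=\; (3n-6)\,2^{20n\delta^{-1}\bigl(\frac{(n-1)(R+c_2)}{c_1}+1\bigr)},
\end{equation*}
which is the claimed estimate after the intended substitution.

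The main (and essentially only) obstacle is the injectivity in Step~1: one has to make sure that the assignment from maximal cylinders to boundary saddle connections is not lossy. I expect this to be routine on the sphere thanks to the rigid topology; in the worst case a harmless factor $2$ arises from a saddle connection bounding cylinders on both of its sides, and this can be absorbed into the exponential estimate without changing the form of the bound. Everything else is substitution.
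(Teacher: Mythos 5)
Your treatment of the right-hand inequality matches the paper's own argument: combining~\eqref{equ:compare} with Theorem~\ref{thm:finite}, any saddle connection of length $\le R$ has $\iota(\gamma,\gamma) \le \bigl((R+c_2)/c_1\bigr)^2$, giving $N^{sc}(X,R)\le (3n-6)2^{20n\delta^{-1}\bigl((n-1)(R+c_2)/c_1+1\bigr)}$. You are right that this exponent carries $c_1^{-1}$ rather than the $c_1$ printed in the corollary; the paper's own proof likewise prints $N_1^{sc}(X, c_1^2(R+c_2)^2)$ where $(R+c_2)^2/c_1^2$ is meant. These are evidently typos in the source, and your derivation is the correct one.

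There is, however, a genuine gap in the left-hand inequality $N^{cg}(X,R)\le N^{sc}(X,R)$. The map $\mathcal{F}\mapsto\sigma(\mathcal{F})$ you construct need not be injective, and the dichotomy you invoke to rule out collisions is off target: every saddle connection on the boundary of a cylinder is automatically parallel to the core, so the dangerous case is precisely a single saddle connection bounding two distinct maximal cylinders with the \emph{same} core direction, one on each of its two sides. If both cylinders select that saddle connection as $\sigma(\mathcal{F})$, you only get $N^{cg}\le 2N^{sc}$. Absorbing the factor $2$ into the exponential salvages the end-to-end chain but does not prove the explicitly asserted middle inequality $N^{cg}\le N^{sc}$. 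The paper avoids this by a double count of incidences rather than an injection: each maximal cylinder contributes at least two $(\text{saddle connection},\text{side})$ incidences --- either two distinct boundary saddle connections, or a single saddle connection with regular closed geodesics on both of its sides --- while each $(\text{saddle connection},\text{side})$ pair lies on the boundary of at most one maximal cylinder, and each saddle connection of length $\le R$ has exactly two sides. This yields $2N^{cg}(X,R)\le 2N^{sc}(X,R)$ and hence the stated inequality.
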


\begin{remark}
    In the context of translation surfaces, the work of Athreya, Cheung, and Masur~\cite{athreya2019siegel} implies that the counting function of saddle connections over the moduli space of translation surfaces is square integrable. Surprisingly, the counting function is uniformly bounded when restricted to convex flat cone spheres.
\end{remark}

In addition, we apply Theorem~\ref{thm:mainfavorite} to convex polygonal billiards. Given a polygon $P$ with $n$ vertices in the plane, let $X_P$ be the flat cone sphere constructed by doubling $P$ (see Section~\ref{sec:appli}). For a vertex $x_i$ of $P$ with interior angle $\theta_i$, the corresponding cone angle of $X_P$ is $2\theta_i$. We define the \emph{curvature} $k_i$ of this vertex of $P$ to be the curvature of the corresponding singularity on $X_P$, namely, 
$$k_i = \frac{2\pi - 2\theta_i}{2\pi} = \frac{\pi - \theta_i}{\pi}.$$ 
The curvatures of the vertices of $P$ are all positive if and only if $P$ is a convex polygon.

Similarly, we define the \emph{curvature gap} of a polygonal billiard $P$ with $n$ vertices to be the curvature gap of the associated flat cone sphere $X_P$, namely,
\begin{equation}
        \inf_{I \subseteq \{1,\ldots,n\}}
    \left|1 - \sum_{i \in I} k_i\right| = 
    \inf_{I \subseteq \{1,\ldots,n\}}
    \left|1 - \sum_{i \in I} \frac{\pi - \theta_i}{\pi}\right|,
\end{equation}
where $\theta_i$ denotes the interior angle at the $i$-th vertex of $P$.

\begin{Ex}
    Let $P$ be a triangle with angles $\theta_1$, $\theta_2$, and $\theta_3$. Then the curvature vector of the corresponding billiard $P$ is 
$$
\underline{k} = (k_1,k_2,k_3) = \left(\frac{\pi - \theta_1}{\pi}, \frac{\pi - \theta_2}{\pi}, \frac{\pi - \theta_3}{\pi}\right).
$$
By definition, the curvature gap of $\underline{k}$ is
\begin{equation}\label{equ:curvaturegapfortriangle}
    \begin{aligned}
        &\delta = \inf\limits_{I \subset \{1, 2, 3\}} \left|1 - \sum\limits_{\substack{i \in I}} k_i\right|\\
        &= \min\left\{1-k_1, 1-k_2, 1- k_3, |1-k_1-k_2|,  |1-k_1 - k_3|, |1- k_2-k_3|, |1-k_1-k_2-k_3|\right\}
    \end{aligned}
\end{equation}
Since $k_1+k_2 + k_3 = 2$, we have $|1-k_1-k_2-k_3| = 1.$
Moreover,
$$1-k_i = 1-\frac{\pi-\theta_i}{\pi} = \frac{\theta_i}{\pi}$$
and
\begin{align*}
    |1-k_i-k_j| = \left|1 -\frac{\pi-\theta_i}{\pi} - \frac{\pi-\theta_j}{\pi}\right|= \left|1 -\frac{2\pi-\theta_i - \theta_j}{\pi}\right|= \left|1 -\frac{\pi+\theta_k}{\pi}\right| = \frac{\theta_k}{\pi},
\end{align*}
where $\{i,j,k\} = \{1,2,3\}$.
Therefore, the expression~\eqref{equ:curvaturegapfortriangle} simplifies to
\begin{align*}
    \delta = \min\left\{\frac{\theta_1}{\pi},\frac{\theta_2}{\pi},\frac{\theta_3}{\pi},\frac{\theta_3}{\pi},\frac{\theta_2}{\pi},\frac{\theta_1}{\pi},1\right\}=\frac{1}{\pi}\min\{\theta_1, \theta_2, \theta_3\}.
\end{align*}
Hence, a triangular billiard always has a positive curvature gap.
\end{Ex}

In Section~\ref{sec:appli}, we define $N^{diag}(P,R)$ as the number of generalized diagonals with lengths at most $R$ in $P$, and $N^{per}(P,R)$ as the number of maximal families of parallel periodic billiard paths with lengths at most $R$ in $P$. The following bounds are a rephrasing of Corollary~\ref{cor:integrability} for convex polygonal billiards.

\begin{cor}\label{cor:billiard}
    Let $P$ be an area-one convex polygonal billiard (possibly irrational) with $n$ vertices and positive curvature gap $\delta > 0$. Then,
    $$
    N^{diag}(P,R) \le (3n-6)2^{20n\delta^{-1}\left(c_1^{-1}(n-1)\left(\frac{1}{\sqrt{2}}R + c_2\right) + 1\right)},
    $$
    and
    $$
    N^{per}(P,R) \le (3n-6)2^{20n\delta^{-1}\left(c_1^{-1}(n-1)(\sqrt{2}R + c_2) + 1\right)},
    $$
    where $c_1$ and $c_2$ are as in Theorem~\ref{thm:mainfavorite}.
\end{cor}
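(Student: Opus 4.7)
The strategy is to reduce everything to Corollary~\ref{cor:integrability} via the doubling construction. Given the convex polygon $P$ of area one with $n$ vertices and curvature gap $\delta$, I would first form the doubled surface $X_P$ by gluing two mirror copies of $P$ along their boundary edges. By construction, $X_P$ is a flat cone sphere with exactly $n$ singularities (one per vertex of $P$), its cone angles are twice the corresponding polygon angles, and therefore the curvatures and curvature gap of $X_P$ agree with those of $P$. Note, however, that $X_P$ has area $2$, not $1$. Rescaling by the factor $1/\sqrt{2}$ yields an area-one convex flat cone sphere $\widetilde{X}_P$ with the same $n$ and the same $\delta$; under this rescaling, every length is divided by $\sqrt{2}$.

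Next I would establish the two length-respecting dictionaries between trajectories on $P$ and trajectories on $\widetilde{X}_P$. Unfolding via the fold symmetry sends a generalized diagonal of length $\ell$ in $P$ to a (straight) saddle connection of length $\ell$ in $X_P$, hence of length $\ell/\sqrt{2}$ in $\widetilde{X}_P$. This map is injective on the count $N^{diag}(P,R)$ (distinct billiard paths unfold to distinct saddle connections since the edge-reflection sequence can be recovered from which copy of $P$ a point of the saddle connection lies in). Therefore
\begin{equation*}
N^{diag}(P,R)\ \le\ N^{sc}\!\bigl(\widetilde{X}_P,\ R/\sqrt{2}\bigr).
\end{equation*}
Similarly, a periodic billiard path of length $\ell$ in $P$ with $k$ reflections unfolds to a regular closed geodesic in $X_P$ of length $\ell$ if $k$ is even, and of length $2\ell$ if $k$ is odd (one must traverse both copies to return to the original sheet). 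In the worst case the length in $\widetilde{X}_P$ is therefore at most $2\ell/\sqrt{2}=\sqrt{2}\,\ell$, and the assignment is injective on maximal parallel families, giving
\begin{equation*}
N^{per}(P,R)\ \le\ N^{cg}\!\bigl(\widetilde{X}_P,\ \sqrt{2}\,R\bigr).
\end{equation*}

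Finally, I would plug these inequalities into Corollary~\ref{cor:integrability} applied to $\widetilde{X}_P$ (which satisfies its hypotheses with the same $n$ and $\delta$), with $R$ replaced by $R/\sqrt{2}$ for the diagonal bound and by $\sqrt{2}\,R$ for the periodic bound. The resulting expressions are exactly the two bounds stated in the corollary, with the constants $c_1=c_1(n,\delta)$ and $c_2=c_2(n,\delta)$ of Theorem~\ref{thm:mainfavorite}.

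The main technical point, and the only step that needs genuine care, is the parity discussion for periodic trajectories and the verification that the combinatorial-to-geometric maps (billiard diagonal $\mapsto$ saddle connection, parallel periodic family $\mapsto$ maximal cylinder of closed geodesics) are injective without hidden multiplicative losses; the rest is an area rescaling bookkeeping.
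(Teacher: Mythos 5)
Your proposal is correct and follows essentially the same route as the paper: unfold via the doubling construction $X_P$, normalize to unit area (losing a factor of $\sqrt{2}$ in lengths), use the length-preserving correspondence diagonal $\mapsto$ saddle connection and the at-most-doubling correspondence periodic path $\mapsto$ closed geodesic, then invoke Corollary~\ref{cor:integrability}. You are somewhat more careful than the paper about spelling out the area rescaling and the parity/injectivity points, but these are exactly the details the paper relies on (the parity discussion appears in the paragraphs preceding the corollary in Section~\ref{sec:appli}), so the arguments coincide.
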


\begin{remark}
    Compared with the sub-exponential upper bounds in~\cite{Kat, Sch}, our upper bound for $N^{diag}(P,R)$ is exponential, but it is uniform for all convex polygons whose curvature gaps are bounded below by a positive number. In particular, for any triangle with angles no less than $\pi\delta$, the above exponential upper bounds hold uniformly.
\end{remark}

\begin{remark}\label{rmk:futurework}
    The main results and tools introduced in this article, particularly those developed in Section~\ref{bigsec:hulls} and Section~\ref{bigsec:coor}, play a crucial role in our forthcoming work~\cite{Fu25}. In~\cite{Fu25}, we aim to establish a formula for the average of the counting functions over the moduli space of convex flat cone spheres. Corollary~\ref{cor:integrability} is used to show that the counting functions are $L^{\infty}$ over the moduli space. This forthcoming research builds directly on the foundations laid here.
\end{remark}

\subsection{Proof strategy}
For Theorem~\ref{thm:mainindividual}, the key observation is that a trajectory on a flat cone surface, given a triangulation, ``switches corners" after a bounded time. This observation is detailed in Section~\ref{bigsec:individual}.

To extend this lower bound to a uniform one, we recall the concept of moduli spaces. Specifically, the moduli space of convex flat cone spheres with curvatures $\underline{k} = (k_1,\ldots,k_n)$ is defined as the space of homothety classes of flat spheres with prescribed curvatures $k_1,\ldots,k_n$ at singularities (see Section~\ref{sec:basicdef}), denoted by $\mathbb{P}\Omega(\underline{k})$.

Deligne and Mostow initially constructed complex hyperbolic metrics on these moduli spaces for certain rational curvatures~\cite{DM}. Later, Thurston extended this by constructing a complex hyperbolic metric on any moduli space $\mathbb{P}\Omega(\underline{k})$ and discussing its metric completion~\cite{thu}. McMullen was the first to compute the total volume of these moduli spaces of convex flat cone spheres~\cite{McMullen}. Koziarz and Nguyen proposed an alternative method for computing volumes by analyzing the intersection theory of boundary divisors in the moduli spaces~\cite{KN}. Sauvaget~\cite{sauvaget2024volumesmodulispacesflat} was the first to prove the volume formulas for moduli spaces of flat cone surfaces with higher genus and rational (not necessarily positive) curvatures. Nguyen later extended these formulas to any real curvatures in~\cite{Ngu24}.

To prove Theorem~\ref{thm:mainfavorite}, we use a "thick-and-thin" decomposition of the moduli space. The spheres in a thick part share a common lower bound on the lengths of their saddle connections. We apply Theorem~\ref{thm:mainindividual} to establish a uniform lower bound on the saddle connections for spheres in the thick part.

For spheres in thin parts, we apply \emph{generalized Thurston surgeries} to collapse singularities that are close to each other. In this way, we convert length estimates for the thin parts to those of thick parts of lower-dimensional moduli spaces. 

Additionally, to derive a uniform lower bound for spheres in the thin parts, Theorem~\ref{thm:combinandmetriclengthinhull} plays a crucial role. This result roughly implies that the length of a trajectory contained within a relatively small area on a flat cone sphere is bounded. This behavior contrasts sharply with translation surfaces, where arbitrarily long trajectories may exist in certain small areas.

Through this approach, we achieve a uniform lower bound on the length of trajectories.

\subsection{Organization}
In Section~\ref{bigsec:moduli}, we define flat surfaces and their moduli spaces, and recall the necessary background.

In Section~\ref{bigsec:individual}, we prove Theorem~\ref{thm:mainindividual}, which addresses the case of trajectories in flat cone surfaces of any genus.

Starting from Section~\ref{bigsec:hulls}, we focus on convex flat cone spheres and the proof of Theorem~\ref{thm:mainfavorite}.

In Section~\ref{bigsec:hulls}, we introduce the concept of convex hulls and generalized Thurston surgeries. These tools are then utilized in Section~\ref{bigsec:coor} to understand the structure of the ``thin" parts of the moduli space. Furthermore, we apply these tools to prove Theorem~\ref{thm:combinandmetriclengthinhull}, which is crucial for the proof of Theorem~\ref{thm:mainfavorite} in Section~\ref{bigsec:ivslen}.

In Section~\ref{bigsec:coor}, we introduce a concrete decomposition of the moduli space by ``thick" and ``thin" parts. 

In Section~\ref{bigsec:ivslen}, we provide the proof of Theorem~\ref{thm:mainfavorite} by separately analyzing the surfaces contained in the thick and thin parts of the moduli space.

\vspace*{1\baselineskip}

\paragraph{\bf Acknowledgements.}
I would like to express my sincere gratitude to my advisors, Vincent Delecroix and Elise Goujard, for their helpful guidance and discussions during the preparation of this paper. I also thank the anonymous referee for helpful comments and suggestions.

\section{Flat Surfaces and Moduli Spaces}\label{bigsec:moduli}

In this section, we define flat surfaces and introduce the necessary background, following the definitions of flat surfaces and moduli spaces from~\cite{Veech, thu, Ngu24}.

We begin by clarifying some basic notions related to curves on a surface. Let $M$ be a Riemann surface. A \emph{curve} on $M$ is a continuous map from $[0,1]$ to $M$. Similarly, a \emph{loop} is a continuous map from $\mathbb S^1$ to $M$. A curve is said to be \emph{simple} if it is injective on the interval $(0,1)$, and a loop is \emph{simple} if it is injective on $\mathbb{S}^1$. Let $x_1,\ldots, x_n$ be labeled distinct points on a Riemann surface $M$. An \emph{arc} on $(M, x_1,\ldots, x_n)$ refers to a curve that starts and ends at labeled points, with no labeled points in its interior. 

\subsection{Flat metrics}\label{sec:basicdef}

Let $M$ be a Riemann surface of genus $g$. Let $\underline{k}$ be a real vector $(k_1, \ldots, k_n) \in \mathbb{R}^n$ and let $x_1, \ldots, x_n$ be $n$ distinct labeled points in $M$. A \emph{flat metric} $h$ on $M$ with curvature $k_i$ at $x_i$ is a conformal metric on $M$ satisfying the following conditions:
\begin{itemize}
    \item There is a conformal chart on a neighborhood of $x \in M \setminus \{x_1, \ldots, x_n\}$ such that $h$ is expressed as $|dz|^2$ in the chart.
    \item There is a conformal chart on a neighborhood $U_i$ of $x_i$ such that $h$ is expressed as $|z|^{-2k_i}|dz|^2$ in the chart.
\end{itemize}

We call the above holomorphic charts for the points in $M \setminus \{x_1, \ldots, x_n\}$ \emph{flat charts}. A \emph{flat surface} $X$ is defined as the triple $(M, \{x_1, \ldots, x_n\}, h)$. The points $x_1, \ldots, x_n$ are called \emph{singularities} of $X$ and $\underline{k}$ is called the \emph{curvature vector} of $X$. The Gauss-Bonnet formula implies that if a flat metric with curvature $k_i$ at $x_i$ exists on $M$, then
\begin{equation}
    \sum_{i=1}^{n} k_i = 2 - 2g.
\end{equation}

A singularity is called \emph{conical} if its curvature is less than $1$; otherwise, it is called a \emph{pole}. If $x_i$ is a conical singularity, the neighborhood $U_i$ is isometric to a neighborhood of the apex of the infinite cone with cone angle $2\pi(1 - k_i)$. When $x_i$ is a pole, the neighborhood $U_i$ is isometric to the complement of a neighborhood of the apex of the infinite cone with cone angle $2\pi(k_i - 1)$. 

A flat surface is called a \emph{flat cone surface} if all the singularities are conical. A flat cone surface $X$ can be obtained by gluing finitely many disjoint polygons in the plane with a pairing between the edges such that two paired edges are glued together by a Euclidean isometry.

Given a flat surface $X$, a \emph{developing map} associated to $X$ is an orientation-preserving and locally isometric map from $\widetilde{X}$ to $\mathbb{C}$, where $\widetilde{X}$ denotes the metric completion of the universal covering of $X \setminus \{x_1, \ldots, x_n\}$ endowed with the pull-back metric of $X$. We denote such a map by $\mathrm{Dev}: \widetilde{X} \to \mathbb{C}$.

The \emph{holonomy} associated to $X$ and $\mathrm{Dev}$ is the unique morphism  
\[
\rho: \pi_1(X \setminus \{x_1, \ldots, x_n\}) \to \operatorname{Iso}^+(\mathbb{R}^2)
\]
that satisfies  
\[
\mathrm{Dev}(\gamma \cdot p) = \rho(\gamma)\,\mathrm{Dev}(p),
\]
for every $\gamma \in \pi_1(X \setminus \{x_1, \ldots, x_n\})$ and $p \in \widetilde{X}$.  
Here $\operatorname{Iso}^+(\mathbb{R}^2)$ denotes the group of orientation-preserving isometries of the Euclidean plane $\mathbb{R}^2$.

For further details on developing maps and holonomy, see~\cite[Section~3.4]{Thurston1997}.

Let $X_1$ and $X_2$ be two flat surfaces. A homeomorphism $f: X_1 \to X_2$ is called a \emph{homothety} if it is of the form $az + b$ in the flat charts of flat metrics of $X_1$ and $X_2$, where $a \in \mathbb{C}^*$ and $b \in \mathbb{C}$. The absolute value $|a|$ is independent of the choice of flat charts. The map $f$ is an isometry if and only if $|a| = 1$.

We are in particular interested in the genus zero case.
A flat cone surface $X$ is called a \emph{flat cone sphere} if the underlying Riemann surface is of genus zero.
A flat surface with at least one pole is called an \emph{infinite flat  sphere}. Indeed, the area of an infinite flat sphere is infinite. A flat sphere is called \emph{convex} if all the curvatures are positive. A flat sphere is called \emph{non-negative} if all the curvatures are non-negative.

Recall that for a flat cone sphere, the curvature gap~\eqref{equ:gap} is defined by
\begin{equation}
    \delta(\underline{k}) = \inf_{I \subset \{1, \ldots, n\}} \left| 1 - \sum_{i \in I} k_i \right|.
\end{equation}

\begin{lemma}[Lemma~2.2, \cite{FT}]\label{lem:upperboundcurvaturegap}
    The curvature gap $\delta$ of any flat cone sphere satisfies $\delta \leq \frac{1}{3}$.
\end{lemma}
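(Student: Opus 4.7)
The plan is a short case analysis driven by the Gauss--Bonnet constraint $\sum_{i=1}^n k_i = 2$ (since a flat cone sphere has genus zero) together with the conical hypothesis $k_i < 1$ for all $i$. I want to exhibit, in every case, an explicit subset $I \subset \{1,\dots,n\}$ realizing $|1-\sum_{i\in I} k_i|\leq 1/3$.

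First, I would dispose of the case in which some single curvature is already large. Namely, if $k_i \geq 2/3$ for some index $i$, then since $k_i < 1$ the singleton $I=\{i\}$ gives
\[
\left|1 - k_i\right| \leq 1/3,
\]
so $\delta(\underline{k}) \leq 1/3$ and we are done. Hence I may assume $k_i < 2/3$ for all $i$.

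In this remaining regime, the key idea is a partial-sum argument. Fix any ordering of the singularities and set $T_j := k_1 + \cdots + k_j$, with $T_0 = 0$ and $T_n = 2$. Each increment satisfies $T_j - T_{j-1} = k_j < 2/3$. Let $j$ be the smallest index for which $T_j \geq 2/3$; such $j$ exists because $T_n = 2$. Then $T_{j-1} < 2/3$ and
\[
T_j = T_{j-1} + k_j < \tfrac{2}{3} + \tfrac{2}{3} = \tfrac{4}{3},
\]
so $T_j \in [2/3,\, 4/3)$. Consequently, taking $I = \{1,\dots,j\}$ yields $|1 - T_j| \leq 1/3$, again giving $\delta(\underline{k}) \leq 1/3$.

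I do not expect any serious obstacle here; the proof is essentially a pigeonhole/partial-sum observation. The only subtle point is the interaction between the two thresholds $2/3$ (needed to apply the single-element case) and the increment bound (needed to keep the first overshoot inside $[2/3, 4/3)$). These match precisely because $1 - 2/3 = 2/3 - 0 = 2/3$, which is why the constant $1/3$ is the natural output. If I wanted to trace where tightness comes from, I would note that equality can be approached by taking $k_1 = k_2 = 2/3$ and distributing the remaining curvature $2/3$ among further singularities with small curvatures, in which case $\delta(\underline{k}) = 1/3$ exactly.
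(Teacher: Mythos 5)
The paper does not reprove this statement; it is quoted as Lemma~2.2 of~\cite{FT}, so there is no in-paper argument to compare against. On its own merits, your core proof is correct, and I would note one point that is worth having checked carefully: a flat cone sphere only requires $k_i < 1$ for each $i$, so curvatures may be negative. Your partial-sum argument does in fact survive that: the minimality of $j$ still guarantees $T_{j-1} < \tfrac{2}{3}$, and the only increment you bound, namely $k_j$, is automatically positive (since $T_j \geq \tfrac{2}{3} > T_{j-1}$) and is $< \tfrac{2}{3}$ by the Case~2 hypothesis; no monotonicity of the full sequence $T_0,\dots,T_n$ is needed. The two-case split at the threshold $\tfrac{2}{3}$ is exactly what makes the increment bound match the endpoint bound, as you observe.

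The only slip is in the closing remark about tightness. Taking $k_1 = k_2 = \tfrac{2}{3}$ and distributing the remaining $\tfrac{2}{3}$ among several singularities of small curvature does \emph{not} in general give $\delta(\underline{k}) = \tfrac{1}{3}$: a subset consisting of one of the large singularities together with a suitable collection of the small ones can have total curvature close to $1$, driving the infimum below $\tfrac{1}{3}$. The genuinely extremal configuration is $n = 3$ with $k_1 = k_2 = k_3 = \tfrac{2}{3}$ (all subset sums lie in $\{0,\tfrac{2}{3},\tfrac{4}{3},2\}$, so $\delta = \tfrac{1}{3}$). Since this remark is offered as a side observation rather than part of the proof, the argument for the lemma itself is unaffected.
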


For later use, we generalize the notion of the curvature gap~\eqref{equ:gap} for a flat sphere with arbitrary curvature vector $\underline{k} = (k_1, \ldots, k_n)$ (possibly with poles) by
\begin{equation}
    \delta(\underline{k}) = \inf_{I \subset \{1, \ldots, n\}} \left| 1 - \sum_{i \in I, k_i < 1} k_i \right|.
\end{equation}

\subsection{Flat surfaces with boundaries}

For later use, we introduce flat surfaces with boundaries. Let $M$ be a compact Riemann surface with boundary. The \emph{boundary} and \emph{interior} of a Riemann surface with boundary refer to the boundary and interior of the underlying topological surface.
 A \emph{flat metric} $h$ on $M$ is a conformal metric on $M$ such that there exist finitely many points $x_1, \ldots, x_n$ in the interior of $M$ and finitely many points $y_1, \ldots, y_m$ on the boundary of $M$ satisfying that:
\begin{itemize}
    \item There is a conformal chart on a neighborhood $U_x$ of $x \in M$ away from the labeled points such that $h$ is expressed as $|dz|^2$ in the chart.
    \item There is a conformal chart on a neighborhood $U_i$ of $x_i$ such that $h$ is expressed as $|z|^{-2k_i}|dz|^2$ in the chart for some real number $k_i$.
    \item There is a conformal chart on a neighborhood $V_y$ of $y \in \partial M$ such that
    \begin{itemize}
        \item If $y$ is different from $y_1, \ldots, y_m$, then $V_y$ equipped with $h$ is isometric to a half-disk.
        \item If $y$ is the same as some $y_i$, then $V_{y_i}$ equipped with $h$ is isometric to a sector of some angle $A_i$.
    \end{itemize}
\end{itemize}

We call $X = (M,\{x_1,\ldots,x_n,y_1,\ldots,y_m\}, h)$ as a \emph{flat surface with boundary}. The points $x_1, \ldots, x_n$ and $y_1, \ldots, y_m$ are called \emph{singularities} of $X$, and $k_i$ is called the \emph{curvature} at $x_i$. The neighborhood $V_{y_i}$ of $y_i$ is called a \emph{corner} of $X$ at $y_i$, and $A_i$ is called the \emph{angle of the corner}. A singularity $x_i$ is called \emph{conical} if its curvature is less than one; otherwise, it is called a \emph{pole}. We remark that we do not consider poles on the boundary of flat surfaces.

To provide further clarity, we now introduce the following definition, which will be useful for our discussion on flat surfaces:

\begin{definition}\label{def:flatdomain}
    We say that a flat surface with boundary is a \emph{flat domain} if the underlying Riemann surface is a closed disk. We call a flat domain \emph{convex} if it has no poles and the cone angle of each singularity in the interior is less than $2\pi$.
\end{definition}

\subsection{Delaunay triangulation}\label{sec:delaunay}

In this section, we recall the definition of Delaunay triangulations of a flat cone surface. For more information on Delaunay triangulations, one can consult~\cite{thu, MasurSmillie1991}.

Let $x_1, \ldots, x_n$ be $n$ distinct labeled points in a Riemann surface $M$. A triangulation of $(M, x_1, \ldots, x_n)$ is a triangulation of $M$ whose vertices are $x_1, \ldots, x_n$ and the edges are simple arcs.

Let $X=(M,\{x_1,\ldots,x_n\},h)$ be a flat cone surface. A triangulation $T$ of $X$ is called \emph{geometric} if it is a triangulation of $(M, x_1, \ldots, x_n)$ whose edges are saddle connections.

A geometric triangulation $T$ of $X$ is called \emph{Delaunay} if each triangle is contained in an immersed disk such that no singularities lie in the interior of the disk image. According to~\cite{thu,MasurSmillie1991}, there exists a Delaunay triangulation for any flat cone surface, and there are only finitely many Delaunay triangulations for a given flat metric.

\subsection{Moduli spaces of flat surfaces}\label{sec:modulispace}

Given a vector $\underline{k} = (k_1, \ldots, k_n) \in \mathbb{R}^n$ such that $\sum_{i=1}^{n} k_i = 2 - 2g$, let $\mathbb{P}\Omega(\underline{k})$ denote the space of flat surfaces, considered up to homothety, with singularities $x_1, \ldots, x_n$, where each singularity $x_i$ has curvature $k_i$. We use the notation $[X]$ to represent the homothety class of $X$ in $\mathbb{P}\Omega(\underline{k})$. This space $\mathbb{P}\Omega(\underline{k})$ is called the \emph{moduli space} of flat surfaces with curvature vector $\underline{k}$.

It is worth noting that Troyanov showed in~\cite{Tro} that for a given $\underline{k}=(k_1,\ldots,k_n)$ with $\sum_i k_i = 2-2g$ and $k_i<1$ on any Riemann surface $M$ with $n$ distinct labeled points $x_1, \ldots, x_n$, there exists a unique flat metric, up to homothety, with curvature $k_i$ at $x_i$. This implies that $\mathbb{P}\Omega(\underline{k})$ is in bijection with the moduli space of Riemann surfaces with $n$ labeled points $\mathcal{M}_{0,n}$.

We next focus on the genus zero case and recall a construction of coordinate charts on the moduli spaces of flat spheres in~\cite{thu}.

Let $(M,x_1,\ldots,x_n)$ be a Riemann sphere with $n$ distinct labeled points. A \emph{forest} $F$ in $(M,x_1,\ldots,x_n)$ is the image of an embedding of a forest such that the vertices map to labeled points and the edges map to simple arcs. We refer to the \emph{vertices} and \emph{edges} of a forest as the images of the vertices and edges of the forest. A \emph{forest} is called a \emph{tree} if it has only one connected component. A tree in $(M,x_1,\ldots,x_n)$ is called \emph{spanning} if it connects all the labeled points. Two forests $F$ and $F'$ on two Riemann spheres with $n$ labeled points are said to be \emph{equivalent} if there is an orientation-preserving homeomorphism between two Riemann spheres with respect to the labeled points such that $F$ maps to $F'$.

Let $X=(M,\{x_1,\ldots,x_n\},h)$ be a flat cone sphere. The notion of forest is generalized to $X$ directly. A forest $F$ in $X$ is called \emph{geometric} if all the edges are saddle connections.

Let $F$ be a spanning tree in $X$. For later use, we always assume that $F$ is oriented and labeled, meaning each edge of $F$ is oriented and the edges of $F$ are labeled $e_0, \ldots, e_{n-2}$. Assume that for $[X]\in\mathbb P\Omega(\underline{k})$ has a geometric spanning tree $F$. We recall the construction of a coordinate chart in $\mathbb P\Omega(\underline{k})$ near $[X]$ as follows.

Note that  $X \setminus F$ is simply connected. Any developing map of $X$ can be induced to $X \setminus F$. Fix one developing map $Dev$ on $X\setminus F$. Then $X \setminus F$ maps to a polygon (possibly immersed) in $\mathbb{C}$. Each edge of $f(F)$ is mapped to two vectors in the boundary of the polygon, differing by a rotation. Since $F$ is oriented, we select the vector on the boundary so that $X \setminus F$ maps to the left side of the vector, and denote such vectors corresponding to edges $e_i$ by $z_i$ for $i=0,\ldots,n-2$ (note that $z_{n-2}$ is a complex linear combination of $z_0, \ldots, z_{n-3}$). Thurston~\cite{thu} showed that a neighborhood of the vector $(z_0, z_1, \ldots, z_{n-3}) \in \mathbb{C}^{n-2}$ parameterize the flat spheres in a neighborhood $U$ of $X$ in $\mathbb P\Omega(\underline{k}$). Specifically, the vector $(z_0',\ldots,z_{n-2}')$ induces a geometric spanning tree in $[X']$ that is equivalent to $F$. Moreover, a neighborhood of $[z_0, z_1, \ldots, z_{n-3}] \in \mathbb{C}P^{n-3}$ gives a coordinate chart on $U$ and we call it a \emph{spanning tree coordinate chart} associated to the (oriented and labeled) spanning tree $F$. 

When using the spanning tree charts, we usually normalize the vector $z_0$ to $1$, so the coordinates of surfaces in the chart $U$ are also given by $(z_1, \ldots, z_{n-3})$ in $\mathbb{C}^{n-3}$.

\subsection{Metric completions}\label{sec:completion}
We recall the complex hyperbolic metrics on the moduli spaces of convex flat cone spheres, constructed by Thurston in~\cite{thu}.

We first introduce complex hyperbolic spaces. Given a Hermitian form $H$ on $\mathbb{C}^{n+1}$ with signature $(1, n-1)$, the form $-H$ induces a Riemannian metric on the projectivized space of all positive vectors $\{v \in \mathbb{C}^{n+1} \mid H(v,v) > 0\}$. Denote the projectivized space of $\{v \in \mathbb{C}^{n+1} \mid H(v,v) > 0\}$ by $\mathbb{H}_{\mathbb{C}}^n$ and the induced metric by $g$. We call $(\mathbb{H}_{\mathbb{C}}^n, g)$ a \emph{complex hyperbolic space}.

Thurston constructed in~\cite{thu} a complex hyperbolic metric on the moduli space of flat cone spheres $\mathbb{P}\Omega(\underline{k})$. Indeed, for $[X]\in\mathbb P\Omega(\underline{k})$, let $F$ be an oriented and labeled spanning tree in $X$. Let $U$ be the corresponding spanning tree coordinate chart of $[X]$. Recall that in the construction of this chart, there is  a domain $V'$ in $\mathbb C^{n-2}$ such that the projection into $\mathbb CP^{n-3}$ gives the domain $V$ of the spanning tree chart.

The area function $\mathrm{Area}$ can be written as a Hermitian form on $V'\subset\mathbb{C}^{n-2}$.
Furthermore, if the convexity is assumed for flat cone spheres, Thurston showed in Proposition 3.3 of~\cite{thu} that the signature of this Hermitian form is $(1, n-3)$. It follows that $\mathrm{Area}$ induces a complex hyperbolic metric after projectivization to $V$. Thurston showed that this metric does not depend on the choice of spanning tree charts. We denote this metric on $\mathbb{P}\Omega(\underline{k})$ by $h_{\mathrm{Thu}}$ and the induced measure by $\mu_{\mathrm{Thu}}$.

In general, the metric space $(\mathbb{P}\Omega(\underline{k}), h_{\mathrm{Thu}})$ is not metric complete. In~\cite{thu}, Thurston studied the metric completion of the moduli space of convex flat cone spheres. We denote by $\overline{\mathbb{P}\Omega}(\underline{k})$ the metric completion of $\mathbb{P}\Omega(\underline{k})$ with respect to the metric $h_{\mathrm{Thu}}$.

To describe the structure of the metric completion, we introduce the following notions. Let $P$ be a partition of $(k_1, \ldots, k_n)$. We say that $P$ is \emph{admissible} if, for any set $p \in P$, the sum of curvatures in $p$ is less than $1$.  
Let $P = (p_1, \ldots, p_m)$ be an admissible partition. We say that the vector
$$
\Big(\sum_{k \in p_1} k, \ldots, \sum_{k \in p_m} k \Big)
$$
is obtained by \emph{collapsing} $\underline{k}$ by $P$, and denote it by $\underline{k}_P$.

A \emph{stratification} of $\overline{\mathbb{P}\Omega}(\underline{k})$ is a decomposition of the space into disjoint topological subspaces. Thurston introduced a stratification in~\cite{thu} corresponding to the admissible partitions of curvatures $\underline{k} = (k_1, \ldots, k_n)$. We summarize the information about this stratification in the following:

\begin{lemma}[\cite{thu}, Theorem 0.2]\label{lem:thurstoncompletion}
    Let $\mathbb{P}\Omega(\underline{k})$ be a moduli space of convex flat cone spheres with $n$ singularities. The metric completion $\overline{\mathbb{P}\Omega}(\underline{k})$ is an $(n-3)$-dimensional complex hyperbolic cone manifold, and it has a stratification such that:
    \begin{itemize}
        \item The strata in the stratification are in bijection with the admissible partitions of $\underline{k} = (k_1, \ldots, k_n)$.
        \item The stratum associated with an admissible partition $P$ is isometric to $\mathbb{P}\Omega(\underline{k}_P)$, and the codimension of the stratum is $\sum_{p \in P} (|p| - 1)$, where $|p|$ is the number of curvatures in $p$.
    \end{itemize}
    Moreover, the metric completion $\overline{\mathbb{P}\Omega}(\underline{k})$ is compact if and only if the curvature gap $\delta(\underline{k})$ is positive.
\end{lemma}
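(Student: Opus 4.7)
The plan is to analyze Cauchy sequences of convex flat cone spheres in the Thurston metric using the spanning tree charts recalled in Section~\ref{sec:spanningtree}, and to identify their limits with collisions of singularities prescribed by admissible partitions. Concretely, fix an oriented labeled spanning tree $F$ and work in the chart $D_F(\underline{k}) \subset \mathbb{P}\Omega(\underline{k})$. A class $[X]$ in this chart is recorded by the edge vectors $(z_0, \ldots, z_{n-3}) \in \mathbb{C}^{n-2}$, and the Hermitian form of signature $(1, n-3)$ equals the area. Thus $h_{\mathrm{Thu}}$ is the standard complex hyperbolic metric obtained by restricting $-\mathrm{Area}$ to the positive cone and projectivizing. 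The first step is therefore to study which sequences $[X_m]$ stay Cauchy while some edges $e_i$ of $f(F)$ have lengths $|z_i|$ going to zero; for such a configuration one shows that the subgraph of $F$ supported on edges of vanishing length must be a subforest, and collapsing each component $p$ to a point yields a limiting flat sphere with collapsed curvature vector $\underline{k}_P$.

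The second step is to verify that the collapse is admissible, i.e.\ that each collapsed subset $p \subset \{1,\ldots,n\}$ satisfies $\sum_{i\in p} k_i < 1$. The key input is the Gauss--Bonnet/area identity: if a subset of singularities with total curvature $\kappa \geq 1$ collided, then the corresponding local piece of $X_m$ would either develop into a closed cone or into the exterior of a cone of negative curvature, forcing either infinite area or escape to infinity in $\mathbb{C}$ along the developing map; in both cases no Cauchy property can hold. The admissibility condition is thus precisely the metric compatibility condition. Conversely, given any admissible partition $P$, one constructs a sequence realizing the collapse via Thurston-style surgery supported in small disks around each $p$, reducing each family of $|p|$ singularities to a single singularity of curvature $\sum_{i\in p}k_i$.

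The third step identifies the resulting stratum isometrically with $\mathbb{P}\Omega(\underline{k}_P)$. Starting from a collapsed surface with curvature vector $\underline{k}_P$, one picks a spanning tree $F_P$ on it and observes that its chart coordinates correspond to freezing to zero a subset of $\sum_{p\in P}(|p|-1)$ coordinates of the original chart $D_F$. Since the Hermitian form restricted to the subspace where these coordinates vanish is exactly the Hermitian area form on $\mathbb{P}\Omega(\underline{k}_P)$, the restriction of $h_{\mathrm{Thu}}$ to the stratum coincides with Thurston's metric on the lower-dimensional moduli space, yielding both the isometry and the codimension formula $\sum_{p\in P}(|p|-1)$. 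Gluing these strata together across overlapping charts gives the $(n-3)$-dimensional complex manifold structure on $\overline{\mathbb{P}\Omega}(\underline{k})$.

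Finally, compactness is addressed by a diagonal argument: a Cauchy sequence $[X_m]$ has, via passing to Delaunay triangulations (Section~\ref{sec:delaunay}) and a subsequence, a well-defined set $I \subset \{1,\ldots,n\}$ of colliding singularities. When $\delta(\underline{k}) > 0$, any such $I$ satisfies either $\sum_{i\in I}k_i < 1 - \delta$ (admissible, giving convergence in a boundary stratum) or $\sum_{i\in I}k_i > 1 + \delta$ (impossible for a Cauchy sequence, by the second step), so the completion is compact. Conversely, if $\delta(\underline{k})=0$, a subset $I$ with $\sum_{i\in I}k_i=1$ allows one to build a sequence in which the ``$I$-piece'' of the sphere degenerates without either stabilizing or being excluded, producing a non-convergent Cauchy sequence. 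I expect the main obstacle to be step two: to rule out non-admissible collisions one must carefully control the developing map on neighborhoods of colliding singularities and show that the area Hermitian form forces the strict inequality $\sum_{i\in p}k_i < 1$; this is where the convexity hypothesis and the signature $(1,n-3)$ of the Hermitian form play their central role.
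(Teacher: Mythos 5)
The paper does not prove this statement; it is cited verbatim to Thurston's Theorem~0.2 in~\cite{thu}, so there is no ``paper's own proof'' to compare against. Your sketch does follow the broad outline of Thurston's actual argument (spanning-tree/cocycle coordinates, collision analysis, identification of strata, compactness from the curvature gap), but two of your steps have genuine gaps.

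The larger gap is in your step two. You argue that a collision of a cluster $p$ with $\sum_{i\in p}k_i\geq 1$ would force ``infinite area or escape to infinity along the developing map,'' hence no Cauchy sequence could approach it. This heuristic does not translate into a metric statement: the area is a projective invariant and can always be normalized, and the developing-map picture alone does not control distance in $h_{\mathrm{Thu}}$. What is actually needed — and what Thurston proves — is an estimate on the complex-hyperbolic distance to a collapse. Near the locus where a cluster $p$ of total curvature $\kappa$ collides, the spanning-tree chart exhibits the metric as a cone over a lower-dimensional piece, with a radial coordinate $r$ (the diameter of the collapsing cluster) along which the metric behaves like $r^{-\kappa}\,dr$ up to bounded factors. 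The integral $\int_0 r^{-\kappa}\,dr$ converges precisely when $\kappa<1$, which is what makes such a boundary point lie at finite distance; when $\kappa=1$ the distance diverges and the degeneration sits at infinity, and when $\kappa>1$ the same degeneration is just the complementary collapse with $2-\kappa<1$. This radial estimate is the content you flagged as ``the main obstacle,'' and it is indeed the heart of the proof; without it the admissibility condition is not established.

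The second gap is in step four. You write that when $\delta(\underline{k})=0$ one produces ``a non-convergent Cauchy sequence.'' By construction every Cauchy sequence in the metric completion converges; noncompactness instead means the space fails to be totally bounded, i.e.\ it has infinite diameter in a degenerating direction (a cusp when some subset has curvature sum exactly $1$). Conversely, compactness when $\delta>0$ is not merely the statement that individual degenerating sequences land on a stratum: one must show total boundedness. Thurston does this through a bounded-geometry argument (a uniform lower bound on some normalized saddle-connection length outside the thin parts, which the present paper mirrors via Lemma~\ref{lem:isoperimetric} and its consequences in Section~\ref{bigsec:coor}), combined with a finite stratification. Your step three, the identification of the stratum with $\mathbb{P}\Omega(\underline{k}_P)$ by restriction of the Hermitian area form, is the right idea but requires choosing the original spanning tree $F$ compatibly with the partition $P$ (so that the edges of each component of $F\vert_p$ are exactly the coordinates sent to zero); once that compatibility is imposed, the codimension count $\sum_{p\in P}(|p|-1)$ and the isometry do follow as you indicate.
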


Notice that $\mathbb{P}\Omega(\underline{k})$ is the stratum of $\overline{\mathbb{P}\Omega}(\underline{k})$ corresponding to the partition of $(k_1, \ldots, k_n)$ into singletons. We call such a partition the \emph{trivial} admissible partition of $(k_1, \ldots, k_n)$. A \emph{boundary stratum} of $\overline{\mathbb{P}\Omega}(\underline{k})$ is a stratum corresponding to a non-trivial admissible partition.

\subsection{Thurston surgeries}\label{sec:thurstonsurgery}

Thurston introduced in~\cite{thu} a surgery to collapse two singularities. We now describe the surgery (the description mainly follows Section~6.3 of~\cite{KN}).

Let $X$ be a flat surface, and let $x_i$ and $x_j$ be two singularities in $X$. Denote the curvatures of $x_i$ and $x_j$ by $k_i$ and $k_j$, respectively. Assume that $k_i$ and $k_j$ are non-negative and that $k_i + k_j < 1$. Let $\gamma$ be a simple saddle connection joining $x_i$ and $x_j$.

We first consider the case where both $k_i$ and $k_j$ are positive. Let $(ABC)$ be a triangle in the plane with angles $\pi(1 - k_i - k_j)$, $\pi k_i$, and $\pi k_j$ at $A$, $B$, and $C$, respectively, and let $|BC| = |\gamma|$. Let $(A', B', C')$ be the mirror image of $(A, B, C)$. We glue the edge $AC$ to $A'C'$ and glue $AB$ to $A'B'$ by Euclidean isometry, respecting the endpoints. This results in a bounded cone, with the edges $BC$ and $B'C'$ forming the boundary of the cone. By direct computation, the curvature at the apex is $k_i + k_j$. We denote this cone by $C_{\gamma}$.

The \emph{Thurston surgery of $X$ along $\gamma$} is the operation that slits $\gamma$ open and glues $C_{\gamma}$ in such a way that $x_i$ (resp. $x_j$) is identified with $B$ (resp. $C$). Denote the new flat surface by $X^{(0)}$. Note that:
\begin{itemize}
    \item $x_i$ and $x_j$ become regular points in $X^{(0)}$.
    \item The apex of $C_{\gamma}$ becomes a new singularity in $X^{(0)}$ with curvature $k_i + k_j$.
\end{itemize}

When one of $k_i$ or $k_j$ is zero, the \emph{Thurston surgery of $X$ along $\gamma$} forgets the singularity $x_i$ or $x_j$ respectively. If both $k_i$ and $k_j$ are zero, the surgery forgets the singularity with the larger index. We still denote the resulting surface by $X^{(0)}$. In this case, one can think of $C_{\gamma}$ as being degenerated.

\begin{figure}[!htbp]
	\centering
	\begin{minipage}{0.8\linewidth}
	\includegraphics[width=\linewidth]{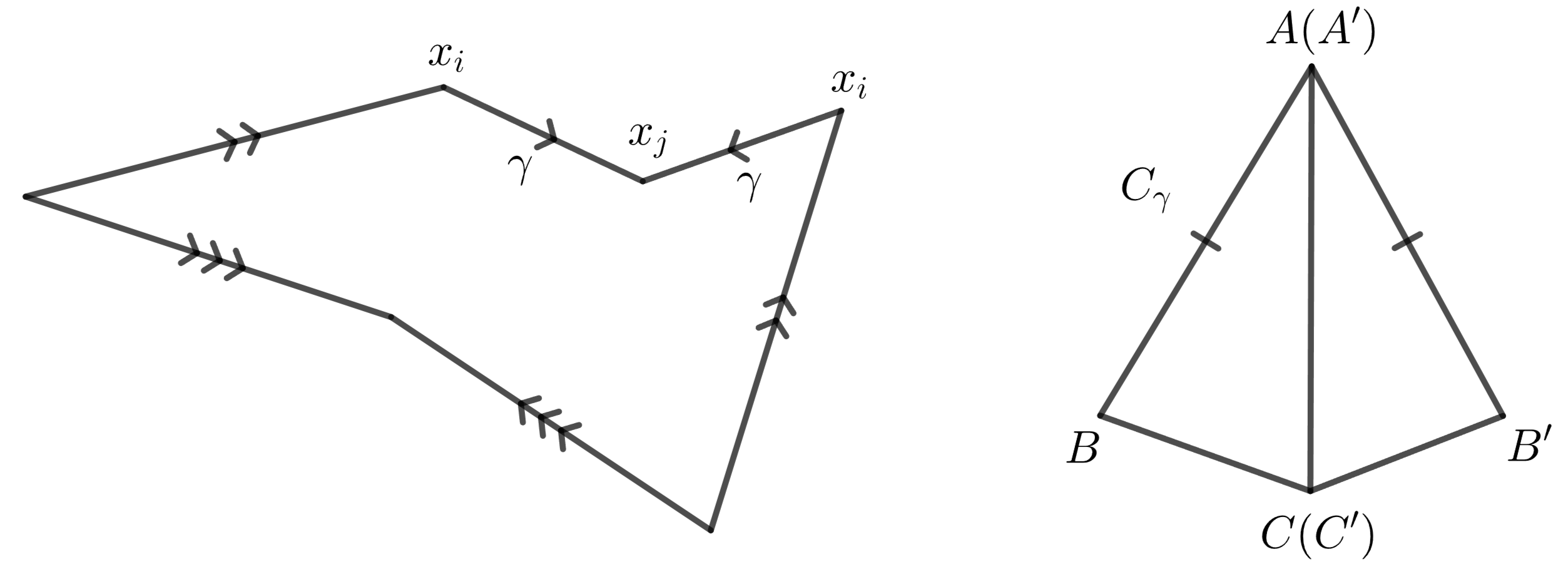}
	\footnotesize
	\end{minipage}
    \caption{Thurston surgery glues $C_{\gamma}$ on the right along the slit saddle connection $\gamma$ in the left flat cone sphere.}
\end{figure}

When we cut $\gamma$ open, the flat metric of $X$ induces a length metric on the complement $X \setminus \gamma$, meaning that the distance between two points in $X \setminus \gamma$ is the infimum of the lengths of the paths in $X \setminus \gamma$ joining them. Let $X_{\gamma}$ denote the metric completion of $X \setminus \gamma$ with the induced length metric. Note that $X_{\gamma}$ is a flat surface with boundary, and the boundary consists of two copies of $\gamma$.

\begin{lemma}\label{lem:remains}
    Let $X$ be a flat surface, and let $x_1, x_2$ be two singularities in $X$. Assume that the curvatures $k_1$ and $k_2$ are positive, and $k_1 + k_2 < 1$. Then $X_{\gamma}$ is isometrically embedded in $X^{(0)}$ in the sense that the embedding induces an isometry of the metrics on the tangent spaces.
\end{lemma}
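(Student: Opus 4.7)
The plan is to construct the embedding $\iota\colon X_\gamma \hookrightarrow X^{(0)}$ directly from the surgery description, and then verify the tangent-space isometry by a local case analysis. Note that the lemma only asserts isometry at the level of tangent spaces, not of global distance: paths in $X^{(0)}$ through $C_\gamma$ may well shorten the distance between two points of $\iota(X_\gamma)$.

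First, I would set up the map. By the very construction of Thurston's surgery, $X^{(0)}$ is assembled from the disjoint union $X_\gamma \sqcup C_\gamma$ by identifying the boundary edge $BC$ of $C_\gamma$ with one copy of $\gamma$ in $\partial X_\gamma$ and $B'C'$ with the other copy, via Euclidean isometries respecting the endpoints $B, B' \mapsto x_1$ and $C, C' \mapsto x_2$. Let $\iota$ be the canonical map $X_\gamma \to X^{(0)}$ induced by this assembly. Injectivity is immediate: no identifications are made in the interior of $X_\gamma$, and on $\partial X_\gamma$ the identifications only glue in $C_\gamma$ from the outside.

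Next, I would verify the tangent-space isometry point by point, distinguishing three cases: (a) interior points of $X_\gamma$; (b) interior points of $\partial X_\gamma$ on the two copies of $\gamma$, away from the endpoints; and (c) the corner points $x_1$ and $x_2$. In case (a), a flat chart of $X$ not meeting $\gamma$ is simultaneously a flat chart of $X^{(0)}$, so the tangent metrics coincide tautologically. In case (b), a neighborhood in $X_\gamma$ is an isometric half-disk, while a neighborhood of its image in $X^{(0)}$ is that half-disk together with the complementary half-disk inherited from $C_\gamma$, glued along the diameter by the prescribed Euclidean isometry; the tangent half-plane of $X_\gamma$ therefore sits inside the tangent plane of $X^{(0)}$ isometrically. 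The delicate case is (c). In $X_\gamma$, the point $x_1$ is a boundary corner whose angle is exactly $2\pi(1-k_1)$, because slitting the cone $X$ along the single ray $\gamma$ emanating from $x_1$ opens it into a sector of the same total angle. In $X^{(0)}$ this point is regular with tangent plane of total angle $2\pi$, the extra $2\pi k_1$ being contributed by the two $\pi k_1$-angles at $B$ and $B'$ in $C_\gamma$. The hypothesis $k_1 > 0$ is what makes $2\pi(1-k_1) < 2\pi$, so the corner sector embeds as a proper sub-sector of the flat tangent plane of $X^{(0)}$; the gluings being Euclidean isometries guarantee that this embedding is an isometry of tangent spaces. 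The argument at $x_2$ is identical after swapping indices.

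The main obstacle is the corner case (c): it is precisely where the hypothesis $k_1, k_2 > 0$ is used, since if either curvature were non-positive the corresponding sector in $X_\gamma$ would have angle $\geq 2\pi$ and could not embed isometrically into the regular tangent plane of $X^{(0)}$. Once this positivity is in hand, the remaining verifications reduce to routine patching of Euclidean pieces along the prescribed gluing isometries.
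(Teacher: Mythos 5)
Your proof is correct and takes a more hands-on route than the paper's. The paper argues abstractly: the inclusion of $X \setminus \gamma$ into $X^{(0)}$ is a local isometry, this inclusion is uniformly continuous, and since $X_\gamma$ is the metric completion of $X \setminus \gamma$, the inclusion extends continuously to $X_\gamma$ and ``remains isometric.'' You instead build the embedding directly from the gluing description of the surgery and verify the tangent-cone isometry pointwise by cases: interior points (tautological), edge points of $\partial X_\gamma$ (a half-plane sitting inside $\mathbb{R}^2$), and the corners $x_1, x_2$ (a sector of angle $2\pi(1-k_i)$ sitting inside the regular tangent plane $\mathbb{R}^2$, the complement being exactly the angle $2\pi k_i$ contributed by $B=B'$ or $C=C'$ in $C_\gamma$). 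The paper's version is shorter, but the completion argument by itself does not obviously control tangent cones at the newly adjoined boundary points, which is precisely the content of the lemma; your case analysis discharges that explicitly, so it is arguably the more careful of the two. Your opening remark that the lemma concerns only the infinitesimal metric, not global distances, is also a correct and useful clarification.

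One small slip in the closing sentence: if $k_1 = 0$, the corner angle at $x_1$ in $X_\gamma$ is exactly $2\pi$, and a cone of angle $2\pi$ does embed isometrically in $\mathbb{R}^2$; the obstruction you describe only arises for strictly negative curvature. This does not affect your proof, since the hypothesis requires $k_1, k_2 > 0$ and the paper defines the surgery differently when a curvature vanishes, but the sentence overstates what strict positivity is doing at that step.
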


\begin{proof}
    Since $X^{(0)}$ is obtained by adding a cone to the complement $X \setminus \gamma$, there is a canonical embedding from $X \setminus \gamma$ into $X^{(0)}$, which induces an isometry on the tangent spaces. Since the inclusion is uniformly continuous, and $X_{\gamma}$ is the metric completion of $X \setminus \gamma$, the embedding extends to the whole of $X_{\gamma}$ and remains isometric, as stated in the lemma.
\end{proof}

\section{Proof of Theorem~\ref{thm:mainindividual}}\label{bigsec:individual}

In this section, we prove Theorem~\ref{thm:mainindividual}. For convenience, we recall the statement of the theorem.

\mainindividual*

The explicit formulas for $b_1$ and $b_2$ in our proof are provided in~\eqref{eq:finalb1b2}. The key observation is that for a triangulation on a flat cone surface, a trajectory ``switches corners" after a bounded time.

\subsection{Corner-switches}\label{sec:cornerswitching}

Let $X$ be a flat cone surface, and let $T$ be a geometric triangulation of $X$. Recall that a geometric triangulation of $X$ has the singularities of $X$ as its vertices, and the edges are simple saddle connections. A \emph{trajectory} in $X$ is a geodesic of finite length that passes through singularities only at its endpoints.

Let $\gamma$ be a trajectory in $X$. Assume that $\gamma$ is oriented by its parameterization and is not contained within an edge of $T$. Then $\gamma$ is divided into sub-paths by the triangles of $T$. We call each sub-path of $\gamma$ that is cut out by a triangle of $T$ a \emph{thread} of $\gamma$ within the triangle. We denote the threads of $\gamma$ by $t_1, \ldots, t_m$, where $t_i$ and $t_{i+1}$ are consecutive threads along $\gamma$.

We say that a thread \emph{cuts a corner} of a triangle on the left (resp. right) side if there is a corner of the triangle on the left (resp. right) side of the thread, while the other two corners of the triangle lie on the opposite side of the thread. Note that a thread may start or end at a vertex of a triangle, in which case we say that the thread cuts a corner on both sides of the triangle.

We say that a trajectory $\gamma$ \emph{switches corners} of $T$ if two consecutive threads of $\gamma$ cut corners on opposite sides of $\gamma$. We also refer to such a pair of threads as a \emph{corner-switch}. Conversely, we say that two consecutive threads cut corners on the same side if they do not form a corner-switch. Figure~\ref{fig:switchandnonswitch} provides examples of two consecutive threads that either switch corners or cut corners on the same side.

\begin{figure}[!htbp]
	\centering
	\begin{minipage}{0.6\linewidth}
	\includegraphics[width=\linewidth]{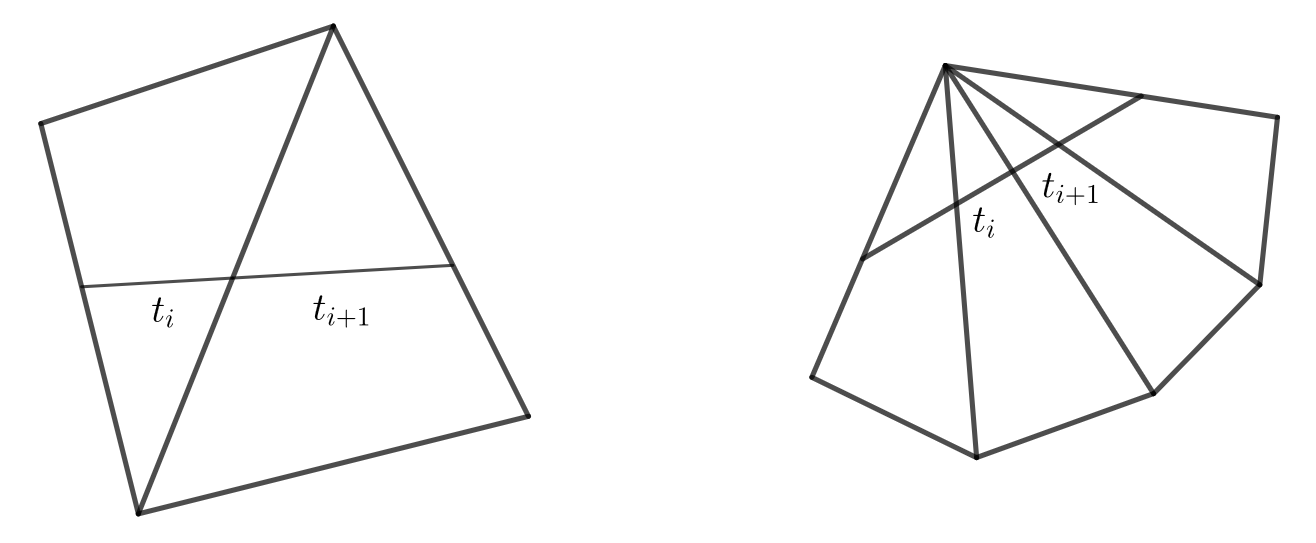}
	\footnotesize
	\end{minipage}
    \caption{Two consecutive threads $t_i$ and $t_{i+1}$ switch corners in the left picture, while they cut corners on the same side in the right picture.}\label{fig:switchandnonswitch}
\end{figure}

\subsection{Widths of triangulations}

We associate to a geometric triangulation $T$ a quantity that measures the ``thickness" of quadrilaterals in the triangulation.

\begin{definition}\label{def:widthoftri}
    Let $X$ be a flat cone surface and $T$ be a geometric triangulation of $X$. Let $e$ be an edge of $T$. Notice that $e$ is adjacent to two triangles (possibly the same triangle). Let $Q(e)$ be the quadrilateral obtained by developing the two adjacent faces in the plane. The \emph{width of the edge} $e$ is defined as the minimal distance between all opposite edges of $Q(e)$, denoted by $d(e)$. The \emph{width of the triangulation} $T$ is defined as the minimum of the widths of all edges of $T$, denoted by $d(T)$.
\end{definition}

The following lemma relates the length of a trajectory to the width of a geometric triangulation.

\begin{lemma}\label{lem:consecutivethick}
    Let $X$ be a unit area flat cone surface with genus $g$ and $n \geq 1$ singularities. Let $k_1, \ldots, k_n$ be the curvatures of all singularities in $X$. Let $T$ be a geometric triangulation of $X$. Let $\gamma$ be a trajectory in $X$ that is not contained in an edge of $T$. Let $t_1, \ldots, t_m$ be the consecutive threads of $\gamma$ that are cut out by $T$. Consider the positive integer
    \begin{equation}\label{equ:m0}
        m_0 = 6(4g + 2n - 4)\Big\lceil\frac{1}{2\min\limits_{1 \leq i \leq n}(1 - k_i)}\Big\rceil.
    \end{equation} 
    If $m \geq m_0 + 2$, then for any $m_0$ consecutive threads $t_{i+1}, \ldots, t_{i+m_0}$ of $\gamma$, with $1 \leq i \leq m - m_0 - 1$, there exists a corner-switch among them. In particular, we have that
    $$|t_{i+1}| + \ldots + |t_{i+m_0}| \geq d(T).$$
\end{lemma}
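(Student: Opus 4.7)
The plan is to analyze maximal runs of consecutive threads that all cut corners on the same side of $\gamma$, show via an unfolding argument that such a run must be short, and then convert a corner-switch into a length lower bound using $d(T)$.

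First, I would record the following geometric fact. If $t_j, t_{j+1}$ are consecutive threads lying in adjacent triangles $\Delta_j, \Delta_{j+1}$ that share an edge $e$ with endpoints $v_L$ (on the left of $\gamma$) and $v_R$ (on the right), then $t_j$ cuts a corner on the left (resp.\ right) if and only if it cuts the corner at $v_L$ (resp.\ $v_R$), and similarly for $t_{j+1}$. Consequently, $t_j$ and $t_{j+1}$ cut corners on the same side if and only if they cut corners at the same common vertex of $e$. It follows that in any maximal same-side run of threads $t_{j_0}, \ldots, t_{j_0+k}$, all $k+1$ threads cut corners at a single common singularity $v$.

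Next, I would develop the triangles $\Delta_{j_0}, \ldots, \Delta_{j_0+k}$ into the plane by successively unfolding across the shared edges. Since every such shared edge emanates from $v$, the vertex $v$ develops to a single point of the plane and the triangles fill a consecutive angular sector at $v$. The trajectory unfolds to a straight line that does not pass through $v$, so the angle subtended by its endpoints at $v$ is strictly less than $\pi$. Because the cone angle at $v$ is $\theta_v = 2\pi(1 - k_v) \geq 2\pi \min_i(1 - k_i)$, each triangle of $T$ adjacent to $v$ can appear in this sector at most $\lceil \pi / \theta_v \rceil + 1$ times, and there are at most $4g + 2n - 4$ distinct triangles of $T$ adjacent to $v$ (since this is the total number of faces of $T$). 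Multiplying these two estimates yields an upper bound on the length of the run that is comfortably below $m_0$; the factor $6$ in $m_0$ absorbs this count together with boundary effects at the two ends of the run. Hence any window of $m_0$ consecutive threads must contain a pair of consecutive threads cutting corners on opposite sides, i.e.\ a corner-switch.

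Finally, I would deduce the length inequality from such a corner-switch. Suppose $(t_j, t_{j+1})$ is a corner-switch across a shared edge $e$, and consider the quadrilateral $Q(e) = \Delta_j \cup \Delta_{j+1}$ developed in the plane. Then $t_j$ enters $Q(e)$ through the edge of $\Delta_j$ adjacent to the vertex of $e$ at which $t_j$ cuts its corner, while $t_{j+1}$ exits $Q(e)$ through the edge of $\Delta_{j+1}$ adjacent to the other vertex of $e$. These two boundary edges of $Q(e)$ do not share a vertex, so they are opposite edges of $Q(e)$; hence $|t_j| + |t_{j+1}| \geq d(e) \geq d(T)$, which gives $|t_{i+1}| + \cdots + |t_{i+m_0}| \geq d(T)$.

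The main technical point is the unfolding count in the second step, namely bounding how many times a single triangle around $v$ can appear in an angular sector of size less than $\pi$ when $\theta_v$ is small; the slack built into the constant $6$ in the definition of $m_0$ makes this bookkeeping robust.
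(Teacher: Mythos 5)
Your proof takes essentially the same route as the paper: a maximal same-side run of threads cuts corners at a single singularity $v$, the developed trajectory subtends an angle strictly less than $\pi$ at $v$, and the size of $T$ bounds the number of corners that can be crossed in such a sector, giving a count strictly below $m_0$; the final bound $|t_j| + |t_{j+1}| \geq d(e) \geq d(T)$ for a corner-switch is exactly the implicit step the paper relies on. One small imprecision to watch: a single triangle of $T$ may have two or even three of its corners identified to $v$, so your per-triangle bound $\lceil \pi/\theta_v\rceil + 1$ should really carry an extra factor of $3$; the paper avoids this by counting corners of $T$ directly, obtaining $3(4g+2n-4)\lceil \pi/(2\pi(1-k))\rceil$, and in either bookkeeping the factor $6$ in $m_0$ provides the needed slack, so your conclusion stands.
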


Note that we exclude the initial thread $t_1$ and the final thread $t_m$, 
as they may end in the interior of faces and thus do not cut any corner.

\begin{proof}
    We prove the lemma by contradiction. Assume that there exist $m_0$ consecutive threads $t_{i+1}, \ldots, t_{i+m_0}$ which contain no corner-switch pairs. Equivalently, $t_{i+1}, \ldots, t_{i+m_0}$ cut corners on the same side of $\gamma$. In particular, these corners are at the same singularity in $X$, denoted by $x$.
    
    Let $k$ be the curvature of $x$. Since the holonomy of a loop around $x$ is a rotation by angle $2\pi(1-k)$, the number of faces that a trajectory can pass through around $x$ is bounded above by
    \begin{equation}\label{equ:estimateforthesameside}
        3(4g + 2n - 4)\Big\lceil\frac{\pi}{2\pi(1 - k)}\Big\rceil,
    \end{equation}
    where $4g + 2n - 4$ is the total number of faces of $T$, and $3(4g + 2n - 4)$ is the total number of corners of $T$.
    
    However, since 
    $$m_0 = 6(4g + 2n - 4)\Big\lceil\frac{1}{2\min\limits_{1 \leq i \leq n}(1 - k_i)}\Big\rceil > 3(4g + 2n - 4)\Big\lceil\frac{\pi}{2\pi(1 - k)}\Big\rceil,$$ 
    the threads $t_{i+1}, \ldots, t_{i+m_0}$ cannot wind around the same singularity $x$.
\end{proof}

\subsection{Lower bounds on widths of triangulations}

To construct a lower bound, we first introduce the notion of systoles.

\begin{definition}
    Let $X$ be a flat cone surface. We define the \emph{systole} of $X$, denoted $\mathrm{sys}(X)$, as the shortest length of a regular closed geodesic in $X$. The \emph{relative systole} of $X$, denoted $\mathrm{relsys}(X)$, is defined as the shortest length of a saddle connection on $X$.
\end{definition}

The systole (resp. relative systole) of a flat cone surface is always positive and is realized by a saddle connection (resp. regular closed geodesic).

Notice that a regular closed geodesic has a cylindrical neighborhood foliated by parallel regular closed geodesics, and there exists a saddle connection at the boundary of the maximal cylindrical neighborhood. Thus, we always have $\mathrm{sys}(X) \geq \mathrm{relsys}(X)$.

\begin{lemma}\label{lem:widthvssys}
    Let $X$ be a unit area flat cone surface with at least one singularity and positive curvature gap $\delta$. Let $T$ be a geometric triangulation of $X$. Then we have that
    $$
    d(T) \geq \frac{\mathrm{relsys}(X)^2}{2R(T)},
    $$
    where $R(T)$ is the maximal radius of the circumscribed disks of the faces of $T$.
\end{lemma}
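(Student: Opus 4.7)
The plan is to fix an edge $e\in T$, develop the two triangles of $T$ adjacent to $e$ in the plane to obtain the quadrilateral $Q(e)$ with shared diagonal $e=AC$, place coordinates with $A=(0,0)$ and $C=(|e|,0)$ so that the opposite apex vertices $B$ and $D$ lie in the open upper and lower half-planes at heights $h_B$ and $h_D$, and then directly bound the distance between each pair of opposite sides. Two preliminary observations are immediate: every side of $Q(e)$ and the diagonal $|AC|$ itself corresponds to a saddle connection of $T$, so has length $\ge \mathrm{relsys}(X)$; and from the identity $\mathrm{Area}(\triangle)=abc/(4R_\triangle)$, the altitude opposite a side of length $a$ is $bc/(2R_\triangle)$, so in particular both $h_B$ and $h_D$ are $\ge \mathrm{relsys}(X)^2/(2R(T))$.

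For the opposite pair $(AB,CD)$, I would parameterize $P\in AB$, $Q\in CD$ and analyze the minimum of $|PQ|^2$ on $[0,1]^2$ via first-order optimality, splitting into three cases. (i) An interior-interior minimum forces $PQ$ perpendicular to both segments, hence $AB\parallel CD$, and the distance equals $|AC|\sin(\angle ACD)$; the extended law of sines in $\triangle ACD$ gives $\sin(\angle ACD)\ge |AD|/(2R(T))$, yielding the bound $\ge \mathrm{relsys}(X)^2/(2R(T))$. (ii) An endpoint-interior minimum splits further: at the on-diagonal endpoints $A$ or $C$, the same law-of-sines estimate bounds $d(A,\mathrm{line}\,CD)$ or $d(C,\mathrm{line}\,AB)$; at the off-diagonal endpoints $B$ or $D$, I use the projection to the $y$-axis, namely since $B$ has $y$-coordinate $h_B>0$ while every point of $CD$ has $y$-coordinate in $[-h_D,0]$, one gets $d(B,CD)\ge h_B$, and symmetrically $d(D,AB)\ge h_D$. (iii) An endpoint-endpoint minimum equals one of $|AC|,|AD|,|BC|,|BD|$; the first three are saddle-connection lengths, the last satisfies $|BD|\ge h_B+h_D$ by projection, and all exceed $\mathrm{relsys}(X)^2/(2R(T))$ since $\mathrm{relsys}(X)\le 2R(T)$ (the circumscribed diameter of any face dominates its longest side).

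The opposite pair $(BC,DA)$ is handled by an entirely symmetric argument, so $d(e)\ge \mathrm{relsys}(X)^2/(2R(T))$ for every edge $e$, and taking the minimum over $e\in T$ gives the lemma. The main technical point I expect is simply the organization of the case analysis cleanly: once one observes that first-order optimality restricts interior minima to the parallel-lines configuration, the remaining cases reduce either to the law-of-sines bound (for endpoints on the diagonal) or to the altitude bound obtained by projecting to the direction perpendicular to $e$ (for endpoints off the diagonal).
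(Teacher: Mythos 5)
Your proof is correct and ultimately rests on the same key identity as the paper's, namely the circumradius formula expressing an altitude of a triangle as (product of the other two sides)$/(2R)$, together with the fact that every side of a triangulation triangle is a saddle connection of length at least $\mathrm{relsys}(X)$. The difference is one of organization. The paper first asserts, in a single sentence and without detail, that the distance between opposite sides of $Q(e)$ is bounded below by the shortest altitude of the two triangles, so that $d(T)$ is at least the global minimum altitude, and then applies the circumradius formula once to that minimum altitude. You instead work directly with a fixed opposite pair of $Q(e)$, run the first-order critical-point analysis of the distance function on $[0,1]^2$ (interior--interior forces parallelism, endpoint--interior forces a foot of perpendicular, endpoint--endpoint gives a vertex distance), and in each case produce the bound $\mathrm{relsys}(X)^2/(2R(T))$ directly, using the circumradius formula for the on-diagonal cases, the projection onto the perpendicular of $e$ for the off-diagonal vertices $B,D$, and the elementary inequality $\mathrm{relsys}(X)\le 2R(T)$ for the corner cases. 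This is the same argument with the unstated step of the paper made explicit; your version is more verbose but also more self-contained, since the paper's asserted reduction to the shortest altitude is precisely what your case analysis verifies, and it also handles cleanly the possibility that $Q(e)$ is non-convex, which the one-line assertion in the paper does not visibly address.
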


\begin{proof}
    First, notice that for each edge $e$ of $T$, the distance between the opposite edges of the quadrilateral $Q(e)$ is bounded below by the shortest height of the two triangles (see Definition~\ref{def:widthoftri}). It follows that the width $d(T)$ is bounded below by the shortest height among all triangles of $T$.
    
    Let $h$ be the shortest height among all triangles in $T$, and let $f$ be the triangle realizing this shortest height $h$. By developing $f$ in the plane, denote the vertices of $f$ by $A, B, C$ such that $h$ corresponds to the height from $C$ to the side $AB$.
    
    Denote by $R$ the radius of the circumscribed disk of $f$. Since $\operatorname{Area}(f) = h|AB|/2$, we have that 
    $$
    R = \frac{|AB||BC||CA|}{4\mathrm{Area}(f)} = \frac{|BC||CA|}{2h}.
    $$
    Since the lengths $|BC|$ and $|CA|$ are the lengths of saddle connections on $X$, it follows that
    $$
    d(T) \geq h \geq \frac{\mathrm{relsys}(X)^2}{2R} \geq \frac{\mathrm{relsys}(X)^2}{2R(T)}.
    $$
\end{proof}

\subsection{Combinatorial lengths}

Let $X$ be a flat cone surface with at least one singularity, and let $T$ be a geometric triangulation of $X$. Let $\gamma$ be a trajectory in $X$. Assume that $\gamma$ is not contained in an edge of $T$. Let $t_1, \ldots, t_m$ be the consecutive threads of $\gamma$ that are cut out by $T$. We call $m$ the \emph{combinatorial length of $\gamma$ with respect to $T$}. It serves as an intermediary linking the length and the self-intersection number of $\gamma$.

We first relate the length of a trajectory to its combinatorial length.

\begin{lemma}\label{lem:lenvscombinlen}
    Let $X$ be a unit area flat cone surface with genus $g$ and $n \geq 1$ singularities of curvatures $k_1, \ldots, k_n$. Let $T$ be a geometric triangulation of $X$. Consider the integer $m_0$ as in~\eqref{equ:m0}. Let $\gamma$ be a trajectory in $X$ that is not contained in an edge of $T$, and let $m$ be the combinatorial length of $\gamma$ with respect to $T$. If $m \geq 2m_0$, then
    $$
    |\gamma| \geq d(T) \frac{m}{3m_0}.
    $$
    It follows that for any positive combinatorial length $m \geq 1$, we have
    $$
    |\gamma| \geq d(T)\left(\frac{m}{3m_0} - 1\right).
    $$
\end{lemma}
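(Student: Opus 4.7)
The plan is to apply Lemma~\ref{lem:consecutivethick} to several \emph{disjoint} blocks of $m_0$ consecutive threads along $\gamma$ and add the resulting lower bounds. Writing $q := \lfloor (m-2)/m_0 \rfloor$, I would consider the blocks
\[
B_j := \{\, t_{jm_0+2},\; t_{jm_0+3},\; \ldots,\; t_{(j+1)m_0+1} \,\}, \qquad j = 0, 1, \ldots, q-1.
\]
Each $B_j$ has the form $\{t_{i+1}, \ldots, t_{i+m_0}\}$ for $i = jm_0 + 1$, and the choice of $q$ guarantees $1 \leq i \leq m - m_0 - 1$, so Lemma~\ref{lem:consecutivethick} applies to each block and gives $\sum_{t \in B_j} |t| \geq d(T)$. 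Since the blocks are disjoint and the threads partition $\gamma$ as a parametrized curve, summing over $j$ yields
\[
|\gamma| \;\geq\; \sum_{j=0}^{q-1} \sum_{t \in B_j} |t| \;\geq\; q\, d(T) \;=\; \Big\lfloor \tfrac{m-2}{m_0} \Big\rfloor d(T).
\]

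To conclude the first displayed inequality of the lemma, I would verify that $\lfloor (m-2)/m_0 \rfloor \geq m/(2m_0)$ whenever $m \geq 2m_0$. This is a short case analysis: for $m \geq 2m_0 + 4$ one uses $\lfloor (m-2)/m_0 \rfloor \geq (m-2)/m_0 - 1 = (m - 2 - m_0)/m_0 \geq m/(2m_0)$, while the four remaining values $m \in \{2m_0, 2m_0+1, 2m_0+2, 2m_0+3\}$ are handled by direct substitution, using that $m_0 \geq 2$ by the definition~\eqref{equ:m0}.

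The second displayed inequality then follows with no additional work. If $m \geq 2m_0$, the first inequality already gives $|\gamma| \geq d(T) \cdot m/(2m_0) \geq d(T)(m/(2m_0) - 1)$. If instead $1 \leq m < 2m_0$, then $m/(2m_0) - 1 < 0$, so $d(T)(m/(2m_0) - 1) \leq 0 \leq |\gamma|$ and the inequality is trivial.

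The whole argument is just additivity of length over a collection of disjoint blocks combined with the per-block estimate from Lemma~\ref{lem:consecutivethick}; the only real obstacle is the bookkeeping of indices needed to ensure simultaneously that the blocks are disjoint and that each of them falls into the index range where Lemma~\ref{lem:consecutivethick} is applicable.
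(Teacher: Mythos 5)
Your block-decomposition argument is the paper's intended approach, and your index bookkeeping is actually more careful than the paper's own write-up, which sums over windows $\{t_i,\ldots,t_{i+m_0-1}\}$ for $i=1,\ldots,\lfloor m/m_0\rfloor$ without explicitly respecting the requirement of Lemma~\ref{lem:consecutivethick} that $t_1$ and $t_m$ be excluded. Your blocks $B_j$ are all valid (each corresponds to $i = jm_0+1$ with $1\le i\le m-m_0-1$) and disjoint, so the estimate $|\gamma|\ge \lfloor (m-2)/m_0\rfloor\,d(T)$ is correct.

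However, your verification that $\lfloor (m-2)/m_0\rfloor \ge m/(2m_0)$ for $m\ge 2m_0$ contains an arithmetic gap: the case $m = 2m_0+1$ is \emph{not} handled by direct substitution. For that $m$ one has $\lfloor (m-2)/m_0\rfloor = \lfloor (2m_0-1)/m_0\rfloor = 1$, while $m/(2m_0) = 1 + 1/(2m_0) > 1$, so the inequality fails. The disjoint-block count $q = \lfloor (m-2)/m_0\rfloor$ equals $1$ here, yet the claimed bound strictly exceeds $d(T)$, and the block argument alone does not supply the missing fraction of $d(T)$. The second displayed inequality is unaffected: from $q\ge (m-m_0-1)/m_0$ one gets
$$
|\gamma|\ \ge\ d(T)\,\frac{m-m_0-1}{m_0}\ \ge\ d(T)\left(\frac{m}{2m_0}-1\right)
$$
for all $m\ge 2$, and the case $m=1$ is trivial since the right-hand side is negative. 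To close the gap in the first inequality you would either need an additional estimate handling $m\equiv 1\pmod{m_0}$ near $2m_0$, or accept the marginally weaker bound $|\gamma|\ge d(T)\,\frac{m-m_0-1}{m_0}$, which is enough for the paper's downstream applications.
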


\begin{proof}
    Let $t_1, \ldots, t_m$ be the consecutive threads of $\gamma$ that are cut out by $T$. 
    To estimate the length, we exclude the initial thread $t_1$ and the final thread $t_m$ as in Lemma~\ref{lem:consecutivethick}, as they may end in the interior of faces and thus do not cut any corner.

    If $m \geq 2m_0 > m_0 + 2$, by Lemma~\ref{lem:consecutivethick}, it follows that    
    \begin{align*}
        |\gamma| &\geq \sum\limits_{i = 1}^{\lfloor (m-2)/m_0 \rfloor} (|t_{(i-1)\cdot m_0 + 2}| + \ldots + |t_{(i-1)\cdot m_0+m_0+1}|)\\
        & \geq \left\lfloor \frac{m-2}{m_0} \right\rfloor d(T)\\
        & \geq \left(\frac{m-2}{m_0} - 1\right)d(T).
    \end{align*}
Note that
$$
\left(\frac{m-2}{m_0} - 1\right) - \frac{m}{3m_0} = \frac{1}{3m_0}(2m-6-3m_0)\ge\frac{1}{3m_0}(4m_0-6-3m_0) = \frac{1}{3m_0}(m_0 -6). 
$$
By~\eqref{equ:m0}, $m_0 \ge 6(4g + 2n -4)\ge 6$. Therefore,
$$
|\gamma|\ge \left(\frac{m-2}{m_0} - 1\right)d(T)\ge \frac{m}{3m_0}d(T)
$$

If $m < 2m_0$, it follows that $\frac{m}{3m_0} - 1 < 0$. Therefore, we obtain the second bound.
\end{proof}

Next, we relate the combinatorial length of a trajectory to its self-intersection number. This is part of the proof of Theorem~1.1 in~\cite{Ara}.

\begin{lemma}\label{lem:combinvsintersect}
    Let $X$ be a flat cone surface with at least one singularity, and let $T$ be a geometric triangulation of $X$. Let $\gamma$ be a trajectory in $X$ that is not contained in an edge of $T$, and denote by $m$ the combinatorial length of $\gamma$ with respect to $T$. Then we have
    $$
    m \geq \sqrt{2\iota(\gamma,\gamma)}.
    $$
\end{lemma}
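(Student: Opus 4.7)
The plan is to adapt the counting argument from the proof of Theorem~1.1 in~\cite{Ara}. The key structural observation is that each thread $t_i$ is a straight Euclidean segment lying in a single triangle of $T$, so two distinct segments in the same triangle meet in at most one point.

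Let $F$ denote the set of triangles of $T$ and, for each $f \in F$, let $m_f$ be the number of threads of $\gamma$ contained in $f$, so that $\sum_{f \in F} m_f = m$. I would first establish the geometric bound
$$
\iota(\gamma,\gamma) \;\leq\; \sum_{f \in F} \binom{m_f}{2}.
$$
Any transverse self-intersection of $\gamma$ is witnessed by two distinct threads sharing a common point with non-parallel tangent directions. If this point lies in the interior of some triangle $f$, then both threads are contained in $f$ and the transverse pair corresponds to a pair in $\binom{m_f}{2}$. If instead the point lies on an edge $e$ separating triangles $f_1$ and $f_2$, then one of the two threads may be replaced by its immediate continuation across $e$, which carries the same tangent direction at that point, producing a pair of distinct threads inside one of $f_1$ or $f_2$. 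Because two segments in a triangle meet in at most one point, distinct transverse pairs yield distinct unordered thread-pairs in a common triangle, giving the displayed inequality.

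Next, I would finish with the elementary estimate
$$
\sum_{f \in F} m_f(m_f - 1) \;\leq\; \sum_{f \in F} m_f^2 \;\leq\; \Bigl(\sum_{f \in F} m_f\Bigr)^{\!2} \;=\; m^2,
$$
which holds because all $m_f$ are nonnegative. Combining this with the previous inequality gives $2\iota(\gamma,\gamma) \leq m^2$, and the desired conclusion $m \geq \sqrt{2\iota(\gamma,\gamma)}$ follows by taking square roots.

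The argument presents no serious obstacle; the only minor subtlety is the bookkeeping for self-intersections lying on the $1$-skeleton of $T$, handled by the ``continuation across the edge'' trick above. Beyond this, the proof is a direct counting exercise, made possible by the fact that threads are straight segments.
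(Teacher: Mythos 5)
Your argument is correct and follows the same route as the paper's: bound the number of intersections in each triangle $f$ by $\binom{m_f}{2}$, then sum over faces, differing only in the elementary final step (you use $\sum_f m_f^2 \leq \big(\sum_f m_f\big)^2$, whereas the paper sums $m_f \geq \sqrt{2I_f}$ and uses subadditivity of the square root). Your explicit treatment of intersections on the $1$-skeleton via continuation across an edge is a slightly more careful version of the paper's ``including those at the boundary of $f$'' bookkeeping, but the core counting idea is identical.
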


\begin{proof}
    Let $f$ be a face of the triangulation $T$. Denote the number of threads of $\gamma$ in $f$ by $m_f$, and the number of self-intersections in $f$ (including those at the boundary of $f$) by $I_f$. Notice that 
    $$
    I_f \leq \binom{m_f}{2}.
    $$
    It follows that
    $$
    m_f \geq \sqrt{2I_f}.
    $$
    By summing over all faces of $T$, we obtain that
    $$
    m \geq \sqrt{2\iota(\gamma,\gamma)}.
    $$
\end{proof}

\subsection{Proof of Theorem~\ref{thm:mainindividual}}

\begin{proof}[Proof of Theorem~\ref{thm:mainindividual}]
    If $X$ has no singularities, then it must be a flat torus. Hence, all trajectories have zero intersection number. Therefore, one can take $b_1$ and $b_2$ to be any positive real numbers.
    
    Assume that $X$ has at least one singularity. Let $T$ be a Delaunay triangulation of $X$. If $\gamma$ is contained in a Delaunay edge, we have that $\iota(\gamma,\gamma) = 0$. Hence, the inequality~\eqref{equ:compareindividual} holds for any positive $b_1$ and $b_2$.
    
    If $\gamma$ is not contained in a Delaunay edge, it is cut by the triangles of $T$ into consecutive threads $t_1,\ldots,t_m$. By Lemma~\ref{lem:lenvscombinlen} and Lemma~\ref{lem:combinvsintersect}, it follows that
    \begin{align*}
        |\gamma| \ge d(T)\left(\frac{m}{3m_0} - 1\right) \geq d(T)\left(\frac{\sqrt{2}}{3m_0} \sqrt{\iota(\gamma,\gamma)} - 1\right),
    \end{align*}
    where $m_0$ is given by~\eqref{equ:m0}.
    Note that
\begin{equation}\label{equ:estimateform0}
        m_0 = 6(4g + 2n - 4)\Big\lceil\frac{1}{2\min\limits_{1 \leq i \leq n}(1 - k_i)}\Big\rceil \le 6(4g + 2n - 4)\cdot \frac{2}{2\min\limits_{1 \leq i \leq n}(1 - k_i)} = \frac{6(4g + 2n - 4)}{\min\limits_{1 \leq i \leq n}(1 - k_i)}
\end{equation}
    Combined with the lower bound on $d(T)$ given in Lemma~\ref{lem:widthvssys}, it follows that
    $$
    |\gamma| \ge \frac{\mathrm{relsys}(X)^2}{2R(T)}
    \left(\sqrt{2} \frac{\min\limits_{1\le i\le n}(1-k_i)}{18(4g+2n-4)} \sqrt{\iota(\gamma,\gamma)} - 1\right).
    $$
Consider
    \begin{equation}\label{equ:b1b2}
       \begin{aligned}
            b_1 &= \sqrt{2} \frac{\mathrm{relsys}(X)^2}{R(X)} \frac{\min\limits_{1\le i\le n}(1-k_i)}{36(4g+2n-4)},\\
            b_2 &= \frac{\mathrm{relsys}(X)^2}{2R(X)}
       \end{aligned}
    \end{equation}
    where $R(X)$ is defined to be the maximal $R(T)$ among Delaunay triangulations of $X$.
    It follows that 
    $$
    |\gamma|\geq b_1\sqrt{\iota(\gamma,\gamma)} - b_2.
    $$
    
    Next, consider the case when $\gamma$ is a saddle connection or a regular closed geodesic. We focus on the case when $X$ has at least one singularity. If $\gamma$ is contained in a Delaunay edge of $T$, we have $\iota(\gamma,\gamma) = 0$, so the inequality~\eqref{equ:compareindividual2} holds for any positive $b_1$. 
    
    If $\gamma$ is not contained in a Delaunay edge of $T$, let $t_1, \ldots, t_m$ be the threads of $\gamma$ cut out by $T$. When $m \geq 2m_0$, combining Lemma~\ref{lem:widthvssys}, Lemma~\ref{lem:lenvscombinlen}, and Lemma~\ref{lem:combinvsintersect}, we have
    $$
    |\gamma| \geq \sqrt{2} \frac{\mathrm{relsys}(X)^2}{2R(T)} \frac{1}{3m_0} \sqrt{\iota(\gamma,\gamma)}.
    $$
    When $m < 2m_0$, Lemma~\ref{lem:combinvsintersect} implies that 
    $$\sqrt{\iota(\gamma,\gamma)} \leq \frac{1}{\sqrt{2}}m < \sqrt{2}m_0.$$
    By definition, $|\gamma| \geq \mathrm{relsys}(X)$.  It follows that
    $$
    |\gamma| \geq \mathrm{relsys}(X) \geq \frac{\mathrm{relsys}(X)}{\sqrt{2}m_0} \sqrt{\iota(\gamma,\gamma)}> \frac{\sqrt{2}\mathrm{relsys}(X)}{3m_0} \sqrt{\iota(\gamma,\gamma)}.
    $$
    
    In summary, when $\gamma$ is a saddle connection or a regular closed geodesic, we always have
    $$
    |\gamma| \geq \frac{\sqrt{2} \mathrm{relsys}(X)}{3m_0} \min\left\{\frac{\mathrm{relsys}(X)}{2R(T)}, 1\right\} \sqrt{\iota(\gamma,\gamma)}.
    $$
    Since $2R(T)$ is larger than any edge of $T$ and $\mathrm{relsys}(X)$, we can simplify this to
    $$
    |\gamma| \geq \sqrt{2} \frac{\mathrm{relsys}(X)^2}{2R(T)} \frac{1}{3m_0} \sqrt{\iota(\gamma,\gamma)}
    $$

    Thus, using the bound~\eqref{equ:estimateform0} on $m_0$, we obtain that 
    $$|\gamma|\geq b_1\sqrt{\iota(\gamma,\gamma)},$$
    where $b_1$ is given in~\eqref{equ:b1b2}.
\end{proof}

\begin{remark}\label{rmk:b1b2}
    According to the proof, the constants $b_1$ and $b_2$ in the theorem have the expressions
    \begin{equation}\label{eq:finalb1b2}
       \begin{aligned}
            b_1 &= \sqrt{2}\frac{\mathrm{relsys}(X)^2}{R(X)} \frac{\min\limits_{1\le i\le n}(1-k_i)}{36(4g+2n-4)},\\
            b_2 &= \frac{\mathrm{relsys}(X)^2}{2R(X)},
       \end{aligned}
    \end{equation}
    where $R(X)$ is defined to be the maximal $R(T)$ among Delaunay triangulations of $X$, and $R(X) = 1$ if $X$ has no singularities.
\end{remark}

\begin{prop}\label{prop:estimatedegenerate}
    Let $b_1(X)$ and $b_2(X)$ be the real numbers $b_1$ and $b_2$ in the formula~\eqref{eq:finalb1b2} for a unit area flat cone surface $X$. Consider an infinite sequence of unit area flat cone surfaces $\{X_i\}_{i=1}^{\infty}$ with the same genus $g$, the same number of singularities $n\ge 1$ and the same curvatures $k_1,\ldots,k_n$.
    If $\mathrm{relsys}(X_i)\to 0$ as $i\to\infty$, then we have that $b_1(X_i)$ and $b_2(X_i)$ converge to zero as $i\to\infty$.
\end{prop}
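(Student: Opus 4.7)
The plan is to unpack the explicit formulas~\eqref{eq:finalb1b2}. In the sequence $\{X_i\}$ the curvatures $k_1,\ldots,k_n$, the genus $g$, and the number of singularities $n$ are fixed, so the only quantities that vary with $i$ are $\mathrm{relsys}(X_i)$ and $R(X_i)$. Since $\mathrm{relsys}(X_i)\to 0$ by hypothesis, everything reduces to showing that $R(X_i)$ stays bounded below by a positive constant depending only on $g$ and $n$.

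To produce such a uniform lower bound, I would pick any Delaunay triangulation $T$ of $X_i$ and count faces. A standard Euler-characteristic computation gives exactly $N := 4g+2n-4$ triangles, independently of $i$. Each face $f$ of $T$ develops to a genuine Euclidean triangle inscribed in its circumscribed disk of radius $R_f\le R(T)$, so the classical inequality (saturated by the equilateral triangle)
\[
    \mathrm{Area}(f)\;\le\;\tfrac{3\sqrt{3}}{4}\,R_f^{\,2}\;\le\;\tfrac{3\sqrt{3}}{4}\,R(T)^{\,2}
\]
applies. Summing over faces and using $\mathrm{Area}(X_i)=1$ yields $1\le N\cdot\tfrac{3\sqrt{3}}{4}R(T)^{\,2}$, hence $R(T)\ge 2/\sqrt{3\sqrt{3}\,N}$. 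Taking the maximum over the finitely many Delaunay triangulations of $X_i$, the same lower bound applies to $R(X_i)$.

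Plugging this into~\eqref{eq:finalb1b2} gives bounds of the form $b_2(X_i)\le C\cdot \mathrm{relsys}(X_i)^2$ and $b_1(X_i)\le C'\cdot \mathrm{relsys}(X_i)^2$, where $C,C'>0$ depend only on $g,n$, and $k_1,\ldots,k_n$. The hypothesis $\mathrm{relsys}(X_i)\to 0$ then forces both $b_1(X_i)$ and $b_2(X_i)$ to zero.

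The main thing to verify is that the inequality $\mathrm{Area}(f)\le \tfrac{3\sqrt{3}}{4}R_f^{\,2}$ really applies to a Delaunay face on a flat cone surface, but this is exactly the content of the Delaunay condition recalled in Section~\ref{sec:delaunay}: each face is contained in an immersed disk avoiding singularities in its interior, so after developing it is a genuine Euclidean triangle inscribed in the corresponding Euclidean disk, and $R(T)$ is the Euclidean circumradius.
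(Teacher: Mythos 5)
Your proof is correct and follows the same basic strategy as the paper's: both arguments reduce the problem to showing that $R(X_i)$ stays bounded away from zero, then deduce the claim from~\eqref{eq:finalb1b2}. The only difference is that the paper argues by contradiction (if $R(X_i)\to 0$ then the area would go to zero), whereas you make the area comparison quantitative via the Euler-characteristic face count $N=4g+2n-4$ and the inscribed-triangle area bound $\mathrm{Area}(f)\le\tfrac{3\sqrt3}{4}R_f^2$, which yields the explicit lower bound $R(X_i)\ge 2/\sqrt{3\sqrt3\,N}$; this is a mild sharpening, not a genuinely different route.
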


\begin{proof}
    From the expression~\eqref{eq:finalb1b2}, we only need to make sure that $\frac{1}{R(X_i)}$ is bounded as $i\to\infty$. Assume that there is a subsequence of $\{X_i\}_{i=1}^{\infty}$ such that $R(X_i)$ goes to zero. Since $R(X_i)$ is an upper bound for all faces of any Delaunay triangulation of $X$, this implies that the area of $X_i$ goes to zero, which contradicts the assumption that the area of $X_i$ is one.
\end{proof}

We mention that one cannot hope that we can choose the constant $b_2$ in the inequality~\eqref{equ:compareindividual} to be identically zero for any flat cone surface and any trajectory on it. We explain this by an example in the following.

\begin{Ex}\label{ex:b2notzero}
    Let $P$ be a triangle with angles $\frac{\pi}{6},\frac{\pi}{3},\frac{\pi}{2}$ and denote the vertices by $x_1,x_2,x_3$, respectively. Let $X_P$ be the flat cone sphere obtained by gluing the double of $P$ along their boundaries. More precisely, let $P'$ be the reflection of $P$ along the edge $x_1x_3$ and denote the reflection of point $x_2$ by $x_2'$, as in Figure~\ref{fig:example1}. We glue the edges by rotations as in Figure~\ref{fig:example1}. The resulting surface is $X_P$. Then $X_P$ is a flat cone sphere with cone angles $\frac{\pi}{3}, \frac{2\pi}{3}, \pi$, and we denote the singularities corresponding to the cone angles still by $x_1,x_2,x_3$.

     \begin{figure}[!htbp]
    	\centering
    	\begin{minipage}{0.8\linewidth}
        	\includegraphics[width=\linewidth]{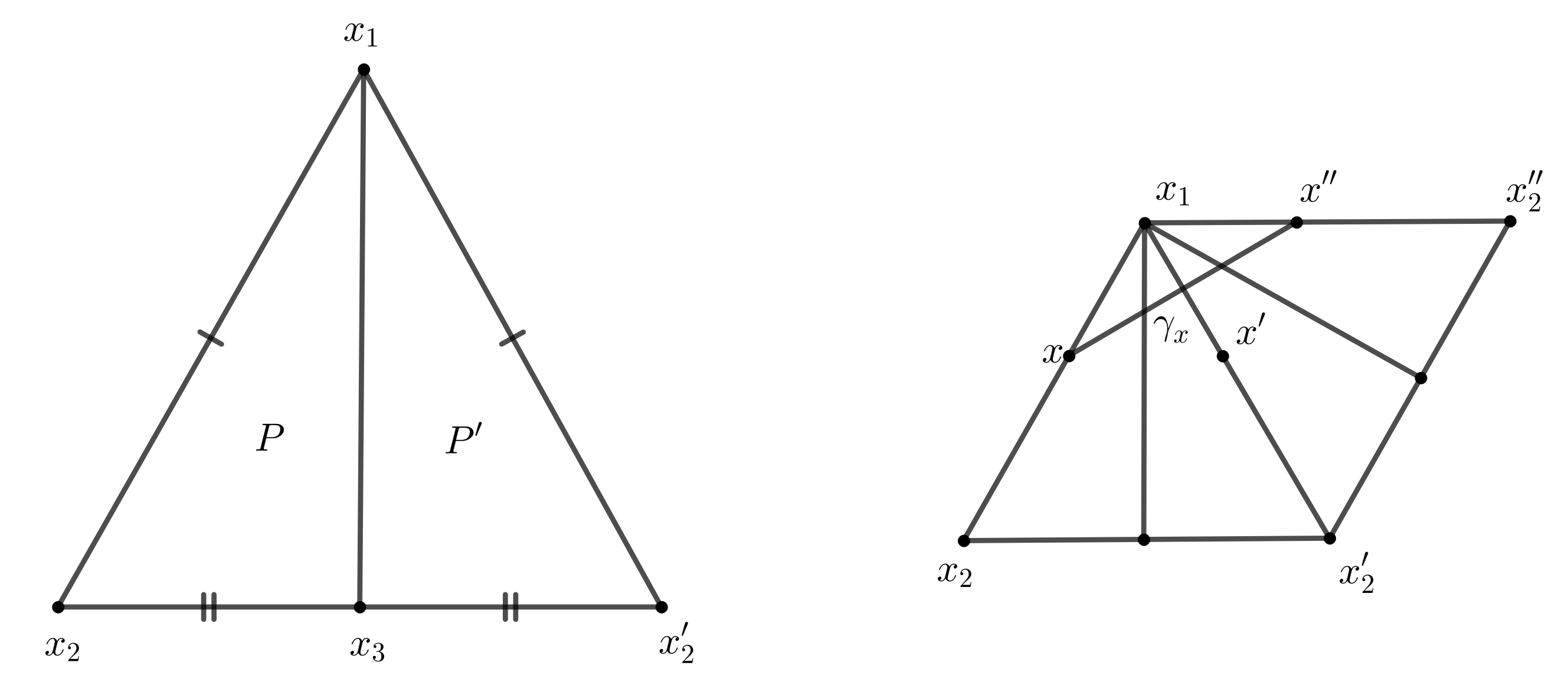}
        	\footnotesize
    	\end{minipage}
        \caption{The surface $X_P$ is obtained by the polygon on the left figure where the edges with the same decoration are glued by rotations. The trajectory $\gamma_x$ in $X_P$ can be unfolded on the plane as on the right figure, where the points $x,x',x''$ are identified as the same point in $X$.}\label{fig:example1}
    \end{figure}

    For any point $x$ in the interior of the edge $x_1x_2$ of $P$, we associate to it a trajectory $\gamma_x$ which starts and ends at $x$ as follows. Since the polygon $P \cup P'$ can be viewed as an unfolding of $X_P$, there is a point in the edge $x_1x_2'$ of $P'$ which corresponds to $x$ under the gluing, and we denote it by $x'$. We rotate the polygon $P \cup P'$ counterclockwise around $x_1$ by angle $\frac{\pi}{3}$ and denote the image of $x'$ by $x''$ after the rotation. We connect $x$ and $x''$ by a straight segment, and it induces a trajectory in $X_P$, denoted by $\gamma_x$. Notice that the self-intersection number of $\gamma_x$ is always one.
    
    \par
    Assume that there exists a positive constant $b_1$ such that 
    \begin{equation}\label{eq:ineqzeroconst}
        b_1\sqrt{\iota(\gamma,\gamma)} \leq |\gamma|
    \end{equation}
    for any trajectory $\gamma$ in $X_P$. In particular, for $\gamma_x$ we have that
    $$
    b_1 \leq |\gamma_x|.
    $$
    However, when the point $x$ in the edge $x_1x_2$ moves towards $x_1$, the length $|\gamma_x|$ goes to zero. Therefore, the inequality~\eqref{eq:ineqzeroconst} does not hold for $X_P$. As a result, the constant $b_2$ in the inequality~\eqref{equ:compareindividual} cannot be identically zero for all flat cone surfaces.
\end{Ex}

\section{Generalized Thurston surgeries}\label{bigsec:hulls}

In this section, we extend Thurston surgeries, introduced in Section~\ref{sec:thurstonsurgery}, by generalizing them to collapse clusters of singularities. To do this, we first define the notion of ``convex hulls" in Section~\ref{sec:convexhull}, followed by a detailed description of the generalized Thurston surgeries in Section~\ref{sec:multi}.

These tools are applied in Section~\ref{sec:trajectory} to prove Theorem~\ref{thm:combinandmetriclengthinhull}, which shows that the lengths of trajectories contained in convex hulls are bounded above. This theorem is a key component in the proof of Theorem~\ref{thm:mainfavorite}.

\par
Before proceeding, we clarify the concept of homotopies for later use. Let $X$ be a flat sphere with singularities at $x_1, \ldots, x_n$. The homotopy we consider is relative to the subset $\{x_1, \ldots, x_n\}$ of singularities on the flat cone sphere $X$. For an arc (or loop) $a$ on $X$, we denote its homotopy class by $\alpha$. We expand the class $\alpha$ to include curves on $X$ that are limits of sequences of curves within the original class. This enlargement allows the curves in $\alpha$ to pass through singularities, which enables us to discuss the shortest representative in the homotopy class $\alpha$.

\subsection{Core of infinite non-negative flat spheres}\label{sec:infiniteflatspheres}
The notion of the core of a meromorphic translation surface was introduced in~\cite{HKK}. In this subsection, we generalize this concept to infinite non-negative flat spheres.

Let $X$ be an infinite non-negative flat sphere. Throughout this paper, we assume that $X$ has exactly one \emph{pole}, that is, a singularity with curvature $k>1$.

In fact, there can be at most one pole with curvature greater than~1 on~$X$. Indeed, if there were two such poles, their total curvature would exceed~2, which would force the presence of negative curvatures to satisfy the Gauss--Bonnet formula. The only situation where $X$ may have two poles occurs when $X$ is an infinite cylinder, whose two ends correspond to poles of curvature~$-1$ and whose other conical singularities have zero curvature. Since this case does not arise in the present work, we exclude it from consideration.

\begin{lemma}\label{lem:uniqueshortest}
    Let $X$ be an infinite non-negative flat sphere. Let $\alpha$ denote the homotopy class of a simple loop around the pole. Then there exists a unique shortest representative $\gamma$ of~$\alpha$, which passes through some conical singularities of~$X$.
\end{lemma}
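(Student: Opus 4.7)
The plan is to prove existence by the direct method of calculus of variations and uniqueness by a geodesic-rigidity argument. Set $L = \inf\{|\gamma'| : \gamma' \in \alpha\}$ and fix a minimizing sequence $(\gamma_n) \subset \alpha$, each parametrized with constant speed on $S^1$. The key geometric input is that the pole is the unique puncture (as observed in the paragraph preceding the lemma) and is ``at infinity'' in the sense that a neighborhood $U$ of the pole is isometric to the complement of a ball in an infinite cone of cone angle $2\pi(k-1)$. Consequently, any loop homotopic to $\alpha$ that enters sufficiently close to the pole has length tending to infinity, so there is a compact set $K \subset X \setminus \{\text{pole}\}$ that contains the image of every $\gamma_n$ once $n$ is large. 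The $\gamma_n$ are then uniformly bounded and uniformly Lipschitz on $S^1$, so by Arzel\`a--Ascoli a subsequence converges uniformly to a continuous loop $\gamma$. Lower semi-continuity of length under uniform convergence, together with the enlarged definition of $\alpha$ recalled at the start of the section (which allows limits passing through singularities), gives $\gamma \in \alpha$ with $|\gamma| = L$, establishing existence.

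For uniqueness, I would argue that any shortest representative must be piecewise geodesic with bends only at conical singularities, since in the smooth (locally flat) part of $X$ one can always strictly shorten a non-geodesic arc by a small local isotopy. At a conical singularity $x$ where such a shortest loop has a bend, the cone angle measured on the pole side must be at least $\pi$: otherwise, using that $x$ has curvature $\ge 0$ (cone angle $\le 2\pi$) and hence the non-pole side has angle $> \pi$, one could push the bend slightly off $x$ toward the pole and strictly decrease the length, contradicting minimality. This characterization forces the shortest representative to trace out the boundary of the ``convex hull'' (in the sense introduced in Section~\ref{sec:convexhull}) of the conical singularities as viewed from the pole, which is canonically determined by the flat metric. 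Since two distinct shortest representatives would produce bounding regions whose symmetric difference could be used to construct a strictly shorter loop via cut-and-paste across a non-pole-side bend, the shortest loop is unique.

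The main obstacle will be making the ``push-off'' inequality rigorous and verifying that the resulting characterization really pins down a single simple loop. Delicate cases include bends where the pole-side angle equals $\pi$ exactly, configurations where several conical singularities are collinear along a candidate geodesic segment, and the possibility that the shortest representative runs through a singularity as a degenerate limit rather than bending at it. The cut-and-paste uniqueness argument must be executed inside simply connected flat subregions of $X$ obtained by cutting along portions of the two candidate loops, where the rigidity of geodesics in flat domains applies directly. Existence is more routine, but one should also record that the class $\alpha$ is closed under uniform limits inside $K$, which follows from a standard approximation by slight perturbations avoiding the singularities.
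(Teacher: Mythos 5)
Your existence argument and the characterization of minimizers (piecewise geodesic, bends only at conical singularities with pole-side angle at least $\pi$) are both sound; the paper treats existence as routine and establishes essentially that characterization separately in Lemma~\ref{lem:uniquecharacterization}. The serious gap is in your uniqueness step, and the tool you propose to close it --- ``rigidity of geodesics in flat domains'' --- is the wrong one. In the simply connected flat region $R$ bounded between two candidate minimizers, the arc $e_2 \subset \gamma_2$ is \emph{not} a geodesic of $R$: it has corners at conical singularities of $X$ lying on $\partial R$, so uniqueness of geodesics in flat disks does not force $e_1 = e_2$. Likewise, ``cut-and-paste across a non-pole-side bend'' does not automatically shorten anything: each candidate already satisfies the push-off inequality on its own pole side, so swapping arcs from the symmetric difference can leave length unchanged, and you cannot conclude strict shortening from the bend condition alone.

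What actually closes the argument, and what the paper does, is an angle-defect computation via the discrete Gauss--Bonnet formula rather than geodesic rigidity. Two distinct shortest loops $\gamma_1,\gamma_2$ are freely homotopic simple closed curves in $X$ minus its singularities, hence have geometric intersection number zero; if they intersect, the bigon criterion produces an innermost bigon $R$, with no singularities in its interior, bounded by arcs $e_1 \subset \gamma_1$ and $e_2 \subset \gamma_2$. Choosing $e_1$ on the pole side of $\gamma_2$, the interior of $e_1$ meets no singularities (all conical singularities lie on the non-pole side of $\gamma_2$), while each bend of $e_2$ has interior angle $\ge \pi$ inside $R$ by your own push-off inequality, and the two bigon vertices have interior angle strictly less than $\pi$. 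Applying $\sum_i 2\pi k_i + \sum_j (\pi - \theta_j) = 2\pi$ to the flat disk $R$ then forces $(\pi - \theta_1) + (\pi - \theta_2) \ge 2\pi$, i.e. $\theta_1 = \theta_2 = 0$, which is impossible. This Gauss--Bonnet contradiction is the missing ingredient; note that the paper obtains uniqueness directly from it, without first establishing the full ``convex hull'' characterization your proposal routes through, and your proposal should also record why two distinct shortest representatives cannot be disjoint (an annulus version of the same Gauss--Bonnet count handles that case).
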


\begin{proof}
Assume that there exist two distinct shortest representatives $\gamma_1$ and $\gamma_2$ in the class $\alpha$, both passing through conical singularities of $X$.

We claim that these two curves must intersect. Otherwise, since they belong to the same homotopy class~$\alpha$, they would bound an annulus on~$X$. As each $\gamma_i$ also bounds a neighborhood of the pole, we may assume that one boundary of the annulus, say $\gamma_1$, lies inside the neighborhood of the pole enclosed by~$\gamma_2$. However, since there are no singularities in the neighborhood of the pole, $\gamma_1$ cannot pass through any conical singularity, a contradiction.

Then, by the bigon criterion (see Section~1.2.4 in \cite{FM}), $\gamma_1$ and $\gamma_2$ form a bigon $R$, meaning the boundary of $R$ can be decomposed into two sub-paths, $e_1$ and $e_2$, where $e_i$ is contained in $\gamma_i$, respectively. The region $R$, with the induced metric from $X$, is a flat domain.  Without loss of generality, assume that the interior of $e_1$ is on the side of $\gamma_2$ containing the pole. This implies that there are no singularities in the interior of $e_1$, although $e_2$ may contain singularities. Since $\gamma_2$ is the shortest representative of $\alpha$, the angle at any singularity in the interior of $e_2$ must be at least $\pi$. However, by the Gauss-Bonnet formula, such a flat domain $R$ cannot exist, leading to a contradiction.
\end{proof}

\begin{definition}\label{def:core}
    Let $X$ be an infinite non-negative flat sphere, and let $\alpha$ be the homotopy class of a simple loop around the pole. Let $\gamma$ be the unique shortest representative of $\alpha$. The \emph{core} of $X$ is defined as the closed subset enclosed by $\gamma$, which has finite area. 
    We denote this core by $core(X)$. The core is said to be \emph{degenerated} if $core(X) = \partial core(X)$.
\end{definition}

\begin{remark}\label{rmk:degeneratedcore}
    Since $X$ is non-negative, the core is degenerated if and only if $\partial core(X)$ is contained in a single geodesic that joins two distinct conical singularities with curvatures of at least $\frac{1}{2}$, and may possibly contain singularities of zero curvature in the interior.
\end{remark}

\begin{lemma}\label{lem:uniquecharacterization}
    Let $X$ be an infinite non-negative flat sphere, and let $\alpha$ denote the homotopy class of a simple loop around the pole. Then a representative $\gamma$ of~$\alpha$ that passes through conical singularities of~$X$ is the shortest representative if and only if
    \begin{itemize}
        \item $\gamma$ consists of saddle connections, and
        \item The curve $\gamma$ divides $X$ into at most two connected components. The metric completion of the component containing the pole is a flat domain whose boundary corners have angles at least~$\pi$.
    \end{itemize}
    Moreover, any saddle connection on $X$ is contained within $core(X)$.
\end{lemma}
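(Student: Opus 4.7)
The plan is to prove the biconditional by treating each direction separately, and then establish the ``moreover'' clause via a flat Gauss-Bonnet computation in the pole-side component. The tools are local shortening at corners, the bigon criterion, and the Gauss-Bonnet formula for flat disks with cone-point corners. For the forward direction, let $\gamma$ be the unique shortest representative of $\alpha$ provided by Lemma~\ref{lem:uniqueshortest}. A standard local-shortening argument forces $\gamma$ to be locally a straight Euclidean segment away from singularities, so its corners occur only at singularities and $\gamma$ is a concatenation of saddle connections. At each corner $x$, write $\theta_p + \theta_c = 2\pi(1-k_x)$ for the pole-side and core-side angles; if $\theta_p < \pi$, a tiny shortcut into the pole-side component produces a strictly shorter curve in the same homotopy class, so $\theta_p \ge \pi$. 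A bigon argument analogous to the proof of Lemma~\ref{lem:uniqueshortest} rules out transverse self-intersections of $\gamma$, and combined with the degenerate behavior recorded in Remark~\ref{rmk:degeneratedcore} this shows that $\gamma$ cuts $X$ into at most two components with the pole-side one satisfying the angle condition.

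For the reverse direction, suppose $\gamma'$ satisfies the two listed properties and let $\gamma_0$ be the unique shortest representative of $\alpha$ given by Lemma~\ref{lem:uniqueshortest}, which by the forward direction also satisfies the properties. If $\gamma' \ne \gamma_0$, then $\gamma'$ and $\gamma_0$ together bound a non-empty collection of topological disk regions in $X$, each with boundary made of sub-arcs of $\gamma'$ and $\gamma_0$ meeting at crossing points with positive angles. Applying the flat Gauss-Bonnet formula
\begin{equation*}
\sum_{x \in \mathrm{int}(D)} 2\pi k_x + \sum_{i}(\pi - \alpha_i) = 2\pi
\end{equation*}
to a suitably chosen such disk $D$ that sits on the pole-side of both $\gamma'$ and $\gamma_0$, the interior curvature contributions are non-negative and each $\gamma$-corner contribution is non-positive, forcing the two crossing-corner contributions to exceed $2\pi$ in total, which contradicts the positivity of crossing angles. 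Hence $\gamma' = \gamma_0$ and $\gamma'$ is shortest.

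For the ``moreover'' statement, observe first that any representative of $\alpha$ is a simple loop separating the pole from every conical singularity, so all conical singularities, and in particular both endpoints of any saddle connection $\sigma$, lie in $\overline{core(X)}$. Suppose for contradiction that $\sigma$ leaves $core(X)$, and pick the first pair of consecutive crossings $p, q$ of $\sigma$ with $\gamma = \partial core(X)$, so that $\sigma_{pq}$ lies strictly in the pole-side component. Exactly one of the two sub-arcs of $\gamma$ joining $p$ to $q$, together with $\sigma_{pq}$, bounds a topological disk $D$ in the pole-side component not enclosing the pole. Since $D$ has no interior singularities, each $\gamma$-corner on $\partial D$ has interior angle $\ge \pi$, and $p, q$ have positive interior angles $\beta_p, \beta_q$, Gauss-Bonnet reads
\begin{equation*}
(\pi - \beta_p) + (\pi - \beta_q) + \sum_{i}(\pi - \alpha_i) = 2\pi,
\end{equation*}
forcing $\beta_p + \beta_q \le 0$, which is absurd; therefore $\sigma \subset core(X)$. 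The main obstacle I anticipate is the uniform treatment of the degenerate case in Remark~\ref{rmk:degeneratedcore}, where $\partial core(X)$ collapses to a tree of saddle connections and the ``boundary corners'' must be interpreted via full cone angles rather than genuine disk corners; once the definitions are adapted accordingly, the Gauss-Bonnet computations above carry over unchanged.
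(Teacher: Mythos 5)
Your overall strategy — local shortening, corner-angle lower bounds, and Gauss--Bonnet on complementary regions — matches the paper's, and your ``moreover'' argument is essentially identical. However, there are two concrete gaps in the biconditional.

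For the forward direction, the claim that ``a bigon argument analogous to the proof of Lemma~\ref{lem:uniqueshortest} rules out transverse self-intersections of $\gamma$'' does not hold up: the bigon criterion governs two distinct simple loops in the same class, not a single self-intersecting curve, and it is not clear what bigon you would form with $\gamma$ alone, nor that cutting such a bigon stays in the class $\alpha$. The paper instead uses a direct angle count: if $\gamma$ had a transverse self-intersection at $y$, the boundary of the pole-side completion $X_1$ would have two distinct preimages of $y$, each with corner angle at least $\pi$ by the shortcut argument you already invoked; their contributions alone already give cone angle at least $2\pi$ at $y$, and the two core-side sectors add strictly positive angle, so the cone angle at $y$ exceeds $2\pi$, contradicting non-negativity. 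This angle argument is what you need; the bigon phrasing should be replaced.

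For the reverse direction, the disk $D$ you want — ``sitting on the pole-side of both $\gamma'$ and $\gamma_0$'' — does not exist. A bigon between two simple loops around the pole lies on the pole side of exactly one of them and on the core side of the other (each bounded lens is inside one loop and outside the other). The correct configuration, which is exactly the one used in the proof of Lemma~\ref{lem:uniqueshortest}, is: take the bigon $R$ whose $\gamma'$-subarc $e'$ lies in the pole-side component of $\gamma_0$. Then $R$ is contained in that pole-side component, which has no conical singularities, so $R$ has no interior singularities and $e'$ has no corners; the $\gamma_0$-subarc $e_0$ has corners of angle at least $\pi$ since $R$ is on the pole side of $\gamma_0$. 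The Gauss--Bonnet identity
\[
(\pi - \beta_p) + (\pi - \beta_q) + \sum_{e_0\text{-corners}} (\pi - \alpha_i) = 2\pi
\]
then forces $\beta_p + \beta_q \le 0$, giving the contradiction. You should also note, as the paper implicitly does, that if $\gamma'$ and $\gamma_0$ were disjoint the bigon criterion would not apply; ruling out that case (or invoking the bigon criterion as in Lemma~\ref{lem:uniqueshortest}) is part of making the argument complete.
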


\begin{proof}
If $\gamma$ is the shortest representative of $\alpha$, we need only verify the second condition. 
Let $R_1$ be the component of $X \setminus \gamma$ containing the pole. The flat metric induces a length metric on $R_1$.
Let $X_1$ be the metric completion of the component containing the pole, with the induced length metric.
There is a canonical isometric immersion from $X_1$ into $X$, and the boundary $\partial X_1$ maps onto $\gamma$.
Since $\gamma$ is the shortest representative of $\alpha$, the boundary corners of $X_1$ must have angles of at least $\pi$.
\par
To show that $X \setminus \gamma$ has at most two connected components, notice that if $core(X)$ is degenerated, then $X \setminus \gamma$ has only one component. If $core(X)$ is non-degenerated, assume $X \setminus \gamma$ has more than two components. Then $\gamma$ has a self-intersection point, denoted by $y$. This means two points on the boundary of $X_1$ map to $y$, forcing the total angle at $y$ to exceed $2\pi$. This contradicts the assumption that $X$ is non-negative.
\par
Conversely, let $\gamma$ be a representative of $\alpha$ that satisfies the two conditions. In fact, the proof of Lemma~\ref{lem:uniqueshortest} also implies that the representative satisfying these conditions is unique. Thus, $\gamma$ must be the shortest representative of $\alpha$.
\par
For the final statement, assume there is a saddle connection $\gamma'$ not contained in $core(X)$. Then $\gamma'$ would intersect the boundary loop $\gamma$ of $core(X)$. This would create a bigon $R$, and the Gauss-Bonnet formula rules out the existence of such a bigon.
\end{proof}

\subsection{Convex hulls}\label{sec:convexhull}
We generalize the definition of cores for a cluster of conical singularities on a flat sphere.
\par
Let $X$ be a non-negative flat sphere and let $x_{i_1},\ldots,x_{i_m}$ be $m$ conical singularities of $X$. Let $\alpha$ be the homotopy class of a simple loop enclosing the singularities $x_{i_1},\ldots,x_{i_m}$ on one side only. In general, the shortest representatives of $\alpha$ are not unique. However, if we restrict our attention to shortest representatives passing through singularities within the set $\{x_{i_1},\ldots,x_{i_m}\}$, then such a representative, if it exists, is unique. This is formalized in the following lemma.

\begin{lemma}\label{lem:characterizeconvexhulls}
    Let $X$ be a non-negative flat sphere. Let $\alpha$ be the homotopy class of a simple loop, and let $x_{i_1},\ldots,x_{i_m}$ be the conical singularities in one connected component of the complement of the loop. If there exists a shortest representative of $\alpha$ passing through singularities belonging to $\{x_{i_1},\ldots,x_{i_m}\}$, then such a shortest representative is unique.
\end{lemma}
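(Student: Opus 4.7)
The plan is to mirror the proof of Lemma~\ref{lem:uniqueshortest}, producing a bigon from the two candidate representatives and deriving a contradiction via Gauss--Bonnet. Suppose for contradiction that $\gamma_1\neq\gamma_2$ are two distinct shortest representatives of $\alpha$ passing only through singularities in $S:=\{x_{i_1},\ldots,x_{i_m}\}$, and write $U_i$ for the $S$-side region of $\gamma_i$. Applying the bigon criterion yields an embedded bigon $R$ whose boundary decomposes as $e_1\cup e_2$ with $e_i\subset\gamma_i$ and the two arcs meeting at corners $P,Q$; after possibly swapping $\gamma_1\leftrightarrow\gamma_2$, I may assume $R\subset U_2\setminus U_1$, so $R$ lies on the $S$-side of $\gamma_2$ and on the non-$S$-side of $\gamma_1$.

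The key geometric input is that $R$ is free of singularities in its interior: since $\gamma_1,\gamma_2$ both separate $S$ from the remaining singularities of $X$, we have $S\subset U_1\cap U_2$ while the non-$S$ singularities lie in $U_1^c\cap U_2^c$, so neither family meets $U_2\setminus U_1$. Similarly, the interior of $e_2$ lies in $\gamma_2\cap U_1^c$, which contains no singularity of $S$; since $\gamma_2$ passes only through $S$, the interior of $e_2$ is singularity-free as well. Only the interior of $e_1$ can contain singularities, all belonging to $S$. I would then establish the convex-hull analogue of Lemma~\ref{lem:uniquecharacterization} by a local perturbation argument: at any singularity $x\in S$ on a shortest representative $\gamma$, pushing $\gamma$ slightly into its non-$S$-side around $x$ yields a curve still in $\alpha$ (because $x$ then lies on the $S$-side of the perturbed curve, preserving the required partition of singularities), so by shortness the non-$S$-side angle of $\gamma$ at $x$ must be at least $\pi$. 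Applied to interior singularities of $e_1$, and since $R$ sits on the non-$S$-side of $\gamma_1$, each such boundary corner of $R$ has angle $\geq\pi$ and therefore contributes non-positively to the Gauss--Bonnet defect.

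Applying Gauss--Bonnet to the flat domain $R$ (disk, so $\chi(R)=1$, with geodesic boundary) then gives
\[2\pi \leq (\pi-\theta_P)+(\pi-\theta_Q),\]
i.e., $\theta_P+\theta_Q\leq 0$, contradicting the strict positivity of the bigon's corner angles (whether $P,Q$ are regular crossings, where $\theta\in(0,\pi)$, or singularities in $S$, where $\gamma_1$ and $\gamma_2$ necessarily depart in distinct directions on the cone). Hence $\gamma_1=\gamma_2$. The principal obstacle I anticipate is establishing the convex-hull analogue of Lemma~\ref{lem:uniquecharacterization} sketched above; a secondary point I would verify is that the corners $P,Q$ cannot degenerate to a shared sub-arc of $\gamma_1$ and $\gamma_2$, which follows by choosing $R$ to be an innermost bigon and using the standard bigon criterion adapted to the enlarged homotopy class.
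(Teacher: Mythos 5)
Your proof is correct and follows essentially the same route as the paper's intended argument. The paper states that the proof of this lemma ``is essentially the same as that of Lemma~\ref{lem:uniqueshortest},'' i.e.\ the bigon-plus-Gauss--Bonnet strategy: obtain an embedded bigon $R$ from the bigon criterion, normalize so that one bounding arc carries no singularities, observe that the remaining boundary singularities have $R$-side angle at least $\pi$ by the shortest-length property, and conclude via Gauss--Bonnet that $R$ cannot exist. You reproduce this faithfully (with $e_1,e_2$ relabelled relative to the paper's Lemma~\ref{lem:uniqueshortest}) and, helpfully, you spell out the local-perturbation justification for the angle condition that the paper only asserts. Your normalization $R\subset U_2\setminus U_1$ (up to swap) is the correct analogue of the paper's ``interior of $e_1$ on the pole side of $\gamma_2$''; it is the step both your argument and the paper's leave tacit — namely ruling out a bigon sitting in $U_1\cap U_2$ — and your proposal does not address that case any less than the paper does. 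The caveat you flag about shared sub-arcs and the enlarged homotopy class is the right thing to be careful about, for the same reason it matters implicitly in Lemma~\ref{lem:uniqueshortest}.
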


The proof of this lemma is essentially the same as that of Lemma~\ref{lem:uniqueshortest}, so we omit it here.

\begin{definition}\label{def:convexhull}
    Let $X$ be a non-negative flat sphere. Let $\alpha$ be the homotopy class of a simple loop, and let $x_{i_1},\ldots,x_{i_m}$ be the conical singularities in one connected component of the complement of the loop. Assume that the shortest representative $\gamma$ of $\alpha$ passing through singularities belonging to $\{x_{i_1},\ldots,x_{i_m}\}$ exists. Then we define the \emph{convex hull of $x_{i_1},\ldots,x_{i_m}$ along $\alpha$} to be the closed subset enclosed by $\gamma$, which contains the singularities $x_{i_1},\ldots,x_{i_m}$. We denote the convex hull $D_{\alpha}$. The convex hull $D_{\alpha}$ is said to be \emph{degenerate} if $\partial D_{\alpha} = D_{\alpha}$.
\end{definition}

\begin{remark}\label{rmk:degeneratedconvexhull}
    Similar to Remark~\ref{rmk:degeneratedcore}, a convex hull $D_{\alpha}$ on a flat sphere is degenerated if and only if $D_{\alpha}$ is contained in a single geodesic that joins two distinct conical singularities with curvatures of at least $\frac{1}{2}$, and may possibly contain singularities of zero curvature in the interior.
\end{remark}

\begin{figure}[!htbp]
	\centering
	\begin{minipage}{0.8\linewidth}
		\includegraphics[width=\linewidth]{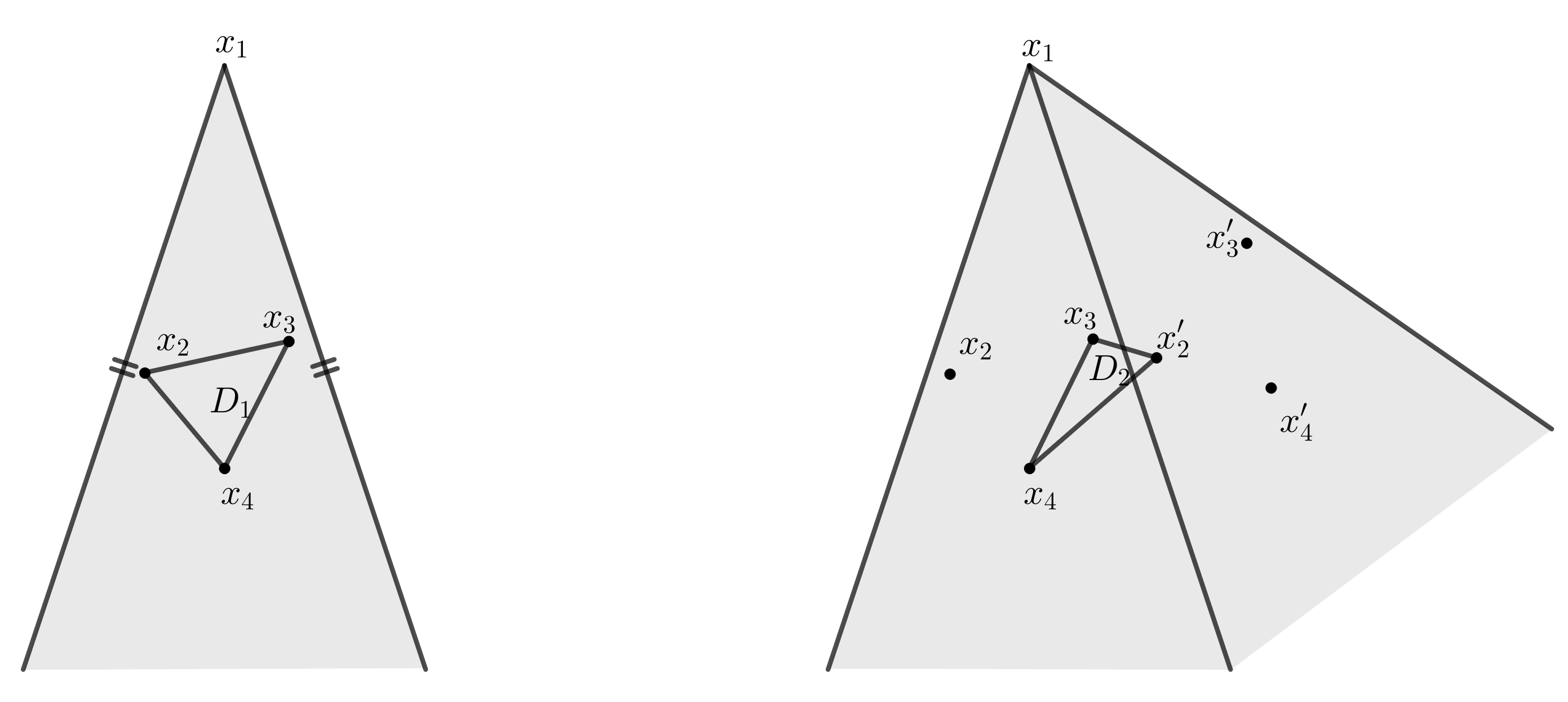}
		\footnotesize
	\end{minipage}
    \caption{The grey cone on the left is an unfolding of a neighborhood of a singularity $x_1$ on some flat cone sphere. The points $x_2,x_3,x_4$ are singularities of zero curvature. By developing the grey cone, we obtain the picture on the right. Notice that $D_1$ and $D_2$ induce two convex hulls of $x_2,x_3,x_4$ in the flat sphere but along different homotopy classes of loops.}
\end{figure}

As in Lemma~\ref{lem:uniquecharacterization}, the boundary $\partial D_{\alpha}$ can be characterized as follows. The proof is similar to that of Lemma~\ref{lem:uniquecharacterization}, so we omit it here.

\begin{lemma}
    Let $X$ be a non-negative flat sphere and let $x_{i_1},\ldots,x_{i_m}$ be $m$ conical singularities of $X$. Let $\alpha$ be the homotopy class of a simple loop enclosing the singularities $x_{i_1},\ldots,x_{i_m}$ on one side only. Assume that the convex hull $D_{\alpha}$ of $x_{i_1},\ldots,x_{i_m}$ along $\alpha$ exists. Then a representative $\gamma$ of $\alpha$ is the boundary loop $\partial D_{\alpha}$ if and only if $\gamma$ satisfies:
    \begin{itemize}
        \item $\gamma$ consists of saddle connections with endpoints in $x_{i_1},\ldots,x_{i_m}$, and
        \item The curve $\gamma$ divides $X$ into at most two connected components. The metric completion of the component containing the singularities not in $\{x_{i_1},\ldots,x_{i_m}\}$ is a flat domain whose boundary corners have angles at least~$\pi$.
    \end{itemize}
    Moreover, any saddle connection in $X$ whose endpoints lie in $\{x_{i_1},\ldots,x_{i_m}\}$ and whose homotopy class has zero geometric intersection number with~$\alpha$ is contained in~$D_{\alpha}$.
\end{lemma}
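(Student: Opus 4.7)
The plan is to mirror the argument used for Lemma~\ref{lem:uniquecharacterization}, replacing the role of the core and its boundary loop around the pole by the convex hull $D_\alpha$ and its boundary loop. Uniqueness has already been handled by Lemma~\ref{lem:characterizeconvexhulls}, so the task reduces to a characterization plus a bigon/Gauss--Bonnet argument for saddle connections of zero geometric intersection with $\alpha$.

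For the forward direction, assume $\gamma = \partial D_\alpha$, i.e.\ $\gamma$ is the (unique) shortest representative of $\alpha$ among loops passing only through singularities in $\{x_{i_1},\ldots,x_{i_m}\}$. Since $\gamma$ is length-minimizing in its homotopy class (subject to the allowed vertices), away from singularities it must be a local geodesic of the flat metric, hence straight, and it can change direction only at points where the angles on both sides are at least $\pi$ (otherwise a small push across the vertex strictly shortens $\gamma$). The corners on the side containing $x_{i_1},\ldots,x_{i_m}$ lie at some of these singularities by hypothesis, and no non-labeled singularity of positive curvature can lie on $\gamma$, so $\gamma$ is a concatenation of saddle connections with endpoints in $\{x_{i_1},\ldots,x_{i_m}\}$, yielding condition (i). The same length-minimizing property forces the corner angles of the flat domain obtained by metric-completing the complementary component containing the remaining singularities to be at least $\pi$, which gives condition (ii). That $X\setminus\gamma$ has at most two components follows from the bigon obstruction: a self-intersection of $\gamma$ would force, near the crossing, two boundary points of the completed flat domain to be identified, contradicting either the non-negativity of $X$ via Gauss--Bonnet or the $\pi$-angle condition.

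For the converse, let $\gamma$ be a representative of $\alpha$ satisfying (i) and (ii). By Lemma~\ref{lem:characterizeconvexhulls}, the shortest such representative is unique, so it suffices to prove $\gamma$ is shortest. Suppose some $\gamma'$ in the same class, still passing only through singularities in $\{x_{i_1},\ldots,x_{i_m}\}$, has strictly smaller length. By the bigon criterion, $\gamma$ and $\gamma'$ bound a bigon $R$ whose two sides $e_1\subset\gamma$ and $e_2\subset\gamma'$ have total interior angles bounded below: along $e_1$, condition (ii) gives corner angles at least $\pi$ on the $R$-side, and along $e_2$, minimality of $\gamma'$ in its homotopy class forces the same lower bound at any interior corner. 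Computing the area of $R$ via Gauss--Bonnet shows the sum of angle defects is too small to be compatible with a non-negative flat sphere structure on $R$ with two boundary cusps, which is a contradiction. Hence no such $\gamma'$ exists, and $\gamma = \partial D_\alpha$.

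Finally, for the closing statement, let $\sigma$ be a saddle connection with endpoints in $\{x_{i_1},\ldots,x_{i_m}\}$ whose homotopy class has zero geometric intersection with $\alpha$. If $\sigma$ were not contained in $D_\alpha$, some sub-arc of $\sigma$ would exit $D_\alpha$ across $\gamma = \partial D_\alpha$ and reenter, producing together with a sub-arc of $\gamma$ a bigon with at most two corners of angle less than $\pi$; this bigon is ruled out by the same Gauss--Bonnet argument as in Lemma~\ref{lem:uniqueshortest}, since $X$ is non-negative. Therefore $\sigma\subset D_\alpha$. The main technical obstacle will be the careful bookkeeping in the bigon/Gauss--Bonnet step when some boundary corners have angle exactly $\pi$ or when $D_\alpha$ is degenerate in the sense of Remark~\ref{rmk:degeneratedconvexhull}; in the degenerate case, the convex hull collapses to a geodesic segment and the statement must be interpreted accordingly, but the same angle analysis still applies.
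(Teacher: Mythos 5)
Your proposal follows the same route as the paper's (which simply says "the proof is similar to that of Lemma~\ref{lem:uniquecharacterization}, so we omit it"): adapt the core/pole argument to the convex hull, using the bigon criterion and Gauss--Bonnet. The overall structure is right, but there are two genuine slips.

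First, in the forward direction you write that the shortest representative "can change direction only at points where the angles on both sides are at least $\pi$." This is false on a non-negative flat sphere: a conical singularity has cone angle strictly less than $2\pi$, so a curve passing through it cannot have both side-angles $\ge\pi$. The correct statement, and the one you actually need, is that the angle $\ge\pi$ condition is only forced on the side where you are free to push the curve while staying in the class $\alpha$ and avoiding the forbidden singularities, i.e.\ on the side of the component not containing $x_{i_1},\ldots,x_{i_m}$. On the $D_\alpha$ side the angle at a corner of $\gamma$ at a labeled singularity $x_{i_j}$ is typically strictly less than $\pi$ (and must be, since the total cone angle there is $<2\pi$). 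Your subsequent conclusion (ii) is only about the outer component, so the final claim is fine, but the intermediate justification should be corrected to the one-sided push argument.

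Second, in the converse direction you let $\gamma'$ be any representative of strictly smaller length and then claim "minimality of $\gamma'$ in its homotopy class forces" the angle lower bound along $e_2$. That does not follow from being merely shorter than $\gamma$; an arbitrary shorter representative need not be geodesic and need not have any corner-angle control. You should instead take $\gamma'$ to be the actual shortest representative passing through singularities in $\{x_{i_1},\ldots,x_{i_m}\}$ (which exists and is unique by Lemma~\ref{lem:characterizeconvexhulls}), apply the forward direction to see $\gamma'$ satisfies (i)--(ii), and then run the bigon/Gauss--Bonnet argument between $\gamma$ and $\gamma'$ — this is precisely the logic of the paper's proof of Lemma~\ref{lem:uniquecharacterization}, which first establishes uniqueness of representatives satisfying (i)--(ii) and only then identifies $\gamma$ with $\partial D_\alpha$. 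Your final paragraph on saddle connections with zero geometric intersection and the handling of the degenerate case are both in order.
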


\begin{remark}\label{rmk:lessthan1}
    From the second condition above, the Gauss-Bonnet formula implies that if the convex hull of $x_{i_1},\ldots,x_{i_m}$ exists, then the sum of their curvatures is less than or equal to one.
\end{remark}

\subsection{Generalized Thurston surgeries}\label{sec:multi}
Let $X$ be a non-negative flat sphere. 
Assume that a convex hull, denoted by $D$, exists in $X$; see Figure~\ref{fig:1}. The flat metric restricted to $X\setminus D$ induces a length metric. We denote by $X_D$ the metric completion of $X\setminus D$ with this induced length metric. Note that there is a canonical immersion from $X_D$ into $X$.

\begin{lemma}\label{lem:infinitecone}
    Let $X$ be a non-negative flat sphere and let $D$ be a convex hull in $X$. Let $C$ be the infinite cone whose curvature at the apex is equal to the sum of the curvatures of the singularities in $D$. Then there exists a flat domain $C_D$ contained in $C$, unique up to isometry of $C$, such that:
    \begin{itemize}
        \item the boundary of $C_D$ is isometric to the boundary of $X_D$,
        \item for any point $y$ in $\partial C_D$ corresponding to a singularity $x$ in $\partial X_D$, the angle of the corner at $y$ outside $C_D$ is equal to the angle of the corner of $X_D$ at $x$.
    \end{itemize}
\end{lemma}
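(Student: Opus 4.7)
The plan is to construct $C_D$ by developing the boundary loop $\partial X_D$ into the cone $C$, and verifying that the developed loop bounds a flat domain around the apex with the required corner angles. Write $\partial X_D$ as a cyclic sequence of saddle connections $\gamma_1,\ldots,\gamma_m$ of lengths $\ell_1,\ldots,\ell_m$, meeting at boundary singularities $p_1,\ldots,p_m$ with interior angles $\beta_i\ge\pi$ on the $X_D$-side (by the characterization of $\partial D$ in Section~\ref{sec:convexhull}). Let $K = \sum_{x_i\in D} k_i \in [0,1]$ denote the total collapsed curvature.

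First I would carry out an angle count via Gauss--Bonnet applied to the flat disk $D$. Using that the interior angle of $D$ at the boundary vertex $p_i$ is $\alpha_i = \theta_i - \beta_i$ with $\theta_i = 2\pi(1-k_i)$, and that the interior singularities of $D$ have curvatures summing to $K - \sum_i k_i$, the Gauss--Bonnet formula $\sum_i (\pi - \alpha_i) + 2\pi(K-\sum_i k_i) = 2\pi$ yields, after rearrangement,
\[
\sum_{i=1}^{m}(\beta_i - \pi) = 2\pi(1-K).
\]
This identity matches the total excess angle along $\partial X_D$ with the opening angle of a planar sector representing $C$.

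Next I would construct $C_D$ by developing. Realize $C$ as the quotient of an infinite planar sector $S$ of angle $2\pi(1-K)$ by identifying its two radial edges via rotation around the apex. Fix an isometric embedding of the initial oriented edge $\gamma_1$ into $S$ as a straight segment, with the apex placed on the side designated to become the interior of $C_D$. Lay out the successive edges $\gamma_2,\ldots,\gamma_m$ as planar segments of lengths $\ell_2,\ldots,\ell_m$, turning at each vertex $p_i'$ by the exterior angle $\beta_i - \pi \ge 0$ toward the apex. By the angle identity above the total turning equals the sector angle $2\pi(1-K)$, so the final direction of $\gamma_m$ matches the starting direction of $\gamma_1$ under the cone identification. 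That the developed endpoint also coincides with the starting point in $C$ follows from matching holonomies: the holonomy of $\partial X_D$ in $X$, which encloses the singularities of $D$ with total curvature $K$, equals the holonomy of a simple loop around the apex of $C$, and this identification is made rigorous by working in the universal cover of $C\setminus\{\mathrm{apex}\}$. The region bounded on the apex side is then $C_D$: local convexity $\beta_i' := 2\pi-\beta_i \le \pi$ combined with the exact turning count forces the loop to encircle the apex exactly once and to bound a topological disk, and Gauss--Bonnet applied to $C_D$ confirms that its unique interior singularity (the apex) has curvature $K$. By construction $\partial C_D$ is isometric to $\partial X_D$ edge by edge, and the corner angle outside $C_D$ at $p_i'$ equals $2\pi - \beta_i' = \beta_i$, as required. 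Uniqueness up to isometry of $C$ follows from the rigidity of the developing procedure: the sole freedom is the planar placement of $\gamma_1$ in $S$, and different placements are related by rotations (and, if $K=0$, reflections) of $C$.

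The main obstacle will be the rigorous justification of the closure step: showing that the developed loop not only closes, but also wraps around the apex exactly once and bounds an embedded disk rather than an immersed region with self-crossings. The hypothesis $\beta_i \ge \pi$ is exactly the input that controls the sign of every turn and prevents such pathologies. Secondary technical care is required for the degenerate cases described in Remark~\ref{rmk:degeneratedconvexhull}, where edges of $\partial X_D$ are traversed twice and $C_D$ collapses to a segment, and for the limiting value $K=1$ in which $C$ itself degenerates and the statement must be interpreted in a limiting sense.
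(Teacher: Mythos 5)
You take a genuinely different route from the paper. The paper's proof is inductive: it picks a saddle connection inside $D$, applies Thurston's surgery along it, and uses Lemma~\ref{lem:remains} to keep $X_D$ isometrically embedded in the new surface; iterating, it reduces to a domain $D_m$ containing a single cone point, which is then unfolded along a radial cut into a bounded sector of angle $2\pi(1-k)$, with the apex located at the fixed point of the holonomy of $\partial D_m$. You instead develop $\partial X_D$ directly into the cone $C$. Your Gauss--Bonnet identity $\sum_i(\beta_i-\pi)=2\pi(1-K)$ is correct, and your holonomy closure argument is in substance the same as the paper's final step (both locate the apex at the holonomy fixed point of the boundary loop). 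The trade-off: the paper's iterative scheme absorbs the hard geometric content — that the construction produces an \emph{embedded} flat disk rather than an immersed self-crossing region — into the already-established one-step Thurston surgery (at the mild cost of needing a saddle connection inside the current hull at each step), whereas your direct development is shorter and more conceptual but concentrates all the geometric difficulty into the closure step.

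That step is a genuine gap in your proposal, and you rightly flag it yourself: the assertion that local convexity ($\beta_i\ge\pi$) together with the exact turning count forces the developed loop to encircle the apex exactly once and bound an embedded disk is stated, not proved. It is true and can be carried out (e.g.\ via monotonicity of the angular coordinate of the developed path in the universal cover of $C\setminus\{\mathrm{apex}\}$), but as written the proposal does not contain this argument, and it is precisely what the paper's inductive structure is designed to sidestep. Two smaller points: your phrase ``with the apex placed on the side designated to become the interior of $C_D$'' obscures the real mechanism, which is that one must translate the development so that the holonomy fixed point of $\partial X_D$ lands at the apex (for $K=0$ the holonomy is trivial and $C$ is the Euclidean plane, so uniqueness is only up to the full group $E(2)$, not just rotations or reflections). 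And the degenerate case, where $C_D$ collapses to a slit, is acknowledged but not treated; the paper handles it separately via the one-edge Thurston surgery.
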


\begin{proof}
    We first prove the existence of $C_D$. If $D$ is degenerated, Remark~\ref{rmk:degeneratedconvexhull} implies that $D$ is contained in a single geodesic $\gamma$ which joins two distinct conical singularities and may contain singularities of zero curvature in its interior. We can disregard the singularities of zero curvature in the interior and treat $\gamma$ as a saddle connection. Then $C_D$ can be defined as the added cone $C_{\gamma}$ resulting from Thurston surgery along $\gamma$.
    \par
    Now assume that $D$ is non-degenerated. Let $\gamma$ be a saddle connection contained in $D$. By performing Thurston surgery along $\gamma$, we obtain a new flat sphere $X_1$. According to Lemma~\ref{lem:remains}, $X_{\gamma}$ is isometrically embedded into $X_1$. We view $X_D \subset X_{\gamma}$ as also being contained in $X_1$. The portion of $X_1$ that lies outside $X\setminus D$ is denoted $D_1$, with one fewer singularity than $D$. By repeatedly applying this procedure, we construct a sequence of flat spheres $X_1, X_2, \dots, X_m$ and a sequence of flat domains $D_1, D_2, \dots, D_m$, where $X_D$ is isometrically embedded onto the complement of the interior of $D_i$ in $X_i$, and only one singularity remains in $D_m$. The curvature at this singularity equals the total curvature of the convex hull $D$, denoted by $k$.
    \par
    Now we show that $D_m$ is the desired flat domain $C_m$.
    Since $X_{D}$ is isometrically embedded onto the complement of the interior of $D_m$ in $X_m$, we have that the boundary of $D_m$ is isometric to the boundary of $X_D$.
    
    Moreover, since the angle at each boundary corner of $X_D$ is at least $\pi$, the angles at the corners of $D_m$ are at most $\pi$. By cutting along a trajectory from the singularity in $D_m$ to the boundary, we unfold $D_m$ into a bounded sector with an angle of $2\pi(1-k)$. The position of the center of this sector is determined by the fixed point of the holonomy along the loop $\partial D_m$. By completing the bounded sector into an infinite sector and gluing the radial rays, we obtain the infinite cone $C$. Hence, $D_m$ becomes a flat domain contained in $C$. Furthermore, the way to construct $C$ from $D_m$ ensures that $D_m$ is unique up to rotation around the apex of $C$.
\end{proof}

Let $D_1, \ldots, D_s$ be disjoint convex hulls on a non-negative flat surface $X$. Let $C_{D_i}$ be the flat domain in $C$ associated with $D_i$, according to Lemma~\ref{lem:infinitecone}. We define the \emph{generalized Thurston surgery} of $X$ along $D_1, \ldots, D_s$ as follows:
\begin{itemize}
    \item Cut out each $D_i$ from $X$ and glue $C_{D_i}$ to the surface along $\partial D_i$ for each $i$. Denote the resulting flat sphere by $X^{(0)}$.
    \item For each $D_i$, glue the metric completion of $C\setminus C_{D_i}$ to $D_i$ along $\partial D_i$. Denote the resulting infinite flat sphere by $X_i$.
\end{itemize}
We call $X^{(0)}$ the \emph{top} flat sphere and $X_1, \ldots, X_s$ the \emph{infinitesimal} flat spheres obtained by the generalized Thurston surgery. We call $C_{D_1}, \ldots, C_{D_s}$ the \emph{added cones} during the surgery.

\begin{figure}[!htbp]
	\centering
	\begin{minipage}{0.8\linewidth}
	\includegraphics[width=\linewidth]{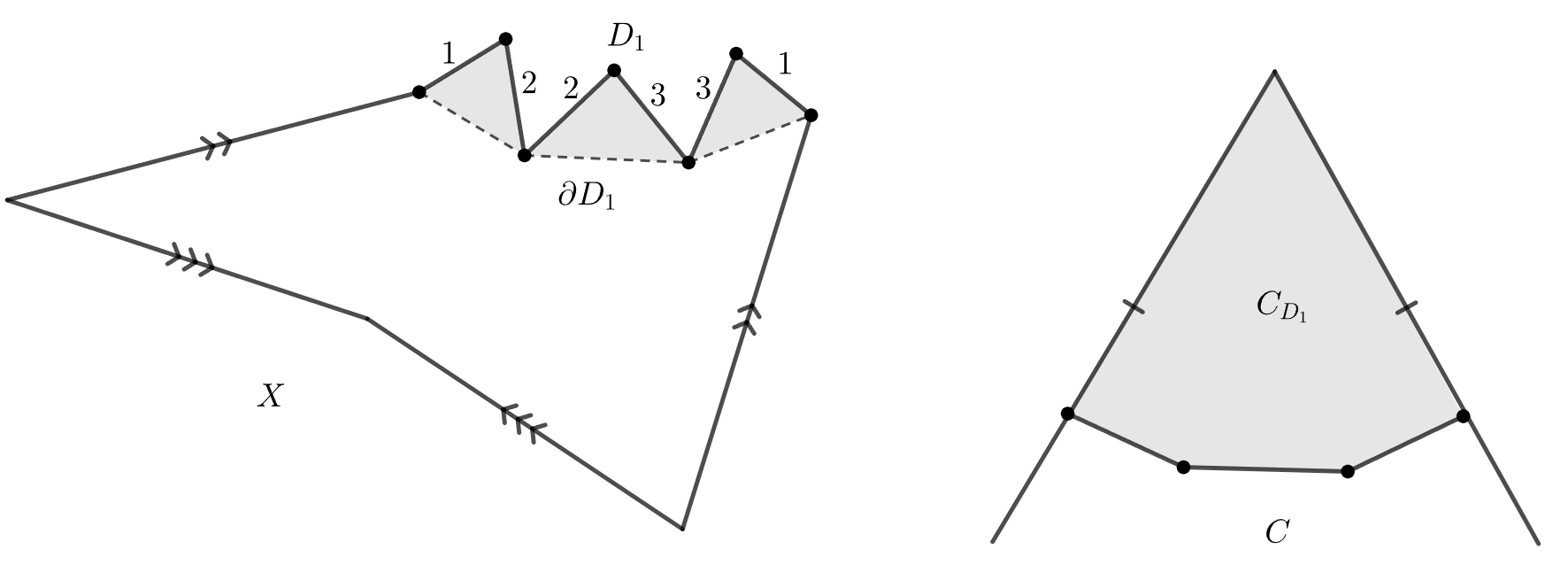}
	\footnotesize
	\end{minipage}
    \caption{The gray regions on the left form a convex hull $D_1$ in $X$; the generalized Thurston surgery cut out $D_1$ and glue $C_{D_1}$ (on the right) to $X\setminus D_1$ along the dotted line $\partial D_1$.}\label{fig:1}
\end{figure}

\begin{remark}
    In this paper, we refer to both the Thurston surgeries described in Section~\ref{sec:thurstonsurgery} and the generalized Thurston surgeries as \emph{Thurston surgeries}. Whenever a surgery is applied, we will specify whether it is performed along a saddle connection or along convex hulls.
\end{remark}

The following lemma is analogous to Lemma~\ref{lem:remains}. The proof is similar and is omitted here.

\begin{lemma}\label{lem:generalizedremains}
    Let $X$ be a non-negative flat sphere and let $D_1, \ldots, D_s$ be disjoint convex hulls in $X$. Then the metric completion of $X\setminus\bigcup_{1 \le i \le s} D_i$, together with the induced length metric, is isometrically embedded into $X^{(0)}$ in the sense that the embedding induces an isometry on the tangent spaces.
\end{lemma}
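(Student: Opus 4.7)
The plan is to mirror the proof of Lemma~\ref{lem:remains} and localize the argument near each convex hull, exploiting that the $D_i$ are pairwise disjoint so the construction of $X^{(0)}$ at each boundary loop $\partial D_i$ is independent of the others. By construction, $X^{(0)}$ is obtained from the flat surface $X\setminus \bigcup_{i=1}^s D_i$ (with its induced length metric) by gluing in the added cones $C_{D_1},\ldots,C_{D_s}$ along the boundary loops. Lemma~\ref{lem:infinitecone} guarantees that $\partial C_{D_i}$ is isometric to $\partial(X\setminus D_i)$ and that the two corner angles at each matched boundary singularity sum correctly to the local cone angle in $X^{(0)}$. Hence the gluing is well-defined and produces a bona fide flat metric on $X^{(0)}$.

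First I would define the canonical embedding $\iota: X\setminus\bigcup_i D_i \hookrightarrow X^{(0)}$ obtained from the construction of $X^{(0)}$. Since away from the boundary loops $\partial D_i$ the metric on $X^{(0)}$ is inherited directly from $X$, the map $\iota$ is expressed in flat charts by the identity, and therefore induces an isometry on every tangent space in the open complement. In particular, $\iota$ is a local isometry and hence $1$-Lipschitz when $X\setminus\bigcup_i D_i$ carries the induced length metric (the length of any path is preserved, so the infimum of lengths between two points can only decrease in the ambient surface $X^{(0)}$, but since $X\setminus\bigcup_i D_i$ is an open subset with the induced length metric, we get the Lipschitz bound needed).

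Next, since $\iota$ is uniformly continuous, it extends uniquely to a continuous map $\bar\iota$ from the metric completion (denote it $X_{\mathbf{D}}$ where $\mathbf{D}=(D_1,\ldots,D_s)$) to $X^{(0)}$. I would check that $\bar\iota$ is injective and isometric on tangent spaces at boundary points as follows. A point $p \in X_{\mathbf{D}}\setminus (X\setminus\bigcup_i D_i)$ corresponds to a boundary point on some $\partial D_i$, and the image $\bar\iota(p)$ lies on the seam $\partial C_{D_i}\subset X^{(0)}$. By the angle-matching condition in Lemma~\ref{lem:infinitecone}, a small metric ball around $p$ in $X_{\mathbf{D}}$ is a flat sector (respectively a flat half-disk at a non-singular boundary point) whose angle equals the angle of the corresponding sector/half-disk in $X^{(0)}$ on the side of $X\setminus D_i$. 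Since the glued cone $C_{D_i}$ only contributes sectors on the opposite side of the seam, there is no collision of preimages and $\bar\iota$ is injective with an isometry on tangent spaces at $p$.

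The main obstacle I anticipate is at boundary corners of $\partial D_i$ that carry singularities: one must confirm that the completion $X_{\mathbf{D}}$ sees precisely the ``outside'' sector at such a corner and nothing more, so that $\bar\iota$ does not collapse distinct completion points into a single image. This is exactly what the second bullet of Lemma~\ref{lem:infinitecone} provides: the corner angle outside $C_{D_i}$ equals the corner angle of $X_{\mathbf{D}}$ at the corresponding singularity, so the local models agree and $\bar\iota$ is locally an isometric embedding. A degenerated convex hull is handled by viewing $C_{D_i}$ as the cone $C_\gamma$ from a single saddle connection Thurston surgery, reducing that case to Lemma~\ref{lem:remains} applied iteratively, exactly as in the proof of Lemma~\ref{lem:infinitecone}.
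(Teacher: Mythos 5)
Your proof is correct and takes essentially the same approach as the paper, which remarks that the proof is analogous to that of Lemma~\ref{lem:remains} and omits it; you faithfully mirror that argument (canonical local isometry on the open complement, extension to the metric completion by uniform continuity). Your added verification of injectivity and of the tangent-space isometry at boundary corners via the angle-matching clause of Lemma~\ref{lem:infinitecone}, and your treatment of the degenerate case, fill in details the paper leaves implicit but do not change the method.
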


The following lemma will be frequently used in Section~\ref{bigsec:coor}.

\begin{lemma}\label{lem:areaestimates}
    Let $X$ be a convex flat cone sphere, and let $D_1, \ldots, D_s$ be disjoint convex hulls in $X$. Let $X^{(0)}$ be the top flat sphere obtained from $X$ by applying Thurston surgery along the convex hulls. Then we have
    $$
    \mathrm{Area}(D_i) \leq \mathrm{Area}(C_{D_i})\mbox{ and }\mathrm{Area}(X) \leq \mathrm{Area}(X^{(0)}).
    $$
\end{lemma}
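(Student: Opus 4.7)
The plan is to track how areas change through the iterative construction of $C_{D_i}$ given in the proof of Lemma~\ref{lem:infinitecone}, and then sum the local estimates to obtain the global one.

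For the first inequality $\mathrm{Area}(D_i)\le\mathrm{Area}(C_{D_i})$, I would fix a single convex hull $D=D_i$ and treat two cases. If $D$ is degenerated, then by Remark~\ref{rmk:degeneratedconvexhull} it is contained in a single geodesic, so $\mathrm{Area}(D)=0$ and the inequality is trivial. If $D$ is non-degenerated, I would carry out the same iterative procedure used in the proof of Lemma~\ref{lem:infinitecone}: pick a saddle connection $\gamma\subset D$ joining two singularities of $D$, and perform Thurston's surgery along $\gamma$ to obtain a new flat sphere $X_1 = X_D\cup D_1$ in which $D_1$ contains one fewer singularity than $D$. By Lemma~\ref{lem:remains}, $X_\gamma$ embeds isometrically into $X_1$, and since $\gamma$ has measure zero,
\[ \mathrm{Area}(X_1)=\mathrm{Area}(X_\gamma)+\mathrm{Area}(C_\gamma)=\mathrm{Area}(X)+\mathrm{Area}(C_\gamma). \]
Combining this with $\mathrm{Area}(X_1)=\mathrm{Area}(X_D)+\mathrm{Area}(D_1)$ and $\mathrm{Area}(X_D)=\mathrm{Area}(X)-\mathrm{Area}(D)$ yields the monotonicity identity
\[ \mathrm{Area}(D_1)=\mathrm{Area}(D)+\mathrm{Area}(C_\gamma)\ge\mathrm{Area}(D). \]
Iterating the argument, the sequence of domains $D,D_1,D_2,\ldots$ has non-decreasing area and terminates at a domain isometric to $C_D$, so $\mathrm{Area}(D)\le\mathrm{Area}(C_D)$.

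For the second inequality $\mathrm{Area}(X)\le\mathrm{Area}(X^{(0)})$, I would invoke Lemma~\ref{lem:generalizedremains}, which provides an isometric embedding of the metric completion $Y$ of $X\setminus\bigcup_i D_i$ into $X^{(0)}$; by construction, the complement of $Y$ in $X^{(0)}$ is the disjoint union of the added cones $C_{D_i}$. Hence
\[ \mathrm{Area}(X^{(0)})=\mathrm{Area}(Y)+\sum_i\mathrm{Area}(C_{D_i})=\mathrm{Area}(X)-\sum_i\mathrm{Area}(D_i)+\sum_i\mathrm{Area}(C_{D_i}), \]
and applying the first inequality to each $D_i$ gives the desired bound.

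The only delicate point is verifying at every step of the iteration that the chosen saddle connection has endpoints whose curvatures sum to strictly less than $1$, so that Thurston's surgery is legitimately applicable. This is ensured by Remark~\ref{rmk:lessthan1} (the sum of curvatures in $D$ is at most $1$) combined with the convexity assumption (all curvatures strictly positive), which forces any pair of endpoint curvatures in $D_i$ with $|D_i|\ge 3$ to sum to strictly less than $1$; the remaining low-count cases coincide with the degenerated situation, already disposed of above. Since these combinatorial issues are already handled implicitly in the proof of Lemma~\ref{lem:infinitecone}, the main task reduces to carrying the area bookkeeping along that same construction.
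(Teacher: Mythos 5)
Your proof is correct and takes essentially the same route as the paper: both track the area increase through the iterated Thurston surgeries used to build $C_{D_i}$ in the proof of Lemma~\ref{lem:infinitecone}, and both conclude via the identity $\mathrm{Area}(X)=\mathrm{Area}(X^{(0)})-\sum_i\bigl(\mathrm{Area}(C_{D_i})-\mathrm{Area}(D_i)\bigr)$. A minor imprecision in your last paragraph: a convex hull of exactly two singularities need not be degenerate (two distinct saddle connections can bound a nonempty bigon), but this does not harm the argument since the surgery's applicability only requires the chosen pair's curvature sum to be strictly less than one, which holds whenever the hull arises in the paper's setting.
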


\begin{proof}
    Recall that in the proof of Lemma~\ref{lem:infinitecone}, the flat domain $C_{D_i}$ is obtained by applying Thurston surgeries along saddle connections. Since Thurston surgeries add cones during the operation, the area of the sphere increases. Therefore, we have $\mathrm{Area}(D_i) \leq \mathrm{Area}(C_{D_i})$. Furthermore, we know that
    \begin{equation}\label{equ:area}
        \mathrm{Area}(X) = \mathrm{Area}(X^{(0)}) - \sum_{i=1}^s \Big(\mathrm{Area}(C_{D_i}) - \mathrm{Area}(D_i)\Big).
    \end{equation}
    It follows that $\mathrm{Area}(X) \leq \mathrm{Area}(X^{(0)})$.
\end{proof}

\subsection{A sufficient condition for the existence of convex hulls}
We have not yet discussed when a convex hull exists for a cluster of singularities. To address this, we first introduce some notation. Let $X$ be a flat sphere, and let $F$ be a geometric forest in $X$. The metric \emph{length} of $F$ is defined as the sum of the lengths of the edges of $F$, denoted by $|F|$.
\par
The following lemma provides a sufficient condition for the existence of a convex hull.

\begin{lemma}\label{lem:embeddedregion}
    Let $X$ be a non-negative flat sphere and $F$ a geometric tree in $X$ such that the sum of the curvatures at the vertices of $F$ is less than $1$. Denote by $\alpha$ the homotopy class of a loop encircling $F$. Assume that for any simple saddle connection $\gamma$ whose homotopy class has non-zero geometric intersection number with $\alpha$, one has:
    \begin{equation}\label{equ:embeddedregionassumption}
        |\gamma| \ge 2|F|.
    \end{equation}
    Then the convex hull $D$ of the vertices of $F$ along the class $\alpha$ exists in $X$.
\end{lemma}

\begin{proof}
    We prove the statement by contraposition. Assume that the convex hull $D$ of the vertices of $F$ in the class $\alpha$ does not exist in $X$. By Definition~\ref{def:convexhull}, this means that for every shortest representative $l$ of $\alpha$, the loop $l$ passes through a singularity of $X$ that does not belong to $F$. There are two cases.

    \emph{Case 1.} Assume that $l$ does not meet any vertex of $F$. Let $R$ be the flat domain bounded by $l$ that contains $F$. By the Gauss-Bonnet formula on the domain $R$,
    $$
    \sum_{x_i\in \operatorname{Int}(R)}2\pi k_i + \sum_{x_j\in \partial R}(\pi-\theta_j) = 2\pi,
    $$
    where $\operatorname{Int}(R)$ denotes the interior of $R$ and $\theta_j$ denotes the interior angle of $R$ at $x_j\in \partial R$. 
    
    We claim that every boundary angle satisfies that $\theta_j\ge \pi$. Indeed, since we assume that $l$ does not pass through any vertex of $F$, the singularity $x_j\in \partial R = l$ is not a vertex of $F$. If $\theta_j<\pi$, one can shorten $l$ by pushing it slightly into the corner of $R$ at $x_j$; this produces a strictly shorter loop in the class $\alpha$, contradicting that $l$ is a shortest representative.

    Hence $\theta_j\ge \pi$, so each $\pi - \theta_j\le 0$ and
    $$
    \sum_{x_i\in \operatorname{Int}(R)}2\pi k_i\ge \sum_{x_i\in \operatorname{Int}(R)}2\pi k_i + \sum_{x_j\in \partial R}(\pi-\theta_j) = 2\pi.
    $$
    Because $\alpha$ encircles $F$, the interior singularities of $R$ are precisely the vertices of $F$. Therefore, the above inequality implies that the sum of the curvatures at the vertices of $F$ is at least $1$, which is the negation of our assumption on $F$ that it is less than $1$.

    \emph{Case 2.} Assume that $l$ meets both singularities that are vertices of $F$ and singularities that are not.

    In particular, in this case, there is a saddle connection $\gamma$ contained in $l$ whose homotopy class has non-zero intersection number with $\alpha$.

    Consider the loop $l'$ in the class $\alpha$ obtained by following each edge of $F$ once forwards and once backwards. By construction, $|l'| = 2|F|$. If $l'$ were a shortest representative of $\alpha$, then, by Definition~\ref{def:convexhull}, the convex hull $D$ along $F$ would exist, which contradicts our initial assumption in this proof. Hence $l'$ is not a shortest representative of $\alpha$, and therefore
    $$|l| < |l'| = 2|F|.$$
    In particular, for the saddle connection $\gamma$ lying in $l$ as above, we then have 
    $$
    |\gamma|\le |l| < |l'| = 2|F|.
    $$
    This negates the assumption~\eqref{equ:embeddedregionassumption}.
\end{proof}

\subsection{Trajectories in convex hulls}\label{sec:trajectory}

In this subsection, we apply the tools introduced earlier to prove Theorem~\ref{thm:combinandmetriclengthinhull}, which will play a crucial role in the proof of the main result, Theorem~\ref{thm:mainfavorite}.

Let $X$ be a flat sphere. Recall that a trajectory in $X$ is a geodesic of finite length without singularities in its interior.

\begin{lemma}\label{lem:estdiscone}
    Let $C$ be a flat domain with only one singularity in the interior. Assume that the curvature $k$ of the singularity is non-negative.
    Let $\gamma$ be any trajectory from the singularity in the interior to the boundary $\partial C$.  
    Then the length of $\gamma$ is bounded above by 
    $$
    \frac{1}{2}\max\Big\{1, \frac{1}{2(1-k)}\Big\}|\partial C|.
    $$
\end{lemma}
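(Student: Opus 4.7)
The plan is to cut $C$ open along $\gamma$ (or along an extended chord through the interior singularity $p$) and then apply a developing-map argument in the Euclidean plane, splitting the analysis according to the size of the cone angle $\theta := 2\pi(1-k)$ at $p$.

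In the first case $k \ge 1/2$ (so $\theta \le \pi$), I would cut $C$ along $\gamma$ to obtain a simply connected flat domain $C'$ with a single boundary corner of angle $\theta$ at the image of $p$ and no interior singularity, and then develop $C'$ in the plane via $\mathrm{Dev}$. The two copies $\gamma^+, \gamma^-$ of $\gamma$ land as segments of length $|\gamma|$ meeting at angle $\theta$ at $\mathrm{Dev}(p)$, so their far endpoints $\mathrm{Dev}(q^+), \mathrm{Dev}(q^-)$ sit at Euclidean distance $2|\gamma|\sin(\pi(1-k))$, while the image of $\partial C$ is a plane curve of length $|\partial C|$ joining these two endpoints. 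This gives $|\partial C| \ge 2|\gamma|\sin(\pi(1-k))$, and the elementary inequality $\sin(\pi t) \ge 2t$ on $[0,1/2]$ (both sides vanish at $0$ and $1/2$, and $\sin(\pi t)$ is concave on this interval) upgrades this to $|\gamma| \le \frac{|\partial C|}{4(1-k)}$, which matches the target bound since $\tfrac{1}{2(1-k)} \ge 1$ for $k \ge 1/2$.

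In the complementary case $k \le 1/2$ (so $\theta \ge \pi$), the sine bound degenerates as $k \to 0$, so I would instead extend $\gamma$ through $p$ by shooting a second trajectory $\gamma'$ from $p$ in the direction making angle $\pi$ with $\gamma$ on the tangent cone at $p$; such a direction exists because $\theta \ge \pi$, and since $p$ is interior in a bounded flat domain, $\gamma'$ terminates at some $q' \in \partial C$. In the sector developing of $C$ cut along $\gamma$, $\gamma'$ is the radial segment at angle $\pi$ from the apex and hence meets $\gamma^\pm$ only at the apex, so $\gamma \cup \gamma'$ is a simple arc through $p$ which splits $C$ into two flat disks $C_1, C_2$ whose boundary corner angles at $p$ are $\pi$ and $\theta - \pi$; let $L_1, L_2$ denote the corresponding two sub-arc lengths of $\partial C$, so $L_1 + L_2 = |\partial C|$. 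The crucial observation is that in $C_1$ the chord is a straight geodesic, because the corner angle $\pi$ at $p$ is flat: developing $C_1$ in the plane and comparing the chord (a line segment of length $|\gamma| + |\gamma'|$) with the $\partial C$-arc (a curve of length $L_1$ between the same endpoints) by the triangle inequality yields $L_1 \ge |\gamma| + |\gamma'|$. Developing $C_2$ and applying the triangle inequality to the two endpoints of its bent chord yields $L_2 \ge \bigl||\gamma| - |\gamma'|\bigr|$. Summing,
$$
|\partial C| = L_1 + L_2 \ge (|\gamma| + |\gamma'|) + \bigl||\gamma| - |\gamma'|\bigr| = 2\max(|\gamma|, |\gamma'|) \ge 2|\gamma|,
$$
so $|\gamma| \le |\partial C|/2$, matching the target bound since $\max\{1, \tfrac{1}{2(1-k)}\} = 1$ for $k \le 1/2$.

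The main obstacle I anticipate lies in the second case: one must verify that $q' \neq q$ (otherwise $\gamma \cup \gamma'$ closes up into a loop and the cut produces a single region rather than two disks) and that the triangle-inequality estimates in $C_1, C_2$ stay valid even when the developing maps are not injective. The former holds for $k < 1/2$ strictly by the sector-developing picture described above, and the degenerate case $k = 1/2$ is already covered by the first case; the latter is fine because the argument only uses that each developing map is length-preserving and extends continuously to the boundary, so the endpoints of the chord and arc map consistently to the same two plane points regardless of whether overlaps occur in the interior. Combining the two cases then gives the stated bound $|\gamma| \le \tfrac{1}{2}\max\{1, \tfrac{1}{2(1-k)}\}|\partial C|$.
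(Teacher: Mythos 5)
Your proof is correct. In the case $\theta \le \pi$ (i.e.\ $k\ge 1/2$) you and the paper use essentially the same argument: develop the slit-open domain, compare the two copies of $\gamma$ (meeting at angle $\theta$ at the developed apex) with the developed $\partial C$-arc joining their far endpoints, getting $|\partial C|\ge 2|\gamma|\sin(\theta/2)$, and then apply a linear lower bound on the sine. Your use of the sharp Jordan inequality $\sin(\pi t)\ge 2t$ on $[0,1/2]$ gives exactly $|\gamma|\le |\partial C|/(4(1-k))$. (As an aside, the paper's final displayed bound in this case, $|\gamma|\le \pi|\partial C|/\theta = |\partial C|/(2(1-k))$, is a factor of two weaker than the lemma claims; the step before it, $|\gamma|\le |\partial C|/(2\sin(\theta/2))$, does yield the claimed bound after the sharp sine estimate, so this is a slip in the last line — your write-up is tighter.)

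In the case $\theta\ge\pi$ you take a genuinely different route. The paper argues that the doubled chord $\gamma^+\gamma^-$ in the slit-open domain $D$ is the shortest representative of its relative homotopy class by a bigon plus Gauss--Bonnet argument, and then compares it with $\partial D\setminus(\gamma^+\gamma^-)$, which has length $|\partial C|$, to get $2|\gamma|\le|\partial C|$. You instead shoot a second geodesic $\gamma'$ from $p$ in the direction at angle $\pi$ from $\gamma$ (available precisely because $\theta\ge\pi$), observe that $\gamma\cup\gamma'$ is a simple chord splitting $C$ into two flat disks $C_1,C_2$ with corner angles $\pi$ and $\theta-\pi$ at $p$, and then apply the triangle inequality in each development: $L_1\ge|\gamma|+|\gamma'|$ because the chord is straight in $C_1$, and $L_2\ge\bigl||\gamma|-|\gamma'|\bigr|$ by the law of cosines applied to the bent chord in $C_2$; summing gives $|\partial C|\ge 2\max(|\gamma|,|\gamma'|)\ge 2|\gamma|$. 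This is more elementary (no Gauss--Bonnet, no homotopy-minimizer argument) and as a bonus simultaneously bounds $|\gamma'|$. The two delicate points you correctly identify — that $\gamma'$ hits $\gamma$ only at $p$, and that the far endpoints $q,q'$ are distinct — are both settled by the developing picture of the slit-open domain: the three radial rays from the apex point in distinct directions (since $0<\pi<\theta$ when $\theta>\pi$ strictly), and because the developing map is single-valued, points of $D$ developing to distinct plane points must be distinct, which rules out both coincidences; the degenerate case $\theta=\pi$ is correctly delegated to the other branch. The argument goes through.
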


\begin{proof}
    We cut $C$ open along $\gamma$ and let $D$ be the metric completion of $C\setminus \gamma$. Then $D$ is a flat domain without any singularity in the interior. 
    Notice that $\partial D$ contains two copies of $\gamma$, denoted by $\gamma_1$ and $\gamma_2$, and they form a corner at the boundary of angle $\theta = 2\pi(1-k)$. We denote by $\gamma_3\subset \partial D$ the path consisting of $\gamma_1$ and $\gamma_2$.
    \par
    When $\theta \ge\pi$, the concatenation $\gamma_3$ is the shortest representative of its homotopy class in $D$. Indeed, if there existed a shorter representative $\gamma'$, then $\gamma_3$ and $\gamma'$ would form a bigon. Since $\gamma'$ is a shortest path, the corners of the bigon in the interior of $\gamma'$ have angles at least $\pi$,  but the Gauss-Bonnet formula implies that such a bigon cannot exist.
    \par
    Since the remaining boundary $\partial D\setminus \gamma_3$ is in the same homotopy class as $\gamma_3$ and has length $|\partial C|$, we have
    \begin{equation}\label{equ:usingthis1stcase}
        2|\gamma| = |\gamma_3| \leq |\partial C|.
    \end{equation}
    
    When $\theta < \pi$, consider the developing map of $D$ into the plane and let $x_1$ and $x_2$ be the endpoints of image of $\gamma_3$ under the developing map. The Euclidean distance between $x_1$ and $x_2$ in the plane is $2|\gamma|\sin\frac{\theta}{2}$. Since $\partial D\setminus\gamma_3$ is a piecewise straight path joining $x_1$ and $x_2$ of length $|\partial C|$, we get 
    \begin{equation}\label{equ:usingthis}
        2|\gamma|\sin\frac{\theta}{2}\leq |\partial C|.
    \end{equation}
    
    Note that $\sin t\ge \frac{2}{\pi} t$ for $t\in(0,\frac{\pi}{2})$. Since $\theta < \pi$, we have
    $$\sin\frac{\theta}{2}\ge \frac{2}{\pi}\cdot\frac{\theta}{2} = \frac{\theta}{\pi}.$$
    Since $\theta = 2\pi(1-k)$, we obtain 
    $$\sin\frac{\theta}{2}\ge \frac{\theta}{\pi} = \frac{2\pi(1-k)}{\pi} = 2(1-k).$$
    Applying the above inequality to~\eqref{equ:usingthis}, we deduce
    $$
    |\gamma| \leq \frac{|\partial C|}{2\sin\frac{\theta}{2}} \leq \frac{|\partial C|}{2\cdot 2(1-k)} = \frac{1}{4(1-k)}|\partial C|.
    $$
    Therefore, combining the above with~\eqref{equ:usingthis1stcase}, we obtain that
    $$
    |\gamma|\le \max\left\{\frac{1}{2}|\partial C|, \frac{1}{4(1-k)}|\partial C|\right\} = \frac{1}{2}\max\Big\{1, \frac{1}{2(1-k)}\Big\}|\partial C|.
    $$
\end{proof}

Let $\underline{k} = (k_1,\ldots,k_n)$ be a curvature vector of a flat sphere. Recall that the curvature gap is defined as
$$
\delta(\underline{k}) = \inf\limits_{I \subset \{1,\ldots,n\}} \Big|1-\sum\limits_{\substack{i \in I\\k_i<1}} k_{i}\Big|.
$$

\begin{lemma}\label{lem:lengthbetweentwoconvexhulls}
    Let $X$ be a non-negative flat sphere with positive curvature gap $\delta$. Assume that $D_0$ is a flat domain in $X$ and $D_1$ is a convex hull of the singularities in the interior of $D_0$. Then, for any trajectory $\gamma$ contained in $D_0 \setminus D_1$, we have
    $$
    |\gamma| \leq 2\max\Big\{1, \frac{1}{2\delta}\Big\}|\partial D_0|.
    $$
\end{lemma}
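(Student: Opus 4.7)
The plan is to apply the generalized Thurston surgery of Section~\ref{sec:multi} to collapse the convex hull $D_1$, thereby reducing the problem to bounding a trajectory in a flat domain that contains a single interior singularity of controlled curvature, where the previous Lemma~\ref{lem:estdiscone} applies. More precisely, I would perform the generalized Thurston surgery along $D_1$ and work inside the resulting top flat domain $D_0^{(0)}$, in which $D_1$ is replaced by the added cone $C_{D_1}$; its boundary is isometric to $\partial D_0$, and the apex of $C_{D_1}$ is the unique interior singularity, with curvature $k=\sum_{x_i\in D_1}k_i\le 1$ by Remark~\ref{rmk:lessthan1}. Since all singularities in $D_1$ are conical, the definition of the curvature gap forces $1-k\ge\delta$. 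Lemma~\ref{lem:generalizedremains} identifies $\gamma$ with an isometric trajectory $\gamma'$ in $D_0^{(0)}\setminus\mathrm{int}(C_{D_1})$ of the same length, which in particular avoids the apex in its interior.

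I would then extend $\gamma'$ inside $D_0^{(0)}$ to a maximal geodesic $\tilde\gamma$, continuing in both directions until reaching either $\partial D_0^{(0)}$ or the apex. Since $|\gamma|\le|\tilde\gamma|$, it suffices to bound $|\tilde\gamma|$, for which I would split into three cases. If neither endpoint of $\tilde\gamma$ is the apex, then $\tilde\gamma$ cuts $D_0^{(0)}$ into two flat sub-disks; the one not containing the apex has no interior singularity, so developing it into $\mathbb{C}$ sends $\tilde\gamma$ to a straight chord joining two points that are also connected by the image of an arc of $\partial D_0^{(0)}$, and the planar triangle inequality yields $|\tilde\gamma|\le|\partial D_0|$. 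If exactly one endpoint is the apex, then $\tilde\gamma$ is a trajectory from the apex to $\partial D_0^{(0)}$, and Lemma~\ref{lem:estdiscone} together with $1-k\ge\delta$ gives $|\tilde\gamma|\le\tfrac{1}{2}\max\{1,\tfrac{1}{2\delta}\}|\partial D_0|$. If both endpoints are the apex, then $\tilde\gamma$ is a geodesic loop based at the apex dividing $D_0^{(0)}$ into two flat sub-disks each with no interior singularity, and the same chord argument gives $|\tilde\gamma|\le|\partial D_0|$. Taking the worst of these bounds is well within the claimed estimate $|\gamma|\le 2\max\{1,\tfrac{1}{2\delta}\}|\partial D_0|$.

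The main technical obstacle I anticipate is the case analysis for $\tilde\gamma$ and verifying that the developing-map chord argument survives the possible presence of boundary corners of $D_0$: the relevant sub-disks inherit corner singularities both at the endpoints of $\tilde\gamma$ and from the original boundary singularities of $D_0$, but a locally isometric developing map still preserves arc lengths so that the Euclidean triangle inequality applies. A minor subtlety is that endpoints of $\gamma$ lying on $\partial D_1$ in the original surface become regular interior points of $D_0^{(0)}$ after surgery, so $\tilde\gamma$ may start from an interior regular point; this is handled uniformly by the maximal-geodesic extension, but deserves to be spelled out carefully.
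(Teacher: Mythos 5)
Your reduction via the generalized Thurston surgery to a disk $D_0^{(0)}$ with a single interior cone point of curvature $k \le 1$ and $1-k \ge \delta$ is the same first step the paper takes, and your case where $\tilde\gamma$ ends at the apex is handled correctly by Lemma~\ref{lem:estdiscone}. However, cases (1) and (3) of your argument silently assume that the extended geodesic $\tilde\gamma$ is \emph{simple}, and this can fail. If the cone angle $2\pi(1-k)$ at the apex is less than $\pi$ (equivalently $k > \tfrac12$, which is exactly the regime $\delta < \tfrac12$ where $\max\{1,\tfrac{1}{2\delta}\} = \tfrac{1}{2\delta}$ and the lemma's bound is nontrivial), a geodesic missing the apex can wind around it and self-intersect: in the developing of the universal cover of $D_0^{(0)}\setminus\{\text{apex}\}$ a straight chord sweeps out an angular sector approaching $\pi$, hence projects to a curve winding up to $\lfloor \pi/(2\pi(1-k))\rfloor$ times. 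Such a $\tilde\gamma$ does not separate $D_0^{(0)}$ into two sub-disks, the ``piece not containing the apex'' is not well defined, and the chord comparison with an arc of $\partial D_0^{(0)}$ has no meaning. Since trajectories in this paper are explicitly allowed to self-intersect, this is not a pathological corner case.

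The paper's proof sidesteps this entirely by a holonomy argument rather than a separation argument. It lifts $\gamma$ to the metric completion $\widetilde R$ of the universal cover of $R\setminus\{x_0\}$ (where $x_0$ is the apex and $R$ is your $D_0^{(0)}$), in which $x_0$ has a unique lift and the developing map sends $\gamma$ to a straight segment. Writing the holonomy vector of $\gamma$ as the difference of the developed images of two paths $\gamma_1,\gamma_2$ from $x_0$ to the two endpoints of (the lift of) $\gamma$ gives $|\gamma|\le|\gamma_1|+|\gamma_2|\le 2r$, where $r$ is the maximal distance from $x_0$ to $\partial D_0$; then Lemma~\ref{lem:estdiscone} bounds $r$. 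This triangle inequality through $x_0$ is completely insensitive to whether $\gamma$ winds around the apex, which is precisely where your chord argument breaks. You could salvage your approach by replacing the two non-apex cases with this universal-cover estimate, but as written the proposal has a genuine gap in the regime of small $\delta$.
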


\begin{proof}
    Applying Thurston surgery along $D_1$, denote the resulting top surface by $X^{(0)}$. Recall that this surgery replaces $D_1$ with a bounded cone $C_{D_1}$, where the curvature $k$ of the apex $x_0$ is the sum of the curvatures of the singularities in $D_1$.
    By Lemma~\ref{lem:generalizedremains}, we regard $X \setminus D_1$ as embedded in $X^{(0)}$. Let $R$ be the flat domain in $X^{(0)}$ enclosed by $\partial D_0$ and containing $C_{D_1}$, and let $\gamma$ be a trajectory contained in $C_{D_1}$.
    \par
    To estimate the length of $\gamma$, consider the universal cover of $R \setminus \{x_0\}$ and denote the metric completion of the universal covering by $\widetilde R$. 
    We still denote by $x_0$ the point in $\widetilde R$ mapping to $x_0$. Let $\gamma_1$ and $\gamma_2$ be the shortest path from $x_0$ to the two endpoints of $\gamma$ in $\widetilde R$. 
    Notice that the holonomy vector of $\gamma$ can be expressed as the difference of the holonomy vectors of $\gamma_1$ and $\gamma_2$. 
    By the triangle inequality, we know that $|\gamma|\le |\gamma_1| + |\gamma_2|$.
    
    \par
    Let $r$ be the maximum distance from $x_0$ to $\partial D_0$. Then, $|\gamma_i| \leq r$ for each $i$. Notice that  $r$ is realized by a trajectory from $x_0$ to a singularity in $\partial D_0$ possibly with a sub-path contained in $\partial D_0$.
    Using Lemma~\ref{lem:estdiscone}, we get
    $$
    r \leq \frac{1}{2}\max\Big\{1, \frac{1}{2(1-k)}\Big\}|\partial D_1| + \frac{1}{2}|\partial D_1| \leq \max\Big\{1, \frac{1}{2\delta}\Big\}|\partial D_0|.
    $$
    Therefore, $|\gamma| \leq 2r \leq 2\max\Big\{1, \frac{1}{2\delta}\Big\}|\partial D_0|$.
\end{proof}

Let $\gamma$ be a trajectory contained in a convex hull $D$ of $X$, and let $T$ be a geometric triangulation of $D$. The \emph{combinatorial length} of $\gamma$ with respect to $T$ is the number of threads of $\gamma$ cut by $T$.

\begin{thm}\label{thm:combinandmetriclengthinhull}
    Let $X$ be a non-negative flat sphere with positive curvature gap $\delta$. Let $D$ be a convex hull of $m$ conical singularities on $X$, and $T$ be a geometric triangulation of $D$.
    Then, for any trajectory $\gamma$ contained in $D$, its combinatorial length with respect to $T$ is bounded above by
    $$
    \frac{8m^2}{\delta},
    $$
    and its metric length is bounded above by 
    $$
    4m\max\Big\{1, \frac{1}{2\delta}\Big\} |\partial D|.
    $$ 
    Moreover, when $X$ is a flat cone sphere, the metric length of $\gamma$ is bounded above by $\frac{2m}{\delta}|\partial D_0|$.
\end{thm}

\begin{remark}
    The curvature gap cannot be eliminated from the upper bounds provided. To illustrate this, consider the following example. Let $C$ be an infinite cone with a singularity at the apex and a singularity of zero curvature, and let the cone angle of $C$ be denoted by $A$. Assume that $A$ is less than $\pi$, ensuring that the core of $C$ is not degenerate. We triangulate the core by adding an arc between the apex and the other singularity. The longest saddle connection on $C$ starts at the singularity of zero curvature, goes around the apex and comes back to the starting point. This saddle connection has a combinatorial length of $2\left\lfloor \frac{\pi}{A} \right\rfloor$ with respect to the triangulation of the core. As $A$ approaches zero, both the combinatorial length and the metric length of the saddle connection grow without bound.
\end{remark}

\begin{proof}[Proof of Theorem~\ref{thm:combinandmetriclengthinhull}]
    If $D$ is degenerate, the bounds hold trivially. 
    If $D$ contains no singularities in its interior, then $D$ is a convex polygon. Since $D$ has $m$ singularities, the geometric triangulation $T$ has $m-2$ faces. Therefore, the combinatorial length of any trajectory contained in $D$ is at most $m-2 \leq \frac{8m^2}{\delta}$. The metric length of any trajectory in $D$ is less than the perimeter, $|\partial D| < 4m\max\left\{1, \frac{1}{2\delta}\right\} |\partial D|$.

    Next, we assume that $D$ has at least one singularity in its interior. Let $\alpha_1$ be the homotopy class of a loop around the boundary $\partial D$ in $D$. Since the angle at each corner of $D$ is at most $\pi$, the shortest representative of $\alpha_1$ cannot pass through the singularities on $\partial D$. Hence, if there is more than one singularity in the interior of $D$, we conclude that the convex hull of these singularities along $\alpha_1$ exists, denoted by $D_1$. If there is only one singularity, $D_1$ refers to that singularity.

    We consider the following sequence of convex hulls in $D$:
    $$
    D_0 = D \supset D_1 \supset \ldots \supset D_r
    $$
    where $D_i$ is the convex hull of the singularities in the interior of $D_{i-1}$ along the homotopy class of $\partial D_{i-1}$, and $D_r$ is either a convex polygon or a single singularity.
    Since $D$ contains $m$ singularities, it follows that $r\le m$.

    Let $f$ be any face of $T$ and $e$ be any edge of $T$. We claim that 
    \begin{itemize}
        \item $f\cap D_i$ (resp. $e\cap D_i$) has at most one connected component,
        \item $f\cap (D_i\setminus D_{i+1})$ (resp. $e\cap (D_i\setminus D_{i+1})$) has at most two connected components.
    \end{itemize}
    Indeed, since $D_1$ is convex hull, any trajectory in $D$ that exits $T$ will not return to $D_1$. It follows that every edge of $T$ intersects with $D_1$ into at most one connected component, and intersects with $D_0\setminus D_1$ into at most two connected components. Hence, this implies that the face $f$ also satisfies the claim for $D_1$. Similarly, one can show that the claim holds for any $D_i$.

    Let $\gamma$ be a trajectory in $D$. Since $D_{i+1}$ is a convex hull, we have that
    \begin{itemize}
        \item $D_i\setminus D_{i+1}$ is a topological cylinder,
        \item any trajectory in $D_i\setminus D_{i+1}$ starting from $\partial D_{i+1}$ has to exit $D_i\setminus D_{i+1}$ through $\partial D_{i}$.
    \end{itemize}
    It follows that $\gamma$ first passes through the convex hulls $D_i$ in increasing order of~$i$, and then exits $D_j$ in decreasing order of~$j$. Therefore, $\gamma$ is cut by these convex hulls into sub-paths:
    $$
    \gamma_i, \gamma_{i+1},\ldots,\gamma_{j-1},\gamma_j,\gamma_{j+1},\ldots,\gamma_{j+l}
    $$
    where $\gamma_t$ lies in $D_t\setminus D_{t+1}$ for $1\le t<j$, $\gamma_j$ is contained in $D_j\setminus D_{j+1}$ (or $D_r$ if $j=r$), and $\gamma_{j+t}$ lies in $D_{j-t}\setminus D_{j-t+1}$ for $1\le t\le l$. 
    
For $\gamma_t$ with $i \leq t \leq j-1$, it may wind around the topological cylinder $D_t \setminus D_{t+1}$.
Note that the holonomy of the core curve of $D_t \setminus D_{t+1}$ is a rotation by an angle of $2\pi\left(1 - \sum_{D_{t+1}} k\right)$, where $\sum_{D_{t+1}} k$ is the sum of the curvatures of the singularities in $D_{t+1}$. It follows that $\gamma_t$ may wind around at most 
$$
\left\lceil \frac{\pi}{2\pi\left(1 - \sum_{D_{t+1}} k\right)} \right\rceil
$$
times.

Note that by the Euler characteristic formula, the number of faces of $T$ is bounded above by $2m$.
Since each face of $T$ intersects $D_i \setminus D_{i+1}$ into at most two connected components, the components of the faces of $T$ intersecting with the topological cylinder $D_t \setminus D_{t+1}$ are at most $4m$. 
Therefore, the combinatorial length of $\gamma_t$ is bounded above by
$$
\left\lceil \frac{\pi}{2\pi\left(1 - \sum_{D_{t+1}} k\right)} \right\rceil 4m \leq \left\lceil \frac{1}{2\delta} \right\rceil 4m \leq \frac{4m}{\delta}.
$$
The combinatorial length of $\gamma_{j+t}$ for $1 \leq t \le l$ shares the same upper bound. In addition, the combinatorial length of $\gamma_j$ can be estimated similarly, except when $j = r$ and $D_r$ is a convex polygon. In this case, the combinatorial length of $\gamma_j$ is bounded above by $m-2 < \frac{4m}{\delta}$.

Since $j\le r\le m$, the total combinatorial length of $\gamma$ is bounded above by 
$$2m\frac{4m}{\delta}= \frac{8m^2}{\delta}.$$

    For the metric length of $\gamma$, Lemma~\ref{lem:lengthbetweentwoconvexhulls} implies that $|\gamma_t|$ and $|\gamma'_t|$ are bounded above by
    $$
    2\max\Big\{1, \frac{1}{2\delta}\Big\} |\partial D_t|\le 2\max\Big\{1, \frac{1}{2\delta}\Big\} |\partial D| 
    $$
    Hence, the total metric length of $\gamma$ is bounded above by
    $$
    4m\max\Big\{1, \frac{1}{2\delta}\Big\} |\partial D|.
    $$
    
    When $X$ is a flat cone sphere, Lemma~\ref{lem:upperboundcurvaturegap} implies that $\frac{1}{2\delta} > 1$. Thus, the last assertion follows.    
\end{proof}

As an application, we deduce the following finiteness result for infinite convex flat spheres.

\begin{cor}\label{cor:finitenessinfinitesphere}
    Let $X$ be an infinite non-negative flat sphere with $n$ conical singularities and positive curvature gap $\delta$. Then the number of saddle connections on $X$ is bounded above by $(3n-6) 2^{8n^2/\delta - 1}$.
\end{cor}

\begin{proof}
    Let $T$ be a geometric triangulation of the core of $X$, denoted by $core(X)$. By setting $D = core(X)$, Theorem~\ref{thm:combinandmetriclengthinhull} implies that the combinatorial length of any saddle connection has a uniform upper bound of $\frac{8n^2}{\delta}$. Notice that, given the initial and subsequent corners of the faces of $T$ that an arc passes through, there is only one homotopy class of arcs in $X$. Since each homotopy class can contain at most one saddle connection, the total number of saddle connections on $X$ is at most $(3n-6) 2^{8n^2/\delta - 1}$.
\end{proof}

It is worth noting that this result is particularly interesting because it implies that no regular closed geodesic exists on an infinite non-negative flat sphere with a positive curvature gap. 
Indeed, the existence of a regular closed geodesic generates a parallel family of regular closed geodesics, and the maximal such family forms an immersed cylinder on the surface whose boundaries are saddle connections. 
Since the cylinder contains infinitely many saddle connections joining the singularities on its boundary, the surface would have infinitely many saddle connections. This contradicts the finite bound on the number of saddle connection obtained in Corollary~\ref{cor:finitenessinfinitesphere}.

This stands in contrast to the case of translation surfaces, where Masur showed in~\cite{Mas86} that every translation surface contains a periodic geodesic.

\section{Thick and Thin Parts of the Moduli Space}\label{bigsec:coor}

In this section, we introduce a finite cover of the moduli space of convex flat cone spheres, consisting of ``thick" and ``thin" parts. The thick parts correspond to regions of the moduli space where the relative systoles of the surfaces are bounded below by a positive number, while the thin parts correspond to regions where surfaces contain very short saddle connections.

We provide formal definitions of the thick and thin parts of the moduli space in Section~\ref{sec:multipleshortsubset}, and construct a finite cover of the moduli space in Section~\ref{sec:finitecovers}.

\subsection{Geometric forests associated to boundary strata}

Let $\mathbb{P}\Omega(\underline{k})$ denote the moduli space of convex flat cone spheres, and let $\overline{\mathbb{P}\Omega}(\underline{k})$ be the metric completion of $\mathbb{P}\Omega(\underline{k})$ with respect to Thurston's metric. Let $S$ be a boundary stratum of $\overline{\mathbb{P}\Omega}(\underline{k})$, and let $P$ be the associated admissible partition. Denote the non-singleton sets in $P$ by $p_1,\ldots,p_s$. For $[X] \in \mathbb{P}\Omega(\underline{k})$, we say that a geometric forest $F$ in $X$ is \emph{associated with} $S$ if $F$ consists of $s$ connected components $F_1,\ldots,F_s$, where the singularities in each $F_i$ are those with curvatures in $p_i$. Similarly, we say that disjoint convex hulls $D_1,\ldots,D_s$ in $X$ are \emph{associated with} $S$ if each $D_i$ is the convex hull of the singularities with curvatures in $p_i$.

We now recall the classical Fiala-Alexandrov inequality. Recall that a convex flat domain is a flat disk, and the curvature of any singularity in its interior is positive (see Definition~\ref{def:flatdomain}).

\begin{lemma}[\cite{Fiala1940/41}, \cite{Alexandrov45}]\label{lem:isoperimetric}
    Assume that $R$ is a convex flat domain. Let $k$ be sum of the curvatures in the interior of $R$. If $k<1$, then we have that
    $$
    4\pi (1- k) \mathrm{Area}(R) \le |\partial R|^2
    $$
    where $|\partial R|$ is the total length of the boundary.
\end{lemma}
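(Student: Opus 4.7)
The plan is to adapt the classical Bol--Fiala ``inner parallel'' proof of the isoperimetric inequality to the flat cone setting. For $t \ge 0$, define the inner parallel set $R_t = \{x \in R : d(x, \partial R) \ge t\}$, let $L(t)$ denote the total length of $\partial R_t$, and set $t^* = \sup\{t : R_t \neq \emptyset\}$. By the coarea formula applied to the distance-to-boundary function,
\[
\mathrm{Area}(R) = \int_0^{t^*} L(t) \, dt.
\]

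The heart of the proof is the differential inequality $L'(t) \le -2\pi(1-k)$ for almost every $t \in (0,t^*)$. At a regular value of $t$, the boundary $\partial R_t$ is a disjoint union of piecewise geodesic loops with corners located at singularities of $R$ on $\partial R_t$ or at points of the cut locus of $\partial R$. Applying Gauss--Bonnet to $R_t$ gives
\[
\int_{R_t} K \, dA + T(t) = 2\pi \chi(R_t),
\]
where $T(t)$ is the total turning of $\partial R_t$ (integrated geodesic curvature plus exterior angles) and $\int_{R_t} K \, dA = 2\pi k(t)$, with $k(t)$ the sum of curvatures of singularities in the interior of $R_t$. The first variation of arc length along the inward-normal flow gives $L'(t) = -T(t)$. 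A simple topological observation --- that $R \setminus R_t$ is an open neighborhood of $\partial R$ inside the disk $R$, so each connected component of $R_t$ is simply connected --- yields $\chi(R_t) \ge 1$ whenever $R_t$ is non-empty. Combined with $k(t) \le k$, this gives $T(t) \ge 2\pi(1-k)$, hence $L'(t) \le -2\pi(1-k)$.

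Integrating this inequality yields $L(t) \le |\partial R| - 2\pi(1-k) t$, which forces $t^* \le |\partial R|/(2\pi(1-k))$. Then
\[
\mathrm{Area}(R) \le \int_0^{|\partial R|/(2\pi(1-k))} \bigl(|\partial R| - 2\pi(1-k) t\bigr) \, dt = \frac{|\partial R|^2}{4\pi(1-k)},
\]
which rearranges to the claimed inequality $4\pi(1-k)\mathrm{Area}(R) \le |\partial R|^2$.

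The main obstacle is justifying the differential inequality at non-regular times: when an inner parallel sweeps through a conical singularity, when the cut locus of $\partial R$ gains a new branch, or when a connected component of $R_t$ pinches off. Each such event produces an additional negative jump in $L(t)$ as a portion of the moving boundary self-annihilates, so the inequality continues to hold in the distributional sense and the integration is unaffected. An alternative route that bypasses these technicalities is to smooth the metric at each singularity by replacing a small neighborhood of curvature $k_i$ with a positively curved spherical cap of total curvature $2\pi k_i$, apply the classical Bol--Fiala inequality on the resulting smooth Riemannian disk (whose total curvature equals $2\pi k < 2\pi$), and pass to the limit as the smoothing parameter tends to zero.
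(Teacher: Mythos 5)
The paper does not actually prove this lemma: it is stated with citations to Fiala (1940/41) and Alexandrov (1945) and is used as a black box. Your reconstruction via the inner-parallel (Bol--Fiala) method is the standard proof of the cited result, and the overall structure is sound: the coarea formula reduces the estimate to the differential inequality on $L(t)$, Gauss--Bonnet identifies the total turning as $2\pi(\chi(R_t)-k(t))$, positivity of the interior curvatures (guaranteed by the convexity hypothesis on the flat domain) gives $k(t)\le k$, and $\chi(R_t)\ge 1$ supplies the rest.

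Two points are worth tightening. First, the first-variation identity should read $L'(t)\le -T(t)$ rather than $L'(t)=-T(t)$: when $\partial R_t$ has a convex corner of exterior angle $\alpha$ coming from a collision along the cut locus, the local rate of length loss is $2\tan(\alpha/2)$ while the corresponding turning contribution is only $\alpha$, so equality fails (already for a square, $L'=-8$ while $T=2\pi$). Since the inequality points the right way, the conclusion is unaffected, but the statement as written is not correct. Second, the topological step as phrased is too weak: it is not true that the complement of an arbitrary open neighborhood of $\partial R$ in a disk has simply connected components (take the near-boundary annulus together with a small disk in the interior; the complement is an annulus). What saves the argument is the specific structure of $R_t$ as a sublevel set of $d(\cdot,\partial R)$: if a component of $R_t$ enclosed a hole $H$, a point $y\in H$ would satisfy $d(y,\partial R)<t$, but a minimizing geodesic from $y$ to $\partial R$ must cross $\partial H\subset R_t$, and every point on that geodesic is at distance $<t$ from $\partial R$, a contradiction. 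With these two repairs the inner-parallel argument is complete, and the smoothing alternative you sketch (replacing each cone by a spherical cap of the same total curvature and applying the smooth Bol--Fiala inequality) is an equally valid route and is in fact closer in spirit to Alexandrov's original treatment of singular metrics.
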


For a geometric forest $F$, we define the \emph{maximal length} of $F$ to be the maximal length among the edges of $F$, denoted by $|F|_{\infty}$. Using Fiala-Alexandrov inequality, we obtain the following lemma.

\begin{lemma}\label{lem:lowerboundonnoncollapsing}
    Let $\underline{k}\in (0,1)^n$ be a curvature vector with positive curvature gap $\delta(\underline k)$. Let $X$ be an area-one convex flat cone sphere with $[X]\in \mathbb{P}\Omega(\underline{k})$. If $F$ is a geometric tree in $X$ satisfying that
    \begin{equation}\label{equ:forestsassociatedtoboundary}
        |F|_{\infty} \le \frac{\sqrt{\delta(\underline k)}}{n},
    \end{equation}
    then $F$ is associated to a certain boundary stratum of $\overline{\mathbb{P}\Omega}(\underline{k})$.
\end{lemma}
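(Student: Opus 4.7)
The plan is to argue by contradiction. For a geometric tree $F$ with $m \geq 2$ vertices, being associated to a boundary stratum of $\overline{\mathbb{P}\Omega}(\underline{k})$ amounts to the partition with $V(F)$ as the unique non-singleton block being admissible; since $X$ is a convex flat cone sphere each $k_i \in (0,1)$ so the singleton conditions are automatic, and admissibility reduces to $K := \sum_{v\in V(F)} k_v < 1$. Suppose for contradiction that $K \geq 1$; the curvature gap upgrades this to $K \geq 1 + \delta$, and the goal is to derive a contradiction with $|F|_\infty \leq \sqrt{\delta}/n$.

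The main idea is to apply Fiala--Alexandrov (Lemma~\ref{lem:isoperimetric}) to the \emph{complement} of a thin tubular neighborhood of $F$, letting the tube radius shrink to zero. For $r > 0$ smaller than the distance from $F$ to the other singularities (and than the local embedding radius along $F$), the closed $r$-neighborhood $U_r(F) := \{x \in X : d(x,F) \leq r\}$ deformation-retracts onto the tree $F$ and is an embedded topological disk, so its complement $U_r(F)^c$ is also a topological disk. The interior singularities of $U_r(F)^c$ are precisely the non-$F$ singularities, all of positive curvature by convexity, so $U_r(F)^c$ is a convex flat domain in the sense of Definition~\ref{def:flatdomain} with total interior curvature $2 - K \leq 1 - \delta < 1$. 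Lemma~\ref{lem:isoperimetric} then gives
\[
4\pi\delta\,\mathrm{Area}(U_r(F)^c) \;\leq\; 4\pi(K-1)\,\mathrm{Area}(U_r(F)^c) \;\leq\; |\partial U_r(F)|^2.
\]

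Letting $r \to 0^+$ will produce the contradiction. A direct tubular-neighborhood computation gives $|\partial U_r(F)| = 2|F| + 2\pi r(m - K)$ --- two parallel copies of each edge, joined at each vertex $v$ by arcs whose lengths sum to the residual cone angle $2\pi r(1 - k_v)$ --- and $\mathrm{Area}(U_r(F)) = O(r)$; in particular $\mathrm{Area}(U_r(F)^c) \to 1$ and $|\partial U_r(F)| \to 2|F|$. The limit of the inequality reads $4\pi\delta \leq 4|F|^2$, i.e.\ $|F| \geq \sqrt{\pi\delta}$. But the hypothesis yields $|F| \leq (m-1)|F|_\infty \leq (n-1)\sqrt{\delta}/n < \sqrt{\delta} < \sqrt{\pi\delta}$, a contradiction; hence $K < 1$ and $F$ is associated to the corresponding boundary stratum.

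The delicate point will be the tubular-neighborhood calculation on a flat cone surface: I would check carefully that for $r$ below the minimum distance from $F$ to the non-$F$ singularities, $U_r(F)$ splits canonically into straight strips along the edges and radial sectors at the vertices, so that the formulas for $|\partial U_r(F)|$ and $\mathrm{Area}(U_r(F))$ agree with the usual Euclidean tubular-neighborhood expressions and the passage to the limit is legitimate.
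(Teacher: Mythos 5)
Your overall strategy is essentially the same as the paper's: apply the Fiala--Alexandrov isoperimetric inequality (Lemma~\ref{lem:isoperimetric}) to the complement of a region around $F$ and deduce $|F|\gtrsim\sqrt{\pi\delta}$. However, there is a genuine gap in the way you set this up. You assert that $U_r(F)^c$ ``is a convex flat domain in the sense of Definition~\ref{def:flatdomain},'' but this is false: the boundary $\partial U_r(F)$ contains circular arcs of radius $r$ around the vertices of $F$, and these arcs are not geodesic, so $U_r(F)^c$ is not a \emph{flat surface with boundary} as the paper defines it (which requires the boundary to be locally isometric to a half-disk or a sector, i.e.\ piecewise geodesic). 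Lemma~\ref{lem:isoperimetric} is stated only for convex flat domains in this sense, so you cannot invoke it for $U_r(F)^c$ without either (a) passing to a piecewise-geodesic approximation of $\partial U_r(F)$, or (b) appealing to a more general form of the isoperimetric inequality that allows rectifiable boundaries --- neither of which you do. (Your formula $|\partial U_r(F)| = 2|F| + 2\pi r(m-K)$ is also off in the $O(r)$ term, since the strips shadow part of each circle of radius $r$, but this is harmless since you only use the $r\to 0$ limit.)

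The paper avoids this issue entirely, and more economically: it cuts $X$ along $F$ itself, takes the metric completion $R$ of $X\setminus F$ with the induced length metric, and observes that $R$ is automatically a convex flat domain --- the boundary consists of two geodesic copies of each edge of $F$, so $|\partial R| = 2|F|$ exactly, $\mathrm{Area}(R)=\mathrm{Area}(X)=1$, and no limiting argument is needed. Lemma~\ref{lem:isoperimetric} then applies directly, giving $4\pi\delta \le 4|F|^2 \le 4n^2|F|_\infty^2$ and the contradiction. You would be better off replacing the tubular-neighborhood regularization with this direct cut: it removes the nonpolygonal-boundary issue, the limit, and your own flagged ``delicate point.'' The underlying idea (isoperimetric inequality plus the curvature gap) is correct, and once you swap $U_r(F)^c$ for the metric completion of $X\setminus F$ your argument coincides with the paper's.
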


\begin{proof}
We prove the statement by contraposition.  

Assume that $F$ is not associated with any boundary stratum of $\overline{\mathbb{P}\Omega}(\underline{k})$.  
Then there exists a connected component $F'$ of $F$ such that the sum of the curvatures at the vertices of $F'$ is greater than~$1$:
\[
\sum_{x_i \in F'} k_i > 1.
\]

Let $R$ be the complement $X \setminus F'$, equipped with the induced length metric.  
Then $R$ is a convex flat domain.  
Since the total curvature satisfies $k_1 + \cdots + k_n = 2$, the sum of the curvatures at the singularities in the interior of $R$ is
\begin{equation}\label{equ:interiorcurvaturessumupto1}
\sum_{x_j \in \operatorname{Int}(R)} k_j = 2 - \sum_{x_i \in F'} k_i < 1,
\end{equation}
where $\operatorname{Int}(R)$ denotes the interior of $R$.

Applying Lemma~\ref{lem:isoperimetric} to the domain $R$, we obtain
\[
4\pi\!\left(1 - \sum_{x_j \in \operatorname{Int}(R)} k_j\right)\!\mathrm{Area}(R) \le |\partial R|^2.
\]
Since the boundary $\partial R$ is obtained by cutting along $F'$, we have
\[
|\partial R| = 2|F'| \le 2n|F'|_{\infty} \le 2n|F|_{\infty}.
\]
It follows that
\begin{equation}\label{equ:edgeoftreecontrolcomplement}
4\pi\!\left(1 - \sum_{x_j \in \operatorname{Int}(R)} k_j\right)\!\mathrm{Area}(R)
\le |\partial R|^2
\le 4n^2|F|_{\infty}^2.
\end{equation}
Since
\[
\delta(\underline{k}) \le 1 - \sum_{x_j \in \operatorname{Int}(R)} k_j,
\]
and $\mathrm{Area}(R) = \mathrm{Area}(X) = 1$, we deduce that
\[
4\pi\delta(\underline{k})
\le 4\pi\!\left(1 - \sum_{x_j \in \operatorname{Int}(R)} k_j\right)
= 4\pi\!\left(1 - \sum_{x_j \in \operatorname{Int}(R)} k_j\right)\!\mathrm{Area}(R)
\le 4n^2|F|_{\infty}^2.
\]
Hence,
\[
|F|_{\infty} \ge \frac{\sqrt{\pi\,\delta(\underline{k})}}{n} > \frac{\sqrt{\delta(\underline{k})}}{n},
\]
which gives the negation of assumption~\eqref{equ:forestsassociatedtoboundary}.

\end{proof}

\subsection{Thick and thin parts}\label{sec:multipleshortsubset}

Let $\mathbb{P}\Omega(\underline{k})$ be the moduli space of convex flat cone spheres with $n$ singularities. Given an integer $1 \leq d \leq n-3$ and a positive real number $\epsilon$, we define a subset $U_{d,\epsilon}$ of $\mathbb{P}\Omega(\underline{k})$ as follows:
\begin{equation}\label{equ:ngbh}
    \begin{aligned}
    U_{d,\epsilon} := &\left\{[X] \in \mathbb{P}\Omega(\underline{k}) :
    \mbox{there is a geometric forest on } X \mbox{ with } d \mbox{ edges} \right.\\
        &\left. \mbox{ such that } \ell_{e}(X) < \epsilon \mbox{ for each edge } e \right\},
    \end{aligned}
\end{equation}
where $\ell_{e}(X)$ is the \emph{normalized length} of the saddle connection $e$, defined as
\begin{equation}
    \ell_{e}(X) = \frac{|e|}{\sqrt{\mathrm{Area}(X)}}.
\end{equation}
We call a geometric forest in $X$ that satisfies the condition of $U_{d,\epsilon}$ an \emph{$\epsilon$-geometric forest}.

\begin{definition}
    Let $\mathbb{P}\Omega(\underline{k})$ be the moduli space of convex flat cone spheres with $n$ singularities. Given a positive real number $\epsilon$, we define the complement of $U_{1,\epsilon}$ as the \emph{thick part} of $\mathbb{P}\Omega(\underline{k})$ associated with $\epsilon$.
\end{definition}

Let $S$ be a boundary stratum of the metric completion of $\mathbb{P}\Omega(\underline{k})$. According to Theorem~\ref{lem:thurstoncompletion}, the boundary stratum $S$ is isometric to a certain lower dimensional moduli space of flat cone spheres. We refer to $S_{\lambda}$ as the thick part of $S$ where saddle connections on the surfaces have normalized lengths bounded below by $\lambda$. 

For a convex hull $D$ in $X$, we say that a geometric tree $F$ is a \emph{spanning tree} of the convex hull if it is contained in $D$ and connects all singularities in $D$. Additionally, if $F$ is a geometric forest, we say that $F$ is a \emph{spanning forest} of disjoint convex hulls $D_1,\ldots,D_s$ if each $D_i$ has a component of $F$ as its spanning tree.

\begin{definition}\label{def:ngbhassobdy}
    Let $\mathbb{P}\Omega(\underline{k})$ be the moduli space of convex flat cone spheres, and let $U_{d,\epsilon}$ be the subset defined in~\eqref{equ:ngbh}. Let $S$ be a $d$-codimensional boundary stratum of $\overline{\mathbb{P}\Omega}(\underline{k})$. 
    
    When $1\leq d \le n-3$, we define the \emph{thin part} $U_{d,\epsilon}(S_{\lambda})$ associated with $(S,\epsilon,\lambda)$ as the subset of $U_{d,\epsilon}$ consisting of the homothety classes $[X]$ such that there is an $\epsilon$-geometric forest $F$ in $X$ satisfying:
    \begin{itemize}
        \item the forest $F$ is associated with $S$,
        \item there exist convex hulls $D_1,\ldots,D_s$ such that $F$ is a spanning forest of them,
        \item by applying generalized Thurston surgeries along all $D_i$, the resulting surface $X^{(0)}$ satisfies $[X^{(0)}] \in S_{\lambda}$.
    \end{itemize}
    
    When $d = n-3$, we define the \emph{thin part} $U_{n-3,\epsilon}(S)$ associated with $(S,\epsilon)$ similarly, except that $S_{\lambda}$ is replaced by $S$, i.e., $\lambda = 0$.
\end{definition}

When $d = n - 3$, the boundary stratum has dimension zero and consists of a single convex flat cone sphere $X$, which is the pillowcase of a triangle. In particular, this boundary stratum $S$ coincides with $S_{\lambda_0}$, where $\lambda_0$ denotes the normalized length of the shortest saddle connection on $X$.

For simplicity, we use the notation $U_{d,\epsilon}(S_{\lambda})$ for a thin part associated with a boundary stratum $S$ of any codimension $1\le d\le n-3$. In the case $d = n-3$, we \emph{regard $U_{d,\epsilon}(S_{\lambda})$ as $U_{n-3,\epsilon}(S)$}; \emph{this convention will be used throughout the paper} to provide a unified notation for all thin parts.

The following lemma shows that the length of a spanning tree of a convex hull controls the size of the convex hull.

\begin{lemma}\label{lem:bound}
    Let $X$ be a convex flat cone sphere with $n$ singularities and positive curvature gap $\delta$. Assume that $D$ is a convex hull in $X$ and $F$ is a geometric spanning tree of $D$. Let $C_D$ be the added cone during the generalized Thurston surgery along $D$. Then, we have that $|\partial D| \leq 2(n-3)|F|_{\infty}$, where $|F|_{\infty}$ is the maximal length among the edges of $F$. In particular, the distance from the apex of $C_D$ to the boundary is bounded above by $\frac{n-3}{2\delta}|F|_{\infty}$.
\end{lemma}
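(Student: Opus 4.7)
The plan is to prove the two bounds sequentially: first derive the bound on $|\partial D|$ from the spanning tree, then apply Lemma~\ref{lem:estdiscone} to the added cone $C_D$ to obtain the distance bound.

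For the first part, let $m$ denote the number of singularities in $D$. Since $F$ is a tree spanning these singularities, it has $m - 1$ edges. The loop that traverses both sides of every edge of $F$ is a closed curve homotopic to $\partial D$ (in the complement of the singularities not in $D$), and has total length $2|F| \le 2(m - 1)|F|_{\infty}$. Since $\partial D$ is the shortest representative of its homotopy class (by definition of the convex hull), we get
\begin{equation*}
    |\partial D| \le 2|F| \le 2(m - 1)|F|_{\infty}.
\end{equation*}
The main step is then to show $m \le n - 2$. By Remark~\ref{rmk:lessthan1}, the sum of curvatures in $D$ is at most $1$, so the sum of curvatures of the singularities outside $D$ is at least $1$. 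Since $X$ is convex, each curvature is strictly less than $1$, so at least two singularities must lie outside $D$. Hence $m \le n - 2$, giving $|\partial D| \le 2(n - 3)|F|_{\infty}$.

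For the second part, let $k_D$ denote the curvature at the apex of $C_D$, which equals the sum of curvatures of singularities in $D$. By the definition of the curvature gap, $1 - k_D \ge \delta$. Applying Lemma~\ref{lem:estdiscone} to $C_D$ (which has a single interior singularity, namely its apex), any trajectory from the apex to $\partial C_D$ has length at most
\begin{equation*}
    \frac{1}{2}\max\Bigl\{1,\, \frac{1}{2(1 - k_D)}\Bigr\}|\partial C_D| \le \frac{1}{2} \cdot \frac{1}{2\delta} \cdot |\partial D|,
\end{equation*}
where we used Lemma~\ref{lem:upperboundcurvaturegap}, which gives $\delta \le \tfrac{1}{3}$ and hence $\frac{1}{2\delta} \ge \tfrac{3}{2} > 1$, and also that $|\partial C_D| = |\partial D|$ by construction of the surgery. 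Combining with the bound on $|\partial D|$ yields the distance bound $\frac{n - 3}{2\delta}|F|_{\infty}$.

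The computation is essentially routine once the right inputs are identified; the only subtle point is the sharper bound $m \le n - 2$ (rather than the naive $m \le n$), which relies crucially on the convexity hypothesis through Remark~\ref{rmk:lessthan1} and the fact that individual curvatures are less than $1$. Without convexity one would only get $|\partial D| \le 2(n - 1)|F|_{\infty}$, so this step is what makes the stated coefficient $n - 3$ work.
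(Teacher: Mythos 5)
Your proof is correct and follows essentially the same route as the paper. The one place you differ is how you get $|\partial D| \leq 2|F|$: you observe directly that the loop running along both sides of $F$ has length $2|F|$ and lies in the class of $\partial D$, then invoke that $\partial D$ is the shortest representative of that class; the paper instead cuts $D$ along $F$ and applies a component-by-component triangle inequality. Your version is a bit slicker. You also spell out why $F$ has at most $n-3$ edges (via Remark~\ref{rmk:lessthan1} and the fact that conical curvatures are below $1$, forcing at least two singularities outside $D$), a point the paper asserts without comment. The second half, applying Lemma~\ref{lem:estdiscone} to $C_D$ with $1 - k_D \geq \delta$ and $\delta \leq \tfrac{1}{3}$, matches the paper exactly. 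One small quibble with your closing remark: what drives $m \leq n-2$ is really that all curvatures are strictly less than $1$ (i.e.\ the singularities are conical), together with Remark~\ref{rmk:lessthan1}; convexity of $X$ is not the operative hypothesis for that particular step.
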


\begin{proof}
    Since $F$ is spanning in $D$, any component of $D \setminus F$ contains exactly one edge in the boundary from $\partial D$ not in $F$. By the triangle inequality, the length of this edge is at most the sum of the lengths of the other edges in the component. Summing over the edges on the boundary $\partial D$, we get
    $$
    |\partial D| \leq 2|F|,
    $$
    where $|F|$ is the sum of the lengths of all edges in $F$. Since $F$ has at most $n-3$ edges, it follows that $|\partial D| \leq 2(n-3)|F|_{\infty}$.
    \par
    By Lemma~\ref{lem:estdiscone}, the distance from the apex of $C_D$ to the boundary $\partial D$ is bounded above by 
    $$
    \frac{1}{2} \max\left\{1, \frac{1}{2(1-\sum_{D}k)}\right\} |\partial D| \leq
    \frac{1}{2\delta}(n-3)|F|_{\infty},
    $$
    where $\sum_{D}k$ is the sum of the curvatures of the singularities in $D$. Since Lemma~\ref{lem:upperboundcurvaturegap} implies that $\delta \leq \frac{1}{3}$, we have $\max\left\{1, \frac{1}{2\delta}\right\} = \frac{1}{2\delta}$.
\end{proof}

\subsection{$(d, \epsilon)$-Convex hulls}

In this section, we study surfaces in the thin part $U_{d,\epsilon}(S_{\lambda})$. 

We begin by introducing the following notion. Let $X$ be a non-negative flat cone sphere, and let $D_1, \ldots, D_s$ be disjoint convex hulls in $X$. A trajectory is said to be \emph{maximal with respect to} $D_1, \ldots, D_s$ if its interior is disjoint from these convex hulls, and its endpoints lie on the boundaries of the convex hulls or at singularities.

The following lemma addresses the uniqueness of a list of convex hulls in a convex flat cone sphere whose union contains $\epsilon$-geometric forests.

\begin{prop}\label{prop:maxiamltrajectory}
    Let $\underline{k}\in (0,1)^n$ be a curvature vector with positive curvature gap $\delta = \delta(\underline k)$ and let $S$ be a boundary stratum of $\overline{\mathbb{P}\Omega}(\underline{k})$ of codimension $1\le d\le n-3$. Assume that $\epsilon$ and $\lambda$ are positive numbers satisfying that
    \begin{equation}\label{equ:restriction1}
        0<\epsilon \le \frac{\delta^2}{n} \mbox{ and }\lambda\ge \big(1+\frac{n}{\delta}\big)\epsilon.
    \end{equation}
    Let $X$ be a flat sphere with $[X]$ in a thin part $U_{d,\epsilon}(S_{\lambda})$. Let $D_1,\ldots, D_s$ be disjoint convex hulls in $X$ such that there is an $\epsilon$-geometric forest which is a spanning forest of these convex hulls. Then any maximal trajectory with respect to $D_1,\ldots,D_s$ is of normalized length larger than $\epsilon$.
    \par
    In particular, we have that $D_1,\ldots,D_s$ are the unique convex hulls in $X$ such that any $\epsilon$-geometric forest with $d$ edges are contained in these convex hulls. 
\end{prop}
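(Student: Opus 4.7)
The plan is to argue by contradiction: suppose a maximal trajectory $\gamma$ with respect to $D_1,\ldots,D_s$ satisfies $\ell_\gamma(X)\le \epsilon$. Applying the generalized Thurston's surgery of Section~\ref{sec:multi} along these convex hulls produces the top flat sphere $X^{(0)}$ with $[X^{(0)}]\in S_\lambda$. I will construct a saddle connection of $X^{(0)}$ whose normalized length is strictly less than $\lambda$, contradicting the defining property of the thick part $S_\lambda$.

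Via the isometric embedding $X\setminus\bigcup_i D_i\hookrightarrow X^{(0)}$ of Lemma~\ref{lem:generalizedremains}, the trajectory $\gamma$ survives as an arc in $X^{(0)}$. At each endpoint of $\gamma$ which in $X^{(0)}$ lies at a regular point of some $\partial C_{D_i}$, I extend $\gamma$ through $C_{D_i}$ along the shortest geodesic to the apex $a_i$, a singularity of $X^{(0)}$. This produces an arc $\alpha$ whose endpoints are all singularities of $X^{(0)}$. Lemma~\ref{lem:bound} bounds each extension by $\tfrac{(n-3)|F|_{\infty}}{2\delta}$; combining with $|\gamma|\le\epsilon\sqrt{\mathrm{Area}(X)}$ and the strict bound $|F|_{\infty}<\epsilon\sqrt{\mathrm{Area}(X)}$ from the $\epsilon$-forest condition gives
\[
|\alpha| \;<\; \epsilon\Big(1+\tfrac{n-3}{\delta}\Big)\sqrt{\mathrm{Area}(X)} \;\le\; \epsilon\Big(1+\tfrac{n}{\delta}\Big)\sqrt{\mathrm{Area}(X)} \;\le\; \lambda\sqrt{\mathrm{Area}(X^{(0)})},
\]
using the hypothesis $\lambda\ge(1+n/\delta)\epsilon$ and $\mathrm{Area}(X)\le\mathrm{Area}(X^{(0)})$ from Lemma~\ref{lem:areaestimates}.

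The main technical step, and what I expect to be the principal obstacle, is showing that $\alpha$ is homotopically non-trivial in $X^{(0)}$ relative to its singularities. When the two endpoints of $\gamma$ give rise to distinct singularities of $X^{(0)}$, this is automatic. If $\gamma$ is a geodesic loop at a singleton, the single-corner Gauss--Bonnet identity forbids bounding an empty disk. The delicate case is when both endpoints of $\gamma$ lie on the same boundary $\partial D_j$, so $\alpha$ becomes a loop based at the apex $a_j$. Suppose for contradiction that $\alpha$ bounds an empty disk $\Delta\subset X^{(0)}$; then $\Delta_2 := \Delta\cap(X\setminus D_j)$ is a simply connected flat region in $X$ bounded by $\gamma$ and an arc of $\partial D_j$. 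Because the corners of $\partial D_j$ carry outside angles of at least $\pi$, applying Gauss--Bonnet to $\Delta_2$ forces the angles between $\gamma$ and $\partial D_j$ at the endpoints $p,q$ to vanish. But then $\gamma$ would coincide with $\partial D_j$ locally at $p$ and $q$, contradicting that the interior of $\gamma$ is disjoint from $D_j$.

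Non-triviality of $\alpha$ guarantees that the shortest representative of its homotopy class is a concatenation of saddle connections of $X^{(0)}$ of total length at most $|\alpha|$. In particular, $X^{(0)}$ contains a saddle connection of length strictly less than $\lambda\sqrt{\mathrm{Area}(X^{(0)})}$, producing the desired contradiction. For the final uniqueness statement, the just-proved lower bound on maximal trajectories rules out any $\epsilon$-saddle connection having a segment that is a maximal trajectory outside $\bigcup_i D_i$; combined with the local convexity of each $D_j$, this forces every $\epsilon$-edge to lie entirely inside some single $D_j$, pinning down $\{D_1,\ldots,D_s\}$ as the convex hulls spanned by the connected components of any $\epsilon$-geometric forest with $d$ edges.
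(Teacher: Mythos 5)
Your argument follows essentially the same route as the paper's: apply the generalized Thurston surgery to pass to $X^{(0)} \in S_{\lambda}$, extend $\gamma$ by radial segments to the apices, bound the total length using Lemma~\ref{lem:bound} and Lemma~\ref{lem:areaestimates}, and derive a contradiction from the existence of a short saddle connection in $X^{(0)}$. The non-triviality argument via the bigon and Gauss--Bonnet is also the same in spirit as the paper's (the paper just states it more tersely).

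However, there is a genuine gap: you never treat the top-codimension case $d=n-3$. By the paper's explicit convention (stated right after Definition~\ref{def:ngbhassobdy}), when $d = n-3$ the thin part $U_{d,\epsilon}(S_\lambda)$ is to be read as $U_{n-3,\epsilon}(S)$ with $\lambda = 0$; the stratum $S$ is a single point, and the hypothesis $[X^{(0)}]\in S_\lambda$ provides no positive lower bound on the relative systole of $X^{(0)}$. Your contradiction rests entirely on the inequality $\ell_{\mathrm{sc}}(X^{(0)})\ge \lambda \ge (1+n/\delta)\epsilon$, which is vacuous when $\lambda=0$. The paper closes this case by computing directly that the flat sphere $X^{(0)}$ underlying $S$, being the double of a triangle with curvature gap at least $\delta$, has every saddle connection of normalized length at least $2\delta$, and then checks $2\delta - \tfrac{n-3}{\delta}\epsilon > \epsilon$ using $\epsilon \le \delta^2/n$. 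Without this, the proposition is not established for $d=n-3$, which is needed later (e.g.\ in Remark~\ref{rmk:disjointthin}, Lemma~\ref{lem:consecutivethin}, and the cover of Corollary~\ref{cor:thindecomp}). You should add this case, or at minimum state that your argument applies only for $1 \le d < n-3$ and give the direct estimate on the double-triangle for $d = n-3$.
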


\begin{proof}
    Let $\gamma$ be a maximal trajectory with respect to $D_1, \ldots, D_s$. Apply Thurston surgeries along these convex hulls, and let $X^{(0)}$ denote the resulting top convex flat cone sphere. By Lemma~\ref{lem:generalizedremains}, we can regard $\gamma$ as a trajectory in $X^{(0)}$. Next, we will estimate the length of $\gamma$ in $X^{(0)}$.
    \par
    We first consider the case when $1\le d < n-3$, or equivalently $\dim S > 0$. If $\gamma$ ends at singularities both outside the convex hulls, then $\gamma$ is a saddle connection in $X^{(0)}$. Recall that Lemma~\ref{lem:areaestimates} says that
    $$
    \mathrm{Area}(X)\le \mathrm{Area}(X^{(0)})
    $$
    By the definition of $S_{\lambda}$ and Lemma~\ref{lem:areaestimates},
    $$
    \frac{|\gamma|}{\sqrt{\mathrm{Area}(X)}}\ge \frac{|\gamma|}{\sqrt{\mathrm{Area}(X^{(0)})}}\ge\lambda \ge \big(1+\frac{n}{\delta}\big)\epsilon >\epsilon.
    $$
    \par
    If $\gamma$ has one endpoint at a singularity outside the convex hulls and the other endpoint on some boundary $\partial D_i$ for some $i$, denote this endpoint by $y$. Since $D_i$ is replaced by the added cone $C_{D_i}$ in the Thurston surgery of $X^{(0)}$, 
    we can join the endpoint $y$ to the apex of $C_{D_i}$ by a radial segment $r$ in $C_{D_i}$. Let $\gamma r$ be the concatenation of $\gamma$ and $r$. Notice that by taking the shortest representative of the homotopy class of $\gamma r$ in $X^{(0)}$, the length of $\gamma r$  is bounded below by the length of a saddle connection in $X^{(0)}$. Since $[X^{(0)}]$ is in $S_{\lambda}$, it follows that 
    $$
    \frac{|\gamma|}{\sqrt{\mathrm{Area}(X)}} 
    \ge \frac{|\gamma|}{\sqrt{\mathrm{Area}(X^{(0)})}}
    = \frac{|\gamma r|}{\sqrt{\mathrm{Area}(X^{(0)})}}-\frac{|r|}{\sqrt{\mathrm{Area}(X^{(0)})}}> \lambda-\frac{|r|}{\sqrt{\mathrm{Area}(X^{(0)})}}.
    $$
    By definition of $U_{d,\epsilon}(S_{\lambda})$ and Lemma~\ref{lem:bound}, 
    we have that $\frac{|r|}{\sqrt{\mathrm{Area}(X^{(0)})}}< \frac{n-3}{2\delta}\epsilon$. Hence, we have that 
    $$
    \frac{|\gamma|}{\sqrt{\mathrm{Area}(X)}}\ge \lambda - \frac{n-3}{2\delta}\epsilon \ge \big(1+\frac{n}{\delta}\big)\epsilon - \frac{n-3}{2\delta}\epsilon > \epsilon.$$
    \par
    If the endpoints of $\gamma$ are both on the boundary of the convex hulls,
    we denote the endpoints by $q_1$ and $q_2$ and assume that $q_i\in \partial D_{j_1}$ and $q_i\in \partial D_{j_2}$ (possibly $j_1 = j_2)$. Connect each $q_i$ to the apex of the cone $C_{D_{j_i}}$ by a radial segment $r_i$. Let $\gamma'$ be the concatenation $r_1\gamma r_2$ in $X^{(0)}$.
    \par
    If $C_{\partial D_{j_1}}$ and $C_{\partial D_{j_2}}$ are different, the corresponding $\gamma'$ is in a non-trivial homotopy class. If $C_{\partial D_{j_1}} = C_{\partial D_{j_2}}$, notice that $\gamma'$ is still in a non-trivial homotopy class, otherwise $\gamma$ forms a bigon with the boundary of $C_{D_{j_1}}$ which contradicts with the Gauss-Bonnet formula. 
    \par
    Then the above implies that the shortest representative of the class of $\gamma'$ consists of at least one saddle connection.
    Since $X^{(0)}$ is in $S_{\lambda}$, we obtain that $\gamma'$ is of normalized length at least $\lambda$ in $X^{(0)}$. Combined with Lemma~\ref{lem:areaestimates}, it follows that
    $$
    \frac{|\gamma|}{\sqrt{\mathrm{Area}(X)}} > \frac{|\gamma|}{\sqrt{\mathrm{Area}(X^{(0)})}} = \frac{|\gamma'|}{\sqrt{\mathrm{Area}(X^{(0)})}}-\frac{|r_1|+|r_2|}{\sqrt{\mathrm{Area}(X^{(0)})}}>\lambda - \frac{|r_1|+|r_2|}{\sqrt{\mathrm{Area}(X^{(0)})}}.
    $$
    Again, by Lemma~\ref{lem:bound} we have that $\frac{|r_i|}{\sqrt{\mathrm{Area}(X^{(0)})}}\le \frac{n-3}{2\delta}\epsilon$ for $i=1, 2$. Therefore, we have that
    $$\frac{|\gamma|}{\sqrt{\mathrm{Area}(X^{(0)})}} > \lambda - \frac{n-3}{\delta}\epsilon \ge \big(1+\frac{n}{\delta}\big)\epsilon - \frac{n-3}{\delta}\epsilon > \epsilon.$$
    \par
    Finally, we consider the case of $d = n-3$. In this case $S$ consists of a single class of a flat sphere $X_0$ with three singularities, and $X_0$ is a double of a triangle. We estimate the normalized length of the shortest saddle connection in $X_0$. Denote the three angles of the triangle by $A, B, C$ and the length of the edge opposite to these angles $a,b,c$ respectively. Assume that $a\le b\le c$. Hence we have that $\pi\delta<A\le B<\frac{\pi}{2}$. By the law of sines, the normalized length of the edge $a$ in $X_0$ is 
    $$\frac{a}{\sqrt{bc\sin A}} = \sqrt{\frac{\sin A}{\sin B\sin C}}$$
    and is bounded below by $\sqrt{\sin A}\ge\sqrt{\frac{2}{\pi}A} \ge \sqrt{2\delta}$. It implies that any saddle connection with different endpoints in $X^{(0)}$ is of normalized length at least $\sqrt{2\delta}$. 
    Notice that a simple closed saddle connection has length equal to twice a height of the triangle. Again, by the law of sines, we have that the normalized length of the shortest simple closed saddle connection is 
    $$\frac{(ab\sin C) / c}{\sqrt{ab\sin C}} = \sqrt{\frac{\sin A \sin B}{\sin C}} \ge \sqrt{\frac{2}{\pi} A\frac{2}{\pi} B}\ge 2\delta.$$
    It follows that any closed saddle connection in $X_0$ is of normalized length at least $2\delta$. Therefore any saddle connection in $X_0$ is of normalized length at least $2\delta$.
    \par
    For $[X]\in U_{n-3,\epsilon}(S)$ and any maximal trajectory $\gamma$ in $X$,
    we basically follow the same analysis as in the case of $1\le d < n-3$. The only difference is that we replace $\lambda$ by $2\delta$. Hence we obtain that
    $$
    \frac{|\gamma|}{\sqrt{\mathrm{Area}(X)}} \ge  2\delta - \frac{n-3}{\delta}\epsilon > \epsilon
    $$
    where the last inequality follows directly from $0<\epsilon \le \frac{\delta^2}{n}$.
    \par
    The last assertion follows directly from the above. Indeed, if an $\epsilon$-geodesic $F$ is not contained within $D_1, \ldots, D_s$, then there must be a maximal trajectory contained in some edge of $F$. By the previous analysis, the normalized length of this edge would exceed $\epsilon$, leading to a contradiction.
\end{proof}

\begin{definition}\label{def:uniqueconvexhulls}
    Let $X$ be a convex flat cone sphere with $n$ singularities. Given an integer $1 \leq d \leq n-3$ and a positive real number $\epsilon$, we say that $d$ disjoint convex hulls in $X$ are \emph{$(d, \epsilon)$-convex hulls} of $X$ if any $\epsilon$-geometric forest with $d$ edges in $X$ is a spanning forest of these convex hulls.
\end{definition}

\begin{remark}\label{rmk:disjointthin}
    Proposition~\ref{prop:maxiamltrajectory} implies that if $\epsilon$ and $\lambda$ satisfy condition~\eqref{equ:restriction1}, then any surface from a thin part $U_{d,\epsilon}(S_{\lambda})$ has unique $(d, \epsilon)$-convex hulls.
\end{remark}

\subsection{Technical lemmas}

In this section, we prove three technical lemmas that will be used in the proof of Theorem~\ref{thm:decomp}.

The following lemma provides a sufficient condition for determining when a surface from $U_{d,\epsilon}$ is contained in a thin part. It can be seen as an application of Lemma~\ref{lem:embeddedregion}.

\begin{lemma}\label{lem:additionalshort}
    Let $\mathbb{P}\Omega(\underline{k})$ be a moduli space of convex flat cone spheres with $n$ singularities and positive curvature gap $\delta = \delta(\underline k)$.
    Let $\epsilon$ be a real number satisfying the condition~\eqref{equ:restriction1} and let $d$ be an integer with $1\le d< n-3$.
    For $[X]\in U_{d,\epsilon}$, let $F$ be an $\epsilon$-geometric forest of $d$ edges in $X$.  Denote by $F_1, \ldots, F_s$ the components of $F$ and let $\alpha_i$ be the homotopy class of the loop surrounding $F_i$ for each $i$.
    Assume that for any simple saddle connection $\gamma$ in $X$ satisfying that 
    \begin{itemize}
        \item $\gamma$ is disjoint from $F$
        \item  or the homotopy class of $\gamma$ has a non-zero intersection number with $\alpha_i$ for some $i$,
    \end{itemize}
    we have that
    \begin{equation}\label{equ:1}
        \ell_{\gamma}(X)\ge\frac{4n}{\delta}\epsilon.
    \end{equation}
    Then it follows that $[X]$ lies in $U_{d,\epsilon}(S_{\lambda})$ for some boundary stratum $S$ of codimension~$d$, where $\lambda = (1 + \tfrac{n}{\delta})\epsilon$.
\end{lemma}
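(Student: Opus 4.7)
The plan is to verify the three defining conditions of $U_{d,\epsilon}(S_\lambda)$ in Definition~\ref{def:ngbhassobdy}. Normalize $\mathrm{Area}(X)=1$ and write $F_1,\dots,F_s$ for the components of $F$. Since $\delta\le 1/3<1$, the assumption $\epsilon\le\delta^2/n$ yields $|F|_\infty<\epsilon\le\sqrt\delta/n$, so Lemma~\ref{lem:lowerboundonnoncollapsing} associates $F$ with an admissible partition of $\underline k$, and hence with a boundary stratum $S$ of codimension $d$. Admissibility gives $\sum_{v\in F_i}k_v<1$, and for any simple saddle connection $\gamma$ whose class has nonzero intersection with $\alpha_i$ the hypothesis gives $|\gamma|\ge \tfrac{4n}{\delta}\epsilon\ge 2(n-3)\epsilon\ge 2|F_i|$ (using $\delta\le 1/3$); Lemma~\ref{lem:embeddedregion} then produces the convex hull $D_i$ along $\alpha_i$ with $F_i$ as a spanning tree.

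Next, form $X^{(0)}$ by the generalized Thurston surgery along $D_1,\dots,D_s$. By Lemma~\ref{lem:infinitecone} each cone $C_{D_i}$ is a convex flat domain carrying a single interior singularity of curvature $k_i<1-\delta$, and $|\partial C_{D_i}|=|\partial D_i|\le 2|F_i|$ by Lemma~\ref{lem:bound}. The Fiala--Alexandrov isoperimetric inequality (Lemma~\ref{lem:isoperimetric}) gives
\[
\sum_i \mathrm{Area}(C_{D_i})\le\frac{|F|^2}{\pi\delta}\le\frac{(n-3)^2\epsilon^2}{\pi\delta}\le\frac{\delta^3}{\pi}\le\frac{1}{27\pi},
\]
so $\mathrm{Area}(X^{(0)})\le 1+\tfrac{1}{27\pi}$. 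Lemma~\ref{lem:estdiscone} bounds the apex-to-boundary distance in $C_{D_i}$ by $(n-3)\epsilon/(2\delta)$, and consequently $\mathrm{diam}(D_i)\le (n-3)\epsilon/\delta$.

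Finally, let $\gamma'$ be a simple saddle connection in $X^{(0)}$. If $\gamma'$ lies entirely in the embedded image of $X\setminus\bigcup_iD_i$ (Lemma~\ref{lem:generalizedremains}), then $\gamma'$ is a simple saddle connection in $X$ disjoint from $F$, and combining the hypothesis with the area bound and $n/\delta\ge 9$ yields $\ell_{\gamma'}(X^{(0)})\ge\lambda$. Otherwise, construct an auxiliary path $c$ in $X$ by keeping the subarcs of $\gamma'$ that lie outside the cones and replacing each maximal cone segment (and each apex endpoint) by a shortest path inside the corresponding $D_i$ to a vertex of $F_i$; then $|c|\le |\gamma'|+\tfrac{2(n-3)\epsilon}{\delta}$. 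A case analysis on the endpoints of the outside subarcs, using the geodesic convexity of each $D_i$ (interior corner angles at most $\pi$, so a loop exiting and re-entering a single $D_i$ fails to be homotopic into $D_i$ as soon as its outside excursion encloses a singularity), shows that the shortest representative of $c$ in $X$ contains a constituent simple saddle connection satisfying hypothesis (i) or (ii). The hypothesis then gives $|c|\ge \tfrac{4n}{\delta}\epsilon$, hence $|\gamma'|\ge \tfrac{2n+6}{\delta}\epsilon$, and dividing by $\sqrt{\mathrm{Area}(X^{(0)})}\le 1.006$ yields $\ell_{\gamma'}(X^{(0)})\ge \lambda=\bigl(1+\tfrac{n}{\delta}\bigr)\epsilon$.

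The main obstacle is the final case analysis: ensuring that the auxiliary curve $c$ actually yields a saddle connection of $X$ to which the hypothesis applies in every configuration, including loops at an apex and saddle connections between two distinct apices. It leans crucially on geodesic convexity of the $D_i$'s, which guarantees that any loop based at a vertex of $F_i$ whose outside excursion encloses a singularity must cross $\partial D_i$ in an uncancellable way, producing the required nontrivial intersection with some $\alpha_j$.
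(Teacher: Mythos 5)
The proposal follows the paper's overall skeleton: associate $F$ to a boundary stratum via Lemma~\ref{lem:lowerboundonnoncollapsing}, produce the convex hulls via Lemma~\ref{lem:embeddedregion}, perform the generalized surgery, control $\mathrm{Area}(X^{(0)})$, and then show that every saddle connection of $X^{(0)}$ has normalized length at least $\lambda$. The area control is done by Fiala--Alexandrov rather than by the $\pi r^2$ estimate of the paper, which is a harmless (and slightly sharper) variant. The genuine divergence, and the gap, is in the final estimate.

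The paper isolates a \emph{single} connected component $\gamma_1$ of $\gamma'$ lying outside the cones, uses the trivial inequality $|\gamma'|\ge|\gamma_1|$, and then only ever deals with the two endpoints of that one component, each of which is extended by a sub-path $s_i\subset\partial D_{j_i}$ of length at most $|\partial D_{j_i}|\le 2(n-3)\epsilon$. Your construction instead modifies \emph{all} of $\gamma'$: you keep every sub-arc outside the cones and replace every maximal cone segment by a path through the corresponding $D_i$. This produces a curve $c$ with $|c|\le|\gamma'|+\sum_j(\text{replacement lengths})$, and you assert this sum is at most $\tfrac{2(n-3)\epsilon}{\delta}$, i.e., that at most two replacements occur. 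But a geodesic of $X^{(0)}$ can cross several different cones $C_{D_i}$ (the number $s$ of convex hulls can be as large as $d$, which goes up to $n-4$), so the replacement cost can reach $s(n-3)\epsilon/\delta$. For, say, $n\ge 9$ the ensuing lower bound $|\gamma'|\ge\tfrac{4n\epsilon}{\delta}-\tfrac{s(n-3)\epsilon}{\delta}$ becomes vacuous. You either need to restrict to one outside component as the paper does, or supply an argument that a saddle connection of $X^{(0)}$ meets a uniformly bounded number of the cones, which is not established.

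A secondary issue: you bound the replacement lengths via the apex-to-boundary distance in $C_{D_i}$ and call the result $\mathrm{diam}(D_i)$, but $D_i$ and $C_{D_i}$ have different interiors; controlling distances inside $D_i$ requires Lemma~\ref{lem:lengthbetweentwoconvexhulls} or Proposition~\ref{prop:combinandmetriclengthinhull}, and the resulting constants are not those you quote. The paper avoids needing $\mathrm{diam}(D_i)$ at all by routing the connecting paths $s_i$ along the boundary $\partial D_{j_i}$, which has total length at most $2(n-3)\epsilon$ by Lemma~\ref{lem:bound}.
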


\begin{proof}
    According to the condition~\eqref{equ:restriction1}, we have that $0<\epsilon \le \frac{\delta^2}{n} < \frac{\sqrt{\delta}}{n}$. Then Lemma~\ref{lem:lowerboundonnoncollapsing} implies that $F$ is associated to some boundary stratum $S$.
    Since $\frac{4n}{\delta}\epsilon > 2d\epsilon \ge 2|F|$, Lemma~\ref{lem:embeddedregion} implies that the convex hull of the singularities in $F_i$ along the class $\alpha_i$ exists for each $i$, denoted by $D_i$. Applying the Thurston surgery along $D_1,\ldots,D_s$, we denote the resulting top surface by $X^{(0)}$. Recall that $C_{D_i}$ is the added cone during the Thurston surgery. Notice that $X^{(0)}$ is contained in $S$.
    \par
    Next, we need to show that $[X^{(0)}]$ is contained in $S_{\lambda}$ where $\lambda = 2(1+\frac{n}{\delta})\epsilon$. Equivalently, we need to show any saddle connection in $X^{(0)}$ is of normalized length bounded below by $\lambda$.
    \par
    First notice that by equation~\eqref{equ:area}, we have that
    $$
    \mathrm{Area}(X^{(0)}) - \mathrm{Area}(X) \le \sum_{i=1}^s\mathrm{Area}(C_{D_i}),
    $$
    so
    $$
    \frac{\mathrm{Area}(X)}{\mathrm{Area}(X^{(0)})} \ge 1 - \frac{\sum_{i=1}^s\mathrm{Area}(C_{D_i})}{\mathrm{Area}(X^{(0)})}.
    $$
    By using Lemma~\ref{lem:bound}, we get that the area of the added cone $C_{D_i}$ is bounded above by $\frac{\pi}{2\delta^2}(n-3)^2|F_i|^2_{\infty}$. Since $F$ is $\epsilon$-geometric, it follows that 
    $$
    \frac{\mathrm{Area}(X)}{\mathrm{Area}(X^{(0)})} \ge 
    1 - \frac{\pi}{2\delta^2}(n-3)^2\epsilon^2 \ge 1 - \frac{\pi}{2}\frac{(n-3)^2\delta^2}{n^2} > \frac{1}{4}
    $$
    where the last inequality is because $\delta<\frac{1}{3}$ by Lemma~\ref{lem:upperboundcurvaturegap}.
    \par
    Let $\gamma$ be a saddle connection in $X^{(0)}$.
    If $\gamma$ is disjoint from any $C_{D_i}$, $\gamma$ appears in $X$ as a saddle connection disjoint from $F$. By condition~\eqref{equ:1},
    $$
    \frac{|\gamma|}{\sqrt{\mathrm{Area}(X^{(0)})}}=\frac{|\gamma|}{\sqrt{\mathrm{Area}(X)}}\sqrt{\frac{\mathrm{Area}(X)}{\mathrm{Area}(X^{(0)})}}>\frac{1}{2}\frac{|\gamma|}{\sqrt{\mathrm{Area}(X)}}\ge\frac{2n}{\delta}\epsilon>\lambda.
    $$
    \par
    If $\gamma$ intersects with some added cones, let $\gamma_1$ be a component of $\gamma$ outside all $C_{D_i}$.
    Denote the endpoints of $\gamma_1$ by $q_1$ and $q_2$.
    The estimate is similar to the estimate of the length of a maximal trajectory in Proposition~\ref{prop:maxiamltrajectory}.  
    We state the proof for the case of $q_1$ and $q_2$ both in the boundaries of added cones. The proof of the case that only one of $q_1$ and $q_2$ is in the boundary of an added cone is basically the same.
    \par
    Assume that $q_1\in \partial D_{j_1}$ and $q_2\in \partial D_{j_2}$.
    Connect each $q_i$ to the nearest singularities in $\partial D_{j_i}$ by a sub-path $s_i$ contained the boundary $\partial D_{j_i}$.  
    Let $\gamma_1'$ be the concatenation of $s_1$, $\gamma'$ and $s_2$. By taking the shortest representative of the homotopy class of the path $\gamma'_1$, we could find a simple saddle connection $\gamma_2'$ contained in the shortest saddle connection such that its homotopy class has a non-zero geometric intersection number with $\alpha_{j_1}$ or $\alpha_{j_2}$. Hence our condition~\eqref{equ:1} implies that
    $$
    \frac{|\gamma_1'|}{\sqrt{\mathrm{Area}(X)}}\ge
    \frac{|\gamma_2'|}{\sqrt{\mathrm{Area}(X)}} \ge
    \frac{4n}{\delta}\epsilon.
    $$
    Moreover, Lemma~\ref{lem:bound} implies that $$
    \frac{|s_i|}{\sqrt{\mathrm{Area}(X)}} 
    \le \frac{|\partial D_{j_i}|}{\sqrt{\mathrm{Area}(X)}}
    <2(n-3)\epsilon.$$
    Thus we obtain that
    \begin{align*}
        \frac{|\gamma|}{\sqrt{\mathrm{Area}(X^{(0)})}}
        >\frac{1}{2}\frac{|\gamma_1|}{\sqrt{\mathrm{Area}(X)}} 
        &=\frac{1}{2}(\frac{|\gamma_1'|}{\sqrt{\mathrm{Area}(X)}}-\frac{|s_1|+|s_2|}{\sqrt{\mathrm{Area}(X)}})\\
        & \ge \frac{2n}{\delta}\epsilon - 2(n-3)\epsilon\\
        & > \lambda,
    \end{align*}
    where the last inequality is because Lemma~\ref{lem:upperboundcurvaturegap} implies that $\delta\le \frac{1}{3}$.
    \par
    Hence, it follows that $[X^{(0)}]$ is in $S_{\lambda}$ and by definition of thin parts $[X]$ is contained in $U_{d,\epsilon}(S_{\lambda})$.
\end{proof}

Next, we prove a technical lemma that shows, under certain conditions, a geometric forest can be enlarged by adding one edge, with a controlled bound on the length of the new edge.

\begin{lemma}\label{lem:zigzagclosed}
    Let $\mathbb{P}\Omega(\underline{k})$ be a moduli space of convex flat cone spheres with $n$ singularities and positive curvature gap $\delta = \delta(\underline k)$. For $[X]\in U_{d,\epsilon}$, let $F$ be an $\epsilon$-geometric forest with $d$ edges in $X$. Let $F_1, \ldots, F_s$ be the connected components of $F$ and let $\alpha_i$ be the homotopy class of the loop surrounding $F_i$ for each $i$. 
    Assume that there exists a simple saddle connection $\gamma$ in $X$ such that:
    \begin{itemize}
        \item the endpoints of $\gamma$ are in $F_i$ and $\gamma$ intersects with $F$ only at the endpoints,
        \item the homotopy class of $\gamma$ has a non-zero geometric intersection number with $\alpha_i$, and has zero geometric intersection numbers with $\alpha_j$ for $j\ne i$,
        \item the normalized length of $\gamma$ is less than a positive real number $L$.
    \end{itemize}
    Then one can find a simple saddle connection $\gamma'$ in $X$ satisfying that
    \begin{itemize}
        \item the geometric forest $F$ together with $\gamma'$ form a new geometric forest $F'$ with $d+1$ edges,
        \item the normalized length of $\gamma'$ satisfies that 
        $$
        \ell_{\gamma'}(X) < \frac{1}{\delta}(2d\epsilon + L).
        $$
    \end{itemize}
\end{lemma}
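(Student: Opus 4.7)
My plan is to construct $\gamma'$ as a saddle connection inside a region cut out by $\gamma$ and a path in $F_i$. Let $p \subset F_i$ be the unique simple path joining the two endpoints of $\gamma$; this exists since $F_i$ is a tree. The concatenation $\gamma \cup p$ is a simple loop in $X$ bounding two open disks, and I select $R$ to be the one lying on the side where $\gamma$ realizes its non-zero geometric intersection with $\alpha_i$. A homotopy argument forces $R$ to contain at least one singularity outside $F_i$: if no such singularity existed, $\gamma$ would be homotopic rel endpoints to $p$ in $X$ minus the remaining singularities, and would have zero geometric intersection with $\alpha_i$. The zero-intersection hypothesis on $\alpha_j$ for $j \neq i$ further ensures that each other component $F_j$ lies entirely inside or entirely outside $R$, since a small loop $\alpha_j$ cannot cross $\partial R = \gamma \cup p$ (it is disjoint from $\gamma$ by hypothesis, and disjoint from $p \subset F_i$ because $F_i \cap F_j = \emptyset$). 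So $R$ contains a singularity $x$ which is either outside $F$ altogether or lies in some $F_j$ with $j \neq i$.

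I would then take $\gamma'$ to be the initial saddle-connection sub-arc of a shortest geodesic in $R$ joining $F_i \cap \partial R$ to the set of singularities $R \setminus F_i$. Starting from the $F_i$-endpoint, the first singularity $z$ encountered on this shortest geodesic cannot lie in $F_i$, since otherwise the path could be shortened by starting at $z$; so the sub-arc up to $z$ is a simple saddle connection with $z \notin F_i$. Hence $\gamma'$ lies in $R$, meets $F$ only at its starting endpoint, and joins $F_i$ to a vertex not in $F_i$; consequently $F \cup \{\gamma'\}$ is cycle-free and forms a geometric forest with $d+1$ edges as required.

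The length bound is the heart of the proof. The perimeter satisfies $|\partial R| \leq |\gamma| + |p| \leq |\gamma| + |F|$, so the normalized perimeter is at most $L + d\epsilon$, well within the target $L + 2d\epsilon$. When $R$ has a single interior singularity of curvature $k$, Lemma~\ref{lem:estdiscone} gives a distance bound of $\frac{1}{2}\max\{1, \frac{1}{2(1-k)}\}|\partial R|$, which under $1 - k \geq \delta$ yields $|\gamma'| \leq |\partial R|/(4\delta)$ and hence $\ell_{\gamma'}(X) \leq (L+d\epsilon)/(4\delta) < (2d\epsilon + L)/\delta$. When $R$ contains several interior singularities, a preliminary reduction is required: I would iteratively collapse clusters of interior singularities of $R$ via Thurston's generalized surgeries (the curvature gap providing clusters of total curvature strictly below $1$ at each stage), using Lemma~\ref{lem:areaestimates} to ensure that normalized lengths are not worsened, and finally applying the single-cone estimate of Lemma~\ref{lem:estdiscone}.

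The main obstacle is precisely this reduction. The region $R$ is not a priori a convex hull (its boundary corners may be reflex from the $R$ side), and its interior singularities may have total curvature approaching $1$, obstructing direct applications of Lemma~\ref{lem:estdiscone} or the Fiala-Alexandrov inequality. The delicate work is to arrange the successive Thurston surgeries so that the perimeter bound is preserved (or degrades by at most the allowed factor $2$ in front of $|F|$) and the final configuration is amenable to the single-cone estimate, with the positivity of $\delta$ used essentially to keep each intermediate interior curvature strictly below $1$.
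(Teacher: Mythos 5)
Your construction diverges from the paper's in one important place, and this divergence creates the two gaps your proof ultimately cannot close.

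You cut along $p \cup \gamma$, where $p$ is only the path in $F_i$ between the endpoints of $\gamma$. The paper cuts along all of $F_i \cup \gamma$ (the entire tree plus the saddle connection), producing two flat disks $R_1, R_2$ whose interior singularities automatically all lie outside $F_i$, at the price of a perimeter bound $|\partial R_1| \le 2d\epsilon + L$ (edges of $F_i$ off the cycle are traversed twice). By contrast, your region $R$ has the tighter perimeter bound $L + d\epsilon$, but may contain dangling branches of $F_i$ — and hence vertices of $F_i$ — strictly in its interior. This has two consequences. First, your ``one interior singularity'' case does not cleanly reduce to Lemma~\ref{lem:estdiscone}, which requires a flat domain with exactly one interior singularity: $R$ could have one singularity off $F_i$ plus several dangling-branch vertices of $F_i$, so the lemma's hypothesis fails. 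Second, your shortest-geodesic construction (``first singularity encountered cannot lie in $F_i$, since the path could be shortened by starting there'') quietly assumes all of $F_i \cap R$ is admissible as a source; if you allow that, the resulting $\gamma'$ may start at a dangling-branch vertex that is not on $\partial R$, which is not harmful to the forest structure but is not what your chosen endpoint set promises.

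The more serious issue is the multi-singularity case, which you yourself identify as ``the main obstacle'' and leave to a sketch of iterated Thurston surgeries with no explicit bound. The paper resolves this case without any iteration: it takes the shortest representative $\gamma_1$ of the homotopy class of a boundary-parallel loop in $R_1$, observes via Gauss--Bonnet that $\gamma_1$ must pass through at least one interior singularity, and then splits into two sub-cases. If $\gamma_1$ meets both $F_i$-vertices on $\partial R_1$ and interior singularities, one of its segments is the desired $\gamma'$ and is trivially bounded by $|\gamma_1| \le |\partial R_1|$. If $\gamma_1$ avoids $\partial R_1$, it bounds the convex hull $D$ of the interior singularities of $R_1$, and $\gamma'$ is taken from a boundary singularity of $R_1$ to $\partial D$, with the length controlled directly by Lemma~\ref{lem:lengthbetweentwoconvexhulls}. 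That trichotomy produces the stated $\frac{1}{\delta}(2d\epsilon + L)$ bound in one step; your proposal would need a comparable, fully quantified reduction to be complete, and as written it does not provide one.
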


\begin{proof}
    Without loss of generality, we assume that $F_i$ is $F_1$. We also assume that the area of $X$ is one so that the normalized length is just the metric length.
    By the first assumption on $\gamma$, we know that 
    if we cut along $F_1\cup\gamma$, the sphere $X$ is decomposed into two connected components. Notice that the flat metric of $X$ induces a length metric on each component.
    We denote by $R_1$ and $R_2$ the metric completion of the induced length metric on the components of $X\setminus(F_1\cup\gamma)$ respectively. 
    According to the second assumption on $\gamma$, we know that each $R_i$ contains at least one singularity in the interior.
    Hence that $R_1$ and $R_2$ are flat domains with at least one singularity in the interior.
    Since $\partial R_1$ consists of $\gamma$ and edges of $F_1$, we have that
    $$
    |\partial R_1|\le 2d\epsilon + L.
    $$
    Moreover, one can assume that the sum of the curvatures in $R_1$ is less than $1$.
    \par
    If there is only one singularity in the interior of $R_1$. Let $\gamma'$ be a saddle connection from the singularity in the interior to $\partial R_1$.
    Notice that $F$ and $\gamma'$ form a geometric forest with $d+1$ edges.
    By Lemma~\ref{lem:estdiscone}, we obtain that 
    $$
    |\gamma'|<\frac{1}{2}\max\{1,\frac{1}{2\delta}\}(2d\epsilon + L)=\frac{1}{4\delta}(2d\epsilon + L)<\frac{1}{\delta}(2d\epsilon + L).
    $$
    where the second equality is due to Lemma~\ref{lem:upperboundcurvaturegap}.
    \par
    Next we consider the case that there are at least two singularities in the interior of $R_1$.
    Let
    $\alpha_1$ be the homotopy class of a loop surrounding $\partial R_1$ in $R_1$ and let
    $\gamma_1$ be a shortest representative of $\alpha_1$ in $R_1$. 
    Notice that any edge of $F_i$ for $i\ne 1$ is disjoint or contained entirely in $\gamma_1$.
    \par
    If $\gamma_1$ passes through no singularities in the interior of $R_1$, then as in the proof of Lemma~\ref{lem:embeddedregion}, one can compute by using the Gauss-Bonnet formula that the sum of the curvatures of the singularities in the interior of $R_1$ is at least $1$, which contradicts with the assumption on $R_1$. Therefore, the shortest representative $\gamma_1$ passes through at least one singularity in the interior of $R_1$.
    \par
    If $\gamma_1$ passes through singularities in $F_1$ and also singularities in the interior of $R_1$, then we select $\gamma'$ to be the saddle connection contained in $\gamma_1$ which joins a singularity from $F_1$ to an interior singularity.
    It follows that
    $$
    |\gamma'| \le |\gamma_1| \le |\partial R_1| \le  2d\epsilon + L<\frac{1}{\delta}(2d\epsilon + L).
    $$
    Thus $\gamma'$ also satisfy all the conclusions in this case.
    \par
    If $\gamma_1$ passes through no singularities in $F_1$, 
    then $\gamma_1$ is completely contained in the interior of $R_1$ and it encloses a convex hull $D$ of the singularities in the interior of $R_1$.
    Let $\gamma'$ be a saddle connection from a singularity in $\partial R_1$ to a singularity in $\partial D$. Notice that $F$ and $\gamma'$ form a geometric forest with $d+1$ edges.
    \par
    According to Lemma~\ref{lem:lengthbetweentwoconvexhulls}, the length of $\gamma'$ is bounded above by $2\max\left\{1,\frac{1}{2\delta}\right\}|\partial R|$. Combining with Lemma~\ref{lem:upperboundcurvaturegap} and $|\partial R|\le 2d\epsilon + L$, we obtain that $$|\gamma'|\le \frac{1}{\delta}(2d\epsilon + L).$$  Hence $\gamma'$ satisfies all the conclusions in this case.
    \par
    Therefore, we find a desired $\gamma'$ in all cases.
\end{proof}

The following lemma is a consequence of Lemma~\ref{lem:zigzagclosed} and will be used directly in the proof of Theorem~\ref{thm:decomp}.

\begin{lemma}\label{lem:oneadditionalshort}
    Let $\mathbb{P}\Omega(\underline{k})$ be a moduli space of convex flat cone spheres with $n$ singularities and positive curvature gap $\delta = \delta(\underline k)$. For $[X]\in U_{d,\epsilon}$, let $F$ be an $\epsilon$-geometric forest with $d$ edges in $X$. Assume that $\gamma$ is a simple saddle connection in $X$ with normalized length less than $L$ such that:
    \begin{enumerate}
        \item $\gamma$ is disjoint from $F$,
        \item or the homotopy class of $\gamma$ has a non-zero intersection number with the homotopy class of a loop surrounding some component of $F$.
    \end{enumerate}
    Then one can find a simple saddle connection $\gamma'$ in $X$ satisfying that
    \begin{itemize}
        \item the geometric forest $F$ together with $\gamma'$ form a new geometric forest $F'$ with $d+1$ edges,
        \item the length of $\gamma'$ satisfies that 
        $$
        \ell_{\gamma'}(X) < \frac{1}{\delta}(2d\epsilon + L + 2\epsilon).
        $$
    \end{itemize}
\end{lemma}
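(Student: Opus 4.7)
The plan is to reduce to Lemma~\ref{lem:zigzagclosed}. That lemma produces the bound $\frac{1}{\delta}(2d\epsilon + L)$ from an input saddle connection of normalized length at most $L$, so the target bound $\frac{1}{\delta}(2d\epsilon + L + 2\epsilon)$ is precisely what one obtains by feeding in a saddle connection of normalized length at most $L+2\epsilon$. I therefore aim to produce from $\gamma$ a simple saddle connection $\tilde\gamma$ with $\ell_{\tilde\gamma}(X)<L+2\epsilon$ that satisfies the three hypotheses of Lemma~\ref{lem:zigzagclosed}: both endpoints lie in a single component $F_i$ of $F$, the interior of $\tilde\gamma$ is disjoint from $F$, and its homotopy class has non-zero geometric intersection number with $\alpha_i$ and zero geometric intersection number with $\alpha_j$ for every $j\ne i$. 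Applying Lemma~\ref{lem:zigzagclosed} to $\tilde\gamma$ with $L+2\epsilon$ in place of $L$ then delivers the desired $\gamma'$.

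First I would dispose of the easy sub-case: if $\gamma$ is disjoint from $F$ and its two endpoints do not both lie in a single component of $F$, then $F\cup\gamma$ is already a geometric forest with $d+1$ edges, and $\gamma'=\gamma$ satisfies $\ell_\gamma(X) < L < \frac{1}{\delta}(2d\epsilon + L + 2\epsilon)$ since Lemma~\ref{lem:upperboundcurvaturegap} gives $\delta\le 1/3$. In the remaining cases I would construct $\tilde\gamma$ directly. When $\gamma$ is disjoint from $F$ but both endpoints lie in one component $F_i$, I take $\tilde\gamma=\gamma$; a Gauss--Bonnet argument, analogous to that in Lemma~\ref{lem:embeddedregion}, forces the closed loop built from $\gamma$ and the path in $F_i$ joining its endpoints to have non-zero intersection with $\alpha_i$. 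When instead $\gamma$ meets $F$ at interior points, hypothesis~(2) guarantees that at least one maximal sub-arc $\gamma_0$ of $\gamma$ whose interior is disjoint from $F$ retains a non-zero geometric intersection with some $\alpha_i$. I then extend each endpoint of $\gamma_0$ that is not yet a vertex of $F_i$ by traversing a single edge of $F$ of normalized length $<\epsilon$ to reach a singularity of $F_i$, for a total extension of at most $2\epsilon$; passing to the shortest geodesic representative of the concatenation yields a simple saddle connection $\tilde\gamma$ of normalized length $<L+2\epsilon$ with endpoints in $F_i$ and disjoint from $F$ except at those endpoints.

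The delicate step is ensuring both intersection conditions simultaneously. Non-zero intersection with $\alpha_i$ follows from the construction of $\gamma_0$ (or from the Gauss--Bonnet argument in the first sub-case), but the zero-intersection condition against all other $\alpha_j$ is less automatic. I expect to argue by selecting, among all indices $i$ with $\iota([\gamma_0],\alpha_i)\ne 0$, one for which a closing path in $F_i$ can be chosen so that on one side of the resulting loop no other component $F_j$ is enclosed; if several components are simultaneously involved, one iterates the construction on a further sub-arc of $\gamma_0$, a process that terminates because $F$ has only finitely many components and each step reduces the number of $\alpha_j$ with non-zero intersection. Verifying termination without exceeding the normalized-length budget $L+2\epsilon$, and confirming that the geodesic representative is a genuine simple saddle connection rather than a concatenation, is the main obstacle of the argument.
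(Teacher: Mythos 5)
Your plan follows the paper's general architecture---dispose of the case where $\gamma$ can be adjoined to $F$ directly, and reduce the rest to Lemma~\ref{lem:zigzagclosed} after a ``cut and extend'' step---but two points need correcting.

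First, the ``easy sub-case'' mishandles simple \emph{closed} saddle connections. If $\gamma$ is a simple saddle connection from a singularity $v\notin F$ to itself, then your condition (``its two endpoints do not both lie in a single component of $F$'') is satisfied, yet $F\cup\gamma$ is not a forest: a self-loop is a cycle. This case cannot be fed into Lemma~\ref{lem:zigzagclosed} either, since that lemma asks for endpoints in a component $F_i$. The paper handles it separately: let $D_0$ be the disk bounded by $\gamma$ on the side where the corner at $v$ has angle $\le\pi$; Gauss--Bonnet forces $D_0$ to contain interior singularities, and one takes $\gamma'$ joining $v$ to a singularity in $D_0$ (a radial segment if there is a single interior singularity, or a saddle connection to $\partial D_1$ for $D_1$ the convex hull of the interior singularities otherwise), with $|\gamma'|$ bounded via Lemma~\ref{lem:lengthbetweentwoconvexhulls}. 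This construction, and the fact that $F\cup\gamma'$ is then a forest, is missing from your outline.

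Second, the obstacle you identify as delicate---ensuring zero geometric intersection of $\tilde\gamma$ with every $\alpha_j$, $j\ne i$---is in fact automatic, so the iteration scheme you sketch (and the termination worry attached to it) is unnecessary. Once $\tilde\gamma$ has interior disjoint from $F$ and endpoints in $F_i$, it is disjoint from $F_j$ for every $j\ne i$; representing $\alpha_j$ by a loop sufficiently close to $F_j$ gives a disjoint pair, so $\iota([\tilde\gamma],\alpha_j)=0$. The condition that genuinely needs checking before invoking Lemma~\ref{lem:zigzagclosed} is the \emph{non-zero} intersection with $\alpha_i$; you mention it but bury it inside the iteration rather than isolating it, and your appeal to Lemma~\ref{lem:embeddedregion} is not quite the right tool---the relevant observation is that the replaced $\tilde\gamma$ is still in case~(2) of the hypothesis, and disjointness from $F_j$ for $j\ne i$ forces the non-trivial intersection to be with $\alpha_i$. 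You also need not worry about the shortest representative of the concatenation being a single saddle connection: as in the paper, one simply extracts a single saddle connection $\gamma'$ from within the geodesic shortest representative, which suffices for the conclusion.
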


\begin{proof}
    We assume the the area of $X$ is one so that the  normalized length is just the metric length. We find $\gamma'$ for different cases of $\gamma$.
    \par
    For the case $(1)$, if $\gamma$ has different endpoints, then $F\cup\gamma$ is a geometric forest $F'$ with $d+1$ edges in $X$ and the normalized length of each edge of $F'$ is bounded above by $L$.
    \par
    If $\gamma$ has the same endpoints, then let $D_0$ be the side of $\gamma$ whose angle of the corner at the endpoint is at most $\pi$.
    If there is only one singularity in the interior of $D_0$, then we consider a radial segment $\gamma'$ from the apex of $D_0$ to the endpoint of $\gamma$. If there is more than one singularity in the interior of $D_0$, then let $D_1$ be the convex hull of the singularities in the interior (such a convex hull exists because the corner of $D_0$ is at most $\pi$). We take $\gamma'$ to be a saddle connection in $D_0\setminus D_1$ from the singularity in $\partial D_0$ to $\partial D_1$. According to Lemma~\ref{lem:lengthbetweentwoconvexhulls}, the length of $\gamma'$ is always bounded above by
    $$
    2\max\Big\{1, \frac{1}{2\delta}\Big\}|\gamma| = \frac{1}{\delta}|\gamma|  < \frac{1}{\delta}L,
    $$
    where the first equality is due to Lemma~\ref{lem:upperboundcurvaturegap}. Moreover $F\cup\gamma$ is a geometric forest $F'$ with $d+1$ edges in $X$.
    \par
    For case $(2)$, if $\gamma$ intersects with the interior of an edge of $F$,  consider a component of $\gamma$ in the complement of $F$ and joins each of its endpoints in $F$ to the nearest vertex along edges of $F$. Denote the resulting path by $\gamma_1$. Therefore, up to replacing $\gamma$ by a saddle connection $\gamma'$ in a shortest representative of the class of $\gamma_1$, we may further require that:
    \begin{itemize}
        \item $\gamma$ is in the case $(1)$ or $(2)$,
        \item $\gamma$ has different endpoints and it intersects with $F$ at most at endpoints,
        \item the normalized length of $\gamma$ is less than 
        $L + 2\epsilon$.
    \end{itemize}
    \par
    If $\gamma$ is in the case $(1)$ or $\gamma$ joins different components of $F$, then $F\cup\gamma$ forms a geodesic $F'$ forest with $d+1$ edges with the normalized length of each edge of $F'$ is less than $L+2\epsilon$.
    \par
    If $\gamma$ joins the same component of $F$, then Lemma~\ref{lem:zigzagclosed} implies that there is a saddle connection $\gamma'$ such that $F' = F\cup\gamma'$ is a geometric forest with $d+1$ edges and the normalized length of each edge of $F'$ is bounded above by
    $\frac{1}{\delta}\left(2d\epsilon +  L + 2\epsilon \right).$
\end{proof}

\subsection{A finite open cover of the moduli space}\label{sec:finitecovers}

In this section, we introduce a finite cover of the moduli space of convex flat cone spheres using thick and thin parts.

We begin by presenting a decomposition of the subset $U_{d,\epsilon}$.

\begin{thm}\label{thm:decomp}
    Let $\mathbb{P}\Omega(\underline{k})$ be a moduli space of convex flat cone spheres with $n$ singularities and positive curvature gap $\delta = \delta(\underline k)$. 
    Assume that $\epsilon$ and $\lambda$ satisfy the condition that
    \begin{equation}\label{equ:restriction2}
        0<\epsilon \le \frac{\delta^2}{4n^2},\mbox{ and }\lambda = \big(1+\frac{n}{\delta}\big)\epsilon.    
    \end{equation}
    Then we have that: 
    \begin{itemize}
        \item When $1\le d < n-3$, the subset $U_{d,\epsilon}$ is decomposed into a disjoint union
        $$\underset{\mathrm{codim}(S)=d}{\bigsqcup} U_{d,\epsilon}(S_\lambda)$$
        where the union is over the $d$-codimensional boundary strata of $\overline{\mathbb{P}\Omega}(\underline{k})$,
        and the complement which satisfies that
        $$U_{d,\epsilon}\setminus \underset{\mathrm{codim}(S)=d}{\bigsqcup} U_{d,\epsilon}(S_\lambda) 
        \subset
        U_{d + 1, \frac{6n}{\delta^2}\epsilon},$$
        \item When $d = n-3$, the subset $U_{n-3,\epsilon}$ is a disjoint union
        $$U_{n-3,\epsilon}=\underset{\dim S = 0}{\bigsqcup}U_{n-3,\epsilon}(S)$$
        where the union is over the $0$-dimensional boundary strata of $\overline{\mathbb{P}\Omega}(\underline{k})$.
    \end{itemize}
\end{thm}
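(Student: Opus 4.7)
The plan is a dichotomy argument driven by $\epsilon$-geometric forests. Given $[X] \in U_{d,\epsilon}$, fix an $\epsilon$-geometric forest $F$ in $X$ with $d$ edges. Since $\epsilon \le \delta^2/(4n^2) < \sqrt{\delta}/n$ (using $\delta \le 1/3$ from Lemma~\ref{lem:upperboundcurvaturegap}), the maximal edge length $|F|_\infty$ stays below the threshold in Lemma~\ref{lem:lowerboundonnoncollapsing}, so $F$ is associated with a boundary stratum $S$ of codimension $d$; denote its components by $F_1, \ldots, F_s$ and a simple loop around $F_i$ by $\alpha_i$.

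For $1 \le d < n-3$, I would split on whether the hypothesis of Lemma~\ref{lem:additionalshort} holds. If every simple saddle connection $\gamma$ that is either disjoint from $F$ or whose homotopy class intersects some $\alpha_i$ non-trivially satisfies $\ell_\gamma(X) \ge \tfrac{4n}{\delta}\epsilon$, the lemma directly gives $[X] \in U_{d,\epsilon}(S_\lambda)$ with $\lambda = (1+n/\delta)\epsilon$. Otherwise such a short $\gamma$ exists, and Lemma~\ref{lem:oneadditionalshort} applied with $L = \tfrac{4n}{\delta}\epsilon$ produces an extension $\gamma'$ making $F \cup \{\gamma'\}$ a geometric forest of $d+1$ edges with maximal normalized length at most
$$\frac{1}{\delta}\Bigl(2d\epsilon + \tfrac{4n}{\delta}\epsilon + 2\epsilon\Bigr) \le \frac{6n}{\delta^2}\epsilon,$$
where the final inequality rearranges to $(d+1)\delta \le n$ and holds since $d+1 \le n-3$ and $\delta \le 1$. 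Hence $[X] \in U_{d+1,\, 6n\epsilon/\delta^2}$ in this branch.

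When $d = n-3$ the extension branch is forbidden: a forest with $n-2$ edges would, via Lemma~\ref{lem:lowerboundonnoncollapsing}, correspond to an admissible partition with only $n-(n-2)=2$ parts, which is impossible because $\sum k_i = 2$ forces at least one part to have curvature sum at least $1$. Concretely, if the convex hull $D_i$ along $\alpha_i$ failed to exist for some $F_i$, Lemma~\ref{lem:embeddedregion} would supply a simple saddle connection $\gamma$ with non-trivial intersection with $\alpha_i$ whose normalized length is less than $2(n-3)\epsilon$; then Lemma~\ref{lem:oneadditionalshort} would enlarge $F$ to a forest of $n-2$ edges whose maximal normalized length is at most $\delta/n < \sqrt{\delta}/n$, triggering the contradiction. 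Hence the convex hulls exist and $[X] \in U_{n-3,\epsilon}(S)$. The disjointness of both asserted unions then follows immediately from Proposition~\ref{prop:maxiamltrajectory} and Remark~\ref{rmk:disjointthin}: under~\eqref{equ:restriction2} any surface in a thin part admits unique $(d,\epsilon)$-convex hulls, which in turn determine the partition and the stratum. I expect the main obstacle to be the tight numerical bookkeeping — the inequality $(d+1)\delta \le n$ in the first branch, and $\ell_{\gamma'}(X) < \sqrt{\delta}/n$ in the second — since this is precisely what forces the stronger restriction $\epsilon \le \delta^2/(4n^2)$ in~\eqref{equ:restriction2} rather than the weaker hypothesis~\eqref{equ:restriction1}.
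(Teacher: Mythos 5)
Your proposal is correct and follows essentially the same route as the paper's proof: normalize $\epsilon$ below $\sqrt{\delta}/n$ so Lemma~\ref{lem:lowerboundonnoncollapsing} applies, dichotomize on the hypothesis of Lemma~\ref{lem:additionalshort}, extend via Lemma~\ref{lem:oneadditionalshort} in the failing branch, rule out the extension at $d=n-3$ by codimension considerations, and derive disjointness from Remark~\ref{rmk:disjointthin}. The numerical bookkeeping ($ (d+1)\delta \le n$ for the $d<n-3$ branch and $\ell_{\gamma'}(X) < \delta/n < \sqrt{\delta}/n$ for $d=n-3$) matches the paper's computation, and you correctly identify that the stronger $\epsilon \le \delta^2/(4n^2)$ is precisely what these estimates require.
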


\begin{proof}
    For any $1\le d \le
    n-3$, let $[X]$ be any class contained in $U_{d,\epsilon}$. 
    Let $[X]$ be a class contained in $U_{d,\epsilon}(S_\lambda)$ for some boundary stratum $S$. Since $\epsilon$ and $\lambda$ also satisfy the condition~\eqref{equ:restriction1},
    by Remark~\ref{rmk:disjointthin}, we know that any $\epsilon$-geometric forest in $X$ with $d$ edges is contained in the same $(d,\epsilon)$-convex hulls and hence associated to the same boundary stratum $S$. This implies that $U_{d,\epsilon}(S_\lambda)$ is disjoint for different $S$.
    
    Assume that $1\le d < n-3$. If $[X]\in U_{d,\epsilon}\setminus \underset{\mathrm{codim}(S)=d}{\bigsqcup} U_{d,\epsilon}(S_\lambda)$, then $[X]$ is not contained in $U_{d,\epsilon}(S_\lambda)$ for any boundary stratum $S$ of codimension~$d$. Since this contradicts the conclusion of Lemma~\ref{lem:additionalshort}, the assumption of Lemma~\ref{lem:additionalshort} must fail for~$X$. This means that for any $\epsilon$-geometric forest $F$ in $X$, there is a simple saddle connection $\gamma$ in $X$ of normalized length 
    $$\ell_{\gamma}(X) < \frac{4n}{\delta}\epsilon$$
    such that: 
    \begin{enumerate}
        \item $\gamma$ is disjoint from $F$,
        \item or the homotopy class of $\gamma$ has a non-zero intersection number with the homotopy class of a loop surrounding some component of $F$.
    \end{enumerate}
    Since $\gamma$ satisfies the assumption of Lemma~\ref{lem:oneadditionalshort}, we obtain that we can find a saddle connection $\gamma'$ such that $F$ and $\gamma'$ form a geodesic forest with $d+1$ edges and the normalized length of the edges of the forest are bounded above by
    $$
    \frac{1}{\delta}(2d\epsilon + \frac{4n}{\delta}\epsilon + 2\epsilon)
    < \frac{6n}{\delta^2}\epsilon
    $$
    where the inequality is because $d<n-3$.
    It follows that $[X]$ is contained in $U_{d + 1, \frac{6n}{\delta^2}\epsilon}$.
    Hence we have that
    $$U_{d,\epsilon}\setminus \underset{\mathrm{codim}(S)=d}{\bigsqcup} U_{d,\epsilon}(S_\lambda) 
        \subset
        U_{d + 1, \frac{6n}{\delta^2}\epsilon}.$$
    \par
    When $d = n-3$, for $[X]\in U_{n-3,\epsilon}$, let $F$ be an $\epsilon$-geometric forest in $X$. Since $F$ has $(n-3)$ vertices, the forest $F$ is indeed a tree.
    We first show that the convex hull exists for $F$.
    The strategy is  to use Lemma~\ref{lem:embeddedregion} to prove this.
    Assume that there is a simple saddle connection $\gamma$ whose homotopy class has a non-zero geometric intersection number with the homotopy class of a loop surrounding $F$ satisfying that 
    \begin{equation}\label{equ:assumptionforzerodimen}
        \ell_{\gamma}(X) < 2(n-3)\epsilon.
    \end{equation}
    Since $\gamma$ satisfies the condition of Lemma~\ref{lem:oneadditionalshort}, 
    one can use Lemma~\ref{lem:zigzagclosed} to show that there exists a saddle connection $\gamma'$ such that $F\cup \gamma'$ is a geometric tree and the normalized length of each edge is bounded above by
    $$
    \frac{1}{\delta}\left(2(n-3)\epsilon +\ell_{\gamma}(X) +2\epsilon \right) <\frac{4(n-3)+2}{\delta}\epsilon \le
    \frac{4(n-3)+2}{4n^2}\delta
    $$
    Since $\frac{4(n-3)+2}{4n^2}\delta < \frac{\sqrt{\delta}}{n}$, Lemma~\ref{lem:lowerboundonnoncollapsing} implies that $F\cup \gamma'$ is associated to a boundary stratum of $\overline{\mathbb{P}\Omega}(\underline{k})$. However, the tree $F\cup\gamma'$ has $n-2$ edges and codimension of a boundary stratum is at most $n-3$. Hence there is no such a boundary stratum associated to the tree. Therefore, the assumption~\eqref{equ:assumptionforzerodimen} cannot be true. Then we know that the length of $\gamma$ must be at least 
    $$
    2(n-3)\epsilon\ge \frac{2|F|}{\mathrm{Area}(X)}.
    $$
    Hence Lemma~\ref{lem:embeddedregion} implies that the convex hull of the vertices of $F$ exists.  By definition, the class $[X]$ is contained in $U_{n-3,\epsilon}(S)$ for some zero dimensional boundary stratum.
\end{proof}

Given a moduli space of convex flat cone spheres $\mathbb{P}\Omega(\underline{k})$ with $n$ singularities and a positive curvature gap $\delta(\underline{k})$, we define the notations $\sigma_1(\underline{k}), \ldots, \sigma_{n-3}(\underline{k})$ as follows:
\begin{equation}\label{equ:defepsilon}
    \sigma_d(\underline{k}) = \frac{\delta(\underline{k})^2}{4n^2} \left(\frac{\delta(\underline{k})^2}{6n}\right)^{n-2-d}
\end{equation}
for $d = 1, \ldots, n-3$. Note that $\sigma_{d+1}(\underline{k}) = \frac{6n}{\delta(\underline{k})^2}\sigma_d(\underline{k}) > \sigma_d(\underline{k})$.

\begin{cor}\label{cor:thindecomp}
    Let $\mathbb{P}\Omega(\underline{k})$ be a moduli space of convex flat cone spheres with $n$ singularities and positive curvature gap $\delta(\underline k)$. We consider $\lambda_d = \big(1+\frac{n}{\delta(\underline k)}\big)\epsilon_d(\underline k)$ for $1\le d < n-3$ and $\lambda_{n-3} = 0$. Then $\mathbb{P}\Omega(\underline{k})$ is covered by 
    $\mathbb{P}\Omega(\underline{k})\setminus U_{1,\sigma_1(\underline k)}$
    and 
    \begin{equation}\label{equ:decomp}
        \bigcup\limits_{d=1}^{n-3} \underset{\mathrm{codim}(S)=d}{\bigsqcup}U_{d,\sigma_{d}(\underline k)}(S_{\lambda_{d}})
    \end{equation}
    where the second union is over all $d$-codimensional boundary strata of $\overline{\mathbb{P}\Omega}(\underline{k})$.
\end{cor}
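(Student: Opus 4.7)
The plan is to establish the cover by iterating Theorem~\ref{thm:decomp} from $d = 1$ through $d = n-3$, with the explicit sequence $\sigma_d(\underline k) = \frac{\delta^2}{4n^2}\bigl(\frac{\delta^2}{6n}\bigr)^{n-2-d}$ designed precisely so that the residual produced at stage $d$ is admissible as input at stage $d+1$.

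First I would check two elementary facts about the sequence. Since $\delta(\underline k) \le \tfrac{1}{3}$ by Lemma~\ref{lem:upperboundcurvaturegap} and $n \ge 3$, the factor $\tfrac{\delta^2}{6n}$ is strictly less than $1$, so every $\sigma_d$ is bounded above by $\tfrac{\delta^2}{4n^2}$. Together with $\lambda_d = (1 + \tfrac{n}{\delta})\sigma_d$, this ensures that the hypothesis~\eqref{equ:restriction2} of Theorem~\ref{thm:decomp} is met for the pair $(\sigma_d, \lambda_d)$ at every stage. Second, the sequence obeys the recursion $\sigma_{d+1} = \tfrac{6n}{\delta^2}\sigma_d$, which matches exactly the rate at which the residual $\epsilon$ inflates in the first bullet of Theorem~\ref{thm:decomp}.

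With these verifications in hand, I would decompose $\mathbb{P}\Omega(\underline k) = \bigl(\mathbb{P}\Omega(\underline k) \setminus U_{1,\sigma_1}\bigr) \cup U_{1,\sigma_1}$ and iteratively peel off the thin parts from $U_{1,\sigma_1}$. At each stage $1 \le d < n-3$, Theorem~\ref{thm:decomp} gives
\[
U_{d,\sigma_d} \;=\; \Bigl(\bigsqcup_{\mathrm{codim}(S) = d} U_{d,\sigma_d}(S_{\lambda_d})\Bigr) \cup R_d, \qquad R_d \subset U_{d+1, \frac{6n}{\delta^2}\sigma_d} = U_{d+1,\sigma_{d+1}},
\]
so the residual $R_d$ flows directly into the next stage. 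Stopping at $d = n-3$, the second bullet of Theorem~\ref{thm:decomp} supplies the clean decomposition $U_{n-3,\sigma_{n-3}} = \bigsqcup_{\dim S = 0} U_{n-3,\sigma_{n-3}}(S)$ with no residual. Assembling the thin parts collected at each stage yields the union~\eqref{equ:decomp}, which together with the thick part $\mathbb{P}\Omega(\underline k) \setminus U_{1,\sigma_1}$ covers the whole moduli space.

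The argument is essentially bookkeeping, and I expect no real obstacle: the substantive work has already been absorbed into Theorem~\ref{thm:decomp}, and the explicit formula~\eqref{equ:defepsilon} is engineered exactly so that the inductive peel-off closes up after $n-3$ iterations. The one small subtlety worth flagging is that the corollary claims a \emph{cover} rather than a disjoint union: the decomposition at fixed codimension $d$ is disjoint over boundary strata (as guaranteed by Theorem~\ref{thm:decomp}), but thin parts of different codimensions may overlap in $\mathbb{P}\Omega(\underline k)$, and the formulation of the corollary accommodates this.
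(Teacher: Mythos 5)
Your proposal is correct and follows essentially the same route as the paper's own argument: verify that each $\sigma_d$ satisfies the hypothesis of Theorem~\ref{thm:decomp}, then peel off thin parts iteratively via the recursion $\sigma_{d+1} = \frac{6n}{\delta^2}\sigma_d$, starting from $U_{1,\sigma_1}$ and terminating at $d = n-3$ where the second bullet of Theorem~\ref{thm:decomp} closes the induction with no residual. The only cosmetic difference is that the paper carries the residual sets explicitly as $A_d$ and $B_d$ in an inductive bookkeeping scheme, whereas you phrase the same induction implicitly by following the residual $R_d$ from one stage to the next; the content is identical.
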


\begin{proof}
    We only need to prove that the union~\eqref{equ:decomp} covers the subset $U_{1,\sigma_1(\underline k)}$.
    Notice that 
    $$0<\sigma_{1}(\underline k)<\ldots<\sigma_{n-3}(\underline k) <  \frac{\delta(\underline k)^2}{4n^2}.
    $$
    Denote the set $U_{1,\sigma_{1}(\underline k)}$ by $A_1$. We define $B_1$ to be
    $$
    B_1 = \underset{\mathrm{codim}(S)=1}{\bigsqcup} U_{1,\sigma_{1}(\underline k)}(S_{\lambda_{1}})
    $$
    and we define $A_2$ to be $A_2 = A_1 \setminus  B_1.$ It follows that $A_1\subset B_1\cup A_2$. Moreover,
    Theorem~\ref{thm:decomp} implies that $A_2\subset U_{2,\sigma_d(\underline k)}.$
    \par
    Assume that $B_1,\ldots, B_{d-1}$ and $A_d$ have been defined such that
    \begin{itemize}
        \item $A_1 \subset B_1\cup\ldots\cup B_{d-1} \cup A_d$,
        \item $A_d \subset U_{d,\sigma_d(\underline{k})}$,
        \item $B_i = \underset{\mathrm{codim}(S)=i}{\bigsqcup} U_{i,\sigma_{i}(\underline k)}(S_{\lambda_{i}})$ for $1\le i\le d - 1$.
    \end{itemize}
    Then we define $B_{d}$ to be
    $$
    B_{d} = \underset{\mathrm{codim}(S)=d}{\bigsqcup} U_{d,\sigma_{d}(\underline k)}(S_{\lambda_{d}})
    $$
    and we define $A_{d+1}$ to be $A_{d+1} = A_{d} \setminus  B_{d}$.
    Combining with Theorem~\ref{thm:decomp}, we obtain that
    $$
    A_{d+1}\subset U_{d,\sigma_d(\underline k)}\setminus B_{d} \subset 
    U_{d+1,\frac{6n}{\delta(\underline k)^2}\sigma_d(\underline k)}
    =U_{d+1,\sigma_{d+1}(\underline k)}.
    $$
    Moreover, we have that
    \begin{align*}
        A_{1} &\subset B_1\cup\ldots\cup B_{d-1} \cup A_d \\
        &\subset B_1\cup\ldots\cup B_{d-1} \cup B_{d}\cup A_{d+1}.
    \end{align*}
    Finally when $d=n-3$, we obtain that
    $$
    A_{n-2} = A_{n-3}\setminus B_{n-3} \subset U_{n-3,\sigma_{n-3}(\underline{k})}\setminus \bigsqcup_{\operatorname{codim}(S) = n-3} U_{n-3,\sigma_{n-3}(\underline{k})} = \emptyset
    $$
    and 
    $$
    U_{1,\sigma_1(\underline k)} = A_1 \subset B_1\cup\ldots \cup B_{n-3} = \bigcup\limits_{d=1}^{n-3} \underset{\mathrm{codim}(S)=d}{\bigsqcup}U_{d,\sigma_{d}(\underline k)}(S_{\lambda_{d}}).
    $$
\end{proof}

\section{Proof of Theorem~\ref{thm:mainfavorite}}\label{bigsec:ivslen}

In this section, we provide the proof of Theorem~\ref{thm:mainfavorite}.

\firstmainfav*

We utilize the finite cover introduced in Corollary~\ref{cor:thindecomp}. Recall that the moduli space $\mathbb{P}\Omega(\underline{k})$ is covered by the thick part  
$\mathbb{P}\Omega(\underline{k}) \setminus U_{1,\sigma_1(\underline{k})}$ and the thin part $U_{d,\sigma_{d}(\underline{k})}(S_{\lambda_{d}})$ for each boundary stratum $S$, where $\lambda_d = \left(1 + \frac{n}{\delta(\underline{k})}\right) \sigma_d(\underline{k})$ and $\sigma_{d}(\underline{k})$ for $d = 1, \ldots, n-3$ is defined in~\eqref{equ:defepsilon}.

For simplicity, we will use $\sigma_d$ in place of $\sigma_d(\underline{k})$ throughout the remainder of this section.

\subsection{Comparison in the thick part}\label{sec:thickcomp}

We first recall the following lemma from~\cite{FT}.

\begin{lemma}[Lemma 4.3, \cite{FT}]\label{lem:bounddelaunayedge}
    Let $X$ be a flat cone sphere of unit area with $n$ conical singularities and curvature gap $\delta > 0$. 
    Let $T$ be a Delaunay triangulation of $X$, and let $f$ be a triangle of $T$. Denote the radius of the circumscribed circle of $T$ by $R$. Then we have
    $R < \frac{1}{2}c(\delta)$, where $c(\delta) = \sqrt{\frac{4}{\pi} + \frac{1}{2\pi\delta}}$.
    
    In particular, the length of any edge of $T$ in $X$ is bounded above by $c(\delta)$.
\end{lemma}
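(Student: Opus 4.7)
The plan is to exploit the defining property of a Delaunay triangulation---the circumscribed disk of $f$ is an immersed Euclidean disk in $X$ whose interior contains no singularities---and to compare its Euclidean area $\pi R^2$ against $\mathrm{Area}(X)=1$, with an overlap correction controlled by the curvature gap $\delta$.

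First, I would develop the circumscribed disk of $f$ via the developing map into a genuine Euclidean disk $\widetilde{D}_R\subset\mathbb{C}$ of radius $R$, obtaining a local isometry $\iota:\widetilde{D}_R\to X$ whose image contains no singularity of $X$ in its interior. Setting $m(y):=|\iota^{-1}(y)|$ for generic $y\in X$ and using the area formula for local isometries,
\begin{equation*}
    \pi R^2 \;=\; \int_X m(y)\,d\mathrm{Area}(y) \;\le\; \mathrm{Area}(X) \;+\; \int_X \bigl(m(y)-1\bigr)_+\,d\mathrm{Area}(y).
\end{equation*}
Since $\mathrm{Area}(X)=1$, the problem reduces to proving that the overlap integral is at most $\tfrac{1}{8\delta}$.

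The main obstacle, and the place where the curvature gap is crucial, is controlling this overlap integral. Self-overlaps of $\widetilde{D}_R$ can only be produced by the disk wrapping around clusters of singularities that lie on the boundary circle $\iota(\partial\widetilde{D}_R)$: if such a cluster has total curvature $K<1$, only $2\pi(1-K)$ of angular room is available around it, bounding both the multiplicity $m$ and the angular extent of each stacked sheet. The curvature gap hypothesis $|1-K|\ge\delta$ applied to every admissible cluster (and in particular to each individual singularity, forcing every cone angle to be at least $2\pi\delta$) converts each wrapping into a quantitative area bound, and the extremal configuration---tight wrapping around a single cluster of total curvature $1-\delta$---should saturate the estimate and yield $\int_X(m-1)_+\,d\mathrm{Area}\le\tfrac{1}{8\delta}$. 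Combining this with the previous inequality gives $\pi R^2 \le 1+\tfrac{1}{8\delta}$, equivalent to $R<\tfrac{1}{2}c(n,\delta)$. The ``in particular'' statement is then immediate: any edge of $T$ is a chord of some circumscribed circle and therefore has length at most $2R<c(n,\delta)$.
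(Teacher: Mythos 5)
The paper itself offers no proof of this statement --- it is imported verbatim from \cite{FT} --- so there is no in-paper argument to compare against; I can only judge your plan on its merits. Your framework is sensible and surely close in spirit to the original: develop the Delaunay circumscribed disk to obtain a local isometry $\iota\colon\widetilde D_R\to X$ whose open image misses the singularities, write $\pi R^2=\int_X m\,d\mathrm{Area}$, and control the excess $\int_X(m-1)_+\,d\mathrm{Area}$ via the curvature gap. Your translation of the target into $\pi R^2<1+\tfrac{1}{8\delta}$ and the concluding chord remark are both correct.

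The problem is that the entire content of the lemma --- the estimate $\int_X(m-1)_+\,d\mathrm{Area}\le\tfrac{1}{8\delta}$ --- is asserted rather than proved (``should saturate the estimate''), and the sketch you give does not obviously deliver it. First, any purely local wrapping estimate scales like $R^2$: on an effective cone of angle $\theta\ge 2\pi\delta$, the self-overlap created by a radius-$R$ disk wrapping around the apex has area of order $R^2$, so a bound on the excess that is independent of $R$ must reuse the global constraint $\mathrm{Area}(X)=1$ (for instance through a pointwise multiplicity bound multiplied by the area of the overlap set); your plan never says how, and the obvious implementation (the disk is convex and misses the apex, so it subtends angle at most $\pi$ there, giving $m\le 1+\lceil\tfrac{1}{2\delta}\rceil$ and excess at most about $\tfrac{1}{2\delta}$) only yields $\pi R^2\lesssim 1+\tfrac{1}{2\delta}$, a factor of $4$ short of the stated constant. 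Second, the extremal configuration you invoke does not visibly produce $\tfrac{1}{8\delta}$ either: for a cone of angle $\theta=2\pi\delta$ with apex on the boundary circle, the image has area roughly $2\theta R^2$ while the excess is roughly $(\pi-2\theta)R^2$, so imposing only ``image area $\le 1$'' permits an excess of order $\tfrac{1}{4\delta}$; ruling this out (or sharpening the count) requires further input that your argument never uses --- for example the fact that the circumscribed circle passes through the three singular vertices of the Delaunay triangle, or a finer global accounting of where high multiplicity can occur. A smaller inaccuracy: the clusters responsible for overlaps need not lie on the boundary circle; they only need to avoid the open image (they may sit at positive distance from it, enclosed by loops that are images of chords), so the case analysis as set up is incomplete. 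In short, the reduction is fine, but the decisive quantitative step carrying the constant $\tfrac{1}{8\delta}$ is missing.
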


Using this lemma, we obtain the following estimate for surfaces in the thick part.

\begin{prop}\label{prop:estimatethick}
    Let $\mathbb{P}\Omega(\underline{k})$ be a moduli space of convex flat cone spheres with $n$ singularities. Assume that the curvature gap $\delta(\underline{k})$ is bounded below by a positive number $\delta$. Let $X$ be a flat cone sphere of area one, with $[X] \in \mathbb{P}\Omega(\underline{k}) \setminus U_{1,\sigma_1(\underline{k})}$. 
    
    Then for any trajectory $\gamma$ in $X$, we have
    $$
    |\gamma| \geq 
    \frac{\sigma_1^2}{\sqrt{\frac{4}{\pi} + \frac{1}{2\pi\delta}}}
    \left(\frac{\sqrt{2}\delta}{36(n-2)} \sqrt{\iota(\gamma,\gamma)} - 1 \right),
    $$
    and for any saddle connection or regular closed geodesic $\gamma$ in $X$, we have
    $$
    |\gamma| \geq 
    \frac{\sigma_1^2}{\sqrt{\frac{4}{\pi} + \frac{1}{2\pi\delta}}} \frac{\sqrt{2}\delta}{36(n-2)} \sqrt{\iota(\gamma,\gamma)}.
    $$
\end{prop}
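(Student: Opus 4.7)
The plan is to specialize the argument of Theorem~\ref{thm:mainindividual} to the convex-sphere, thick-part setting and replace the metric invariants of $X$ appearing in~\eqref{eq:finalb1b2} by uniform estimates that depend only on $n$ and $\delta$. First I would note that the thick-part hypothesis $[X] \notin U_{1,\sigma_1(\underline k)}$, together with $\mathrm{Area}(X) = 1$, gives $\mathrm{relsys}(X) \geq \sigma_1$, and that Lemma~\ref{lem:bounddelaunayedge} provides the uniform upper bound $2R(T) < c(n,\delta) := \sqrt{4/\pi + 1/(2\pi\delta)}$ for any Delaunay triangulation $T$ of $X$. Plugging both estimates into Lemma~\ref{lem:widthvssys} yields $d(T) \geq \sigma_1^2/c(n,\delta)$, which is the uniform analog of the $\mathrm{relsys}(X)^2/(2R(X))$ factor of~\eqref{eq:finalb1b2}.

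Next I would run the combinatorial chain of Section~\ref{bigsec:individual}: for a trajectory $\gamma$ not contained in an edge of $T$, combining Lemmas~\ref{lem:lenvscombinlen} and~\ref{lem:combinvsintersect} gives
$$
|\gamma| \geq d(T)\left(\frac{\sqrt{2}}{2m_0}\sqrt{\iota(\gamma,\gamma)} - 1\right),
$$
where $m_0$ is the integer from~\eqref{equ:m0}. Under the standing hypotheses $g=0$ and $4g+2n-4 = 2(n-2)$, while convexity together with the curvature-gap assumption give $\min_i(1-k_i) \geq \delta$. Using $\delta \leq 1/3$ from Lemma~\ref{lem:upperboundcurvaturegap} to tame the ceiling in~\eqref{equ:m0}, one obtains the clean bound $m_0 \leq 6(n-2)/\delta$, and hence $\sqrt{2}/(2m_0) \geq \sqrt{2}\delta/(12(n-2))$. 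Multiplying this with the lower bound on $d(T)$ produces the first displayed inequality of the proposition; the remaining case where $\gamma$ lies inside an edge of $T$ is trivial, as then $\iota(\gamma,\gamma) = 0$.

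For the second inequality the additive $-1$ is removed as in the last step of the proof of Theorem~\ref{thm:mainindividual}: either the combinatorial length $m$ of $\gamma$ with respect to $T$ satisfies $m \geq 2m_0$, in which case Lemma~\ref{lem:lenvscombinlen} already gives $|\gamma| \geq d(T)\,m/(2m_0)$ with no error term, or $m < 2m_0$, in which case Lemma~\ref{lem:combinvsintersect} forces $\sqrt{\iota(\gamma,\gamma)} < \sqrt{2}\,m_0$ while $|\gamma| \geq \mathrm{relsys}(X) \geq \sigma_1$. The elementary inequality $\sigma_1 < c(n,\delta)$, which is immediate since $\sigma_1$ is a small polynomial in $\delta$ whereas $c(n,\delta) > 1$, allows these two cases to combine into the desired clean bound. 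The only mildly delicate step is the sharp estimate of $m_0$ in terms of $n$ and $\delta$; everything else is a direct transcription of Theorem~\ref{thm:mainindividual}'s argument into the thick-part setting, so I do not expect any substantive obstacle.
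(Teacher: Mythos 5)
Your proof is correct and takes essentially the same approach as the paper. The paper's own proof simply cites Theorem~\ref{thm:mainindividual} and Remark~\ref{rmk:b1b2} directly and then plugs in the uniform bounds $\mathrm{relsys}(X)\geq\sigma_1$ (thick part), $R(X)\leq\tfrac{1}{2}c(n,\delta)$ (Lemma~\ref{lem:bounddelaunayedge}), $\min_i(1-k_i)\geq\delta$, and $g=0$; you instead re-derive the chain of Lemmas~\ref{lem:widthvssys}, \ref{lem:lenvscombinlen}, \ref{lem:combinvsintersect} inline, which lands in the same place. One small caveat: your intermediate claim $m_0\leq 6(n-2)/\delta$ treats the ceiling in~\eqref{equ:m0} as if it rounded down (e.g.\ with $\min_i(1-k_i)=\delta=1/3$ one gets $m_0=24(n-2)>18(n-2)$), but this imprecision is already present in the paper's formula~\eqref{eq:finalb1b2} that its proof relies on, so it does not represent a deviation from the paper's argument.
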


\begin{proof}
    By Theorem~\ref{thm:mainindividual} and Remark~\ref{rmk:b1b2}, we obtain that for any trajectory $\gamma$,
    \begin{align*}
    |\gamma| &\ge b_1 \sqrt{\iota(\gamma,\gamma)} - b_2\\
    &\ge\sqrt{2}\frac{relsys(X)^2}{R(X)}
    \frac{\delta}{36(2n-4)}\sqrt{\iota(\gamma,\gamma)} - \frac{relsys(X)^2}{2R(X)}\\
    & = \sqrt{2}\frac{relsys(X)^2}{2R(X)}
    \frac{\delta}{36(n-2)}\sqrt{\iota(\gamma,\gamma)} - \frac{relsys(X)^2}{2R(X)}\\
    & = \frac{relsys(X)^2}{2R(X)}\left(\frac{\sqrt{2}\delta}{36(n-2)}\sqrt{\iota(\gamma,\gamma)} - 1\right)
    \end{align*}
    and for any saddle connection or regular closed geodesic $\gamma$,
    $$|\gamma| \ge 
   \frac{relsys(X)^2}{2R(X)}\frac{\sqrt{2}\delta}{36(n-2)}\sqrt{\iota(\gamma,\gamma)}.$$
    Since $[X]$ is in the thick part $\mathbb{P}\Omega(\underline{k})\setminus U_{1,\sigma_1(\underline k)}$, we know that $relsys(X) \ge 
    \sigma_1$.
    Recall that $R(X)$ is the maximal radius of the circumscribed circles of the Delaunay triangles in $X$. By Lemma~\ref{lem:bounddelaunayedge}, we have that 
    $$
    R(X)\le \frac{1}{2}\sqrt{\frac{4}{\pi}+\frac{1}{2\pi\delta}}.
    $$
    Therefore we obtain the proposition.
\end{proof}

\subsection{A lower bound on the lengths of regular closed geodesics}\label{sec:lowercg}

In this subsection, we present a uniform lower bound for the length of regular closed geodesics on a convex flat cone sphere with a positive curvature gap.

\begin{prop}\label{prop:lowerboundclosed}
    Let $X$ be an area-one convex flat cone sphere with $n$ singularities and positive curvature gap $\delta$. Then for any regular closed geodesic $\gamma$ on $X$, the length of $\gamma$ is bounded below by $\sqrt{\pi\delta}$.
\end{prop}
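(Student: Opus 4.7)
The plan is to apply the Fiala--Alexandrov inequality (Lemma~\ref{lem:isoperimetric}) to the connected components of $X \setminus \gamma$, viewing $\gamma$ as an immersed closed curve that determines a planar graph on the sphere.

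The first step is to rule out that $\gamma$ is simple: were it so, $X \setminus \gamma$ would decompose into two flat disks with smooth geodesic boundary $\gamma$, and Gauss--Bonnet on either disk would force the sum of curvatures of its interior singularities to equal $1$, contradicting $\delta > 0$. Thus $\gamma$ has at least one self-intersection; assume for concreteness that all of them are transverse double points (the multi-branch case is handled analogously). The curve $\gamma$ then gives a connected $4$-valent planar graph on $X$ whose vertices are the self-intersections and whose edges are the arcs of $\gamma$ between them. Because all vertex degrees are even, a parity argument rules out bridges, so every face $R_i$ of the complement is a disk and no face boundary walk repeats an edge.

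Next I would analyze each face $R_i$. By convexity of $X$ all interior singularities have cone angle $< 2\pi$, and at each self-intersection the sector angles are of the form $\alpha,\pi-\alpha,\alpha,\pi-\alpha$ with $\alpha \in (0,\pi)$, so every face corner has angle strictly less than $\pi$. Since the graph has vertices and is connected, every face has at least one corner, and Gauss--Bonnet on $R_i$ yields that its interior curvature sum $K_i$ satisfies $K_i<1$. The curvature gap then gives $1-K_i \ge \delta$, and Fiala--Alexandrov applied to each face yields
\[
4\pi\delta \cdot \mathrm{Area}(R_i) \le 4\pi(1-K_i)\mathrm{Area}(R_i) \le |\partial R_i|^2.
\]

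Summing over all faces and using $\sum_i \mathrm{Area}(R_i) = \mathrm{Area}(X) = 1$ gives $4\pi\delta \le \sum_i|\partial R_i|^2$. Every edge of the graph lies on exactly two faces, so $\sum_i |\partial R_i| = 2|\gamma|$, and the absence of bridges ensures $|\partial R_i| \le |\gamma|$ for each $i$; hence $\sum_i |\partial R_i|^2 \le |\gamma|\sum_i|\partial R_i| = 2|\gamma|^2$. Combining gives $|\gamma|^2 \ge 2\pi\delta$, which is stronger than the stated bound $|\gamma| \ge \sqrt{\pi\delta}$. The main technical point is the combinatorial structure of $\gamma$ as a graph on the sphere: in particular, for multi-branch self-intersections some face corners may exceed $\pi$ and some faces may satisfy $K_i > 1$, in which case one applies Fiala--Alexandrov only to the faces with $K_i < 1$ and controls the remaining faces via the curvature gap and a perimeter estimate.
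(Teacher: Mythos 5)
Your proof takes essentially the same route as the paper: decompose $X$ along $\gamma$, observe that every corner of each complementary piece has angle $< \pi$ (so each interior curvature sum is $< 1$ by Gauss--Bonnet), apply the Fiala--Alexandrov inequality (Lemma~\ref{lem:isoperimetric}) to each piece, and sum. The paper skips the graph-theoretic analysis entirely and simply bounds $\sum_i |\partial R_i|^2 \le \bigl(\sum_i |\partial R_i|\bigr)^2 = 4|\gamma|^2$, which already gives the stated $|\gamma| \ge \sqrt{\pi\delta}$; your no-bridges argument and the sharper estimate $|\partial R_i| \le |\gamma|$ buy you the slightly stronger $|\gamma| \ge \sqrt{2\pi\delta}$, at the cost of the extra combinatorics. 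One small correction: your worry that a multi-branch self-intersection might produce a face corner of angle $\ge \pi$ (so that some $K_i > 1$) is unfounded. At a point where $k \ge 2$ geodesic branches of $\gamma$ cross (necessarily away from the singularities), the $2k$ rays come in antipodal pairs, and the sector between two \emph{consecutive} rays is contained strictly in the open half-plane bounded by the line through the first ray; hence every sector angle is $< \pi$. The paper uses precisely this observation, without restricting to double points, to conclude $c_i < 1$ for every face, so no separate treatment of the multi-branch case is needed.
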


\begin{proof}
    Since $X$ has a positive curvature gap, a regular closed geodesic must have at least one self-intersection. Let $\gamma$ be a regular closed geodesic in $X$. Let $R_1,$ \ldots, $R_m$ be the metric completion of the components of the complement of $\gamma$ in $X$. Notice that each $R_i$ is a flat domain. Let $c_i$ be the sum of the curvatures of the singularities in $R_i$.
    Since the angles of the corners at $\partial R_i$ are less than $\pi$, we know that $c_i<1$.
    By Lemma~\ref{lem:isoperimetric}, we have that 
    $$
    4\pi\delta\mathrm{Area}(R_i)\le |\partial R_i|^2.
    $$
    By summing over all $i$, we obtain that
    $$
    4\pi\delta\mathrm{Area}(X) \le \sum\limits_{i = 1}^m |\partial R_i|^2 \le \left( \sum\limits_{i=1}^m |\partial R_i|  \right)^2
    = 4|\gamma|^2.
    $$
    Since the area of $X$ is one, we have that $|\gamma|\ge\sqrt{\pi\delta}$.
\end{proof}

\subsection{Comparison in the thin parts}\label{sec:thincomp}

Let $\mathbb{P}\Omega(\underline{k})$ be a moduli space of convex flat cone spheres with $n$ singularities and positive curvature gap $\delta(\underline{k})$. Let $U_{d,\epsilon}(S_{\lambda})$ be a thin part of $\mathbb{P}\Omega(\underline{k})$. Let $X$ be a flat cone sphere with $[X] \in U_{d,\epsilon}(S_{\lambda})$, and let $\gamma$ be a trajectory on $X$.

In Section~\ref{sec:cornerswitching}, we used a geometric triangulation to cut a trajectory into threads. In this section, we introduce another method to cut a trajectory on a surface from a thin part. Let $D_1, \ldots, D_s$ be the $(d,\epsilon)$-convex hulls in $X$ (see Definition~\ref{def:uniqueconvexhulls}). Let $X^{(0)}$ be the top surface obtained by applying the generalized Thurston surgery along $D_1, \ldots, D_s$. 

We first decompose $\gamma$ as follows:
\begin{itemize}
    \item Let $\gamma_0$ be the components of $\gamma$ outside the $(d, \epsilon)$-convex hulls,
    \item Let $\gamma_1$ be the components of $\gamma$ inside the $(d, \epsilon)$-convex hulls.
\end{itemize}
By Lemma~\ref{lem:generalizedremains}, we can regard $\gamma_0$ as being contained in $X^{(0)}$.

Next, we select a Delaunay triangulation $T_0$ of $X^{(0)}$ and a geometric triangulation $T_i$ for each $D_i$. We then decompose $\gamma_0$ and $\gamma_1$ into sub-paths as follows:
\begin{itemize}
    \item $T_0$ cuts $\gamma_0$ into sub-paths,
    \item $T_i$ cuts $\gamma_1$ into sub-paths.
\end{itemize}
If some component of $\gamma_0$ or $\gamma_1$ lies along an edge of the geometric triangulation, we define the sub-path as the entire component. If any convex hull $D_i$ degenerates, we disregard the sub-paths obtained within $D_i$.

We denote the sub-paths obtained as $t_1, \ldots, t_m$, where $t_i$ and $t_{i+1}$ are consecutive in $\gamma$. We call $t_1, \ldots, t_m$ the \emph{decomposition of} $\gamma$ with respect to $T_0, T_1, \ldots, T_s$. Each $t_i$ is still referred as a \emph{thread} of $\gamma$, as in Section~\ref{sec:cornerswitching}.

The following lemma can be seen as a version of Lemma~\ref{lem:consecutivethick}, adapted for the decomposition introduced above.

\begin{lemma}\label{lem:consecutivethin_modified}
    Let $\mathbb{P}\Omega(\underline{k})$ be a moduli space of convex flat cone spheres with $n$ singularities and positive curvature gap $\delta(\underline{k})$. Let $S$ be a codimension $d$ boundary stratum of $\overline{\mathbb{P}\Omega}(\underline{k})$. Let $X$ be a unit area flat cone sphere with $[X] \in U_{d,\sigma_d}(S_{\lambda_d})$, and let $D_1, \ldots, D_s$ be the $(d, \epsilon)$-convex hulls in $X$. Denote by $X^{(0)}$ the top surface obtained by applying Thurston surgeries along the $(d, \epsilon)$-convex hulls. Let $T_0$ be a Delaunay triangulation of $X^{(0)}$.

    Let $\gamma$ be a trajectory on $X$, and let $t_1, \ldots, t_m$ be the decomposition of $\gamma$ associated with $U_{d,\sigma_d}(S_{\lambda_d})$ and $T_0$. Consider the positive integer
    $$
    m_0 = 4 + \left\lfloor \frac{9}{\delta(\underline{k})}n^2 \right\rfloor.
    $$
    If $m \ge m_0$, then for any $m_0$ consecutive threads $t_i, \ldots, t_{i+m_0-1}$, where $1 \leq i \leq m - m_0 + 1$, one of the following holds:
    \begin{itemize}
        \item There are two consecutive threads in $t_{i}, \ldots, t_{i+m_0 -1}$ that form a corner-switch of $T_0$,
        \item The union of $t_{i}, \ldots, t_{i+m_0-1}$ contains a maximal trajectory with respect to $D_1, \ldots, D_s$ in $X$. 
    \end{itemize}
    Moreover, we have
    \begin{equation}\label{equ:consecutivethin}
        |t_i| + \ldots + |t_{i+m_0-1}| \geq 3\sqrt{3}n^2\sigma_1^2.
    \end{equation}
\end{lemma}

\begin{proof}
    We call a thread $t_i$ \emph{special} if it has an endpoint at the interior of a triangle face or at the boundary of $(d, \epsilon)$-convex hulls (see Figure~\ref{fig:special}).
    
    \begin{figure}[!htbp]
    	\centering
    	\begin{minipage}{0.6\linewidth}
    	\includegraphics[width=\linewidth]{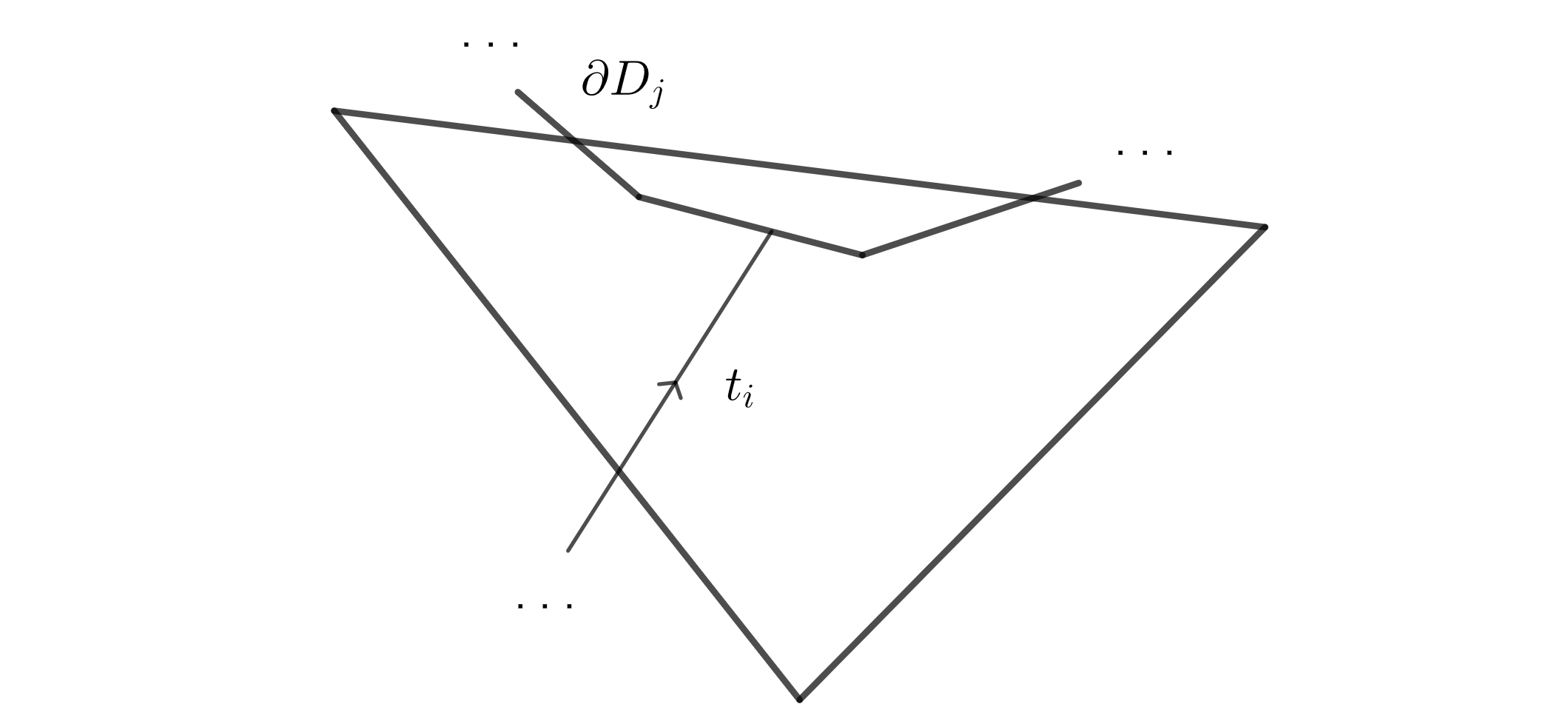}
    	\footnotesize
    	\end{minipage}
        \caption{The figure shows an example of a special thread $t_i$ which ends at the boundary of a convex hull $D_j$.}\label{fig:special}
    \end{figure}

    Let $t'$ be a union of consecutive threads in $\gamma$ such that $t'$ contains neither a corner-switch nor any maximal trajectory with respect to $D_1,\ldots,D_s$. 
    In order to maximize the number of threads contained in $t'$,  we assume that $t'$ first cuts corners of $T_0$ on the same side, then enters into a convex hull, then leaves the convex hull and cuts corners of $T_0$ on the same side again.
    \par
    To estimate the number of threads of $t'$, we note that:
    \begin{itemize}
        \item Theorem~\ref{thm:combinandmetriclengthinhull} implies that the combinatorial length of a trajectory in a convex hull on $X$ is bounded above by $\frac{8}{\delta(\underline k)}n^2$,
        \item According to the estimate~\eqref{equ:estimateforthesameside} obtained in the proof of Lemma~\ref{lem:consecutivethick}, 
        the number of consecutive threads cutting corners of $T_0$ on the same side is bounded above by  
        $$
        3(2n - 4)\Big\lceil\frac{1}{2\delta(\underline k)}\Big\rceil \le \frac{6(n-2)}{\delta(\underline k)}.
        $$
        \item There may be special threads at ends of $t'$ or before entering or after leaving convex hulls.
    \end{itemize} 
    
    By taking all of these into consideration, the 
    number of threads in $t'$ is bounded above by
    $$
    4 + \frac{6(n-2)}{\delta(\underline k)} + \frac{8}{\delta(\underline k)}n^2 < 4 + \left\lfloor\frac{9}{\delta(\underline k)}n^2\right\rfloor = m_0,
    $$ 
    where the constant~$4$ accounts for the possible special threads.

    Hence, it follows that the $m_0$ consecutive threads $t_{i},\ldots,t_{i+m_0 - 1}$ have to contain either a corner-switch or a maximal trajectory with respect to $D_1,\ldots,D_s$.
    \par
    For the last assertion of the lemma, we consider two cases.
    If $t_{i},\ldots,t_{i+m_0-1}$ contains a maximal trajectory, Proposition~\ref{prop:maxiamltrajectory} implies that
    $$
    |t_{i}|+\ldots+|t_{i+m_0-1}|\ge \sigma_{d} \ge  \sigma_{1}.
    $$
    By the definition~\eqref{equ:defepsilon}, one can compute that $\sigma_{1} > \frac{n^2}{\delta(\underline k)^2}\sigma^2_{1}$.
    
    If $t_{i},\ldots,t_{i+m_0-1}$ contains a corner-switch, then by Lemma~\ref{lem:consecutivethick}, one can show that 
    $$|t_{i}|+\ldots+|t_{i+m_0-1}|\ge d(T_0).$$
    Lemma~\ref{lem:widthvssys} and Lemma~\ref{lem:bounddelaunayedge} imply
    $$
    d(T_0)\ge \frac{relsys(X^{(0)})^2}{\sqrt{\frac{4}{\pi}+\frac{1}{2\pi\delta(\underline{k})}}}\ge \frac{\lambda^2_{d}}{\sqrt{\frac{4}{\pi}+\frac{1}{2\pi\delta(\underline{k})}}} = 
    \frac{\big(1+\frac{n}{\delta(\underline{k})}\big)^2\sigma_d^2}{\sqrt{\frac{4}{\pi}+\frac{1}{2\pi\delta(\underline{k})}}}
    $$
    According to Lemma~\ref{lem:upperboundcurvaturegap}, we have $\delta(\underline{k})\le1/3$ and then 
    $$
    \frac{4}{\pi}+\frac{1}{2\pi\delta(\underline{k})}<\frac{2}{\pi\delta(\underline{k})} + \frac{1}{2\pi\delta(\underline{k})} = \frac{5}{2\pi\delta(\underline{k})}<\frac{1}{\delta(\underline{k})}.
    $$
    It follows that
    $$
    d(T_0)\ge 
    \frac{\big(1+\frac{n}{\delta(\underline{k})}\big)^2\sigma_d^2}{\sqrt{\frac{4}{\pi}+\frac{1}{2\pi\delta(\underline{k})}}}>\sqrt{\delta(\underline{k})}\left(1+\frac{n}{\delta(\underline{k})}\right)^2\sigma_d^2\ge\sqrt{\delta(\underline{k})}\frac{n^2}{\delta(\underline{k})^2}\sigma_1^2 = \delta(\underline{k})^{-3/2}n^2\sigma_1^2\ge3\sqrt{3}n^2\sigma_1^2,
    $$
    where the last inequality uses Lemma~\ref{lem:upperboundcurvaturegap}.
    Then we obtain the estimate~\eqref{equ:consecutivethin}.
\end{proof}

Then we obtain the following estimate on thin parts.

\begin{prop}\label{prop:estimatethin_modified}
    Let $\mathbb{P}\Omega(\underline k)$ be a moduli space of convex flat cone spheres with $n$ singularities. Assume that the curvature gap $\delta(\underline k)$ is bounded below by a positive number $\delta$. Let $S$ be a codimensional $d$ boundary stratum. Let $X$ be a unit area flat cone sphere with $[X]\in U_{d,\sigma_d}(S_{\lambda_d})$. Then for any trajectory $\gamma$ in $X$, we have that
    $$
    |\gamma|\ge \sigma_1^2\Big(\frac{\sqrt{2}\delta}{4}\sqrt{\iota(\gamma,\gamma)} - 9n^2\Big),
    $$
    and for any regular closed geodesic $\gamma$ in $X$, we have that
    $$
    |\gamma| \ge \frac{\sqrt{2}\delta\sigma_1^2}{4} \sqrt{\iota(\gamma,\gamma)}.
    $$
\end{prop}

\begin{proof}
    Let $D_1,\ldots,D_s$ be the $(d, \epsilon)$-convex hulls in $X$.
    Let $X^{(0)}$ be the top surface obtained by Thurston surgeries along 
    $D_1,\ldots,D_s$.
    Let $t_1,\ldots,t_m$ be a decomposition of $\gamma$ associated to $U_{d,\sigma_d}(S_{\lambda_d})$ and $T_0$.
    Consider the integer 
    $$m_0 = 4 + \left\lfloor\frac{9}{\delta(\underline k)}n^2\right\rfloor$$
    as in Lemma~\ref{lem:consecutivethin_modified}.
    \par
    If $m\ge 2m_0$, Lemma~\ref{lem:consecutivethin_modified} implies that 
    \begin{align*}
        |\gamma| &\ge \sum\limits_{i = 1}^{\lfloor m/m_0 \rfloor}(|t_{(i-1)\cdot m_0+1}| + \ldots + |t_{(i-1)\cdot m_0+m_0}|)\\
        & \ge \left\lfloor \frac{m}{m_0} \right\rfloor 3\sqrt{3}n^2\sigma^2_{1} \ge \frac{m}{2m_0} 3\sqrt{3}n^2\sigma_1^2 = \frac{3\sqrt{3}n^2}{2m_0}\sigma_1^2\cdot m 
    \end{align*}
    Using $\delta\le 1/3$ and $n\ge 3$, we obtain that
    $$
    \frac{3\sqrt{3}n^2}{2m_0}\ge \frac{3\sqrt{3}n^2}{8 + \frac{18}{\delta}n^2} = \frac{3\sqrt{3}\cdot\delta}{8\cdot\delta/n^2 + 18}\ge\delta\cdot\frac{3\sqrt{3}}{(8\cdot1/3)/3^2 + 18}>\delta\cdot\frac{1}{4}
    $$
    Then 
    \begin{equation}
        |\gamma| \ge \frac{3\sqrt{3}n^2}{2m_0}\sigma_1^2\cdot m\ge \frac{\delta\sigma_1^2}{4}\cdot m
    \end{equation}
    As in Lemma~\ref{lem:combinvsintersect}, we know that $m\ge \sqrt{2\iota(\gamma,\gamma)}$. Hence when $m\ge 2m_0$, we obtain that
    \begin{equation}\label{equ:3}
        |\gamma|\ge  \frac{\sqrt{2}\delta\sigma_1^2}{4}
        \sqrt{\iota(\gamma,\gamma)}.
    \end{equation}
    \par
    If $m < 2m_0$, we have that 
    \begin{align*}
        |\gamma| > 0 &\ge 9n^2\sigma_1^2\big(\frac{m}{2m_0} - 1\big) \\
        &\ge \sigma_1^2\Big(\frac{9n^2}{2m_0}m - 9n^2\Big)\\
        &\ge \sigma_1^2\Big(\frac{\sqrt{2}\delta}{4}\sqrt{\iota(\gamma,\gamma)} - 9n^2\Big)
    \end{align*}
    Therefore, combining with~\eqref{equ:3}, we obtain that 
    $$
    |\gamma|\ge \sigma_1^2\Big(\frac{\sqrt{2}\delta}{4}\sqrt{\iota(\gamma,\gamma)} - 9n^2\Big)
    $$
    for any trajectory in $X$.
    \par
    We further assume that $\gamma$ is a regular closed geodesic. Proposition~\ref{prop:lowerboundclosed} implies that $|\gamma|\ge\sqrt{\pi\delta}$. If $m<2m_0$, since $m\ge \sqrt{2\iota(\gamma,\gamma)}$, we have that 
    $$\sqrt{\iota(\gamma,\gamma)}\le \frac{1}{\sqrt{2}}m<\sqrt{2}m_0 \le \sqrt{2}\big(4 +\frac{9}{\delta}n^2\big).$$
    Hence, when $m<2m_0$, it follows that
    \begin{align*}
        |\gamma|\ge \sqrt{\pi\delta} \ge \frac{\sqrt{\pi\delta}}{\sqrt{2}\big(4 +\frac{9}{\delta}n^2\big)}\sqrt{\iota(\gamma,\gamma)}.
    \end{align*}
    Combining with the inequality~\eqref{equ:3}, we obtain that
    \begin{align*}
        |\gamma| \ge \min\left\{\frac{\sqrt{2}\delta\sigma_1^2}{4},  \frac{\sqrt{\pi\delta}}{\sqrt{2}\big(4 +\frac{9}{\delta}n^2\big)} \right\}\sqrt{\iota(\gamma,\gamma)} = \frac{\sqrt{2}\delta\sigma_1^2}{4} \sqrt{\iota(\gamma,\gamma)}
    \end{align*}
    where the last equality is computed by the formula~\eqref{equ:defepsilon} of $\sigma_1$.
\end{proof}

\subsection{Proof of Theorem~\ref{thm:mainfavorite}}\label{sec:main}

\begin{proof}[Proof of Theorem~\ref{thm:mainfavorite}]
    Let $X$ be a unit area convex flat cone sphere with $n$ singularities and let $\underline k$ be the curvature vector of $X$. Assume that the curvature gap of $X$ satisfies that $\delta(\underline k)\ge\delta>0$. Notice that $[X]$ is contained in  $\mathbb{P}\Omega(\underline k)$. Corollary~\ref{cor:thindecomp} implies that 
    $\mathbb{P}\Omega(\underline k)$ is covered by the thick part 
    $\mathbb{P}\Omega(\underline k)\setminus U_{1,\sigma_1}$ and the thin parts $U_{d,\sigma_d}(S_{\lambda_d})$ for each boundary stratum $S$. By Proposition~\ref{prop:estimatethick} and Proposition~\ref{prop:estimatethin_modified}, for any trajectory $\gamma$ in $X$, we have that
    $$
    |\gamma| \ge a_1(\underline k)\sqrt{\iota(\gamma,\gamma)} - a_2(\underline k),
    $$ 
    and for any regular closed geodesic $\gamma$, we have that
    $$
    |\gamma| \ge a_1(\underline k)\sqrt{\iota(\gamma,\gamma)},
    $$ 
    where
    \begin{align*}
        & a_1(\underline{k})  
    = \sqrt{2}\cdot\delta\cdot\sigma_1(\underline{k})^2\cdot
    \min\left\{ \frac{1}{\sqrt{\frac{4}{\pi}+\frac{1}{2\pi\delta}}}\frac{1}{36(n-2)}, \frac{1}{4}\right\}\\
        & a_2(\underline{k}) = \sigma_1(\underline{k})^2\cdot\max\left\{\frac{1}{\sqrt{\frac{4}{\pi}+\frac{1}{2\pi\delta}}} ,9n^2\right\}.
    \end{align*}

    Since $\frac{1}{\sqrt{\frac{4}{\pi}+\frac{1}{2\pi\delta}}}\le
    \frac{1}{\sqrt{\frac{4}{\pi}}} <1$,
    we deduce that
    $$
    a_1(\underline{k}) = \sqrt{2}\cdot\delta\cdot\sigma_1(\underline{k})^2\cdot \frac{1}{\sqrt{\frac{4}{\pi}+\frac{1}{2\pi\delta}}}\frac{1}{36(n-2)}
    $$
    and 
    \begin{equation}\label{equ:estimatefora2}
        a_2(\underline{k}) = \sigma_1(\underline{k})^2\cdot9n^2.
    \end{equation}

    By Lemma~\ref{lem:upperboundcurvaturegap}, we know that
    \begin{equation}\label{equ:useboundofthecurvaturegaptoestimate}
        \delta(\underline{k})\le\frac{1}{3}.
    \end{equation}
    It follows that $\frac{1}{\delta}\ge \frac{1}{\delta(\underline{k})}\ge3>2$ and
    $$
    \frac{1}{\sqrt{\frac{4}{\pi}+\frac{1}{2\pi\delta}}}\ge\frac{1}{\sqrt{2+\frac{1}{2\pi\delta}}}>\frac{1}{\sqrt{\frac{1}{\delta}+\frac{1}{2\pi\delta}}} > \frac{1}{\sqrt{\frac{1}{\delta}+\frac{1}{\delta}}} = \frac{\sqrt{\delta}}{\sqrt{2}}. 
    $$
    Then,
\begin{equation}\label{equ:estimatefora1}
        a_1(\underline{k}) \ge \sqrt{2}\cdot\delta\cdot\sigma_1(\underline{k})^2\cdot\frac{\sqrt{\delta}}{\sqrt{2}}\cdot\frac{1}{36(n-2)} = \sigma_1(\underline{k})^2\cdot\frac{\delta^{3/2}}{36(n-2)}.
\end{equation}
    
    Recall that 
    $$\sigma_1(\underline{k}) = \frac{\delta(\underline{k})^2}{4n^2} \left(\frac{\delta(\underline{k})^2}{6n}\right)^{n-3}.$$
    Using inequality~\eqref{equ:useboundofthecurvaturegaptoestimate}, we obtain
    $$\frac{3}{2n}\left(\frac{\delta^2}{6n}\right)^{n-2}\le \sigma_1(\underline k)\le 81\left(\frac{1}{54n}\right)^{n-1}.$$ 
    Substituting these bounds into~\eqref{equ:estimatefora1} and~\eqref{equ:estimatefora2}, we obtain
    \begin{align*}
        &a_1(\underline{k})\ge \sigma_1(\underline{k})^2\cdot\frac{\delta^{3/2}}{36(n-2)} \ge \frac{9\delta^{3/2}}{144n^2(n-2)}\left(\frac{\delta^2}{6n}\right)^{2n-4}\\
        &a_2(\underline{k}) = \sigma_1(\underline{k})^2\cdot9n^2\le9n^2\cdot81^2\cdot\left(\frac{1}{54n}\right)^{2n-2} = \frac{81}{4}\left(\frac{1}{54n}\right)^{2n-4}.
    \end{align*}

    Define
    \begin{align*}
        c_1 :=  \frac{9\delta^{3/2}}{144n^2(n-2)}\left(\frac{\delta^2}{6n}\right)^{2n-4}
        \quad \mbox{ and }\quad
        c_2 := \frac{81}{4} \left(\frac{1}{54n}\right)^{2n-4}.
    \end{align*}
    Note that $c_1$ depends only on $n$ and $\delta$ and $c_2$ depends only on $n$. It follows that for any trajectory $\gamma$ in $X$, we have that
    $$
    |\gamma| \ge c_1\sqrt{\iota(\gamma,\gamma)} - c_2,
    $$ 
    and for any regular closed geodesic $\gamma$, we have that
    $$
    |\gamma| \ge c_1\sqrt{\iota(\gamma,\gamma)}.
    $$ 
\end{proof}

\subsection{Applications of Theorem~\ref{thm:mainfavorite}}\label{sec:appli}

We first apply Theorem~\ref{thm:mainfavorite} to counting problems on convex flat cone spheres. Let $X$ be a flat cone sphere. 

Recall that a regular closed geodesic in $X$ is a geodesic that returns to its starting point with the same tangent direction. Two regular closed geodesics are said to be \emph{parallel} if they bound an immersed flat cylinder with no singularities inside. A family of parallel regular closed geodesics is called \emph{maximal} if it contains all regular closed geodesics parallel to a given regular closed geodesic.

Given a positive real number $R$, we define $N^{sc}(X,R)$ and $N^{cg}(X,R)$ as follows:
\begin{itemize}
    \item $N^{sc}(X,R)$ is the number of saddle connections on $X$ with length at most $R$.
    \item $N^{cg}(X,R)$ is the number of maximal families of parallel regular closed geodesics with lengths at most $R$.
\end{itemize}

\begin{proof}[Proof of Corollary~\ref{cor:integrability}]
    For a non-negative integer $s$, we define $N_1^{sc}(X, s)$ to be the number of saddle connections on $X$ with number of self-intersection at most $s$. By Theorem~\ref{thm:mainfavorite}, a saddle connection $\gamma$ of length $\le R$ satisfies that
    $$
    \iota(\gamma,\gamma)\le c_1^{-2}(|\gamma|+c_2)^2 \le c_1^{-2}(R+c_2)^2.
    $$
    It follows    that 
    $$N^{sc}(X,R)\le N_1^{sc}(X, c_1^{-2}(R+c_2)^2)$$
    where the constant $c_1$ and $c_2$ are as in Theorem~\ref{thm:mainfavorite} and depend only on the number of singularities and  the curvature gap of $X$. According to Theorem~\ref{thm:finite}, we have that $N^{sc}(X,R)$ is bounded above by $$
    (3n-6)2^{20n\delta^{-1}\big(c_1^{-1}(n-1)(R+c_2) + 1\big)}.
    $$ 
    Notice that every saddle connection is in the boundary of at most two maximal families of parallel regular closed geodesics. Moreover, since a closed flat cylinder has two boundary components, we can find two saddle connections on the boundary of a maximal family of parallel regular closed geodesics, or a saddle connection on the boundary of the family such that regular closed geodesics are on both sides of the saddle connection. Hence, it follows that 
    $$
    N^{cg}(X,R)\le N^{sc}(X,R).
    $$
\end{proof}

Next, we apply Theorem~\ref{thm:mainfavorite} to counting problems on polygonal billiards.

Let $P$ be a polygon in the plane. There is a classical construction from~\cite{FoxKer} that associates a flat cone sphere to $P$. Given a polygon $P$, let $P_1$ and $P_2$ be two copies of $P$, and identify the corresponding edges of $P_1$ and $P_2$ via Euclidean isometries. This construction results in a flat cone sphere, which we denote by $X_P$.

Given a billiard path $\{s_1, \ldots, s_m\}$ in $P$, we now describe how the billiard path induces a trajectory in $X_P$ (following~\cite{Zor}, Section~2.1). We start the billiard path from $s_1$ in one copy of $P$, say $P_1$. Each time $s_i$ hits the edge of a copy of $P$ in $X_P$, the trajectory continues as $s_{i+1}$ in the other copy of $P$. In this way, we obtain a trajectory in $X_P$, denoted by $\gamma$. The length of $\gamma$ is the same as the billiard path.

From this construction, we know that $\gamma$ is a saddle connection in $X_P$ if and only if the billiard path is a generalized diagonal in $P$. When $m$ is even, $\gamma$ is a regular closed geodesic in $X_P$ if and only if the billiard path is periodic in $P$. However, when $m$ is odd, the trajectory $\gamma$ ends in the same copy, $P_1$, where it started. In this case, if the billiard path is periodic, the induced trajectory $\gamma$ is not a regular closed geodesic in $X_P$. To obtain a regular closed geodesic, we continue $\gamma$ by repeating the operation starting from $s_1$ in $P_2$. As a result, we obtain a regular closed geodesic whose length is twice that of the billiard path.
From now on, we refer to this regular closed geodesic as $\gamma$ when $m$ is odd and the billiard path is periodic.

We say that two periodic billiard paths in $P$ are \emph{parallel} if they induce parallel regular closed geodesics in $X_P$. A family of parallel periodic billiard paths is called \emph{maximal} if it induces a maximal family of regular closed geodesics in $X_P$.

Let $R$ be a positive real number. We define $N^{diag}(P,R)$ and $N^{per}(P,R)$ as follows:
\begin{itemize}
    \item $N^{diag}(P,R)$ is the number of generalized diagonals in $P$ with length at most $R$.
    \item $N^{per}(P,R)$ is the number of maximal families of parallel periodic billiard paths with length at most $R$.
\end{itemize}

\begin{proof}[Proof of Corollary~\ref{cor:billiard}]
    Let $X_P$ be the flat cone sphere associated to $P$ which is obtained by the above construction. Since $P$ is of area one, the area of $X_P$ is two.
    We know that a general diagonal in $P$ induces a saddle connection of the same length in $X_P$. 
    It follows that
    $$
    N^{diag}(P,R)\le
    N^{sc}(X_P,\frac{R}{\sqrt{2}})\le
    (3n-6)2^{20n\delta^{-1}\big(c_1^{-1}(n-1)(\frac{1}{\sqrt{2}}R+c_2) + 1\big)}
    $$
    where the constant $c_1$ and $c_2$ are as in Theorem~\ref{thm:mainfavorite}. 
    We also know that  a periodic billiard path in $P$ induces a regular closed geodesic of at most twice the length in $X_P$. It follows that
    $
    N^{per}(P,R)\le N^{cg}(X_P,\frac{2R}{\sqrt{2}})\le (3n-6)2^{20n\delta^{-1}\big(c_1^{-1}(n-1)(\sqrt{2}R+c_2) + 1\big)}.
    $
    Hence, we proved the results.
\end{proof}

\subsection{Examples}
In this subsection, we present several examples to explore the dependence of the constants $c_1$ and $c_2$ in Theorem~\ref{thm:mainfavorite} on the conditions stated in the theorem.

We begin by noting that there is a counterexample to the validity of Theorem~\ref{thm:mainfavorite} when negative curvatures are permitted.

\begin{Ex}\label{ex:negativecurvature}
    We construct a sequence of flat cone spheres with negative curvatures and positive curvature gaps such that there is a regular closed geodesic on each of them of normalized length tending to zero.

    For each $0 < t < 1$, consider an equilateral triangle $P_t$ with side length $t$ and vertices labeled $x_1, x_2, x_3$; see the left picture of Figure~\ref{fig:example11}.  
    There exists a special periodic billiard path $\gamma_t$ in $P_t$, called the \emph{Fagnano path}, which reflects at the midpoint of each edge. Consequently, its length is $|\gamma_t| = \tfrac{3}{2}t$.

    Next, consider a quadrilateral $Q$ with vertices labeled $y_1, y_2, y_3, y_4$ and cone angles $\tfrac{\pi}{2}$, $\tfrac{9\pi}{20}$, $\tfrac{3\pi}{5}$, and $\tfrac{9\pi}{20}$, respectively, as shown in the right picture of Figure~\ref{fig:example11}.

    \begin{figure}[!htbp]
    \centering
    \includegraphics[width=0.8\linewidth]{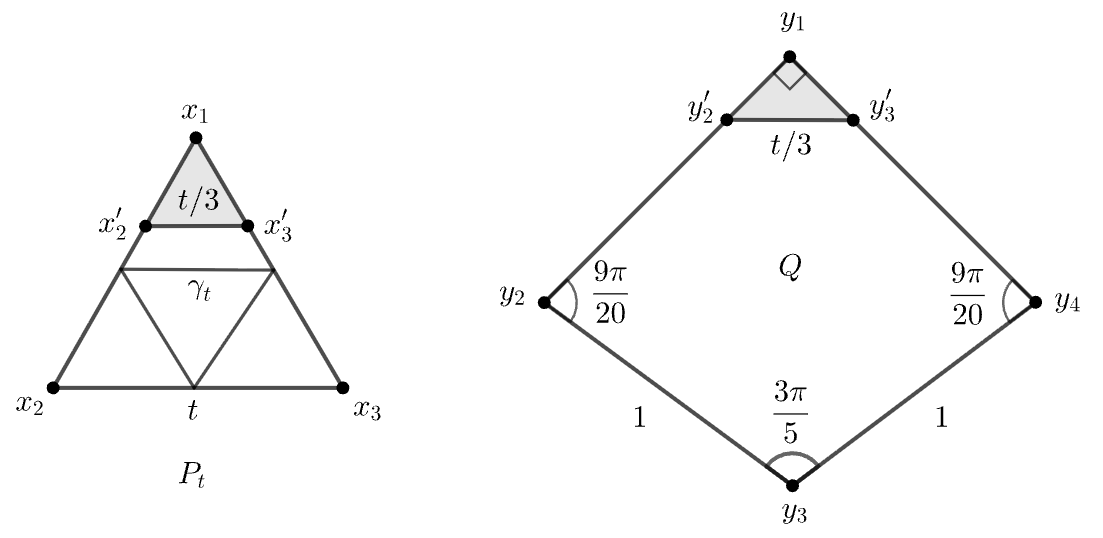}
    \caption{$P_t$: an equilateral triangle of side length $t$ with a gray subtriangle $P_{t/3}$ at $x_1$; $Q$: a quadrilateral with the indicated angles and side lengths, containing a gray isosceles right triangle of hypotenuse length $t/3$.}\label{fig:example11}
    \end{figure}

    As shown in Figure~\ref{fig:example11}, we mark a smaller gray equilateral triangle $x_1x_2'x_3'$ at the vertex $x_1$ of $P_t$, with side length $t/3$, and an isosceles right triangle $y_1y'_2y'_3$ at the vertex $y_1$ of $Q$, with hypotenuse length $t/3$.  
    We then cut out these gray triangles and glue the edge $x'_2x'_3$ to the edge $y'_2y'_3$, as illustrated in Figure~\ref{fig:example12}. Denote the resulting non-convex polygon by $P'_t$. Let $X_t$ be the flat cone sphere by the double construction associate to $P'_t$ (see Section~\ref{sec:appli}).

    Since the cone angles of $X_t$ are twice the interior angles of $P'_t$, the curvatures of $X_t$ at $x_2, x_3, x'_3, y_2, y_3, y_4, x'_2$ are
    \[
    \frac{2}{3},\ \frac{2}{3},\ -\frac{5}{12},\ \frac{11}{20},\ \frac{2}{5},\ \frac{11}{20},\ -\frac{5}{12},
    \]
    respectively.
    Writing them with denominator~$60$ gives
    \[
    \frac{40}{60},\ \frac{40}{60},\ -\frac{25}{60},\ \frac{33}{60},\ \frac{24}{60},\ \frac{33}{60},\ -\frac{25}{60}.
    \]
    A direct check over all subset sums shows that the closest value to~$1$ is $57/60=19/20$, attained for the subset $\{2/5,\,11/20\}$. 
    Hence,
    \[
    \delta=\inf_{I\subseteq\{1,\ldots,7\}}\left|1-\sum_{i\in I}k_i\right|
    =\frac{1}{20}>0.
    \]
    
    \begin{figure}[!htbp]
    	\centering
        	\includegraphics[width=0.7\linewidth]{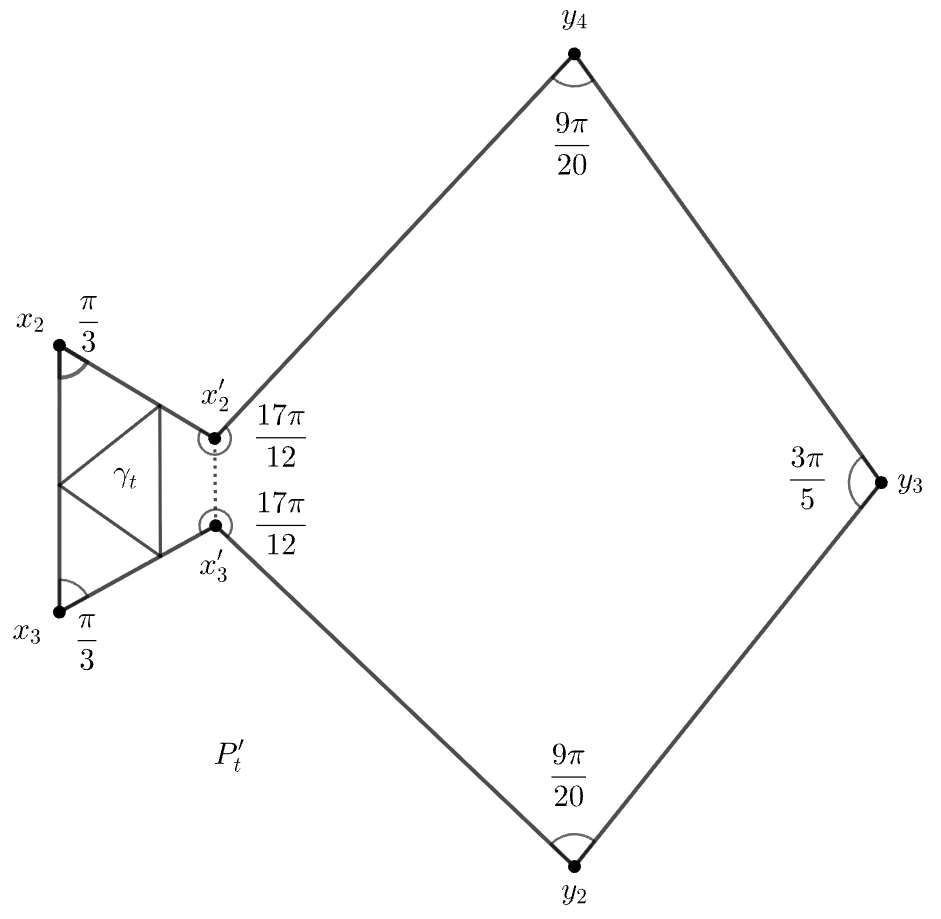}
        \caption{$P'_t$: a non-convex polygon obtained by cutting the gray triangles from $P_t$ and $Q$, and then gluing along $x'_2x'_3$ and $y'_2y'_3$.}\label{fig:example12}
    \end{figure}
    
    Note that the Fagnano path $\gamma_t$ induces a regular closed geodesic on $X_t$, denoted by $\gamma'_t$. The length of $\gamma'_t$ is twice the length of $\gamma_t$, so it is $3t$. The area of $X_t$ converges to $2\cdot\operatorname{Area}(Q)>0$ as $t\to 0$. Hence, the normalized length of $\gamma_t$ converges to zero as $t\to 0$. 
\end{Ex}

Next, we present the following example to illustrate that the inequality~\eqref{equ:compare2} cannot hold for every saddle connection $\gamma$.

\begin{Ex}\label{ex:c2notzero}
    We construct a sequence of convex flat cone spheres with positive curvature gaps such that there is a saddle connection on each of them whose self-intersection number is one but normalized length tends to zero.
    \par
    Let $P_1$ be a equilateral triangle with side length one and let $P_2$ be an isosceles triangle with angles $\frac{\pi}{6}, \frac{5\pi}{12}, \frac{5\pi}{12}$ and the base side length one. Denote the vertices of $P_1$ by $x_1,x_2,x'_2$ and denote the vertices of $P_2$ by $y_1,y_2,y'_2$.
    For any real number $0<t<1$, we denoted the polygon by $(P_1)_t$ which is obtained by scaling $P_1$ by $t$. We cut out a small isosceles triangle in $P_2$ with that apex at $y_1$ and base length $t$. Denote the other two vertices in the base by $\widetilde x_2$ and $\widetilde{x}'_2$.

    \begin{figure}[!htbp]
    	\centering
    	\begin{minipage}{0.8\linewidth}
        	\includegraphics[width=\linewidth]{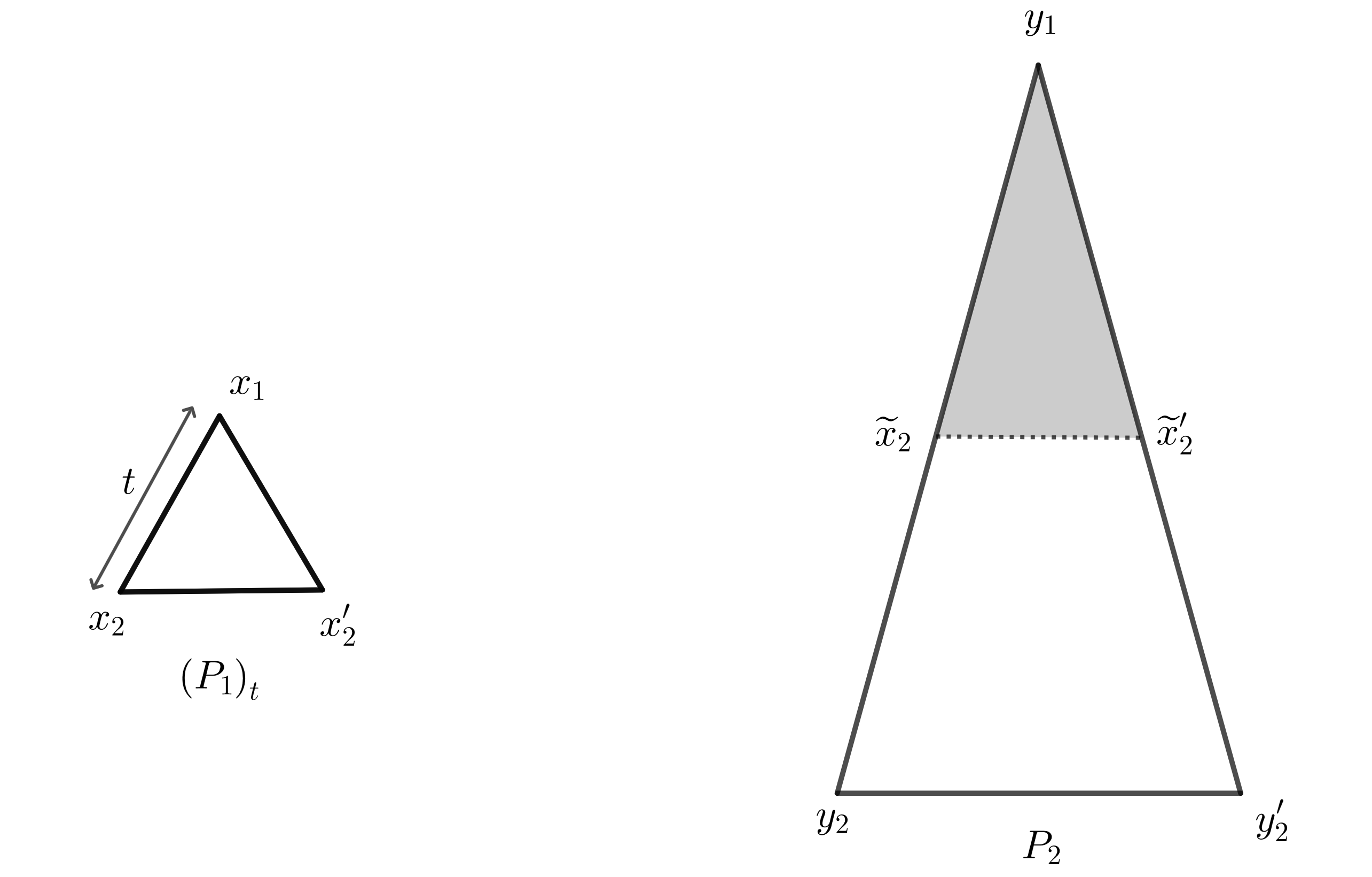}
        	\footnotesize
    	\end{minipage}
        \caption{We cut out the gray region in the isosceles triangle on the right and attach a scaled equilateral triangle $(P_1)_t$ by identifying $x_2x_2'$ and $\widetilde{x}_2\widetilde{x}'_2$.}\label{fig:example21}
    \end{figure}

    Next we glue $(P_1)_t$ to the truncated triangle of $P_2$ by an isometry from the scaled edge of $x_2x'_2$ to the edge $\widetilde x_2\widetilde{x}'_2$. We denote the resulting convex polygon by $P$. We identify the edges of $P$ as shown in Figure~\ref{fig:example22}, where $y_3$ is the midpoint of the side $y_2y'_2$. We denote the obtained flat cone sphere by $X_t$. From the figure, we know that $x_1$, $x_2$, $y_2$ and $y_3$ induce four singularities in $X_t$ of curvatures $\frac{5}{6},\frac{1}{12},\frac{7}{12},\frac{1}{2}$ respectively. It follows that $X_t$ has the same positive curvature gap for all $t$.

    \begin{figure}[!htbp]
    	\centering
    	\begin{minipage}{0.6\linewidth}
        	\includegraphics[width=\linewidth]{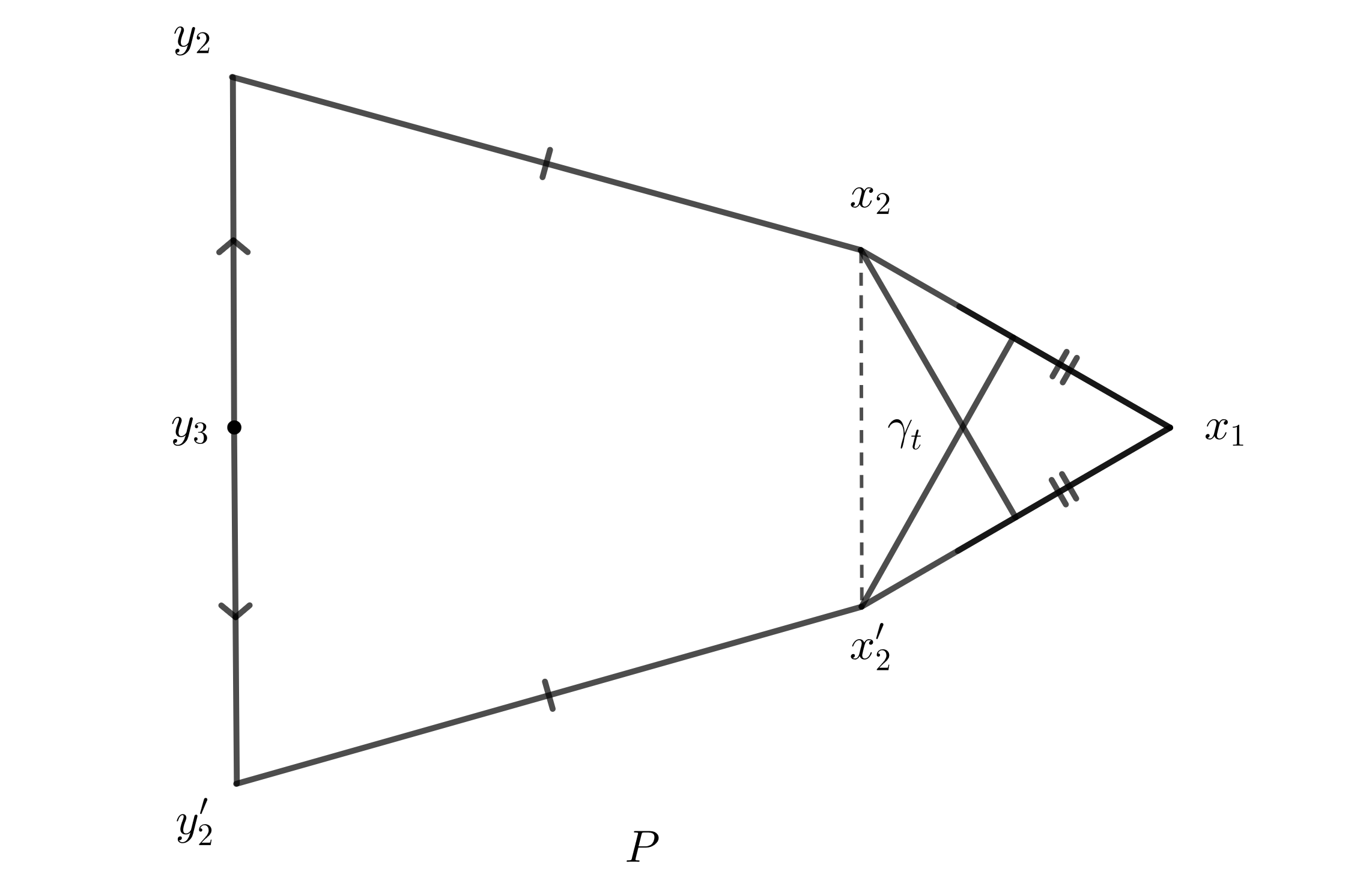}
        	\footnotesize
    	\end{minipage}
        \caption{Under the pairing of the edges of $P$, two heights of the equilateral triangle form a saddle connection on the resulting surface.}\label{fig:example22}
    \end{figure}

    Under the identification in $P$, two heights of the scaled triangle $(P_1)_t$ on the sides $x_1x_2$ and $x_1x'_2$ form a saddle connection in $X_t$, denoted 
    by $\gamma_t$. Notice that $\iota(\gamma_t,\gamma_t) = 1$ and the length of $\gamma_t$ is $\sqrt{3}t$. It can be directly computed that the normalized length of $\gamma_t$ in $X_t$ goes to zero as $t\to 0$. Therefore, there is no positive constant $c$ depending only on the number of singularities and the curvature gap such that $c = c\sqrt{\iota(\gamma_t,\gamma_t)}\le \ell_{\gamma_t}(X_t)$ for all sufficiently small $t$.
\end{Ex}

In the final example, we emphasize that the dependence of $c_1$ and $c_2$ on the curvature gap $\delta$ is necessary.

\begin{Ex}\label{ex:deltaisnecessary}
    We first construct a family of convex flat cone spheres with curvature gaps going to zero. We consider an isosceles trapezoid with base angles $\frac{\pi+\theta}{2}$ and $\frac{\pi-\theta}{2}$ for $0<\theta<\frac{\pi}{6}$. We attach an isosceles triangle with apex angle $\frac{2\pi}{3}$ to the trapezoid by identifying the base of the triangle and the longer base of the trapezoid.
    Denote the resulting polygon by $P_{\theta}$.
    We put a vertex in the shorter base of the trapezoid. The vertices of $P_{\theta}$ are labeled and the sides of $P_{\theta}$ are paired as shown in Figure~\ref{fig:example31}. Hence we obtain a flat cone sphere $X_{\theta}$. The vertices $x_1$, $x_2$, $x_3$, $x_4$ induce four singularities in $X_{\theta}$ with cone angles $\pi+\theta$, $\frac{4\pi}{3}-\theta$, $\frac{2\pi}{3}$, $\pi$ respectively. Since $\theta<\frac{\pi}{6}$, the curvature gap of $X_{\theta}$ is $\frac{\theta}{2\pi}$ which goes to zero as $\theta\to 0$.

    \begin{figure}[!htbp]
    	\centering
    	\begin{minipage}{0.8\linewidth}
        	\includegraphics[width=\linewidth]{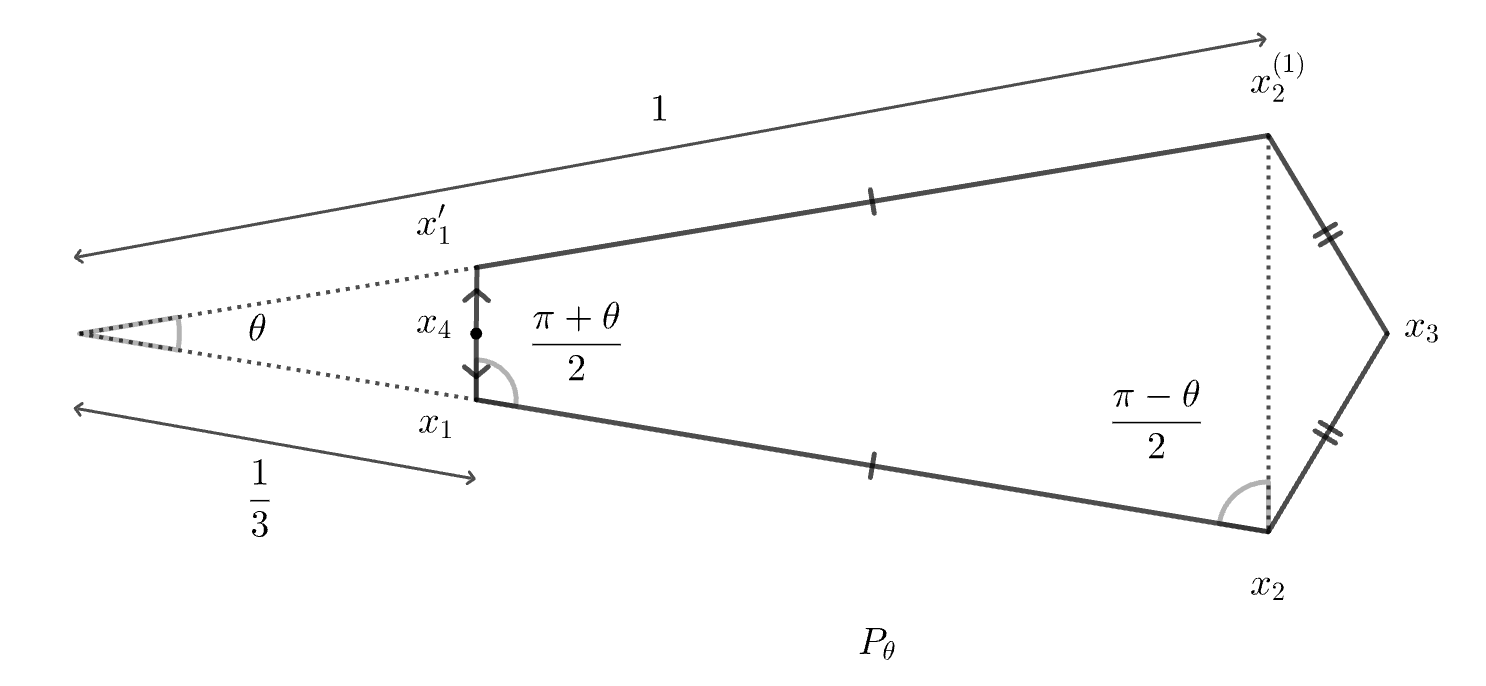}
                \caption{We glue an isosceles triangle to a trapezoid along a base. The pairing of the edges of the polygon is marked in the picture.}\label{fig:example31}
        	\footnotesize
    	\end{minipage}
    \end{figure}
    
    Next we show that there are no positive constants $c_1$ and $c_2$ such that for any $\theta\in(0,\frac{\pi}{6})$ and any trajectory $\gamma$ in $X_{\theta}$, one has that $c_1\sqrt{\iota(\gamma,\gamma)} - c_2 \le \ell_{\gamma}(X_{\theta})$.
    \par
    We develop $X_{\theta}$ on the plane as $P_{\theta}$ and consider the copies of $P_{\theta}$ as in Figure~\ref{fig:example32}. To simplify, we only draw the copies of the trapezoid. Denote the copies of $x_2$ in the longer base by $x_2^{(1)},x_2^{(2)},\ldots$. We join the vertices $x_2$ and $x_2^{(m)}$ by a segment $\gamma_m$. If $\gamma_m$ is contained in the interior of  the copies, then it induces a saddle connection in $X_{\theta}$, denoted by $\gamma_m$ still. 
    
    By construction, the holonomy of a small loop around $x_1$ and $x_4$ is a rotation $r_\theta$ by angle $\theta$. Hence, for each $1 \le i < m$, the image $r_\theta^i(\gamma_m)$ projects to the same saddle connection on $X_\theta$ as $\gamma_m$, where $r_\theta^i$ denotes the $i$-fold composition of $r_\theta$. Since $r_\theta^i(\gamma_m)$ intersects $\gamma_m$ exactly once, we conclude that $\iota(\gamma_m,\gamma_m) = m-1$.
    
    \begin{figure}[!htbp]
    	\centering
    	\begin{minipage}{0.8\linewidth}
        	\includegraphics[width=\linewidth]{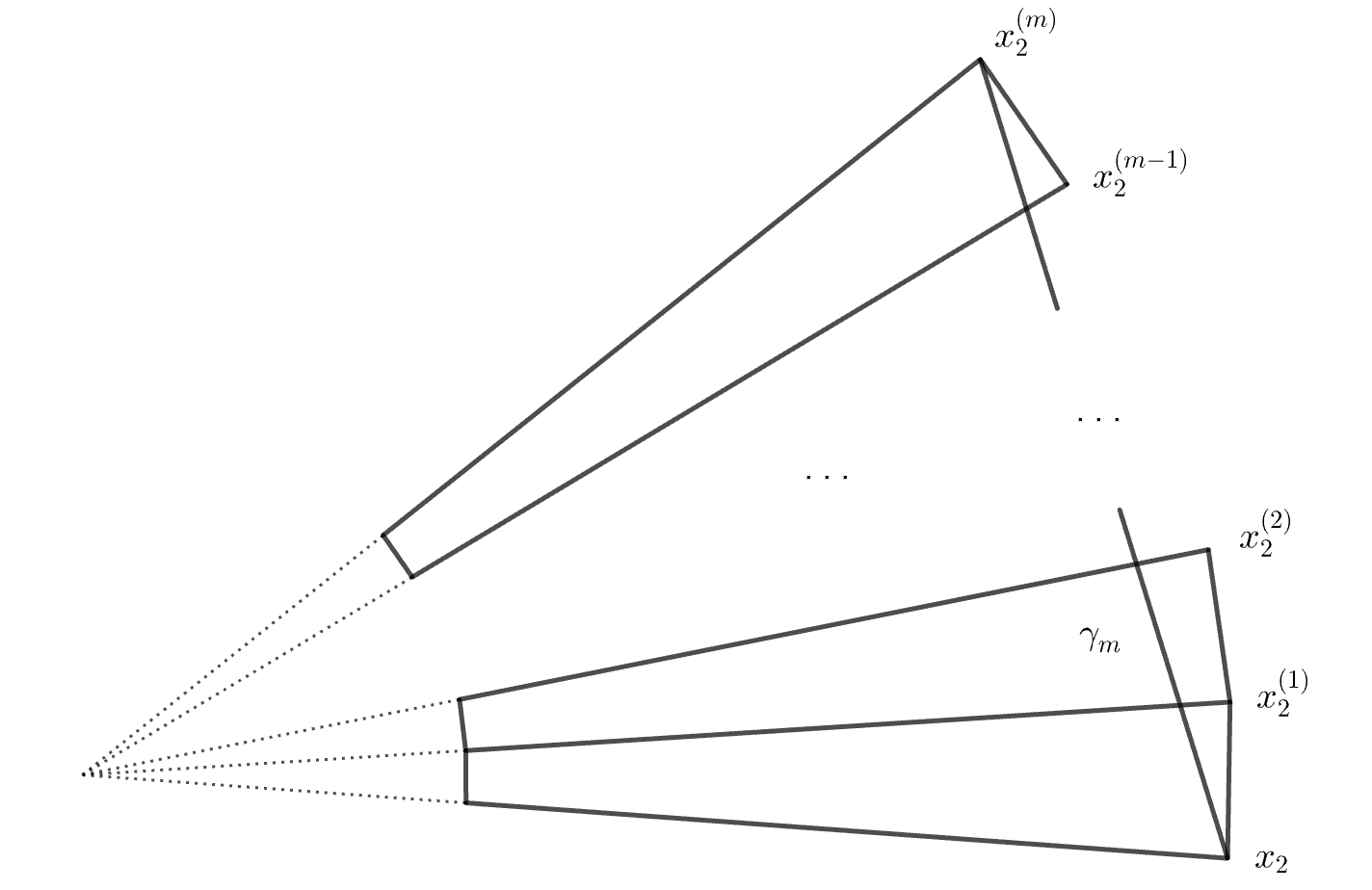}
                \caption{We unfold the trapezoid by rotations around the apex. Note that the segment $\gamma_m$ induces a saddle connection on the flat cone sphere $X_{\theta}$.}\label{fig:example32}
        	\footnotesize
    	\end{minipage}
    \end{figure}
    
    When $\theta$ is small enough, we can select $m$ such that the segment $\gamma_m$ induces a saddle connection in $X_{\theta}$ and $0<c_1(m-1)-c_2=c_1\sqrt{\iota(\gamma_m,\gamma_m)}-c_2$.  
    On the other hand, it can be computed that the length of $\gamma_m$ is $2\sin{\frac{m\theta}{2}}\sim m\theta$ as $\theta\to 0$, and the area of $X_{\theta}$ is $\frac{4}{9}\sin{\theta} + \frac{1}{\sqrt{3}}\sin^2\frac{\theta}{2}\sim \frac{4}{9}\theta$ as $\theta\to 0$. It follows that the normalzied length of $\gamma_m$ is asymptotically equivalent to $\frac{3m}{2}\sqrt{\theta}$ and hence goes to zero as $\theta\to 0$. Therefore, there is no positive constants $c_1$ and $c_2$ such that for any $\theta\in(0,\frac{\pi}{6})$, one has that $0< c_1\sqrt{\iota(\gamma_m,\gamma_m)} - c_2 \le \ell_{\gamma_m}(X_{\theta})$.
\end{Ex}

\bibliographystyle{plainurl}
\bibliography{ref}

@article{thu,
  author = {Thurston, William P.},
  title = {Shapes of polyhedra and triangulations of the sphere},
  journal = {Geometry \& Topology Monographs},
  year = {1998},
  volume = {1},
  pages = {511--549},
  doi = {10.2140/gtm.1998.1.511},
  url = {https://arxiv.org/abs/math/9801088}
}

@article{McMullen,
  title   = {The Gauss-Bonnet theorem for cone manifolds and volumes of moduli spaces},
  author  = {McMullen, Curtis T.},
  journal = {American Journal of Mathematics},
  year    = {2017},
  volume  = {139},
  pages   = {261--291},
  url     = {https://www.jstor.org/stable/24906289}
}

@article{Veech,
  author  = {Veech, William A.},
  title   = {Siegel Measures},
  journal = {Annals of Mathematics},
  volume  = {148},
  number  = {3},
  pages   = {895--944},
  year    = {1998},
  doi     = {10.2307/121033},
  url     = {https://doi.org/10.2307/121033}
}

@book{FM,
  author    = {Benson Farb and Dan Margalit},
  title     = {A Primer on Mapping Class Groups},
  publisher = {Princeton University Press},
  year      = {2012},
  isbn      = {9780691147949},
  url       = {https://press.princeton.edu/books/hardcover/9780691147949/a-primer-on-mapping-class-groups}
}

@article{KN,
  title   = {Complex hyperbolic volume and intersection of boundary divisors in moduli spaces of genus zero curves},
  author  = {Koziarz, Vincent and Nguyen, Duc-Manh},
  journal = {Annales scientifiques de l'École normale supérieure},
  volume  = {51},
  number  = {6},
  pages   = {1549--1597},
  year    = {2018},
  doi     = {10.24033/asens.2381},
  url     = {https://doi.org/10.24033/asens.2381}
}

@article{HKK,
  title={Flat surfaces and stability structures},
  author={Haiden, Fabian and Katzarkov, Ludmil and Kontsevich, Maxim},
  journal={Publications mathématiques de l'IHÉS},
  volume={126},
  pages={247--318},
  year={2017},
  doi={10.1007/s10240-017-0095-y},
  url={https://doi.org/10.1007/s10240-017-0095-y}
}

@article{DM,
  author = {Deligne, Pierre and Mostow, G. D.},
  title = {Monodromy of hypergeometric functions and non-lattice integral monodromy},
  journal = {Publications Mathématiques de l'IHÉS},
  year = {1986},
  volume = {63},
  pages = {5--89},
  url = {https://doi.org/10.1007/BF02831622}
}

@misc{FT,
      title={Bounds on saddle connections for flat spheres}, 
      author={Kai Fu and Guillaume Tahar},
      year={2023},
      eprint={2308.08940},
      archivePrefix={arXiv},
      primaryClass={math.GT},
      url={https://arxiv.org/abs/2308.08940}, 
}

@article{Ara,
  title = {Universal length bounds for non-simple closed geodesics on hyperbolic surfaces},
  author = {Basmajian, Ara},
  journal = {Journal of Topology},
  year = {2013},
  volume = {6},
  pages = {513--524},
  doi = {10.1112/jtopol/jtt005},
  url = {https://doi.org/10.1112/jtopol/jtt005}
}

@article{BaPV,
  title={The shortest non-simple closed geodesics on hyperbolic surfaces},
  author={Basmajian, Ara and Parlier, Hugo and Vo, Hanh},
  journal={Mathematische Zeitschrift},
  year={2024},
  volume={306},
  number={1},
  pages={Paper No. 8, 19},
  doi={10.1007/s00209-023-03401-8},
  url={https://doi.org/10.1007/s00209-023-03401-8}
}

@incollection{Ma1,
  author    = {Masur, Howard},
  title     = {Lower bounds for the number of saddle connections and closed trajectories of a quadratic differential},
  booktitle = {Holomorphic Functions and Moduli I},
  editor    = {Drasin, David and Kra, Irwin and Earle, Clifford J. and Marden, Albert and Gehring, Frederick W.},
  series    = {Mathematical Sciences Research Institute Publications},
  volume    = {10},
  pages     = {215--228},
  publisher = {Springer},
  year      = {1988},
  doi       = {10.1007/978-1-4613-9602-4_20},
  url       = {https://link.springer.com/chapter/10.1007/978-1-4613-9602-4_20}
}

@article{Ma2,
  title   = {The growth rate of trajectories of a quadratic differential},
  author  = {Masur, Howard},
  journal = {Ergodic Theory and Dynamical Systems},
  year    = {1990},
  volume  = {10},
  number  = {1},
  pages   = {151--176},
  doi     = {10.1017/S0143385700005632},
  url     = {https://doi.org/10.1017/S0143385700005632}
}

@incollection{Zor,
  author    = {Zorich, Anton},
  title     = {Flat Surfaces},
  booktitle = {Frontiers in Number Theory, Physics, and Geometry I},
  editor    = {Cartier, Pierre and Julia, Bernard and Moussa, Pierre and Vanhove, Pierre},
  pages     = {437--583},
  publisher = {Springer},
  year      = {2006},
  doi       = {10.1007/978-3-540-31347-2_9},
  url       = {https://doi.org/10.1007/978-3-540-31347-2_9}
}

@article{Kat,
  title   = {The growth rate for the number of singular and periodic orbits for a polygonal billiard},
  author  = {Katok, Anatole},
  journal = {Communications in Mathematical Physics},
  year    = {1987},
  volume  = {111},
  pages   = {151--160},
  doi     = {10.1007/BF01239021},
  url     = {https://doi.org/10.1007/BF01239021}
}

@article{Sch,
  title   = {Complexity growth of a typical triangular billiard is weakly exponential},
  author  = {Scheglov, Dmitri},
  journal = {Journal d'Analyse Mathématique},
  year    = {2020},
  volume  = {142},
  pages   = {105--124},
  doi     = {10.1007/s11854-020-0134-3},
  url     = {https://doi.org/10.1007/s11854-020-0134-3}
}

@article{FoxKer,
  title   = {Concerning the transitive properties of geodesics on a rational polyhedron},
  author  = {Fox, Ralph H. and Kershner, Richard B.},
  journal = {Duke Mathematical Journal},
  year    = {1936},
  volume  = {2},
  pages   = {147--150},
  doi     = {10.1215/S0012-7094-36-00213-2},
  url     = {https://doi.org/10.1215/S0012-7094-36-00213-2}
}

@article{Mas86,
  author  = {Masur, Howard},
  title   = {Closed trajectories for quadratic differentials with an application to billiards},
  journal = {Duke Mathematical Journal},
  year    = {1986},
  volume  = {53},
  number  = {2},
  pages   = {307--314},
  doi     = {10.1215/S0012-7094-86-05321-8},
  url     = {https://doi.org/10.1215/S0012-7094-86-05321-8}
}

@article{RSch06,
  author  = {Schwartz, Richard E.},
  title   = {Obtuse Triangular Billiards I: Near the $(2,3,6)$ Triangle},
  journal = {Experimental Mathematics},
  volume  = {15},
  number  = {2},
  pages   = {161--182},
  year    = {2006},
  doi     = {10.1080/10586458.2006.10128943},
  url     = {https://doi.org/10.1080/10586458.2006.10128943}
}

@article{RSch08,
  author  = {Schwartz, Richard E.},
  title   = {Obtuse Triangular Billiards II: One Hundred Degrees Worth of Periodic Trajectories},
  journal = {Experimental Mathematics},
  volume  = {18},
  number  = {2},
  pages   = {137--171},
  year    = {2009},
  doi     = {10.1080/10586458.2009.10128999},
  url     = {https://doi.org/10.1080/10586458.2009.10128999}
}

@misc{tokarsky2018point,
      title={One Hundred and Twelve Point Three Degree Theorem}, 
      author={George Tokarsky and Jacob Garber and Boyan Marinov and Kenneth Moore},
      year={2018},
      eprint={1808.06667},
      archivePrefix={arXiv},
      primaryClass={math.DS}
}

@article{Ngu24,
  author  = {Nguyen, Duc-Manh},
  title   = {Intersection theory and volumes of moduli spaces of flat metrics on the sphere},
  journal = {Geometriae Dedicata},
  volume  = {218},
  number  = {32},
  year    = {2024},
  doi     = {10.1007/s10711-023-00883-y},
  url     = {https://doi.org/10.1007/s10711-023-00883-y}
}

@misc{sauvaget2024volumesmodulispacesflat,
      title={Volumes of moduli spaces of flat surfaces}, 
      author={Adrien Sauvaget},
      year={2024},
      eprint={2004.03198},
      archivePrefix={arXiv},
      primaryClass={math.AG},
      url={https://arxiv.org/abs/2004.03198}, 
}

@book{AthreyaMasur2024,
  author = {Athreya, Jayadev S. and Masur, Howard},
  title = {Translation Surfaces},
  series = {Graduate Studies in Mathematics},
  volume = {242},
  isbn = {978-1-4704-7655-7},
  year = {2024},
  publisher = {American Mathematical Society (AMS)},
  url = {https://bookstore.ams.org/gsm-242}
}

@article{Filip2024,
  author  = {Filip, Simion},
  title   = {Translation surfaces: Dynamics and Hodge theory},
  journal = {EMS Surveys in Mathematical Sciences},
  volume  = {11},
  number  = {1},
  pages   = {63--151},
  year    = {2024},
  doi     = {10.4171/EMSS/78},
  url     = {https://ems.press/journals/emss/articles/14297772}
}

@Article{McMullen2023,
 Author = {McMullen, Curtis T.},
 Title = {Billiards and {Teichm{\"u}ller} curves},
 FJournal = {Bulletin of the American Mathematical Society. New Series},
 Journal = {Bulletin (New Series) of the American Mathematical Society},
 ISSN = {0273-0979},
 Volume = {60},
 Number = {2},
 Pages = {195--250},
 Year = {2023},
 Language = {English},
 DOI = {10.1090/bull/1782},
 Keywords = {32G15,30F60},
 zbMATH = {7666127},
 Zbl = {1511.32011}
}

@incollection{MasurTabachnikov2002,
  author    = {Masur, Howard and Tabachnikov, Serge},
  title     = {Rational billiards and flat structures},
  booktitle = {Handbook of Dynamical Systems, Volume 1A},
  pages     = {1015--1089},
  year      = {2002},
  publisher = {Elsevier},
  doi       = {10.1016/S1874-575X(02)80015-7},
  url       = {https://doi.org/10.1016/S1874-575X(02)80015-7}
}

@Article{Wright2015,
 Author = {Wright, Alex},
 Title = {Translation surfaces and their orbit closures: an introduction for a broad audience},
 FJournal = {EMS Surveys in Mathematical Sciences},
 Journal = {European Mathematical Society Surveys in Mathematical Sciences},
 ISSN = {2308-2151},
 Volume = {2},
 Number = {1},
 Pages = {63--108},
 Year = {2015},
 Language = {English},
 DOI = {10.4171/EMSS/9},
 Keywords = {37F30,32G15,30F30,30D05,30F60,14H55},
 zbMATH = {6455787},
 Zbl = {1372.37090}
}

@article{Tro, 
title={Les surfaces euclidiennes à singularités coniques. (Euclidean surfaces with cone singularities).}, 
journal = {L'Enseignement mathématique},
volume={32}, 
url={https://infoscience.epfl.ch/handle/20.500.14299/15756}, 
DOI={10.5169/seals-55079}, 
abstractNote={We prove in this paper that evry compact Riemann surface carries an euclidean (flat) conformal metric with precribed conical singularities of given angles, provided the Gauss-Bonnet relation is satisfied. This metric is unique up to homothety.}, 
number={1–2}, 
author={Troyanov, Marc}, year={1986}, pages={79–94}, keywords={Polyhedral surface | conical sinularities}}

@article{Thurston1997,
  title={Three-Dimensional Geometry and Topology, Volume 1},
  author={James, Ioan Mackenzie and Thurston, William P. and Levy, Sylvio},
  journal={The Mathematical Gazette},
  year={1997},
  volume={82},
  pages={345--346},
  url={https://api.semanticscholar.org/CorpusID:126174690}
}

@article{MasurSmillie1991,
  author  = {Masur, Howard and Smillie, John},
  title   = {Hausdorff dimension of sets of nonergodic measured foliations},
  journal = {Annals of Mathematics},
  volume  = {134},
  number  = {3},
  pages   = {455--543},
  year    = {1991},
  doi     = {10.2307/2944356},
  url     = {https://doi.org/10.2307/2944356}
}

@article{Fiala1940/41,
author = {Fiala, Félix},
journal = {Commentarii mathematici Helvetici},
keywords = {Metric geometry, convex geometry, integral geometry},
pages = {293-346},
title = {Le problème des isopérimètres sur les surfaces ouvertes à courbure positive.},
url = {http://eudml.org/doc/138776},
volume = {13},
year = {1940/41},
}

@article{Alexandrov45,
  author  = {Alexandrov, Alexander D.},
  title   = {An isoperimetric problem},
  journal = {Doklady Akademii Nauk SSSR},
  volume  = {50},
  number  = {1},
  pages   = {31--34},
  year    = {1945},
  zbl     = {0063.39603}
}

@misc{Fu25,
      title={Siegel-Veech Measures of Convex Flat Cone Spheres}, 
      author={Fu, Kai},
      year={2025},
      eprint={2504.14731},
      archivePrefix={arXiv},
      primaryClass={math.GT},
      url={https://arxiv.org/abs/2504.14731}, 
}

@article{athreya2019siegel,
  title = {Siegel-Veech transforms are in $L^2$},
  author = {Athreya, Jayadev S. and Cheung, Yitwah and Masur, Howard},
  journal = {Journal of Modern Dynamics},
  volume = {15},
  pages = {1--27},
  year = {2019},
  publisher = {American Institute of Mathematical Sciences},
  doi = {10.3934/jmd.2019001},
  url = {https://doi.org/10.3934/jmd.2019001},
  eprint = {arXiv:1711.08537},
  eprinttype = {arxiv},
  eprintclass = {math.DS}
}

\end{document}